\documentclass[12pt]{amsart}
\usepackage{pifont}
\usepackage{amsfonts}
\usepackage{amsmath}
\usepackage{amssymb}
\usepackage{amsmath,bm}
\usepackage{cite}
\usepackage{color}
\topmargin=-0.7in \hoffset=-1.8cm \voffset=2cm \textheight=220mm
\textwidth=160mm

\newtheorem{theorem}{Theorem}[section]

\newtheorem{lemma}[theorem]{Lemma}

\theoremstyle{definition}

\newtheorem{remark}{Remark}[section]

\begin{document}

\title{Spreading speeds of KPP-type lattice systems in heterogeneous media}

\author{Xing Liang}
\author {Tao Zhou}
\address{School of Mathematical Sciences and Wu Wen-Tsun Key Laboratory of Mathematics, University of Science and Technology of China, Hefei, Anhui 230026, China}
\date{18th July, 2017 }
\maketitle

\begin{abstract}
In this paper, we investigate spreading properties of the solutions of the  Kolmogorov-Petrovsky-Piskunov-type, (to be simple,KPP-type) lattice system
\begin{equation}\label{firstequation}\overset{.}u_{i}(t) =d^{\prime}_{i}(u_{i+1}(t)-u_{i}(t))+d_{i}(u_{i-1}(t)-u_{i}(t))+f(i,u_{i}).\end{equation}
Motivated by the work in\cite{B2},
we develop some new discrete Harnack-type estimates and homogenization techniques for the lattice system \eqref{firstequation} to construct two speeds $\overline\omega \leq \underline \omega$ such that
$\displaystyle{\lim_{t\rightarrow+\infty}}\sup \limits_{i\geq\omega t}|u_i(t)|=0$ for any $\omega>\overline{\omega}$, and
$\displaystyle{\lim_{t\rightarrow+\infty}}\sup \limits_{0\leq i\leq\omega t}|u_i(t)-1|=0$ for any $\omega<\underline{\omega}$.
These speeds are characterized by two generalized principal eigenvalues of the linearized systems of \eqref{firstequation}.
In particular, we derive the exact spreading speed when the coefficients are random stationary ergodic or almost periodic (where $\underline\omega = \overline\omega$).
Finally, in the case where $f_{s}^{\prime}(i,0)$ is almost periodic in $i$ and the diffusion rate $d_i'=d_i$ is independent of $i$,
we show that the spreading speeds in the positive and negative directions are identical even if  $ f(i,u_{i})$ is not invariant with respect to the reflection. \end{abstract}
\keywords{KPP-type lattice systems, generalized principal eigenvalue, spreading speed, heterogeneous media}
\section{Introduction}

In this paper, we focus on the large time behavior of the solution of the following problem:
\begin{equation}\label{1.1}
  \left\{
   \begin{aligned}
   \overset{.}u_{i}(t) =d^{\prime}_{i}(u_{i+1}(t)-u_{i}(t))+d_{i}(u_{i-1}(t)-u_{i}(t))+f(i,u_{i})\ \ t>0,i\in\mathbf{Z},  \\
   0\leq u(0,i)\leq1,\{i:u(0,i)\neq0\}\neq\emptyset\ \text{has finite elements,} \\
   \end{aligned}
   \right.
  \end{equation}
where we assume that the coefficient $d_{i}$ is uniformly bounded in $i$ with $\inf\limits_{i}d_{i}>0$,
and that $f$ satisfies some KPP-type conditions. This will be told in detail later. A simple
example is $f(i,s)=s(1-s)$.
It is known that \eqref{1.1} is a spatial-discrete version of
\begin{equation}
 \label{continuousequation} \left\{
   \begin{aligned}
   \partial_{t}u=a(x)\partial_{xx}u+q(x)\partial_{x}u+f(x,u)\ \ t>0,x\in\mathbf{R}, \\
   0\leq u(0,x)\leq1,\{x:u(0,x)\neq0\}\neq\emptyset\ \ \text{is bounded} .\\
   \end{aligned}
   \right.
  \end{equation}
The pioneer works on the dynamics of the type of equations like \eqref{1.1} and \eqref{continuousequation} were done by Fisher \cite{F1} and Kolmogorov, Petrovsky, Piskunov \cite{K1} in the homogeneous case:
$$\partial_{t}u=\partial_{xx}u+f(u),$$
where $f\in\mathcal{C}^{1}[0,1]$, $f(0)=f(1)=0$. In fact, in \cite{F1,K1}, they proved the existence of the minimal wave speed in the case where $f(s)>0$
and $f^{\prime}(s)\leq f^{\prime}(0)s$ for any $s\in(0,1)$.
 Moreover, in the homogeneous case,
Aronson and Weinberger \cite{A1} proved that if $f^{\prime}(0)>0$ and $f(s)>0$ for any $s\in(0,1)$, then there exists
$\omega^{\ast}>0$ such that
\begin{equation*}
  \left\{
   \begin{aligned}
   &\text{for any}\ \ \ \ \omega>\omega^{\ast},&\displaystyle{\lim_{t\rightarrow\infty}}\sup \limits_{x\geq\omega t}|u(t,x)|=0,\\
   &\text{for all}\ \ \omega\in(0,\omega^{\ast}),\ \ &\displaystyle{\lim_{t\rightarrow\infty}}\sup \limits_{0\leq x<\omega t}|u(t,x)-1|=0.\\
   \end{aligned}
   \right.
  \end{equation*}
A similar result still holds if $x\leq0$. An easy corollary is
$\displaystyle{\lim_{t\rightarrow\infty}}u(t,x+\omega t)=0$ locally uniform in $x\in\mathbf{R}$ if $\omega>\omega^{\ast}$ and
$\displaystyle{\lim_{t\rightarrow\infty}}u(t,x+\omega t)=1$ locally uniform in $x\in\mathbf{R}$ if $0\leq\omega<\omega^{\ast}$.
These results are called spreading properties and $\omega^{\ast}$ is called
the spreading speed.

In the past decades, the spreading properties of \eqref{1.1} and \eqref{continuousequation} in heterogeneous media got increasing attention of mathematicians. The propagation problems in (spatially) periodic
media, the simplest heterogenous case, were widely considered by mathematicians. Applying the approach of
probability,  \cite{GF} first proved the existence of spreading speeds for one-dimensional
KPP-type reaction-diffusion equations in periodic media. \cite{SKT, X} gave the definition of the spatially
periodic traveling waves independently, and then \cite{HZ} proved the existence of
the spatially periodic traveling waves of KPP-type equations in the distributional
sense. In a series of works (e.g.\cite{BH, BHR, BHN}), Berestycki, Hamel and their colleagues investigated
the traveling waves and spreading speeds of KPP-type reaction-diffusion equations
in high-dimensional periodic media.

Besides above works, more general
frameworks are provided by \cite{LZ,W} to study spreading properties for more general diffusion systems in periodic media.

Though  \eqref{firstequation} is just a system of ordinary differential equations, comparing with the reaction-diffusion equation  \eqref{continuousequation}, the study of  \eqref{firstequation} has its own difficulties.
In fact, even in the case where the media is homogeneous, the system \eqref{firstequation} only has the invariance with respect to the action of the spatial translation group $\mathbf Z$,  which is a discrete subgroup of $\mathbf R$.
In this sense, the homogeneous lattice system is essentially a system with spatially periodic heterogeneity (e.g. see \cite{LZ}).
Besides, in the study of reaction-diffusion equations, the Harnack-type estimates and methods of integration by parts are very powerful tools (e.g. \cite{BH, BHR, BHN,BHN2,Na} ).
In the case where the media is discrete, these techniques should be rebuilt and developed.  Related results
on the spreading speeds and traveling waves of lattice systems in homogeneous media can be found in \cite{CLW,CCV,CFG,MWZ,MZ,Mallet, WLW}, and references therein.
There are also some works considering the spreading properties of the lattice system \eqref{firstequation} in periodic media.
\cite{G1,HZ } proved the existence of the traveling waves of \eqref{firstequation} with KPP-type nonlinearity and in periodic media by different methods, and then \cite{LZ} proved the existence of the spreading speeds.

However, there are only a few works on the spreading properties
of KPP-type equations in more complicated media.
Berestycki, Hamel and Nadirashvili \cite{BHN2}  investigated spreading properties in higher dimension for the homogeneous equation in general unbounded domains.
Particularly, in \cite{BHN2} , the concepts of lower and upper spreading speeds were introduced. Then Berestycki and Nadin \cite{B2} also
introduced these two speeds again for \eqref{continuousequation} to study the spreading properties. Precisely, for one-dimensional equation \eqref{continuousequation},
the lower and upper spreading speeds are defined by$$\omega^{\ast}:=\inf\{\omega\geq0,\ \displaystyle{\lim_{t\rightarrow\infty}}\sup \limits_{x\geq\omega t}|u(t,x)|=0\},$$
$$\omega_{\ast}:=\sup\{\omega\geq0,\ \displaystyle{\lim_{t\rightarrow\infty}}\sup \limits_{0\leq x<\omega t}|u(t,x)-1|=0\}.$$
They gave a sharp estimate on $\omega_{\ast},\ \omega^{\ast}$ by constructing
$\underline{\omega},\ \overline{\omega}$, where $\underline{\omega},\ \overline{\omega}$ are represented by two generalized principal eigenvalues (see Definition \ref{def2.1}) of the linearized equation of \eqref{firstequation} such that
$$\underline{\omega}\leq\omega_{\ast}\leq\omega^{\ast}\leq\overline{\omega}.$$
Furthermore, they showed that if the coefficients are (asymptotically) almost periodic or random stationary ergodic, then $\underline{\omega}=\overline{\omega}$,
and hence $\omega_{\ast}=\omega^{\ast}$ is exactly the spreading speed. Most  recently, they also investigated the multidimensional
and space-time heterogeneous case in \cite{B3}.  In fact, Shen (see e.g.\cite{s1,s2,s3}) also introduced the concepts of lower and upper spreading speeds to study the spreading speeds of KPP-type equations in space-time heterogeneous media.

In this paper, we investigate the spreading properties for \eqref{1.1} in general heterogeneous media.
Motivated by \cite{B2}, we establish the theory of generalized principal eigenvalues of linear lattice systems to estimate the lower and upper spreading speeds $\omega_{\ast}, \omega^{\ast}$.
Aiming to estimate the spreading speeds via the generalized principal eigenvalues, we also develop some new discrete Harnack-type inequalities, and homogenization techniques for lattice equations.
Then we  prove that $\omega_{\ast}=\omega^{\ast}$  in the case where the media is almost periodic  or random stationary ergodic.
Finally, in the case where $f_{s}^{\prime}(i,0)$ is almost periodic in $i$ and the diffusion rate $d_i'=d_i$ is independent of $i$,
we show that the spreading speeds in the positive and negative directions are identical even if  $ f(i,u_{i})$ is not invariant with respect to the reflection.
Moreover, such a conclusion still holds for the reaction-diffusion equation \eqref{continuousequation} in corresponding conditions.
Here, we would like to point out that the last conclusion is far from being obvious even in the case where the media is periodic.
\cite{LZ} first noticed such a phenomenon while considering a so-called linear determined reaction-diffusion equation and applying one classical conclusion that  a linear operator and its adjoint operator have identical real spectral sets.
Based on a similar idea, \cite{DL,JZ,s1} proved the same conclusion about the invariance of the spreading speeds with respect to reflection for different systems with linear determined property.
In this paper,  for the systems in almost periodic media,
we show this conclusion by considering the generalized principal eigenvalues of the ``formal" adjoint operators and giving a limit estimate based on the discrete integration by parts.


\section{Preliminary: Definitions, notions, results}
First, let $H\subseteq\mathbf{Z}$. For any function $a:H\rightarrow\mathbf{R}$, we denote $a_{i}:=a(i)$,
$i\in H$. In this paper, we use both $a_{i}$ and $a(i)$ for convenience.
Considering the problem \eqref{1.1}, we assume that
$0<\inf \limits_{i}d_{i}\leq\sup \limits_{i}d_{i}<+\infty$,
$0<\inf \limits_{i}d^{\prime}_{i}\leq\sup \limits_{i}d^{\prime}_{i}<+\infty$,
$f(i,0)\equiv f(i,1)\equiv0$,\ $0<\inf\limits_{i}f(i,s)\leq f(i,s)\leq f_{s}^{\prime}(i,0)s$ for any $s\in (0,1)$ and
$f(i,\cdot)\in\mathcal{C}^{1+\gamma}([0,1])$ uniformly with respect to $i\in\mathbf{Z}$,
that is, $\sup\limits_{i}\|f(i,\cdot)\|_{\mathcal{C}^{1+\gamma}}<+\infty$.
Specially, we also assume that \\
\begin{equation}\label{2.1}
\liminf \limits_{|i|\to \infty}(f_{s}^{\prime}(i,0)-({\sqrt {d^{\prime}_{i}}}-{\sqrt {d_{i}}})^2)>0.
\end{equation}
Denote $X_{n}:=\{a|\ a:[n,+\infty)\cap\mathbf{Z}\rightarrow\mathbf{R}\}$ and
$X_{-\infty}:=\{a|\ a:\mathbf{Z}\rightarrow\mathbf{R}\}$. We define
$A:X_{n}\rightarrow X_{n+1}$ by
$(A\phi)_{i}=d^{\prime}_{i}(\phi_{i+1}-\phi_{i})+d_{i}(\phi_{i-1}-\phi_{i})$,
$\mathcal L:X_{n}\rightarrow X_{n+1}$ by $(\mathcal L\phi)_{i}=(A\phi)_{i}+f_{s}^{\prime}(i,0){\phi}_{i},$
and $L_{p}:X_{n}\rightarrow X_{n+1}$ by
$(L_{p}\phi)_{i}={e}^{-pi}\mathcal (\mathcal{L}e^{p\cdot}\phi)_{i}.$ Moreover, we denote
$D=\max\{\sup\limits_{i} d_{i},\sup\limits_{i} d^{\prime}_{i}\},\ \underline{D}=\min\{\inf\limits_{i} d_{i},\inf\limits_{i} d^{\prime}_{i}\}$.
We can also consider a more general case with heterogenous steady states $p^{-}$ and
$p^{+}$ instead of $0$ and $1$ under some assumptions corresponding to those we have given before.
In fact, under the condition
$0<\inf \limits_{i}(p^{+}_{i}-p^{-}_{i})\leq\sup \limits_{i}(p^{+}_{i}-p^{-}_{i})<+\infty$, we can reduce the equation with heterogeneous steady states into an equation with steady states $0$ and $1$ by setting
$v=\frac{u-p^{-}}{p^{+}-p^{-}}$. Hence we may, without loss of generality, assume that $p^{+}=1$ and $p^{-}=0$
as long as $0<\inf \limits_{i}(p^{+}_{i}-p^{-}_{i})\leq\sup \limits_{i}(p^{+}_{i}-p^{-}_{i})<+\infty$.
\newtheorem{defn}{Definition}[section]
\begin{defn}\label{def2.1}
The generalized principal eigenvalues associated with operator $L_{p}$ on $I_{n}:=(n,+\infty)\cap \mathbf{Z}$, where $n\in\{-\infty\}\cup\mathbf{Z}$, are
$$\underline{{\lambda}_{1}}(p,n):=\sup\{\lambda|\ \exists\ \phi\in{\mathcal{A}}_{n},\ \text{such that}\
(L_{p}\phi)_{i}\geq \lambda\phi_{i}\ \text{for any}\ i\in I_{n}\},$$
$$\overline{{\lambda}_{1}}(p,n):=\inf\{\lambda|\ \exists\ \phi\in{\mathcal{A}}_{n},\ \text{such that}\
(L_{p}\phi)_{i}\leq \lambda\phi_{i}\ \text{for any}\ i\in I_{n}\},$$
where for $n\in\mathbf{Z}$, ${\mathcal{A}}_{n}$ is the set of admissible test functions:\\
${\mathcal{A}}_{n}:=\{\phi\in X_{n}|\ {\phi}_{i}>0\ for\ i\geq n,
\displaystyle{\lim_{i\rightarrow +\infty}}\frac{\ln{{\phi}_{i}}}{i}=0,
{\Big\{\frac{{\phi}_{i+1}-{\phi}_{i}}{{\phi}_{i}}\Big\}}_{i=n}^{\infty}\in{\ell}^{\infty},\\
{\Big\{\frac{{\phi}_{i+1}-{\phi}_{i}}{{\phi}_{i+1}}\Big\}}_{i=n}^{\infty}\in{\ell}^{\infty}\}$\\
and ${\mathcal{A}}_{-\infty}$ is the set of admissible test functions:
$${\mathcal{A}}_{-\infty}:=\{\phi\in X_{-\infty}|\ {\phi}_{i}>0,
\displaystyle{\lim_{|i|\rightarrow +\infty}}\frac{\ln{{\phi}_{i}}}{i}=0,
{\Big\{\frac{{\phi}_{i\pm1}-{\phi}_{i}}{{\phi}_{i}}\Big\}}_{i=-\infty}^{\infty}\in{\ell}^{\infty}\}.$$
\end{defn}
In some cases, we write the generalized principal eigenvalues which are related to $L_{p}$ as
$\underline{{\lambda}_{1}}(p,n,\mathcal L)$ and $\overline{{\lambda}_{1}}(p,n,\mathcal L)$ to emphasize that they depend on $\mathcal{L}$.
It is easy to see that $\underline{{\lambda}_{1}}(p,n)$ is increasing in $n$, and
$\overline{{\lambda}_{1}}(p,n)$ is decreasing in $n$. Furthermore, we have:

\newtheorem{prop}{Proposition}[section]
\begin{prop}\label{prop2.1}
Let $n\in\{-\infty\}\cup\mathbf{Z}$. Then
$$\underline{{\lambda}_{1}}(p,n)\leq\overline{{\lambda}_{1}}(p,n)\ \forall p\in\mathbf{R}.$$

\end{prop}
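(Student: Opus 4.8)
The plan is to argue by contradiction. Since $\underline{\lambda_1}(p,n)$ is the supremum of one set of admissible $\lambda$'s and $\overline{\lambda_1}(p,n)$ the infimum of another, it suffices to show that every $\lambda$ admissible in the definition of $\underline{\lambda_1}(p,n)$ is at most every $\mu$ admissible in the definition of $\overline{\lambda_1}(p,n)$; so I would fix $\phi,\psi\in\mathcal{A}_n$ and constants $\lambda>\mu$ with $(L_p\phi)_i\ge\lambda\phi_i$ and $(L_p\psi)_i\le\mu\psi_i$ for all $i\in I_n$, and derive a contradiction. A direct computation first rewrites the conjugated operator as
$$(L_p\phi)_i=d'_i e^{p}\phi_{i+1}+d_i e^{-p}\phi_{i-1}+\bigl(f'_s(i,0)-d'_i-d_i\bigr)\phi_i ,$$
so that $L_p$ is a second--order difference operator whose off--diagonal coefficients $d'_i e^{p}$ and $d_i e^{-p}$ are strictly positive; this positivity is what makes the quotient of a sub-- and a supersolution tractable.

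The next step is to pass to $w_i:=\phi_i/\psi_i>0$. Substituting $\phi_i=w_i\psi_i$ into the inequality for $\phi$ and using the inequality for $\psi$ to eliminate the zeroth--order coefficient $f'_s(i,0)-d'_i-d_i$, the pointwise terms cancel and, after dividing by $\psi_i>0$, one is left with the discrete quotient inequality
$$\tilde\alpha_i\,(w_{i+1}-w_i)+\tilde\beta_i\,(w_{i-1}-w_i)\ \ge\ (\lambda-\mu)\,w_i\ >\ 0\qquad(i\in I_n),$$
where $\tilde\alpha_i=d'_i e^{p}\,\psi_{i+1}/\psi_i$ and $\tilde\beta_i=d_i e^{-p}\,\psi_{i-1}/\psi_i$. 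Here it is crucial that $\psi\in\mathcal{A}_n$: the two $\ell^{\infty}$--conditions on $(\psi_{i+1}-\psi_i)/\psi_i$ and $(\psi_{i+1}-\psi_i)/\psi_{i+1}$ (respectively the $\ell^{\infty}$--condition on $(\psi_{i\pm1}-\psi_i)/\psi_i$ when $n=-\infty$) force $\psi_{i+1}/\psi_i$ and $\psi_{i-1}/\psi_i$ to lie in a fixed compact subinterval of $(0,\infty)$, so that together with $0<\underline D\le d_i,d'_i\le D$ there are constants $0<c_0\le C_0$ with $c_0\le\tilde\alpha_i,\tilde\beta_i\le C_0$ for all $i$. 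I would also record the only other feature of $\mathcal{A}_n$ that enters, namely that $w$ is sub--exponential: $\ln w_i/i=\ln\phi_i/i-\ln\psi_i/i\to0$ as $i\to+\infty$ (and also as $i\to-\infty$ when $n=-\infty$).

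Then I would finish by a discrete ODE--type dichotomy read off from the quotient inequality, which avoids having to locate a maximum of $w$ and therefore treats $n=-\infty$ and finite $n$ uniformly. Put $\kappa:=(\lambda-\mu)/C_0>0$. If $w_{i_0}\ge w_{i_0-1}$ for some $i_0\in I_n$, the quotient inequality at $i_0$ gives $w_{i_0+1}-w_{i_0}\ge(\lambda-\mu)w_{i_0}/\tilde\alpha_{i_0}\ge\kappa w_{i_0}$; in particular $w_{i_0+1}\ge w_{i_0}$, so the same step iterates and $w_{i_0+k}\ge(1+\kappa)^k w_{i_0}$ for all $k\ge0$, whence $\liminf_{k\to\infty}\ln w_{i_0+k}/(i_0+k)\ge\ln(1+\kappa)>0$, contradicting sub--exponentiality. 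In the remaining case $w_{i+1}<w_i$ for every $i\in I_n$, so $w$ is strictly decreasing on its whole domain; feeding $w_{i+1}-w_i<0$ back into the quotient inequality gives $\tilde\beta_i(w_{i-1}-w_i)\ge(\lambda-\mu)w_i$, i.e.\ $w_{i-1}\ge(1+\kappa)w_i$ for all $i\in I_n$, hence the geometric decay $w_{n+k}\le(1+\kappa)^{-k}w_n$ (and the same estimate as $i\to+\infty$ when $n=-\infty$), so $\limsup_{i\to+\infty}\ln w_i/i\le-\ln(1+\kappa)<0$, again a contradiction. Thus $\lambda\le\mu$, which proves $\underline{\lambda_1}(p,n)\le\overline{\lambda_1}(p,n)$.

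The step I expect to require the most care is not the dichotomy itself but the claim that $\tilde\alpha_i,\tilde\beta_i$ are bounded above and away from $0$ uniformly in $i$: this is exactly where the precise list of conditions in Definition \ref{def2.1} is used (in particular both $\ell^\infty$--conditions in the definition of $\mathcal{A}_n$ for finite $n$), and it is also what makes the monotonicity/decay route preferable to the more naive strategy of running a maximum principle on $w$, or on a weighted version $w_i e^{-\delta|i|}$ of it. That strategy is clean when $n=-\infty$, but for finite $n$ the (weighted) supremum of $w$ may be attained at the left endpoint $n$, where the quotient inequality is unavailable, whereas the argument above uses only the sub--exponential growth built into $\mathcal{A}_n$ and so needs no information about where an extremum of $w$ sits.
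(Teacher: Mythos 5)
Your proof is correct, and your derivation of the quotient inequality
\[
\tilde\alpha_i(w_{i+1}-w_i)+\tilde\beta_i(w_{i-1}-w_i)\ \ge\ (\lambda-\mu)\,w_i,\qquad w_i=\phi_i/\psi_i,
\]
is valid: multiplying the $\psi$--inequality by $w_i>0$ and subtracting correctly cancels the zeroth--order coefficient, and the two $\ell^\infty$ conditions in $\mathcal{A}_n$ do give two--sided bounds on $\psi_{i\pm1}/\psi_i$, hence on $\tilde\alpha_i,\tilde\beta_i$. The dichotomy (either $w$ eventually becomes nondecreasing at some $i_0$ and then grows geometrically, or $w$ is strictly decreasing on its whole domain and decays geometrically) is sound in both the finite--$n$ and $n=-\infty$ cases, and each branch contradicts $\ln w_i/i\to0$.

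The route is essentially the same as the paper's, repackaged. The paper argues directly with the two log--increments $\psi_{i+1}/\psi_i$ and $\phi_{i+1}/\phi_i$: its inequality (3.1) is a weighted comparison of these increments, its Claim 1 propagates a forward--difference inequality rightward, and its Claim 2 is precisely your two--sided bound on consecutive ratios. Its two cases (forward increment always small, or large at some $i_0$) are the mirror image of your two cases (after translating to $w=\phi/\psi$: the increments of the individual ratios and the increments of $w$ have the same sign). Two genuine streamlinings in your version are (i) folding the $p\ne0$ case into the argument by writing $L_p$ out explicitly, whereas the paper first treats $p=0$ and then conjugates by $e^{pi}$, and (ii) working with the single scalar sequence $w_i$, which collapses the paper's product telescoping into a straightforward geometric iteration and avoids the separate Claim 1 by building the propagation directly into the dichotomy. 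These buy a tidier presentation at no loss of generality; the core ingredients (boundedness of ratios and sub--exponential growth of admissible test functions) are used in exactly the same way.
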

This proposition and Definition \ref{def2.1} yield the following corollary immediately.
\newtheorem{cor}{Corollary}[section]
\begin{cor}\label{cor2.1}
For any $n\in\{-\infty\}\cup\mathbf{Z}$ and $p\in\mathbf{R}$, if there exist $\lambda\in\mathbf{R}$ and $\phi\in\mathcal{A}_{n}$
such that $(L_{p}\phi)_{i}=\lambda\phi_{i}$, then
$$\lambda=\underline{{\lambda}_{1}}(p,n)=\overline{{\lambda}_{1}}(p,n).$$
\end{cor}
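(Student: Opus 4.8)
The plan is to argue by contradiction. Suppose $\underline{\lambda_1}(p,n)>\overline{\lambda_1}(p,n)$ and fix reals $\mu<\lambda$ with $\overline{\lambda_1}(p,n)<\mu<\lambda<\underline{\lambda_1}(p,n)$. By Definition~\ref{def2.1} there exist $\phi,\psi\in\mathcal{A}_n$ with $(L_p\phi)_i\ge\lambda\phi_i$ and $(L_p\psi)_i\le\mu\psi_i$ for all $i\in I_n$. The engine of the proof is the elementary identity obtained by writing $\psi_i=w_ig_i$ for an arbitrary positive function $g$, with $w:=\psi/g$, and expanding $L_p$:
\begin{equation*}
(L_p\psi)_i=w_i(L_pg)_i+d'_i\,e^{p}g_{i+1}(w_{i+1}-w_i)+d_i\,e^{-p}g_{i-1}(w_{i-1}-w_i).
\end{equation*}
Since $d_i,d'_i>0$, at any index $i_0\in I_n$ at which $w$ has a local minimum (i.e.\ $w_{i_0\pm1}\ge w_{i_0}$) the last two terms are nonnegative, so $(L_p\psi)_{i_0}\ge w_{i_0}(L_pg)_{i_0}$; if moreover $(L_pg)_{i_0}\ge\lambda' g_{i_0}$ for some $\lambda'>\mu$, then $(L_p\psi)_{i_0}\ge\lambda'\psi_{i_0}>\mu\psi_{i_0}$, contradicting the supersolution inequality for $\psi$ at $i_0$. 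Thus everything reduces to producing, after an admissible perturbation of $\phi$, such a minimizing index.

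Taking $g=\phi$ directly is not enough: the ratio $\psi/\phi$ need not attain its infimum, since its logarithm is only $o(|i|)$ and may tend subexponentially to a nonnegative value along a sequence going to infinity. I would therefore regularize. From $\phi\in\mathcal{A}_n$ and the two $\ell^\infty$ conditions in the definition of $\mathcal{A}_n$, the ratios $\phi_{i\pm1}/\phi_i$ are bounded above and bounded away from $0$, uniformly in $i$, say by a constant $M$. Put $\phi^{\epsilon}_i:=\phi_i\,\theta_i$ with $\theta_i=e^{-\epsilon(i-n)}$ when $n\in\mathbf{Z}$ and $\theta_i=e^{-\epsilon\sqrt{1+i^2}}$ when $n=-\infty$; in either case $|\ln(\theta_{i\pm1}/\theta_i)|\le\epsilon$, and a one-line computation using $M$ shows $(L_p\phi^{\epsilon})_i\ge\lambda_{\epsilon}\phi^{\epsilon}_i$ on $I_n$ with $\lambda_{\epsilon}=\lambda-C(D,p,M)(e^{\epsilon}-1)$. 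Fix $\epsilon>0$ small so that $\lambda_{\epsilon}>\mu$. Because $\ln\psi_i-\ln\phi_i=o(|i|)$, the new ratio $w^{\epsilon}_i:=\psi_i/\phi^{\epsilon}_i=(\psi_i/\phi_i)e^{\epsilon(i-n)}$ (respectively with the factor $e^{\epsilon\sqrt{1+i^2}}$) tends to $+\infty$ as $i\to+\infty$ (respectively as $|i|\to\infty$), hence attains its minimum over its closed index set at some $i_0$.

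If $n=-\infty$, then $i_0\in\mathbf{Z}=I_n$ is automatically an interior minimum and the identity above with $g=\phi^{\epsilon}$, $\lambda'=\lambda_{\epsilon}$, closes the argument. If $n\in\mathbf{Z}$, the only remaining possibility is $i_0=n$, the left endpoint, where no equation holds and the maximum-principle step is unavailable — and I expect this endpoint case to be the genuine obstacle: on a half-line a minimizing ratio can legitimately sit on the boundary. I would dispose of it by pushing the minimum strictly into $I_n$, at the cost of a little of the gap $\lambda-\mu$. One way is to replace $\theta$ by $e^{-\epsilon(i-n)}\tau_i$, where $\tau$ is a slowly increasing ``ramp'' with $\tau_i\equiv1$ for large $i$ and $\tau_n$ extremely small, slow enough that the extra loss in $\lambda_{\epsilon}$ stays below $\lambda-\mu$, yet with $\tau_n$ small enough that $w^{\epsilon}_n>w^{\epsilon}_{n+1}$, which forces $i_0\ge n+1$. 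An alternative is to exhaust $I_n$ by boxes $\{n+1,\dots,N\}$, let $\Lambda_N$ be the Perron principal eigenvalue of the associated tridiagonal (Dirichlet) matrix — the supersolution $\psi$ gives $\Lambda_N\le\mu$ immediately from the variational characterization $\Lambda_N=\min\{\sigma:\exists\varphi>0,\ M_N\varphi\le\sigma\varphi\}$ — and show, using the sublinear growth of $\phi$ together with a boundary ramp, that $\sup_N\Lambda_N\ge\lambda$, again a contradiction. Along the way one checks the routine points: that $\phi^{\epsilon}$ and the ramped variants still have bounded multiplicative increments so that the identity and the estimate apply, that $M$ and $C(D,p,M)$ are uniform in $i$, and, for the box route, that $\Lambda_N$ is nondecreasing in $N$ by domain monotonicity.
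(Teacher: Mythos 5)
You are not, in fact, proving Corollary~\ref{cor2.1}; you are re-proving Proposition~\ref{prop2.1}. The corollary is immediate once that proposition is available: since $(L_p\phi)_i=\lambda\phi_i$ is simultaneously a ``$\ge$'' and a ``$\le$'' inequality, the single admissible function $\phi\in\mathcal{A}_n$ serves as a test function in both halves of Definition~\ref{def2.1}, which gives $\underline{\lambda_1}(p,n)\ge\lambda$ and $\overline{\lambda_1}(p,n)\le\lambda$; Proposition~\ref{prop2.1} then pinches $\lambda\le\underline{\lambda_1}(p,n)\le\overline{\lambda_1}(p,n)\le\lambda$ into a chain of equalities. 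That is the whole proof, and the paper treats it that way. A useful self-check is that your sketch never invokes the hypothesis that an \emph{exact} eigenfunction exists -- that hypothesis is precisely what makes this corollary a two-line consequence rather than a theorem.

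Read instead as a new proof of Proposition~\ref{prop2.1}, your argument is genuinely different from the paper's, and the boundary difficulty you flag for $n\in\mathbf{Z}$ is a real gap, not an afterthought. You locate an interior minimizer of a tilted ratio $\psi/\phi^{\epsilon}$ and apply a discrete maximum-principle identity there; the paper never looks for a minimizer. Starting from the pointwise inequality
$d'_i\bigl(\tfrac{\psi_{i+1}}{\psi_i}-\tfrac{\phi_{i+1}}{\phi_i}\bigr)+d_i\bigl(\tfrac{\psi_{i-1}}{\psi_i}-\tfrac{\phi_{i-1}}{\phi_i}\bigr)\ge 2\varepsilon$,
it establishes a dichotomy (one-sided once propagated) and then \emph{iterates} the resulting ratio bound along $I_n$, obtaining $\tfrac{\psi_0}{\psi_k}\ge\tfrac{\phi_0}{\phi_k}(1+\varepsilon\delta/D)^k$ or its forward analogue from some $i_0$, which contradicts $\ln\psi_i,\ln\phi_i=o(i)$ after dividing by $k$ and letting $k\to\infty$. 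Because that telescoping product always \emph{begins or ends} at the boundary index, the iteration never needs the equation there, so the endpoint problem you are wrestling with simply does not arise. Your ramp is probably salvageable but the constant-tracking is tight: the ramp loss plus the tilt loss must stay below $\lambda-\mu$, while $w^{\epsilon,\tau}_n=w^{\epsilon}_n\,e^{\delta K}$ must exceed $\min_{i\ge n+K}w^{\epsilon}_i\sim e^{(\epsilon+o(1))K}$, which forces $\delta$ slightly above $\epsilon$ and then a careful choice of $K$ tied to the unknown $o(i)$ tail of $\ln(\psi/\phi)$. The Dirichlet-box alternative hits the same asymmetry: truncation drops nonnegative boundary terms from $L_p\psi$, so $\Lambda_N\le\mu$ is indeed free, but it subtracts positive boundary terms from $L_p\phi$ and spoils the inequality there, and recovering $\sup_N\Lambda_N\ge\lambda$ would again require the ramp.
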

Let us define
\begin{equation}\label{2.2}
\overline{H}(p):=\displaystyle{\lim_{n\rightarrow +\infty}\overline{{\lambda}_{1}}(p,n)},\ \text{and}\ \ \underline{H}(p):=\displaystyle{\lim_{n\rightarrow +\infty}\underline{{\lambda}_{1}}(p,n)},\ \forall \ p\in\mathbf{R}.
\end{equation}

$\overline{H}(p)$ and $\underline{H}(p)$ are well defined by Proposition \ref{prop2.1} and the monotonicity of $\underline{{\lambda}_{1}}(p,n)$ and $\overline{{\lambda}_{1}}(p,n)$. Moreover, we have the following proposition:

\begin{prop}\label{prop2.2}
The functions $\overline{H}$ and $\underline{H}$ are locally Lipschitz continuous. Moreover, there exist constants ${\varepsilon}_{0}, {a}_{0}>0$, and ${a}_{1}<2$ such that
$${\varepsilon}_{0}<\underline{H}(p)\leq\overline{H}(p)\leq{a}_{0}(e^{p}+{e}^{-p}-{a}_{1})\ \forall p\in\mathbf{R},\ \text{and}\
\displaystyle{\lim_{p\rightarrow{0}^{+}}}\frac{\underline{H}(\pm p)}{p}=+\infty.$$
\end{prop}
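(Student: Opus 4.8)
The plan is to exploit the explicit form of the conjugated operator. Unwinding the definitions gives
\[
(L_p\phi)_i = d_i'\,e^{p}\phi_{i+1} + d_i\,e^{-p}\phi_{i-1} - (d_i'+d_i)\phi_i + f'_s(i,0)\phi_i ,
\]
so that for every positive $\phi$ (in particular every $\phi\in\mathcal A_n$),
\[
\frac{(L_p\phi)_i}{\phi_i}=a_i e^{p}+b_i e^{-p}+c_i,\qquad a_i:=d_i'\tfrac{\phi_{i+1}}{\phi_i}\ge 0,\ \ b_i:=d_i\tfrac{\phi_{i-1}}{\phi_i}\ge 0,\ \ c_i:=f'_s(i,0)-d_i'-d_i .
\]
Here $M:=\sup_i f'_s(i,0)$ is finite by the uniform $\mathcal C^{1+\gamma}$ bound and strictly positive, since for fixed $s\in(0,1)$ one has $f'_s(i,0)\ge f(i,s)/s\ge(\inf_j f(j,s))/s>0$; also $|c_i|\le 2D+M$. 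Every estimate below uses only this representation and the signs $a_i,b_i\ge0$.

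For the two-sided sandwich I would test with the constant function $\mathbf 1$, which lies in every $\mathcal A_n$. Since $(L_p\mathbf 1)_i\le D(e^{p}+e^{-p})-2\underline D+M$, the definition of $\overline{\lambda_1}$ yields $\overline{\lambda_1}(p,n)\le D(e^{p}+e^{-p})-2\underline D+M$ for all $n$, hence $\overline H(p)\le a_0(e^{p}+e^{-p}-a_1)$ with $a_0:=D>0$ and $a_1:=(2\underline D-M)/D$; moreover $a_1<2$ because $-M<0\le 2(D-\underline D)$. For the lower bound I would rewrite the same quantity as
\[
(L_p\mathbf 1)_i=\bigl(\sqrt{d_i'e^{p}}-\sqrt{d_ie^{-p}}\bigr)^{2}+\bigl(f'_s(i,0)-(\sqrt{d_i'}-\sqrt{d_i})^{2}\bigr),
\]
where the first term is $\ge0$ and, by hypothesis~\eqref{2.1}, the second is $\ge\varepsilon_1>0$ for all large $i$. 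Fixing $n_0$ so this holds on $I_{n_0}$ gives $\underline{\lambda_1}(p,n)\ge\varepsilon_1$ for every $p$ and every $n\ge n_0$, hence $\underline H(p)\ge\varepsilon_1>\varepsilon_0:=\varepsilon_1/2$. The inequality $\underline H\le\overline H$ is Proposition~\ref{prop2.1} passed to the limit in~\eqref{2.2}, and $\underline H(\pm p)/p\to+\infty$ as $p\to0^{+}$ is then immediate, since $\underline H(\pm p)\ge\varepsilon_1$ forces the quotient to dominate $\varepsilon_1/p$.

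The substantive part is local Lipschitz continuity; I would fix $P>0$, prove it on $[-P,P]$ with an $n$-independent constant, and pass to the limit. Put $\Lambda_P:=D(e^{P}+e^{-P})-2\underline D+M$, which bounds $\overline{\lambda_1}(\cdot,n)$ and $\underline{\lambda_1}(\cdot,n)$ there. For $\overline{\lambda_1}$ the key point is that a test function $\phi$ nearly optimal in the \emph{infimum} defining $\overline{\lambda_1}(p,n)$ automatically satisfies a \emph{two-sided} bound $-2D\le a_ie^{p}+b_ie^{-p}+c_i\le\overline{\lambda_1}(p,n)+\delta$, which forces $a_i,b_i\le(\Lambda_P+1+2D)e^{P}=:K_P$ for all $i$; consequently $\bigl|\,a_ie^{q}+b_ie^{-q}-a_ie^{p}-b_ie^{-p}\,\bigr|\le 2K_P e^{P}|p-q|$ on $[-P,P]$, so $\overline{\lambda_1}(q,n)\le\overline{\lambda_1}(p,n)+2K_Pe^{P}|p-q|$, and by symmetry $\overline{\lambda_1}(\cdot,n)$ is Lipschitz with an $n$-free constant; since $\overline H=\inf_n\overline{\lambda_1}(\cdot,n)$, so is $\overline H$.

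The step I expect to be the main obstacle is the same statement for $\underline{\lambda_1}$: a near-optimal test function for the \emph{supremum} satisfies only the one-sided inequality $a_ie^{p}+b_ie^{-p}+c_i\ge\underline{\lambda_1}(p,n)-\delta$, so $a_i,b_i$ are \emph{not} controlled and the argument above fails when $q>p$. For $q\le p$ there is no difficulty: $a_ie^{q}+b_ie^{-q}\ge e^{-(p-q)}(a_ie^{p}+b_ie^{-p})$ together with $|c_i|\le 2D+M$ gives directly $\underline{\lambda_1}(q,n)\ge\underline{\lambda_1}(p,n)-(\Lambda_P+2D+M)(p-q)$. For $q>p$ I would use two lower bounds at once: the perturbative one $a_ie^{q}+b_ie^{-q}+c_i\ge(a_ie^{p}+b_ie^{-p}+c_i)-(q-p)\,b_ie^{-p}$ (from $e^{-(q-p)}\ge1-(q-p)$) and the crude one $a_ie^{q}+b_ie^{-q}+c_i\ge e^{-(q-p)}\,b_ie^{-p}-(2D+M)$ (from $a_ie^{q}\ge0$). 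As functions of the unknown $y:=b_ie^{-p}\ge0$ the first decreases and the second increases, so taking their maximum and minimizing over $y\ge0$, and using $e^{-t}+t\ge1$, shows that in fact $a_ie^{q}+b_ie^{-q}+c_i\ge\underline{\lambda_1}(p,n)-\delta-(\Lambda_P+2D+M)(q-p)$ for \emph{every} $i$, whatever the size of $b_i$. Combining the two cases, $\underline{\lambda_1}(\cdot,n)$ is $(\Lambda_P+2D+M)$-Lipschitz on $[-P,P]$ uniformly in $n\ge n_0$, so $\underline H=\sup_{n\ge n_0}\underline{\lambda_1}(\cdot,n)$ is locally Lipschitz. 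Apart from this balancing of the two one-sided bounds for $\underline{\lambda_1}$, all of the steps are elementary bookkeeping with $a_i,b_i\ge0$ and the convexity of $e^{\pm p}$.
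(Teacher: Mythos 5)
Your proposal is correct. The sandwich estimate and the limit as $p\to0^{+}$ are obtained essentially as in the paper: both test with $\phi\equiv\mathbf 1$, and both arrive at the lower bound via the identity $h_i(p)=d_i'e^p+d_ie^{-p}-d_i'-d_i+c_i=(\sqrt{d_i'e^p}-\sqrt{d_ie^{-p}})^2+(c_i-(\sqrt{d_i'}-\sqrt{d_i})^2)$; the paper reaches the same minimum value $c_i-(\sqrt{d_i'}-\sqrt{d_i})^2$ by differentiating $h_i$, whereas you read it off by completing the square, but these are the same computation.

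Where you genuinely diverge from the paper is the proof that $\underline{\lambda_1}(\cdot,n)$ and $\overline{\lambda_1}(\cdot,n)$ are Lipschitz uniformly in $n$. The paper's route goes through the multiplicative Young inequality of Lemma~\ref{lem3.1} (the inequality $\alpha\,(A\Phi)_i/\Phi_i+\beta\,(A\Psi)_i/\Psi_i\ge (A\Phi^\alpha\Psi^\beta)_i/(\Phi_i^\alpha\Psi_i^\beta)$), from which one interpolates a given test function against an explicit exponential $e^{(p_1-1)\cdot}$ or $e^{(p_2+1)\cdot}$ to transfer an almost-maximizer at $p_1$ to a comparison at $p_2$; this is the same convexity machinery that also yields Lemma~\ref{lem3.2}. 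You instead perturb the test function directly: for $\overline{\lambda_1}$ you observe that a near-minimizer automatically satisfies the two-sided bound $0\le a_ie^p+b_ie^{-p}\le\Lambda_P+1+2D$, which controls $a_i,b_i$ and lets you propagate the inequality from $p$ to $q$ by a one-line estimate; for $\underline{\lambda_1}$, where a near-maximizer gives only a one-sided bound and $a_i,b_i$ may be arbitrarily large, you split into $q\le p$ (multiply the constraint by $e^{-(p-q)}$ and absorb the error into $|c_i|$) and $q>p$ (balance the linearization $\lambda-ty$ against the crude bound $e^{-t}y-(2D+M)$ and minimize over $y\ge0$, exploiting $e^{-t}+t\ge1$). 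This is more elementary — no Young/log-convexity lemma is needed, only $a_i,b_i\ge0$, an $n$-uniform upper bound $\Lambda_P$, and the convexity of $e^{\pm p}$ — and it makes the mechanism of the $n$-uniformity transparent. What the paper's convexity route buys in exchange is the statement of Lemma~\ref{lem3.2} (convexity of $\overline{\lambda_1}$ in $p$), which the paper reuses later (e.g.\ in the proof of Theorem~\ref{thm6.1}); your argument delivers Lipschitz continuity but not convexity. As a self-contained proof of Proposition~\ref{prop2.2}, your version is complete and correct.
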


Now, as in \cite{B2} we can define the speeds $\underline{\omega}$ and $\overline{\omega}$:
\begin{equation}\label{2.3}
\underline{\omega}:=\min \limits_{p>0}\frac{\underline{H}(-p)}{p},\  \text{and}\
\overline{\omega}:=\min \limits_{p>0}\frac{\overline{H}(-p)}{p}.
\end{equation}
The main result of this paper is as follows:
\newtheorem{thm}{Theorem}[section]
\begin{thm}\label{thm2.1}
Let $u(t,i)$ be a solution of \eqref{1.1}. Then\\
1) For any $\omega>\overline{\omega}$, $\displaystyle{\lim_{t\rightarrow+\infty}}\sup \limits_{i\geq\omega t}|u(t,i)|=0$;\\
2) For any $0\leq\omega<\underline{\omega}$, $\displaystyle{\lim_{t\rightarrow+\infty}}\sup \limits_{0\leq i\leq\omega t}|u(t,i)-1|=0.$
\end{thm}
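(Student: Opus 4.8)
The plan is to prove the two assertions of Theorem \ref{thm2.1} by the standard sub/super-solution strategy adapted to the lattice setting, using the generalized principal eigenvalues as the bridge between the linearized operator and exponential barriers. For part 1), the upper estimate, I would first reduce to a linear problem: since $f(i,s)\le f_s'(i,0)s$, any solution $u$ of \eqref{1.1} is a subsolution of the linear system $\dot v_i = (\mathcal L v)_i$. So it suffices to bound solutions of the linear lattice system. Fix $\omega>\overline\omega$. By \eqref{2.3} and the definition \eqref{2.2} of $\overline H$, choose $p>0$ and $n$ large with $\overline{\lambda_1}(-p,n)<\omega p$ (using that $\overline H(p)=\lim_n\overline{\lambda_1}(p,n)$ and that $\overline{\lambda_1}(-p,n)$ decreases to $\overline H(-p)$, together with $\omega p>\overline H(-p)\ge$ something strictly less). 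Then there is a test function $\phi\in\mathcal A_n$ with $(L_{-p}\phi)_i\le\lambda\phi_i$ on $I_n$ for some $\lambda<\omega p$; equivalently $\psi_i:=e^{-pi}\phi_i$ satisfies $(\mathcal L\psi)_i\le\lambda\psi_i$. The function $\bar v_i(t):=Ce^{\lambda t}\psi_i$ is then a supersolution of the linear system on $\{i>n\}$ for any $C>0$. On the boundary region $i\le n$ and at $t=0$ one uses that $u$ stays bounded (by $1$) and, crucially, that $\psi_i=e^{-pi}\phi_i$ has a positive lower bound on compact $i$-ranges while $u_i(0)$ has finite support, so $C$ can be chosen to dominate initially; the condition $\lim_i \ln\phi_i/i=0$ guarantees $\psi_i$ decays essentially like $e^{-pi}$, so on $i\ge\omega t$ we get $\bar v_i(t)\le Ce^{\lambda t}e^{-p\omega t}e^{o(i)}\to 0$ since $\lambda-p\omega<0$. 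A comparison principle for the cooperative linear lattice system finishes part 1). The only subtlety is handling the left half-line $i\le n$ where the test function lives only on $I_n$; here one either works with a two-sided construction or, since the spreading to $+\infty$ is a one-sided statement, glues a crude global bound for $i\le n$ with the sharp bound for $i>n$.

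For part 2), the lower estimate, fix $0\le\omega<\underline\omega$; I would show that $\liminf_{t\to\infty}\inf_{0\le i\le\omega t}u_i(t)\ge 1$, which combined with $u\le 1$ gives the claim. The heart is to build a compactly supported, moving subsolution. Using $\underline\omega=\min_{p>0}\underline H(-p)/p>\omega$ and the homogenization/Lipschitz properties in Proposition \ref{prop2.2}, together with the fact that $\underline H(p)=\lim_n\underline{\lambda_1}(p,n)$ is approximated by $\underline{\lambda_1}(p,n)$ on finite intervals, I would produce on a large finite box $[n,N]\cap\mathbf Z$ a positive principal eigenfunction-type object $\varphi$ for $L_{-p}$ with Dirichlet-type boundary conditions whose eigenvalue $\lambda$ still satisfies $\lambda/p>\omega$ (this is where the discrete Harnack estimates and the homogenization lemmas advertised in the introduction are invoked — they control the dependence of the finite-interval principal eigenvalue on the box size). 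Then $\underline u_i(t):=\delta e^{-p(i-ct)}\varphi_{i-[ct]}$ (suitably interpreted, with $c$ slightly less than $\lambda/p$ but still $>\omega$, and $\delta$ small) is, after using the KPP bound $f(i,s)\ge f_s'(i,0)s - o(s)$ near $s=0$ and $\inf_i f(i,s)>0$ away from $0$, a genuine subsolution of \eqref{1.1} that is supported in a moving window of width $N-n$ traveling with speed $c>\omega$ and whose height stays bounded below on its support. A comparison argument, plus iteration of the window in time (re-centering the bump each time it has moved one box-length, which is legitimate by the spatial structure and the uniform-in-$i$ hypotheses on $d_i,d_i',f$), shows the solution is bounded below by a fixed $\eta>0$ on $\{0\le i\le\omega t\}$ for large $t$. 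Finally a standard ``lifting'' argument using the stable steady state $1$ and the fact that $1$ is globally attracting for the spatially homogeneous-type dynamics from below (more precisely, that any solution bounded below by $\eta>0$ on an expanding interval converges to $1$ there — proved by comparison with the solution of the ODE-lattice without the moving constraint, using $f>0$ on $(0,1)$) upgrades $\eta$ to $1$.

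I expect the main obstacle to be part 2), specifically the construction of the finite-interval principal eigenfunction with an eigenvalue that does not lose too much compared to $\underline H(-p)$, uniformly enough to make the moving-bump subsolution work over long times. In the continuous PDE case \cite{B2} this rests on elliptic Harnack inequalities and homogenization of the linearized operator; in the lattice case these tools must be the new discrete Harnack-type inequalities mentioned in the introduction, and verifying that the finite-box eigenvalue converges to $\underline{\lambda_1}(-p,n)$ (and thence, as $n\to\infty$, to $\underline H(-p)$) with controlled rate is the delicate point. A secondary difficulty is the re-centering/iteration step: one must check that the uniform lower bounds on $d_i$ and $\inf_i f(i,s)$ really do make the lattice comparison principle and the window-advancing argument go through without the accumulated loss over infinitely many steps, which is why the structural hypothesis \eqref{2.1} and the uniform $\mathcal C^{1+\gamma}$ bound on $f$ are needed. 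The comparison principle itself for the cooperative lattice system \eqref{1.1} is routine (the right-hand side is quasimonotone), so I would state it once and reuse it throughout.
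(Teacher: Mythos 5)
Your part 1) argument is essentially the paper's: choose $p>0$, $n$ with $\overline{\lambda_1}(-p,n)<\omega p$, take $\phi\in\mathcal A_n$ as a test function, build the supersolution $\psi_i(t)=\phi_i e^{-pi+(\omega p-\delta)t}$, truncate at $1$ on $\{i<n\}$, verify the one-sided differential inequality case by case, and compare. That matches the paper's proof and is sound.

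Part 2) is where your route diverges from the paper's and where a genuine gap appears. You propose a moving-bump subsolution supported in a finite box $[n,N]$, built from a Dirichlet-type principal eigenfunction of $L_{-p}$, and you iterate by re-centering the box as it travels. The difficulty is that the generalized principal eigenvalue $\underline{\lambda_1}(-p,n)$ is defined via test functions on the \emph{half-line} $I_n$, not on finite boxes, and there is no reason in general heterogeneous media for the Dirichlet principal eigenvalue on a box $[j,j+N]$ to be bounded below by anything close to $\underline{\lambda_1}(-p,n)$ uniformly as $j\to+\infty$. The half-line test function $\phi$ witnessing $\underline{\lambda_1}(-p,n)$ is free to ``shift its mass'' to favorable regions; a translated box is not. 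In particular, the re-centering step tacitly requires uniformity of the finite-box eigenvalue across all translates of the box, which is essentially a periodicity-type hypothesis and is exactly what the general-heterogeneity framework lacks. This is not a technical delicacy that Harnack fixes; it is the structural obstruction that motivated the Berestycki--Nadin (and this paper's) alternative.

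The paper's actual proof of part 2) takes a different route: it rescales, setting $v_\varepsilon(t,x)=u(t/\varepsilon,[x/\varepsilon])$ and $z_\varepsilon=\varepsilon\ln v_\varepsilon$, proves local bounds on $z_\varepsilon$ via the discrete Harnack inequality (Theorems~\ref{thm4.1}--\ref{thm4.2}), shows the lower relaxed limit $z_*$ is a viscosity supersolution of a Hamilton--Jacobi equation with Hamiltonian $\underline H$ (Lemma~\ref{lem4.4}), and from the Hopf--Lax-type bound $z_*(t,x)\geq\min\{-t\underline H^*(-x/t),0\}$ (Lemma~\ref{lem4.5}) deduces $\liminf_{t\to\infty}\widetilde u(t,\omega t)>0$. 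The passage to $\inf_{0\le i\le\omega t}u$ then uses a whole-line generalized principal eigenvalue $\lambda_1'$ with a bounded test function (not a finite-box eigenfunction) and the maximum principle of Lemma~\ref{lem4.1}; the final lift to $1$ is done by a diagonal-extraction compactness argument on translates of the equation, using $\inf_i f(i,s)>0$ for $s\in(0,1)$, rather than the comparison-with-ODE lifting you sketch. So the Harnack estimates are used here to control $z_\varepsilon$ for the viscosity framework, not to stabilize a finite-box eigenvalue. If you want to salvage your approach, you would at minimum need to prove a uniform (in the box location) lower bound on the finite-box Dirichlet eigenvalue in terms of $\underline H$, and that is false in general; the homogenization route avoids ever having to prove it.
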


\section{Proof of propositions in section 2}

In this section, the proof of Propositions \ref{prop2.1} and \ref{prop2.2} will be given. We will also
provide another proposition about the generalized principal eigenvalues and then give its proof. 
In fact, we consider the operator $\mathcal{L}$ in the general form:
$$(\mathcal{L}\phi)_{i}:=d^{\prime}_{i}(\phi_{i+1}-\phi_{i})+d_{i}(\phi_{i-1}-\phi_{i})+{c}_{i}{\phi}_{i},$$
i.e., we replace $f_{s}^{\prime}(i,0)$ by
$c={\{c_{i}\}}_{-\infty}^{\infty}\in{\ell}^{\infty}(\mathbf{Z})$. We denote $C:=\sup\limits_{i} c_{i}$.

\begin{proof}[Proof of Proposition \ref{prop2.1}]
We only give the proof in case there $n\in\mathbf{Z}$.
The proof in the case where $n=-\infty$ is similar. The proof of the former case includes two steps.\\
Step 1: $p=0$.
We may assume that $n=0$ without loss of generality by translation. Assume by contradiction that
$\underline{{\lambda}_{1}}(0,0)>\overline{{\lambda}_{1}}(0,0)$. Then there exist $\lambda\in\mathbf{R}$,
$\varepsilon>0$ and $\phi,\psi\in{\mathcal{A}}_{0}$ such that
$\underline{{\lambda}_{1}}(0,0)>\lambda>\lambda-2\varepsilon>\overline{{\lambda}_{1}}(0,0)$
and
$$\left\{
   \begin{aligned}
d^{\prime}_{i}\frac{{\phi}_{i+1}}{{\phi}_{i}}+{d}_{i}\frac{{\phi}_{i-1}}{{\phi}_{i}}
     -(d^{\prime}_{i}+{d}_{i})+{c}_{i}\leq\lambda-2\varepsilon,\ \ \forall i\in{I}_{0},\\
d^{\prime}_{i}\frac{{\psi}_{i+1}}{{\psi}_{i}}+{d}_{i}\frac{{\psi}_{i-1}}{{\psi}_{i}}
     -(d^{\prime}_{i}+{d}_{i})+{c}_{i}\geq\lambda,\ \ \forall i\in{I}_{0}.\\
   \end{aligned}\
   \right.$$
This yields
\begin{equation}\label{3.1}
d^{\prime}_{i}(\frac{{\psi}_{i+1}}{{\psi}_{i}}-\frac{{\phi}_{i+1}}{{\phi}_{i}})+
{d}_{i}(\frac{{\psi}_{i-1}}{{\psi}_{i}}-\frac{{\phi}_{i-1}}{{\phi}_{i}})\geq 2\varepsilon,\ \forall i\in{I}_{0}.
\end{equation}
We have the following two claims:\\
Claim 1. If there exists ${i}_{0}\in{I}_{0}$ such that
$d^{\prime}_{{i}_{0}}(\frac{{\psi}_{{i}_{0}+1}}{{\psi}_{{i}_{0}}}-\frac{{\phi}_{{i}_{0}+1}}{{\phi}_{{i}_{0}}})
\geq\varepsilon$,
then $d^{\prime}_{i}(\frac{{\psi}_{i+1}}{{\psi}_{i}}-\frac{{\phi}_{i+1}}{{\phi}_{i}})>\varepsilon$ for any
$i>{i}_{0}$.\\
Proof of Claim 1: We only need to show
$d^{\prime}_{{i}_{0}+1}(\frac{{\psi}_{{i}_{0}+2}}{{\psi}_{{i}_{0}+1}}-\frac{{\phi}_{{i}_{0}+2}}{{\phi}_{{i}_{0}+1}})
>\varepsilon$. If not, by \eqref{3.1}, we have
${d}_{{i}_{0}+1}(\frac{{\psi}_{{i}_{0}}}{{\psi}_{{i}_{0}+1}}-\frac{{\phi}_{{i}_{0}}}{{\phi}_{{i}_{0}+1}})
\geq\varepsilon$. Hence
$$1=\frac{{\psi}_{{i}_{0}+1}}{{\psi}_{{i}_{0}}}\cdot\frac{{\psi}_{{i}_{0}}}{{\psi}_{{i}_{0}+1}}
\geq(\frac{{\phi}_{{i}_{0}+1}}{{\phi}_{{i}_{0}}}+\frac{\varepsilon}{d^{\prime}_{{i}_{0}}})\cdot
(\frac{{\phi}_{{i}_{0}}}{{\phi}_{{i}_{0}+1}}+\frac{\varepsilon}{d_{{i}_{0}+1}})
>1+\frac{{\varepsilon}^{2}}{d^{\prime}_{{i}_{0}}d_{{i}_{0}+1}}.$$
This is impossible so that the claim is valid.\\
Claim 2. There exists $\delta>0$ such that $\delta<\frac{{\phi}_{i+1}}{{\phi}_{i}}<\frac{1}{\delta}$
for any $i\in{I}_{0}$.\\
Proof of Claim 2: We only need to show that there exists $\delta>0$ such that
$\liminf \limits_{i\to \infty}\frac{{\phi}_{i\pm 1}}{{\phi}_{i}}\geq\delta$.
If not, then for any $k$, there exists ${i}_{k}$ such that
$\frac{{\phi}_{i_{k}\pm 1}}{{\phi}_{i_{k}}}<\frac{1}{k+1}$, which means that
$\frac{{\phi}_{{i}_{k}}-{\phi}_{{i}_{k}\pm 1}}{{\phi}_{{i}_{k}\pm 1}}\geq k$, and this contradicts
$\phi\in{\mathcal{A}}_{0}$. Hence the claim holds.

Now we turn to the proof of the proposition. From Claim 1 there are two cases we need to consider.\\
Case 1: $d^{\prime}_{i}(\frac{{\psi}_{i+1}}{{\psi}_{i}}-\frac{{\phi}_{i+1}}{{\phi}_{i}})<\varepsilon$
for any $i\in{I}_{0}$. Then
${d}_{i}(\frac{{\psi}_{i-1}}{{\psi}_{i}}-\frac{{\phi}_{i-1}}{{\phi}_{i}})>\varepsilon$ for any
$i\in{I}_{0}$ since \eqref{3.1},
i.e., $\frac{{\psi}_{i-1}}{{\psi}_{i}}>\frac{{\phi}_{i-1}}{{\phi}_{i}}+\frac{\varepsilon}{{d}_{i}}$.
Then by Claim 2, we get
$$\frac{{\psi}_{0}}{{\psi}_{k}}>\prod_{i=1}^{k}(\frac{{\phi}_{i-1}}{{\phi}_{i}}+\frac{\varepsilon}{{d}_{i}})
=\frac{{\phi}_{0}}{{\phi}_{k}}\prod_{i=1}^{k}(1+\frac{{\phi}_{i}}{{\phi}_{i-1}}\cdot\frac{\varepsilon}{{d}_{i}})
\geq\frac{{\phi}_{0}}{{\phi}_{k}}{(1+\frac{\varepsilon\delta}{D})}^{k},$$
which yields
\begin{equation}\label{3.2}
\frac{\ln{\psi}_{0}}{k}-\frac{\ln{\psi}_{k}}{k}>
\frac{\ln{\phi}_{0}}{k}-\frac{\ln{\phi}_{k}}{k}+\ln(1+\frac{\varepsilon\delta}{D}).
\end{equation}
Noting that $\phi,\psi\in{\mathcal{A}}_{0}$, and setting $k\rightarrow\infty$ in \eqref{3.2},
we have $0\geq\ln(1+\frac{\varepsilon\delta}{D})>0$, which is a contradiction!\\
Case 2: There exists ${i}_{0}\in{I}_{0}$ such that
$d^{\prime}_{i_{0}}(\frac{{\psi}_{i_{0}+1}}{{\psi}_{i_{0}}}-\frac{{\phi}_{i_{0}+1}}{{\phi}_{i_{0}}})\geq\varepsilon$.
Then
$\frac{{\psi}_{i+1}}{{\psi}_{i}}\geq\frac{{\phi}_{i+1}}{{\phi}_{i}}+\frac{\varepsilon}{d^{\prime}_{i}}$
for any $i>{i}_{0}$ by Claim 1. Combining this with Claim 2, we have
$$\frac{{\psi}_{k+1}}{{\psi}_{{i}_{0}}}\geq\prod_{i={i}_{0}}^{k}
  (\frac{{\phi}_{i+1}}{{\phi}_{i}}+\frac{\varepsilon}{d^{\prime}_{i}})
  =\frac{{\phi}_{k+1}}{{\phi}_{{i}_{0}}}\prod_{i={i}_{0}}^{k}(1+\frac{{\phi}_{i+1}}{{\phi}_{i}}
  \cdot\frac{\varepsilon}{d^{\prime}_{i}})\geq
  \frac{{\phi}_{k+1}}{{\phi}_{{i}_{0}}}{(1+\frac{\varepsilon\delta}{D})}^{k-{i}_{0}+1}.$$
Then
\begin{equation}\label{3.3}
\frac{\ln{\psi}_{k+1}}{k+1}-\frac{\ln{\psi}_{{i}_{0}}}{k+1}>
 \frac{\ln{\phi}_{k+1}}{k+1}-\frac{\ln{\phi}_{{i}_{0}}}{k+1}+
 \frac{k-{i}_{0}+1}{k+1}\ln(1+\frac{\varepsilon\delta}{D}).
\end{equation}
Taking $k\rightarrow\infty$ in \eqref{3.3}, we have $0\geq\ln(1+\frac{\varepsilon\delta}{D})>0$,
which is a contradiction.
Thus it must be $\underline{{\lambda}_{1}}(0,0)\leq\overline{{\lambda}_{1}}(0,0).$\\
Step 2: $p\neq0$.
Setting ${\phi}_{i}^{(p)}\in X_{n}$ with ${\phi}_{i}^{(p)}={e}^{pi}{\phi}_{i}$,
we note that $\phi\in{\mathcal{A}}_{0}$. Then
$\displaystyle{\lim_{i\rightarrow \infty}}\frac{\ln{{\phi}_{i}^{(p)}}}{i}=p,
{\Big\{\frac{{\phi}_{i+1}^{(p)}-{\phi}_{i}^{(p)}}{{\phi}_{i}^{(p)}}\Big\}}_{i=0}^{\infty}\in{\ell}^{\infty},
{\Big\{\frac{{\phi}_{i+1}^{(p)}-{\phi}_{i}^{(p)}}{{\phi}_{i+1}^{(p)}}\Big\}}_{i=0}^{\infty}\in{\ell}^{\infty}$.
For $\psi$, we have the same conclusion. By the same argument as in Step 1, we still have \eqref{3.2} and
\eqref{3.3} by replacing $\phi,\psi$ with ${\phi}^{(p)},{\psi}^{(p)}$.
Then taking $k\rightarrow\infty$, we still obtain contradictions.
Thus the conclusion that $\underline{{\lambda}_{1}}(p,0)\leq\overline{{\lambda}_{1}}(p,0)$ is proved.
\end{proof}


Next we will prove that $\overline{H}(p)$ and $\underline{H}(p)$ are locally Lipschitz continuous by showing
that $\underline{{\lambda}_{1}}(p,n)$ and $\overline{{\lambda}_{1}}(p,n)$ are locally Lipschitz continuous with
respect to $p$ uniformly in $n$.

\begin{lemma}\label{lem3.1}
Let $\alpha,\beta\in(0,1)$ with $\alpha+\beta=1$ and $\Phi,\Psi\in X_{n}, n\in\{-\infty\}\cup\mathbf{Z}$, with ${\Phi}_{i},{\Psi}_{i}>0$ for any $i\in I_{n-1}$. Then
 \begin{equation}\label{l3.1.1}
  \alpha\frac{(A\Phi)_{i}}{{\Phi}_{i}}+\beta\frac{(A\Psi)_{i}}{{\Psi}_{i}}
  \geq\frac{(A{\Phi}^{\alpha}{\Psi}^{\beta})_{i}}{{\Phi}_{i}^{\alpha}{\Psi}_{i}^{\beta}},\ \forall i\in{I}_{0}.
 \end{equation}
\end{lemma}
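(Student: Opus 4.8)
The plan is to reduce the inequality \eqref{l3.1.1} to a pointwise convexity statement. Recall that $(A\Phi)_i = d'_i(\Phi_{i+1}-\Phi_i) + d_i(\Phi_{i-1}-\Phi_i)$, so dividing by $\Phi_i$ gives
$$\frac{(A\Phi)_i}{\Phi_i} = d'_i\Big(\frac{\Phi_{i+1}}{\Phi_i}-1\Big) + d_i\Big(\frac{\Phi_{i-1}}{\Phi_i}-1\Big),$$
and similarly for $\Psi$. For the combination $\Theta := \Phi^\alpha\Psi^\beta$ we have $\Theta_{i+1}/\Theta_i = (\Phi_{i+1}/\Phi_i)^\alpha(\Psi_{i+1}/\Psi_i)^\beta$ and likewise for the $i-1$ term. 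So after cancelling the constant terms $-(d'_i+d_i)$ that appear with the same coefficient on both sides (since $\alpha+\beta=1$), the inequality \eqref{l3.1.1} is equivalent to the two separate inequalities
$$\alpha\frac{\Phi_{i+1}}{\Phi_i} + \beta\frac{\Psi_{i+1}}{\Psi_i} \geq \Big(\frac{\Phi_{i+1}}{\Phi_i}\Big)^\alpha\Big(\frac{\Psi_{i+1}}{\Psi_i}\Big)^\beta \quad\text{and}\quad \alpha\frac{\Phi_{i-1}}{\Phi_i} + \beta\frac{\Psi_{i-1}}{\Psi_i} \geq \Big(\frac{\Phi_{i-1}}{\Phi_i}\Big)^\alpha\Big(\frac{\Psi_{i-1}}{\Psi_i}\Big)^\beta,$$
each multiplied by the positive coefficient $d'_i$ (resp. $d_i$) and then added. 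Both of these are just the weighted AM–GM inequality $\alpha x + \beta y \geq x^\alpha y^\beta$ for positive reals $x,y$ with weights $\alpha+\beta=1$, applied with $x = \Phi_{i\pm1}/\Phi_i$, $y = \Psi_{i\pm1}/\Psi_i$; the positivity hypothesis on $\Phi_i,\Psi_i$ over $I_{n-1}$ is exactly what guarantees these ratios are well-defined positive numbers for $i\in I_0$ (one needs $i, i\pm1 \geq n-1$, which holds since $i\in I_0$, i.e. $i\geq n$ — one should double-check the index bookkeeping here, but it is the only subtlety).

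Concretely I would carry it out as follows. First state weighted AM–GM (or concavity of $\log$) as the elementary fact to be used. Second, fix $i\in I_0$ and write out $\alpha(A\Phi)_i/\Phi_i + \beta(A\Psi)_i/\Psi_i$ explicitly in terms of the ratios above, grouping the $d'_i$ terms and the $d_i$ terms. Third, apply weighted AM–GM to each group — once to $(\Phi_{i+1}/\Phi_i, \Psi_{i+1}/\Psi_i)$ and once to $(\Phi_{i-1}/\Phi_i, \Psi_{i-1}/\Psi_i)$ — multiply by $d'_i>0$ and $d_i>0$ respectively, add, and re-insert the common constant term $-(d'_i+d_i)$, recognizing the right-hand side as $(A\Theta)_i/\Theta_i$ with $\Theta=\Phi^\alpha\Psi^\beta$. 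The case $n=-\infty$ is identical since the operator $A$ has the same three-point form there.

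There is essentially no serious obstacle: the lemma is a discrete manifestation of the fact that $\phi\mapsto -(A\phi)/\phi$ (or the associated ``Hamilton–Jacobi'' nonlinearity $\phi\mapsto (A\phi)/\phi$ in log-variables) behaves well under geometric interpolation, and it follows from a one-line convexity estimate applied twice. The only thing requiring care is the domain/index accounting — making sure all the ratios appearing are over indices where $\Phi,\Psi$ are positive — which is handled by the hypothesis that positivity holds on $I_{n-1}$ while the conclusion is asserted on $I_0$. I expect the write-up to be under half a page.
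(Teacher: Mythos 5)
Your proof is correct and takes essentially the same route as the paper's: both reduce \eqref{l3.1.1} to two applications of the weighted AM--GM inequality (which the paper invokes as Young's inequality), once for the $i+1$ terms with coefficient $d'_i$ and once for the $i-1$ terms with coefficient $d_i$. The only cosmetic difference is that you cancel the common $-(d'_i+d_i)$ term first and work with the ratios $\Phi_{i\pm1}/\Phi_i$, $\Psi_{i\pm1}/\Psi_i$, whereas the paper clears denominators by multiplying through by $\Phi_i^{\alpha}\Psi_i^{\beta}$ and then by $\Phi_i\Psi_i$; the underlying pointwise convexity estimate is identical.
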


\begin{proof}
It is sufficient to show
$$\alpha\frac{d^{\prime}_{i}{\Phi}_{i+1}+d_{i}{\Phi}_{i-1}}{{\Phi}_{i}}+\beta\frac{d^{\prime}_{i}{\Psi}_{i+1}+d_{i}{\Psi}_{i-1}}{{\Psi}_{i}}
\geq\frac{d^{\prime}_{i}{\Phi}_{i+1}^{\alpha}{\Psi}_{i+1}^{\beta}+d_{i}{\Phi}_{i-1}^{\alpha}{\Psi}_{i-1}^{\beta}}
{{\Phi}_{i}^{\alpha}{\Psi}_{i}^{\beta}}.$$
That is,
$$\alpha(d^{\prime}_{i}{\Phi}_{i+1}+d_{i}{\Phi}_{i-1}){\Psi}_{i}+\beta(d^{\prime}_{i}{\Psi}_{i+1}+d_{i}{\Psi}_{i-1}){\Phi}_{i}$$
$$\geq d^{\prime}_{i}{\Phi}_{i+1}^{\alpha}{\Psi}_{i+1}^{\beta}{\Phi}_{i}^{1-\alpha}{\Psi}_{i}^{1-\beta}+
{d}_{i}{\Phi}_{i-1}^{\alpha}{\Psi}_{i-1}^{\beta}{\Phi}_{i}^{1-\alpha}{\Psi}_{i}^{1-\beta},$$
i.e.,\\
$$\alpha d^{\prime}_{i}{\Phi}_{i+1}{\Psi}_{i}+\alpha{d}_{i}{\Phi}_{i-1}{\Psi}_{i}+
\beta d^{\prime}_{i}{\Psi}_{i+1}{\Phi}_{i}+\beta{d}_{i}{\Psi}_{i-1}{\Phi}_{i}$$
$$\geq{(d^{\prime}_{i}{\Phi}_{i+1}{\Psi}_{i})}^{\alpha}{(d^{\prime}_{i}{\Psi}_{i+1}{\Phi}_{i})}^{\beta}+
{({d}_{i}{\Phi}_{i-1}{\Psi}_{i})}^{\alpha}{({d}_{i}{\Psi}_{i-1}{\Phi}_{i})}^{\beta}.$$
This is true by Young's inequality that\\
${(d^{\prime}_{i}{\Phi}_{i+1}{\Psi}_{i})}^{\alpha}{(d^{\prime}_{i}{\Phi}_{i+1}{\Psi}_{i})}^{\beta}\leq
\alpha{[{(d^{\prime}_{i}{\Phi}_{i+1}{\Psi}_{i})}^{\alpha}]}^{\frac{1}{{\alpha}}}+
\beta{[{(d^{\prime}_{i}{\Phi}_{i+1}{\Psi}_{i})}^{\beta}]}^{\frac{1}{{\beta}}}$
$$=\alpha d^{\prime}_{i}{\Phi}_{i+1}{\Psi}_{i}+\beta d^{\prime}_{i}{\Psi}_{i+1}{\Phi}_{i}$$
and\\
${({d}_{i}{\Phi}_{i-1}{\Psi}_{i})}^{\alpha}{({d}_{i}{\Phi}_{i-1}{\Psi}_{i})}^{\beta}\leq
\alpha{[{({d}_{i}{\Phi}_{i-1}{\Psi}_{i})}^{\alpha}]}^{\frac{1}{{\alpha}}}+
\beta{[{({d}_{i}{\Phi}_{i-1}{\Psi}_{i})}^{\beta}]}^{\frac{1}{{\beta}}}$
$$=\alpha{d}_{i}{\Phi}_{i-1}{\Psi}_{i}+\beta{d}_{i}{\Psi}_{i-1}{\Phi}_{i}.$$
\end{proof}
\begin{lemma}\label{lem3.2}
For any $n\in\{-\infty\}\cup\mathbf{Z}$, $\overline{{\lambda}_{1}}(p,n)$ is convex with respect to $p$.
\end{lemma}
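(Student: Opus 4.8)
The plan is to establish the pointwise-in-$n$ statement directly, by logarithmic interpolation of supersolutions, with Lemma~\ref{lem3.1} supplying the convexity inequality for the operator $A$. Fix $p_0,p_1\in\mathbf{R}$ and weights $\alpha,\beta\in(0,1)$ with $\alpha+\beta=1$, and put $p_\alpha:=\alpha p_0+\beta p_1$. Given $\varepsilon>0$, the definition of $\overline{{\lambda}_{1}}$ furnishes, for $k=0,1$, a real number $\lambda^{(k)}<\overline{{\lambda}_{1}}(p_k,n)+\varepsilon$ together with a test function $\phi^{(k)}\in\mathcal{A}_n$ satisfying $(L_{p_k}\phi^{(k)})_i\le\lambda^{(k)}\phi^{(k)}_i$ for all $i\in I_n$. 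Setting $\widetilde\phi^{(k)}_i:=e^{p_k i}\phi^{(k)}_i$, this becomes $(\mathcal{L}\widetilde\phi^{(k)})_i\le\lambda^{(k)}\widetilde\phi^{(k)}_i$ on $I_n$, and the natural interpolant is $\Phi_i:=(\phi^{(0)}_i)^\alpha(\phi^{(1)}_i)^\beta$, for which $\widetilde\Phi_i:=e^{p_\alpha i}\Phi_i=(\widetilde\phi^{(0)}_i)^\alpha(\widetilde\phi^{(1)}_i)^\beta$.

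Next I would run the main estimate. Applying Lemma~\ref{lem3.1} with $(\Phi,\Psi)\leftarrow(\widetilde\phi^{(0)},\widetilde\phi^{(1)})$ gives, for $i\in I_n$, that $\frac{(A\widetilde\Phi)_i}{\widetilde\Phi_i}\le\alpha\frac{(A\widetilde\phi^{(0)})_i}{\widetilde\phi^{(0)}_i}+\beta\frac{(A\widetilde\phi^{(1)})_i}{\widetilde\phi^{(1)}_i}$. Adding $c_i=\alpha c_i+\beta c_i$ to both sides converts $A$ into $\mathcal{L}$ and yields $\frac{(\mathcal{L}\widetilde\Phi)_i}{\widetilde\Phi_i}\le\alpha\frac{(\mathcal{L}\widetilde\phi^{(0)})_i}{\widetilde\phi^{(0)}_i}+\beta\frac{(\mathcal{L}\widetilde\phi^{(1)})_i}{\widetilde\phi^{(1)}_i}\le\alpha\lambda^{(0)}+\beta\lambda^{(1)}$; multiplying through by $e^{-p_\alpha i}\widetilde\Phi_i=\Phi_i$ gives $(L_{p_\alpha}\Phi)_i\le(\alpha\lambda^{(0)}+\beta\lambda^{(1)})\Phi_i$ for all $i\in I_n$.

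The only step requiring genuine care — and the one I would regard as the crux — is verifying $\Phi\in\mathcal{A}_n$, which is what makes this last inequality admissible in the definition of $\overline{{\lambda}_{1}}(p_\alpha,n)$. Positivity is immediate, and $\lim_{i\to+\infty}\frac{\ln\Phi_i}{i}=\alpha\lim_{i\to+\infty}\frac{\ln\phi^{(0)}_i}{i}+\beta\lim_{i\to+\infty}\frac{\ln\phi^{(1)}_i}{i}=0$. For the two $\ell^\infty$ conditions on the difference quotients I would first note that membership of $\phi^{(k)}$ in $\mathcal{A}_n$ forces the ratio sequence $\{\phi^{(k)}_{i+1}/\phi^{(k)}_i\}$ to be bounded above (from $\{(\phi^{(k)}_{i+1}-\phi^{(k)}_i)/\phi^{(k)}_i\}\in\ell^\infty$) and bounded below by a positive constant (from $\{(\phi^{(k)}_{i+1}-\phi^{(k)}_i)/\phi^{(k)}_{i+1}\}\in\ell^\infty$); consequently $\Phi_{i+1}/\Phi_i=(\phi^{(0)}_{i+1}/\phi^{(0)}_i)^\alpha(\phi^{(1)}_{i+1}/\phi^{(1)}_i)^\beta$ stays in a fixed compact subset of $(0,\infty)$, which is exactly what places $\{(\Phi_{i+1}-\Phi_i)/\Phi_i\}$ and $\{(\Phi_{i+1}-\Phi_i)/\Phi_{i+1}\}$ in $\ell^\infty$; the case $n=-\infty$ is identical, using that $\mathcal{A}_{-\infty}$ controls the ratios in both directions and that $\lim_{|i|\to\infty}\frac{\ln\phi^{(k)}_i}{i}=0$. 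Hence $\overline{{\lambda}_{1}}(p_\alpha,n)\le\alpha\lambda^{(0)}+\beta\lambda^{(1)}<\alpha\overline{{\lambda}_{1}}(p_0,n)+\beta\overline{{\lambda}_{1}}(p_1,n)+\varepsilon$, and letting $\varepsilon\to0$ gives convexity. There is no issue with infinite values: testing against $\phi\equiv1$ shows $\overline{{\lambda}_{1}}(p,n)\le D(e^{p}+e^{-p})+C$, while positivity of the off-diagonal coefficients of $L_p$ gives $\overline{{\lambda}_{1}}(p,n)\ge -2D+\inf_i c_i$. Finally, taking $n\to+\infty$ and invoking \eqref{2.2} propagates convexity to $\overline{H}$ as well.
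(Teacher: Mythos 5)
Your proof is correct and follows essentially the same route as the paper: both apply Lemma~\ref{lem3.1} to the geometric interpolant $\Phi^{\alpha}\Psi^{\beta}$ of the lifted test functions $e^{p_k\cdot}\phi^{(k)}$, add $c_i$ to pass from $A$ to $\mathcal{L}$, and then read off the convexity inequality from the definition of $\overline{\lambda_1}$. Your write-up is actually a bit more careful than the paper's (which leaves the last step as "the definition yields"): you explicitly check that $(\phi^{(0)})^{\alpha}(\phi^{(1)})^{\beta}$ lands in $\mathcal{A}_n$ via the two-sided bound on $\phi^{(k)}_{i+1}/\phi^{(k)}_i$ (the same observation as Claim 2 in the proof of Proposition~\ref{prop2.1}) and record finiteness of the eigenvalues, both of which the paper tacitly assumes.
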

\begin{proof}
Let $\Phi\in\mathcal{A}_{n}$ with
${\Phi}_{i}={e}^{{p}_{1}i}{\phi}_{i},$ and $\Psi\in \mathcal{A}_{n}$ with
${\Psi}_{i}={e}^{{p}_{2}i}{\psi}_{i}$. Then by Lemma \ref{lem3.1} we get
$$\alpha\frac{(\mathcal{L}\Phi)_{i}}{{e}^{{p}_{1}i}{\phi}_{i}}+
  \beta\frac{(\mathcal{L}\Psi)_{i}}{{e}^{{p}_{2}i}{\psi}_{i}}
  \geq\frac{(\mathcal{L}\Phi^{\alpha}\Psi^{\beta})_{i}}
  {{e}^{(\alpha{p_{1}}+\beta{p}_{2})i}{\phi}_{i}^{\alpha}{\psi}_{i}^{\beta}},$$
i.e.,
$$\alpha\frac{(L_{{p}_{1}}{\phi})_{i}}{{\phi}_{i}}+\beta\frac{(L_{{p}_{2}}{\psi})_{i}}{{\psi}_{i}}
\geq\frac{(L_{\alpha{p}_{1}+\beta{p}_{2}}{\phi}^{\alpha}{\psi}^{\beta})_{i}}{{\phi}_{i}^{\alpha}{\psi}_{i}^{\beta}}.$$
The definition of $\overline{{\lambda}_{1}}(p,n)$ yields
$$\alpha\overline{{\lambda}_{1}}({p}_{1},n)+\beta\overline{{\lambda}_{1}}({p}_{2},n)
\geq\overline{{\lambda}_{1}}(\alpha{p}_{1}+\beta{p}_{2},n).$$
\end{proof}
\begin{lemma}
$\underline{{\lambda}_{1}}(p,n)$ and $\overline{{\lambda}_{1}}(p,n)$ are locally Lipschitz continuous in $p$
uniformly with respect to $n\in\{-\infty\}\cup\mathbf{Z}$.
\end{lemma}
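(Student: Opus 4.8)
The plan is to prove more: that $p\mapsto\underline{\lambda_1}(p,n)$ and $p\mapsto\overline{\lambda_1}(p,n)$ are Lipschitz on each compact $[-P,P]$ with a constant depending only on $P$ and on $D,\underline D,\|c\|_{\ell^\infty}$, not on $n$. The common starting point is a pair of $n$-independent bounds from the constant test function $\phi\equiv1\in\mathcal A_n$: since $(L_p\mathbf 1)_i=c_i+d'_i(e^p-1)+d_i(e^{-p}-1)$, Definition~\ref{def2.1} together with Proposition~\ref{prop2.1} gives, for every $n\in\{-\infty\}\cup\mathbf{Z}$,
$$\underline\Lambda(p):=\inf_i\big(c_i+d'_i(e^p-1)+d_i(e^{-p}-1)\big)\le\underline{\lambda_1}(p,n)\le\overline{\lambda_1}(p,n)\le\sup_i\big(c_i+d'_i(e^p-1)+d_i(e^{-p}-1)\big)=:\overline\Lambda(p),$$
and $\underline\Lambda,\overline\Lambda$ are real-analytic, hence bounded on compacts.

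For $\overline{\lambda_1}$ the conclusion is then soft: by Lemma~\ref{lem3.2}, $p\mapsto\overline{\lambda_1}(p,n)$ is convex, and a convex function lying between two fixed constants on $[a-\delta,b+\delta]$ is Lipschitz on $[a,b]$ with constant at most the oscillation bound divided by $\delta$; feeding in the $n$-uniform bounds $\underline\Lambda\le\overline{\lambda_1}(\cdot,n)\le\overline\Lambda$ gives the claim for $\overline{\lambda_1}$.

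For $\underline{\lambda_1}$ convexity is unavailable — Lemma~\ref{lem3.1} estimates the $A$-quotient of a geometric mean from below, which is the wrong direction for sub-solutions — so I would argue by finite-dimensional approximation. Let $\lambda_{n,m}(p)$ be the principal (Perron--Frobenius, hence largest and simple) eigenvalue of the Dirichlet truncation of $L_p$ to the box $(n,m)\cap\mathbf{Z}$; it is standard, and is among the facts about generalized principal eigenvalues developed in this section, that $\lambda_{n,m}(p)\to\underline{\lambda_1}(p,n)$ as $m\to+\infty$ for each fixed $p,n$. The matrix of $L_p$ on the box is tridiagonal with entries $d'_ie^p$, $d_ie^{-p}$, $c_i-d'_i-d_i$, all entire in $p$, so $\lambda_{n,m}$ is real-analytic with $\partial_p\lambda_{n,m}(p)=\langle\psi,(\partial_pL_p)\phi\rangle/\langle\psi,\phi\rangle$, where $\langle\cdot,\cdot\rangle$ is the inner product on the box, $((\partial_pL_p)\phi)_i=d'_ie^p\phi_{i+1}-d_ie^{-p}\phi_{i-1}$, and $\phi,\psi>0$ are the right and left eigenfunctions (extended by $0$ outside). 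It thus suffices to bound $\sup_i(\phi_{i+1}/\phi_i+\phi_{i-1}/\phi_i)$ uniformly in $n,m$, i.e.\ to establish a discrete Harnack inequality for $\phi$. This comes from the eigenvalue identity itself: discarding the nonnegative term $d_ie^{-p}\phi_{i-1}$ (resp.\ $d'_ie^p\phi_{i+1}$) in $(L_p\phi)_i=\lambda_{n,m}(p)\phi_i$ yields $d'_ie^p\phi_{i+1}\le(\lambda_{n,m}(p)-c_i+d'_i+d_i)\phi_i$ (resp.\ the analogue for $\phi_{i-1}$), valid also at the two boundary sites where one neighbour is $0$; since $|\lambda_{n,m}(p)|$ is bounded by the operator norm of the truncation, hence by $2De^{|p|}+\|c\|_{\ell^\infty}+2D$ independently of $n,m$, this forces $\phi_{i\pm1}/\phi_i\le M_P$ for all $i$ with $M_P$ depending only on $P,D,\underline D,\|c\|_{\ell^\infty}$. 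Consequently $|\partial_p\lambda_{n,m}(p)|\le 2De^PM_P=:L_P$ for all $n,m$ and all $p\in[-P,P]$, so every $\lambda_{n,m}(\cdot)$ — and therefore its pointwise limit $\underline{\lambda_1}(\cdot,n)$ — is $L_P$-Lipschitz on $[-P,P]$, uniformly in $n$.

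The main obstacle is precisely this last step for $\underline{\lambda_1}$, and within it the two uniformities it needs: that the box eigenvalues $\lambda_{n,m}(p)$ do not drift as the box grows, and that the Harnack ratio bound for the eigenfunctions is box-independent right up to the truncation boundary — only then is $L_P$ free of $n$ and $m$. One also needs the approximation identity $\lambda_{n,m}(p)\to\underline{\lambda_1}(p,n)$, which is where the sub-exponential normalization built into $\mathcal A_n$ is used. By contrast, the $\overline{\lambda_1}$ half is essentially free once the constant-test-function bounds and Lemma~\ref{lem3.2} are available.
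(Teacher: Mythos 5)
Your treatment of $\overline{\lambda_1}$ via Lemma~\ref{lem3.2} plus the constant-test-function bounds is correct and arguably cleaner than what the paper does: the paper never invokes convexity in this lemma, but instead re-runs the Lemma~\ref{lem3.1} machinery for $\overline{\lambda_1}$ with explicit test functions. So that half is a valid alternative.

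The $\underline{\lambda_1}$ half has a genuine gap. You invoke the convergence $\lambda_{n,m}(p)\to\underline{\lambda_1}(p,n)$ as if it were established in the paper; it is not, and it is not a routine fact here. The easy comparison (ratio of eigenfunction to a test function attaining its extremum in the interior, then using the maximum principle) gives $\underline{\lambda_1}(p,n)\le\lambda_{n,m}(p)\le\overline{\lambda_1}(p,n)$ for every $m$, so $\lim_m\lambda_{n,m}(p)$ sits \emph{somewhere} between $\underline{\lambda_1}$ and $\overline{\lambda_1}$, but there is no a priori reason it equals $\underline{\lambda_1}$. To close that gap you would have to extract, from the normalized Dirichlet eigenvectors, a limiting positive solution of $L_p\phi=\lambda_{n,\infty}\phi$ lying in $\mathcal A_n$; the subexponential growth condition $\lim_{i\to\infty}\frac{\ln\phi_i}i=0$ is precisely what can fail, since nothing prevents the limit from decaying like $e^{-\mu i}$. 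Until that is proved, your Lipschitz bound on $\lambda_{n,m}$ controls the wrong quantity. You also dismiss Lemma~\ref{lem3.1} for the subsolution case as being in ``the wrong direction,'' but this is not how the paper uses it: the paper takes one of the two factors $\Phi_i=e^{(p_1-1)i}$ to be an explicit exponential whose $A$-quotient is computable, and then the inequality of Lemma~\ref{lem3.1} isolates the quotient of the \emph{other} factor $\Psi=e^{p_2\cdot}\phi^\eta$ and gives a lower bound on it in terms of $\underline{\lambda_1}(p_1,n)$ minus a controlled error; since $\phi^\eta\in\mathcal A_n$, this is an admissible subsolution for $L_{p_2}$, yielding the Lipschitz estimate directly and uniformly in $n$ with no truncation or Perron--Frobenius input at all. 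That is the argument your proposal is missing.
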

\begin{proof}
We may, without loss of generality, assume that ${p}_{1}<{p}_{2}$ and that
$\underline{{\lambda}_{1}}({p}_{j},n)$ and $\overline{{\lambda}_{1}}({p}_{j},n),(j=1,2),$ are positive
since $\underline{{\lambda}_{1}}(p,n,\mathcal{L}+M)=\underline{{\lambda}_{1}}(p,n,\mathcal L)+M$ for any constant $M$.
Let $\alpha=\frac{{p}_{2}-{p}_{1}}{1+{p}_{2}-{p}_{1}},$ and
$\beta=\frac{1}{1+{p}_{2}-{p}_{1}}$.

First we prove that $\underline{{\lambda}_{1}}(p,n)$ is locally Lipschitz continuous in $p$ uniformly
with respect to $n\in\{-\infty\}\cup\mathbf{Z}$. Taking $\varepsilon>0$, there exists
$\phi\in{\mathcal{A}_{n}}$ such that
$$L_{{p}_{1}}\phi\geq(\underline{{\lambda}_{1}}({p}_{1},n)-\varepsilon)\phi\ \text{on}\ I_{n},$$
i.e.,
$$(\frac{(A{e}^{{p}_{1}\cdot}\phi)_{i}}{{e}^{{p}_{1}i}\phi_{i}}+{c}_{i})
\geq\underline{{\lambda}_{1}}({p}_{1},n)-\varepsilon\ \forall i\in I_{n}.$$
Let ${\Phi}_{i}={e}^{({p}_{1}-1)i},{\Psi}_{i}={e}^{{p}_{2}i}{\phi}_{i}^{\eta}$, where $\eta=1+{p_{2}}-{p}_{1}$.
Then ${\Phi}_{i}^{\alpha}{\Psi}_{i}^{\beta}={e}^{{p}_{1}i}{\phi}_{i}$. By lemma \ref{lem3.1}
\begin{equation}\label{l3.3.1}
\begin{split}
\beta(\frac{(A{e}^{{p}_{2}\cdot}{\phi}^{\eta})_{i}}{{e}^{{p}_{2}i}{\phi}_{i}^{\eta}}+{c}_{i})
&\geq(\frac{(A{e}^{{p}_{1}\cdot}{\phi})_{i}}{{e}^{{p}_{1}i}{\phi}_{i}}+{c}_{i})-
\alpha(\frac{(A{e}^{({p}_{1}-1)\cdot})_{i}}{{e}^{({p}_{1}-1)i}}+{c}_{i}) \\
&\geq\underline{{\lambda}_{1}}({p}_{1},n)-\varepsilon-
 \alpha(d^{\prime}_{i}{e}^{({p}_{1}-1)}+d_{i}{e}^{(1-{p}_{1})}-d^{\prime}_{i}-d_{i}+c_{i}) \\
 &\geq\underline{{\lambda}_{1}}({p}_{1},n)-\varepsilon-
 \alpha(D{e}^{({p}_{1}-1)}+D{e}^{(1-{p}_{1})}-2d+C).
\end{split}
\end{equation}
\\
Noting that ${\phi}^{\eta}\in{\mathcal{A}_{n}}$ since $\phi\in{\mathcal{A}_{n}}$,
the Definition \ref{def2.1} yields that
$$\underline{{\lambda}_{1}}({p}_{2},n)\geq\beta\underline{{\lambda}_{1}}({p}_{2},n)
\geq\underline{{\lambda}_{1}}({p}_{1},n)-\varepsilon-
 \alpha(D{e}^{({p}_{1}-1)}+D{e}^{(1-{p}_{1})}-2d+C).$$
Taking $\varepsilon\to 0$, we have
\begin{equation}\label{l3.3.2}
\underline{{\lambda}_{1}}({p}_{2},n)\geq
\underline{{\lambda}_{1}}({p}_{1},n)-
 ({p}_{2}-{p}_{1})(D{e}^{({p}_{1}-1)}+D{e}^{(1-{p}_{1})}-2d+C).
\end{equation}
On the other hand, there exists $\psi\in{\mathcal{A}_{n}}$ such that $$L_{{p}_{2}}\psi\geq(\underline{{\lambda}_{1}}({p}_{2},n)-\varepsilon)\psi.$$
Let
${\Phi}_{i}={e}^{({p}_{2}+1)i},{\Psi}_{i}={e}^{{p}_{1}i}{\psi}_{i}^{\eta}$, where
$\eta=1+{p_{2}}-{p}_{1}$. Then ${\Phi}_{i}^{\alpha}{\Psi}_{i}^{\beta}={e}^{{p}_{2}i}\psi$. Therefore,
as what we just did before,
\begin{equation}\label{l3.3.3}
\underline{{\lambda}_{1}}({p}_{1},n)\geq
\underline{{\lambda}_{1}}({p}_{2},n)-
 ({p}_{2}-{p}_{1})(D{e}^{({p}_{2}+1)}+D{e}^{(-1-{p}_{2})}-2d+C).
\end{equation}
From \eqref{l3.3.2} and \eqref{l3.3.3}, we have
\begin{equation}
|\underline{{\lambda}_{1}}({p}_{1},n)-\underline{{\lambda}_{1}}({p}_{2},n)|\leq
 M|{p}_{2}-{p}_{1}|,
\end{equation}
where $M$ is a constant depending on ${p}_{1},{p}_{2}$ but not depending on $n$.

Next we will prove that $\overline{{\lambda}_{1}}(p,n)$ is locally Lipschitz continuous in $p$ uniformly
with respect to $n\in\mathbf{Z}$. Taking $\varepsilon>0$, there exist $\phi,\psi\in\mathcal{A}_{n}$ such that
$$L_{{p}_{1}}\phi\leq(\overline{{\lambda}_{1}}({p}_{1},n)+\varepsilon)\phi,$$
$$L_{{p}_{2}}\psi\leq(\overline{{\lambda}_{1}}({p}_{2},n)+\varepsilon)\psi.$$
Set ${\Phi}_{i}={e}^{({p}_{2}+1)i},{\Psi}_{i}={e}^{{p}_{1}i}{\phi}_{i}$. Then ${\Phi}_{i}^{\alpha}{\Psi}_{i}^{\beta}={e}^{{p}_{2}i}{\phi}_{i}^{\beta}$.
Similarly, by Lemma \ref{lem3.1}, we have
$$\overline{{\lambda}_{1}}({p}_{1},n)+ ({p}_{2}-{p}_{1})(D{e}^{({p}_{2}+1)}+D{e}^{(-1-{p}_{2})}-2d+C)
\geq\overline{{\lambda}_{1}}({p}_{2},n).$$
While setting ${\Phi}_{i}={e}^{({p}_{1}-1)i},{\Psi}_{i}={e}^{{p}_{2}i}{\psi}_{i}$, we have
${\Phi}_{i}^{\alpha}{\Psi}_{i}^{\beta}={e}^{{p}_{1}i}{\psi}_{i}^{\beta}$. Therefore,
$$\overline{{\lambda}_{1}}({p}_{2},n)+({p}_{2}-{p}_{1})(D{e}^{({p}_{1}-1)}+D{e}^{(1-{p}_{1})}-2d+C)
\geq\overline{{\lambda}_{1}}({p}_{1},n).$$
Thus we have
\begin{equation}
|\overline{{\lambda}_{1}}({p}_{1},n)-\overline{{\lambda}_{1}}({p}_{2},n)|\leq
M|{p}_{2}-{p}_{1}|,
\end{equation}
where $M$ is a constant as before. The proof is complete.
\end{proof}

Let us now turn to the proof of Proposition \ref{prop2.2}. We need the assumption
\begin{equation}\label{3.4}
\liminf \limits_{|i|\to \infty}({c}_{i}-({\sqrt {d^{\prime}_{i}}}-{\sqrt {d_{i}}})^2)>0.
\end{equation}

\begin{proof}[Proof of Proposition \ref{prop2.2}]
It is easy to see that $\overline{H}(p)$ and $\underline{H}(p)$ are locally Lipschitz continuous by Lemma \ref{lem3.1}. Take $\phi\equiv 1$ as a test function. Then we have
\begin{equation}\label{3.5}
\begin{split}
\inf\limits_{i\in{I}_{n}}(d^{\prime}_{i}{e}^{p}+d_{i}{e}^{-p}-d^{\prime}_{i}-d_{i}+c_{i})
&\leq\underline{{\lambda}_{1}}(p,n)\\
&\leq\overline{{\lambda}_{1}}(p,n)\\
&\leq\sup\limits_{i\in{I}_{n}}(d^{\prime}_{i}{e}^{p}+d_{i}{e}^{-p}-d^{\prime}_{i}-d_{i}+c_{i}).
\end{split}
\end{equation}
Let $h_{i}(p):=d^{\prime}_{i}{e}^{p}+d_{i}{e}^{-p}-d^{\prime}_{i}-d_{i}+c_{i}$. Then\\
$$h_{i}^{\prime}(p):=\frac{d}{dp}h_{i}(p)=d^{\prime}_{i}{e}^{p}-d_{i}{e}^{-p},\ \ h_{i}^{\prime\prime}:=\frac{d}{dp}h_{i}^{\prime}(p)=d^{\prime}_{i}{e}^{p}+d_{i}{e}^{-p}>0.$$
Hence ${h}_{i}^{\prime}$ is strictly increasing, and
${h}_{i}^{\prime}{(\frac{\ln{d}_{i}-\ln{{q}_{i+1}}}{2})}=0.$
That is to say, $h_{i}(p)$ reaches its minimum ${c}_{i}-({\sqrt {d^{\prime}_{i}}}-{\sqrt {d_{i}}})^2$
at $\frac{1}{2}(\ln{d}_{i}-\ln{d^{\prime}_{i}})$ for any $p\in\mathbf{R}$. Noting \eqref{3.4}, there exist
${N}_{0}\in\mathbf{Z}$ and ${\varepsilon}_{0}>0$
such that
$\inf\limits_{i\in{I}_{n}}(d^{\prime}_{i}{e}^{p}+d_{i}{e}^{-p}-d^{\prime}_{i}-d_{i}+c_{i})
 >{\varepsilon}_{0}$ for all $n\geq{N}_{0}.$
Letting $n\to +\infty$ in \eqref{3.5}, we have
$$\varepsilon_{0}<\inf\limits_{i\in{I}_{{N}_{0}}}
  (d^{\prime}_{i}{e}^{p}+d_{i}{e}^{-p}-d^{\prime}_{i}-d_{i}+c_{i})
\leq\underline{H}(p)\leq\overline{H}(p)
\leq D({e}^{p}+{e}^{-p})-2\underline{D}+C.$$
Hence
$\displaystyle{\lim_{p\rightarrow{0}^{+}}}\frac{\underline{H}(\pm p)}{p}
\geq\displaystyle{\lim_{p\rightarrow{0}^{+}}}\frac{{\varepsilon}_{0}}{p}=+\infty$ and
\begin{equation*}
\begin{split}
\displaystyle{\lim_{p\rightarrow+\infty}}\frac{\underline{H}(\pm p)}{p}
&\geq\displaystyle{\lim_{p\rightarrow+\infty}}
 \inf\limits_{i\in{I}_{{N}_{0}}}
 \frac{(d^{\prime}_{i}{e}^{\pm p}+d_{i}{e}^{\mp p}-d^{\prime}_{i}-d_{i}+c_{i})}{p}\\
&\geq\displaystyle{\lim_{p\rightarrow+\infty}}(\underline{D}\frac{{e}^{p}}{p}-\frac{2D}{p}-\frac{C}{p})\\
&=+\infty.
\end{split}
\end{equation*}
This concludes the proof by taking $a_{0}=D, a_{1}=\frac{2\underline{D}-C}{D}<2$.
\end{proof}

Next we will prove that the generalized principal eigenvalues are continuous with respect to the coefficient $c\in{\ell}^{\infty}$.
Define $\mathcal L^{\prime}:X_{n}\rightarrow X_{n+1}$ by
$(\mathcal L^{\prime}\phi)_{i}:=d^{\prime}_{i}(\phi_{i+1}-\phi_{i})+d_{i}(\phi_{i-1}-\phi_{i})+c^{\prime}_{i}{\phi}_{i}$,
and $L^{\prime}_{p}\phi:={e}^{-p\cdot}\mathcal L^{\prime}(e^{p\cdot}\phi).$
\begin{prop}
\begin{equation}\label{p3.1.1}
|\underline{{\lambda}_{1}}(p,n,\mathcal L^{\prime})-\underline{{\lambda}_{1}}(p,n,\mathcal L)|
\leq\|c^{\prime}-c\|_{l^{\infty}}\ \forall n\in\{-\infty\}\cup\mathbf{Z},\ p\in\mathbf{R},
\end{equation}
\begin{equation}\label{p3.1.2}
|\overline{{\lambda}_{1}}(p,n,\mathcal L^{\prime})-\overline{{\lambda}_{1}}(p,n,\mathcal L)|
\leq\|c^{\prime}-c\|_{l^{\infty}}\ \forall n\in\{-\infty\}\cup\mathbf{Z},\ p\in\mathbf{R}.
\end{equation}
\end{prop}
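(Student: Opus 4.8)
The plan is to exploit that $\mathcal L'$ and $\mathcal L$ differ only through their zeroth-order terms. For every $\phi\in X_{n}$ one has $(\mathcal L'\phi)_{i}-(\mathcal L\phi)_{i}=(c'_{i}-c_{i})\phi_{i}$, and since a multiplication operator is unaffected by the conjugation $\phi\mapsto e^{-p\cdot}\mathcal L(e^{p\cdot}\phi)$, also
$$(L'_{p}\phi)_{i}-(L_{p}\phi)_{i}=(c'_{i}-c_{i})\phi_{i}\qquad\text{for all }i\in I_{n}.$$
The decisive point is that the admissibility class $\mathcal A_{n}$ depends on $\phi$ alone and not on the coefficients, so the \emph{same} test function serves both operators.

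First I would bound $\underline{{\lambda}_{1}}(p,n,\mathcal L')$ from below. Fix $\varepsilon>0$ and pick $\phi\in\mathcal A_{n}$ with $(L_{p}\phi)_{i}\ge(\underline{{\lambda}_{1}}(p,n,\mathcal L)-\varepsilon)\phi_{i}$ on $I_{n}$. Because $\phi_{i}>0$ and $c'_{i}-c_{i}\ge-\|c'-c\|_{\ell^{\infty}}$,
$$(L'_{p}\phi)_{i}=(L_{p}\phi)_{i}+(c'_{i}-c_{i})\phi_{i}\ge\bigl(\underline{{\lambda}_{1}}(p,n,\mathcal L)-\varepsilon-\|c'-c\|_{\ell^{\infty}}\bigr)\phi_{i},$$
so $\underline{{\lambda}_{1}}(p,n,\mathcal L)-\varepsilon-\|c'-c\|_{\ell^{\infty}}$ lies in the set whose supremum defines $\underline{{\lambda}_{1}}(p,n,\mathcal L')$; letting $\varepsilon\to0$ gives $\underline{{\lambda}_{1}}(p,n,\mathcal L')\ge\underline{{\lambda}_{1}}(p,n,\mathcal L)-\|c'-c\|_{\ell^{\infty}}$. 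Exchanging the roles of $c$ and $c'$ yields the opposite inequality, hence \eqref{p3.1.1}.

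For \eqref{p3.1.2} the argument is entirely symmetric: if $\phi\in\mathcal A_{n}$ satisfies $(L_{p}\phi)_{i}\le(\overline{{\lambda}_{1}}(p,n,\mathcal L)+\varepsilon)\phi_{i}$ on $I_{n}$, then $(L'_{p}\phi)_{i}\le\bigl(\overline{{\lambda}_{1}}(p,n,\mathcal L)+\varepsilon+\|c'-c\|_{\ell^{\infty}}\bigr)\phi_{i}$, so that $\overline{{\lambda}_{1}}(p,n,\mathcal L)+\varepsilon+\|c'-c\|_{\ell^{\infty}}$ belongs to the set whose infimum is $\overline{{\lambda}_{1}}(p,n,\mathcal L')$; letting $\varepsilon\to0$ and then swapping $c$ and $c'$ gives the claim.

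I do not expect a genuine obstacle here: the statement is essentially immediate from Definition \ref{def2.1}, the only thing to verify being that the exponential conjugation in the definition of $L_{p}$ leaves the perturbation $c'-c$ untouched, which it cannot alter since diagonal (multiplication) operators commute with it. The same reasoning also reproves, as a special case, the elementary identity $\underline{{\lambda}_{1}}(p,n,\mathcal L+M)=\underline{{\lambda}_{1}}(p,n,\mathcal L)+M$ that was already used above.
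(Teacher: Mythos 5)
Your proof is correct and follows essentially the same route as the paper: use the same admissible test function for $\mathcal L$ and $\mathcal L'$, observe that $(L'_p\phi)_i=(L_p\phi)_i+(c'_i-c_i)\phi_i$ so that the perturbation shifts the eigenvalue bound by at most $\|c'-c\|_{\ell^\infty}$, and conclude by symmetry. The only addition is the (harmless, correct) remark that the conjugation leaves the zeroth-order perturbation untouched, which the paper uses implicitly.
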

\begin{proof}
For any $\varepsilon>0$, there exists $\phi\in\mathcal{A}_{n}$ such that
$$(L_{p}\phi)_{i}\geq(\underline{{\lambda}_{1}}(p,n,\mathcal L)-\varepsilon)\phi_{i}\ \forall i\in I_{n}.$$
Hence
\begin{equation*}
\begin{split}
(L^{\prime}_{p}\phi)_{i}
&=(L_{p}\phi)_{i}+(c^{\prime}_{i}-c_{i})\phi_{i}\\
&\geq(\underline{{\lambda}_{1}}(p,n,\mathcal L)-\varepsilon)\phi_{i}+(c^{\prime}_{i}-c_{i})\phi_{i}\\
&\geq(\underline{{\lambda}_{1}}(p,n,\mathcal L)-\varepsilon-\|c^{\prime}-c\|_{l^{\infty}})\phi_{i}.
\end{split}
\end{equation*}
Letting $\varepsilon\to 0$, the above inequality yields
$$\underline{{\lambda}_{1}}(p,n,\mathcal L^{\prime})\geq\underline{{\lambda}_{1}}(p,n,\mathcal L)-\|c^{\prime}-c\|_{l^{\infty}}.$$

By the symmetry, one has
$$|\underline{{\lambda}_{1}}(p,n,\mathcal L^{\prime})-\underline{{\lambda}_{1}}(p,n,\mathcal L)|
\leq\|c^{\prime}-c\|_{l^{\infty}}.$$
A similar argument gives \eqref{p3.1.2}.
\end{proof}

\section{Proof of the spreading property}

\subsection{Maximum principles}
Before going any further, we first give some maximum principles which we will use later. Let $\ell_{1}(t)$,
$\ell_{2}(t)$ be two functions defined on $[t_{0},+\infty)$.
Assume that $\ell_{1}(t)$ is decreasing and
$\ell_{2}(t)$ is increasing, and that $[\ell_{2}(t)]-\ell_{1}(t)\geq 2$. Denote
$$S_{t}:=\{i\in\mathbf{Z}|\ \ell_{1}(t)\leq i\leq\ell_{2}(t)\},\ \Omega_{{t}_{0}}:=\{(t,i)|\ t\in[{t}_{0},+\infty),i\in S_{t}\},$$
$$\Gamma_{t_{0}}:=\{(t_{0},i)\in\Omega_{t_{0}}\}
\cup\{(t,i)\in\Omega_{t_{0}}|\ (t,i+1)\notin\Omega_{t_{0}}\ \text{or}\ (t,i-1)\notin\Omega_{t_{0}}\},$$
$$\Omega_{{t}_{0},i}:=\{t>0|\ (t,i)\in(\Omega_{{t}_{0}}\setminus\Gamma_{t_{0}})\}.$$
\begin{lemma}\label{lem4.1}
Assume that $z_{i}(t)$ is differentiable in $t\in\Omega_{{t}_{0},i}$ for any $i\in\mathbf{Z}$,
and that $z_{i}(t)$ satisfies
\begin{equation}\label{l4.1.1}
\left\{
   \begin{aligned}
 \overset{.}z_{i}(t)-(Az)_{i}-c_{i}z_{i}\geq0,\ & \ \ \Omega_{{t}_{0}}\setminus\Gamma_{t_{0}},\\
                                z\geq0,\ & \ \ \Gamma_{t_{0}}.\\
   \end{aligned}
   \right.
\end{equation}
Then $z\geq0$ in $\Omega_{{t}_{0}}$.
\end{lemma}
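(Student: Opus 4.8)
The plan is to run a discrete analogue of the parabolic maximum principle. First I would kill the possibility of equality by a perturbation. Set $C:=\sup_i c_i$, which is finite since $c\in\ell^\infty$, fix $\mu>C$, and for $\varepsilon>0$ put $v_i(t):=z_i(t)+\varepsilon e^{\mu(t-t_0)}$. Using $(A\mathbf 1)_i=0$ one computes, on $\Omega_{t_0}\setminus\Gamma_{t_0}$,
$$\dot v_i-(Av)_i-c_iv_i=\big(\dot z_i-(Az)_i-c_iz_i\big)+\varepsilon e^{\mu(t-t_0)}(\mu-c_i)\ \ge\ \varepsilon e^{\mu(t-t_0)}(\mu-C)>0,$$
while $v>0$ on $\Gamma_{t_0}$. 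Since $v_i(t)\to z_i(t)$ as $\varepsilon\to0$, it suffices to prove that $v_i(t)>0$ throughout $\Omega_{t_0}$ for every $\varepsilon>0$, and then let $\varepsilon\to0$.

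Next I argue by contradiction: suppose for some $\varepsilon>0$ there is $(t^*,i^*)\in\Omega_{t_0}$ with $v_{i^*}(t^*)\le0$. On the finite interval $[t_0,t^*]$ every slice satisfies $S_t\subseteq S_{t^*}$, a fixed finite set, and $t\mapsto S_t$ is nondecreasing, hence takes only finitely many values; moreover a point entering a slice as $t$ increases does so through $\Gamma_{t_0}$ (it becomes the new maximal or minimal element of the slice), where $v>0$ is prescribed. Using continuity of the finitely many $v_i$ on their domains, the first contact time
$$\bar t:=\inf\{t\in[t_0,t^*]:\ v_i(t)\le0\ \text{for some}\ i\in S_t\}$$
satisfies $\bar t>t_0$ (because $v>0$ on $\{t_0\}\times S_{t_0}\subseteq\Gamma_{t_0}$), the infimum is attained at some $i_0\in S_{\bar t}$ with $v_{i_0}(\bar t)=0$, and $v_j(t)>0$ for all $t\in[t_0,\bar t)$, $j\in S_t$. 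Since $v\ge\varepsilon e^{\mu(t-t_0)}>0$ on $\Gamma_{t_0}$, the point $(\bar t,i_0)$ lies in $\Omega_{t_0}\setminus\Gamma_{t_0}$; in particular $\bar t\in\Omega_{t_0,i_0}$, $i_0\pm1\in S_{\bar t}$, and $v_{i_0\pm1}(\bar t)\ge v_{i_0}(\bar t)=0$.

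Finally I exploit the sign structure of $A$. As $v_{i_0}$ is differentiable at $\bar t$, vanishes there, and is positive for $t<\bar t$ near $\bar t$, we get $\dot v_{i_0}(\bar t)\le0$. On the other hand, evaluating the strict interior inequality at $(\bar t,i_0)$ and using $d_{i_0},d'_{i_0}>0$ together with $v_{i_0}(\bar t)=0$, $v_{i_0\pm1}(\bar t)\ge0$,
$$\dot v_{i_0}(\bar t)>(Av)_{i_0}(\bar t)+c_{i_0}v_{i_0}(\bar t)=d'_{i_0}v_{i_0+1}(\bar t)+d_{i_0}v_{i_0-1}(\bar t)\ge0,$$
a contradiction. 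Hence $v>0$ on $\Omega_{t_0}$ for every $\varepsilon>0$, and $\varepsilon\to0$ gives $z\ge0$ on $\Omega_{t_0}$.

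The routine algebra (the perturbation identity and the final inequality) is immediate; the only point requiring care is the construction of the first contact time $\bar t$ and the verification that the contact index $i_0$ is genuinely interior at $\bar t$, so that $v_{i_0}$ is differentiable there and the one‑sided derivative estimate is legitimate. This is where the interplay between the expanding slices $S_t$ and the boundary set $\Gamma_{t_0}$ must be tracked — once that is settled, the fact that $(Av)_{i_0}(\bar t)\ge0$ at an interior minimum follows purely from the positivity of the off‑diagonal coefficients $d_i$ and $d'_i$.
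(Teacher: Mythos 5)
Your proof is correct and mirrors the paper's argument: both are discrete maximum-principle contradictions on the finite, time-dependent lattice slab $\Omega_{t_0}$, using the positivity of the off-diagonal coefficients $d_i,d_i'$ and the boundary condition on $\Gamma_{t_0}$ to force a sign contradiction at an interior extremum. The only cosmetic difference is in how the strict inequality is manufactured --- you add $\varepsilon\,e^{\mu(t-t_0)}$ and argue at a first zero-contact time, whereas the paper substitutes $\zeta=ze^{-(C+1)t}$ to normalize $c_i<0$ and examines the negative minimum of $\zeta$ over the compact slab $\Omega_{t_0}^T$; the interior sign analysis is identical (and incidentally the paper's printed ``$(Az)_i\le 0$'' at the minimum is a typo for $(Az)_i\ge 0$).
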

\begin{proof}
We first prove the the result by assuming $c_{i}<0$. Assume by contradiction that there exists
$(T,j)\in\Omega_{{t}_{0}}$ such that $z(T,j)<0$. Then $z$ reaches its minimum at some point, say $(s,k)$, over
$\Omega_{{t}_{0}}^{T}:=([t_{0},T]\times\mathbf{Z})\cap\Omega_{{t}_{0}}$. Obviously, $(s,k)\notin\Gamma_{t_{0}}$.
Hence at $(s,k)$, we have
$$\overset{.}z_{i}(t)\leq0,\ (Az)_{i}\leq0,\ c_{i}z_{i}>0,$$
i.e., $\overset{.}z_{i}(t)-(Az)_{i}-c_{i}z_{i}<0$, which contradicts \eqref{l4.1.1}!

For general $c_{i}$, set $\zeta_{i}(t)=z_{i}(t){e}^{-(C+1)t}$. Then $\zeta$ satisfies
$$\left\{
   \begin{aligned}
 \overset{.}\zeta_{i}(t)-(A\zeta)_{i}-(c_{i}-C-1)\zeta_{i}\geq0,\ & \ \ \Omega_{{t}_{0}}\setminus\Gamma_{t_{0}},\\
                                             \zeta\geq0,\ & \ \ \Gamma_{t_{0}}.\\
   \end{aligned}
   \right.$$
Therefore, $\xi\geq0$ in $\Omega_{t_{0}}$, which yields that $z\geq0$ in $\Omega_{t_{0}}$.
\end{proof}
From the proof of Lemma \ref{lem4.1}, one has
\begin{cor}\label{cor4.1}
Assume that $z_{i}(t)$ is differentiable in $t\in\Omega_{{t}_{0},i}$ for all $i\in\mathbf{Z}$,
and that $z_{i}(t)$ satisfies $\overset{.}z_{i}(t)-(Az)_{i}-c_{i}z_{i}\geq0$ in
$\Omega_{{t}_{0}}^{T}\setminus\Gamma_{t_{0}}$ for $T>t_{0}$. Then $z$ can't reach its negative minimum in $\Omega_{{t}_{0}}^{T}\setminus\Gamma_{t_{0}}$.
\end{cor}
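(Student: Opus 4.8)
The plan is to reuse the strict-inequality-at-a-minimum step that already appears inside the proof of Lemma~\ref{lem4.1}, now carried out on the finite time window $[t_0,T]$ rather than on $[t_0,+\infty)$. Concretely, I argue by contradiction: assume $z$ attains $m:=\min_{\Omega_{t_0}^{T}}z$ at some $(s,k)\in\Omega_{t_0}^{T}\setminus\Gamma_{t_0}$ with $m<0$, and I derive a contradiction.

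As in Lemma~\ref{lem4.1}, the first step is to reduce to the case $c_i<0$ for all $i$: replacing $z_i(t)$ by $\zeta_i(t):=z_i(t)\me^{-(C+1)t}$ turns the differential inequality on $\Omega_{t_0}^{T}\setminus\Gamma_{t_0}$ into $\dot\zeta_i-(A\zeta)_i-(c_i-C-1)\zeta_i\ge 0$, whose zeroth-order coefficient $c_i-C-1$ is $\le -1<0$, while $\zeta_i(t)$ has the same sign as $z_i(t)$ at every point, so it suffices to work under the standing assumption $c_i<0$. At the point $(s,k)$ one then has: since $(s,k)\notin\Gamma_{t_0}$, both $s>t_0$ and $(s,k\pm 1)\in\Omega_{t_0}^{T}$, hence $z_{k\pm1}(s)\ge z_k(s)$ and $(Az)_k(s)=d'_k(z_{k+1}(s)-z_k(s))+d_k(z_{k-1}(s)-z_k(s))\ge 0$; moreover $t\mapsto z_k(t)$ attains a minimum at $t=s$ over its slice of admissible times in $[t_0,T]$, so $\dot z_k(s)\le 0$ (an interior critical point when $s<T$, a left endpoint of the time interval when $s=T$); and $c_k z_k(s)=c_k m>0$ because $c_k<0$ and $m<0$. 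Adding these, $\dot z_k(s)-(Az)_k(s)-c_k z_k(s)<0$, which contradicts the assumed inequality at $(s,k)\in\Omega_{t_0}^{T}\setminus\Gamma_{t_0}$. Hence no such point exists, which is the claim.

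I do not expect a genuine obstacle here, since the entire argument is contained in the proof of Lemma~\ref{lem4.1}; the only points that need a word of care are the behaviour at the terminal time $t=T$ (where one only has the left derivative of $z_k$ at its minimum, which still yields $\dot z_k(T)\le 0$ and suffices, using $(s,k)\notin\Gamma_{t_0}$ to exclude $s=t_0$) and the verification that the exponential substitution $z\mapsto\zeta$ simultaneously preserves the sign of the solution and the supersolution inequality, so that the reduction to $c_i<0$ is legitimate.
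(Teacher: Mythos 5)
Your direct argument at the interior minimizer --- $\dot z_k(s)\le 0$, $(Az)_k(s)\ge 0$, $c_k z_k(s)>0$, so that $\dot z_k(s)-(Az)_k(s)-c_k z_k(s)<0$ contradicts the inequality --- is exactly the argument embedded in the proof of Lemma~\ref{lem4.1}, and it is correct whenever $c_i<0$. This is what the paper has in mind when it says the Corollary follows ``from the proof of Lemma~\ref{lem4.1}'', and the only place the Corollary is actually invoked (inside the proof of Lemma~\ref{lem4.2}) occurs after the WLOG reduction to $c_i\le-1$, where your argument is complete as written.

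The genuine gap is in your reduction to $c_i<0$. In Lemma~\ref{lem4.1} the substitution $\zeta_i(t)=z_i(t)\me^{-(C+1)t}$ works because both the boundary hypothesis $z\ge0$ on $\Gamma_{t_0}$ and the conclusion $z\ge0$ on $\Omega_{t_0}$ are preserved under multiplication by a positive time-dependent factor. The Corollary has no boundary hypothesis, and the property under scrutiny --- where the minimum of the function over $\Omega_{t_0}^T$ is attained --- is \emph{not} preserved by such a factor: the minimizer of $\zeta$ need not coincide with that of $z$, and in particular need not be an interior point, so you cannot pass to $\zeta$ and then invoke the $c_i<0$ case. The remark that ``$\zeta_i(t)$ has the same sign as $z_i(t)$ at every point'' is true but is not what the reduction requires. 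Nor is this merely cosmetic: on a fixed strip with $\ell_1\equiv 0$, $\ell_2\equiv 2$, take $z_0\equiv z_2\equiv M>0$, $z_1\equiv-\varepsilon<0$, and $c_1$ large enough that $c_1\varepsilon\ge(d'_1+d_1)(M+\varepsilon)$; then $\dot z_1-(Az)_1-c_1 z_1\ge 0$ on the interior $(0,T]\times\{1\}$ while $z$ attains its negative minimum $-\varepsilon$ there, so the Corollary as literally stated can fail once some $c_i$ is allowed to be large and positive. The cheap fix, matching the paper's usage, is to state and prove the Corollary under the standing hypothesis $c_i<0$ and drop the exponential substitution altogether. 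One small typo: when $s=T$ you write ``left endpoint of the time interval''; you mean the right endpoint, at which the left derivative is $\le 0$.
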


Moreover, we have the following maximum principle in the whole space.
\begin{lemma}\label{lem4.2}
Assume that for any bounded interval $I\subset[0,+\infty)$, $z$ is bounded on $I\times\mathbf{Z}$, and that
$z_{i}(t)$ is differentiable in $t\in(0,+\infty)$ for any $i\in\mathbf{Z}$. Suppose that $z$ satisfies
$$\left\{
   \begin{aligned}
 \overset{.}z_{i}(t)-(Az)_{i}-c_{i}z_{i}\geq0,\ & \ \ \ (0,+\infty)\times\mathbf{Z},\\
                                     z\geq0,\ & \ \ \ t=0.\\
   \end{aligned}
   \right.$$
Then $z\geq0$ on $(0,+\infty)\times\mathbf{Z}$.
\end{lemma}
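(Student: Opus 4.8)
The plan is to deduce the whole-space maximum principle from the finite-window version in Lemma \ref{lem4.1} by an exhaustion argument with a carefully chosen auxiliary supersolution that dominates the (possibly exponential) decay of $z$ at spatial infinity. Fix $T>0$; it suffices to show $z\geq 0$ on $[0,T]\times\mathbf Z$. Since $z$ is bounded on $[0,T]\times\mathbf Z$, write $M:=\sup_{[0,T]\times\mathbf Z}|z|<+\infty$. For $\delta>0$ and a constant $K>0$ to be fixed, introduce the comparison function
\begin{equation}\label{aux}
w_i(t):=z_i(t)+\delta\,\me^{Kt}\bigl(\me^{i}+\me^{-i}\bigr).
\end{equation}
The point of the weight $\me^{i}+\me^{-i}$ is that it is an eigenfunction-type profile for the symbol: a direct computation gives $(A(\me^{\pm\cdot}))_i=(d'_i\me^{\pm1}+d_i\me^{\mp1}-d'_i-d_i)(\me^{\pm i})$, and since the $d_i,d'_i$ are uniformly bounded, $(A(\me^{\cdot}+\me^{-\cdot}))_i+c_i(\me^{i}+\me^{-i})\le C_0(\me^{i}+\me^{-i})$ for some constant $C_0$ depending only on $D$ and $C=\sup_i c_i$. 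Choosing $K>C_0$ makes $\delta\me^{Kt}(\me^\cdot+\me^{-\cdot})$ a strict supersolution of $\dot{\,}-(A\cdot)-c_\cdot(\cdot)\ge 0$, hence $w$ satisfies $\dot w_i-(Aw)_i-c_iw_i\ge 0$ on $(0,T]\times\mathbf Z$ and $w\ge 0$ at $t=0$.

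Next I would run the finite-domain principle on a growing family of windows. Apply Lemma \ref{lem4.1} (or rather Corollary \ref{cor4.1}) on the rectangle $\Omega_{0}^T$ associated with the constant bounds $\ell_1\equiv -N$, $\ell_2\equiv N$: on the parabolic boundary $\Gamma_0$ one has either $t=0$, where $w\ge 0$, or $|i|=N$, where $w_i(t)\ge -M+\delta\me^{i}\ge -M+\delta\me^{N}>0$ once $N$ is large enough that $\delta\me^{N}>M$. Since $w$ satisfies the differential inequality in the interior, Lemma \ref{lem4.1} gives $w\ge 0$ on $[0,T]\times([-N,N]\cap\mathbf Z)$. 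As $N$ can be taken arbitrarily large, $w\ge 0$ on all of $[0,T]\times\mathbf Z$, i.e. $z_i(t)\ge -\delta\me^{Kt}(\me^i+\me^{-i})$ for every $i$ and $t\le T$. Now letting $\delta\to 0^+$ for each fixed $(t,i)$ yields $z_i(t)\ge 0$, and since $T$ was arbitrary the conclusion holds on all of $(0,+\infty)\times\mathbf Z$.

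The only delicate point is the choice of the spatial weight: it must grow fast enough to beat the uniform bound $M$ on each finite window (so that the lateral boundary terms are nonnegative) yet still be an exact or approximate supersolution after multiplication by $\me^{Kt}$, which is exactly why $\me^{i}+\me^{-i}$ rather than, say, $i^2$ is the right object — the lattice Laplacian $A$ acts almost diagonally on exponentials, so the term $\dot{\,}-(A\cdot)-c_\cdot(\cdot)$ stays controlled with a single constant $K$ independent of $N$. One should double-check that the differentiability hypothesis on $z$ transfers to $w$ (it does, since the added term is smooth in $t$) and that Lemma \ref{lem4.1}'s geometric hypothesis $[\ell_2(t)]-\ell_1(t)\ge 2$ is met, which holds for $N\ge 1$. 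Everything else is the routine limit argument $\delta\to 0$.
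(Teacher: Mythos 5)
Your proof is correct, but it takes a genuinely different route from the paper's. You run a classical barrier (Phragm\'en--Lindel\"of) argument: add the small exponential supersolution $\delta\,\mathrm{e}^{Kt}(\mathrm{e}^{i}+\mathrm{e}^{-i})$, which is a strict supersolution of $\partial_t-A-c$ once $K>D(\mathrm{e}+\mathrm{e}^{-1})+C$, use the boundedness of $z$ on $[0,T]\times\mathbf{Z}$ to make the lateral boundary of the window $\{|i|\le N\}$ nonnegative for $N$ large, invoke the finite-window principle (Lemma \ref{lem4.1} applied on $\Omega_0^T$), and then send $N\to\infty$ and $\delta\to0^+$. This is sound; the only slip is cosmetic: on the lateral boundary you should bound $w_{\pm N}(t)\ge -M+\delta(\mathrm{e}^{N}+\mathrm{e}^{-N})\ge -M+\delta\mathrm{e}^{N}$ (your displayed step ``$\delta\mathrm{e}^{i}\ge\delta\mathrm{e}^{N}$'' is only correct at $i=+N$, but the symmetric weight $\mathrm{e}^{i}+\mathrm{e}^{-i}$ already covers $i=-N$). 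You should also be explicit that the finite-domain comparison is applied on the truncated cylinder $\Omega_0^T$, since the constant $M$ is a sup over $[0,T]$ only. The paper instead argues bare-hands without a barrier: after reducing to $c_i\le -1$, it assumes a negative value exists, uses Corollary \ref{cor4.1} on three-column windows $[0,t_0]\times\{i_0,i_0\pm1\}$ to slide the (strictly decreasing) negative minimum outward and produce monotone sequences $t_n\downarrow$ and $z_{i_0\mp n}(t_n)\downarrow$, then uses boundedness and the equation to show $(Az)_{i_0\mp n}(t_n)\to 0$ while it is simultaneously bounded below zero by the negative minimum, a contradiction. Your approach is more modular (it isolates the auxiliary supersolution as the only new ingredient and applies Lemma \ref{lem4.1} as a black box), while the paper's avoids introducing a growth weight at the cost of a more intricate iterative construction; both are correct, and your argument is arguably the more standard route for whole-space maximum principles under a boundedness hypothesis.
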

\begin{proof}
We may assume, without loss of generality, that $c_{i}\leq-1$. If the conclusion is not true, then there exists
$(\tau_{0},i_{0})$ such that $z(\tau_{0},i_{0})<0$ and exists $0<t_{0}\leq\tau_{0}$ such that
$z(t_{0},i_{0})=\min\limits_{t\in[0,\tau_{0}]}z(t,i_{0})$. Hence $\overset{.}z_{i_{0}}(t_{0})\leq0$.
Consider $\Omega_{0}=[0,t_{0}]\times\{i_{0},i_{0}\pm1\}$. By Corollary \ref{cor4.1}, there must be
$t\in(0,t_{0}],i\in\{i_{0}\pm1\}$ such that $z(t,i)<z(t_{0},i_{0})$. We divide the proof into
the following two cases.\\
Case 1: If $z(t_{1},i_{0}-1)=\min\limits_{\Omega_{0}}z(t,i)<z(t_{0},i_{0})<0$ for some
$t_{1}\in(0,t_{0}]$, then $\overset{.}z(t_{1},i_{0}-1)\leq0$. Consider $\Omega_{1}=[0,t_{1}]\times\{i_{0},i_{0}\pm1,i_{0}-2\}$. Still by Corollary \ref{cor4.1}, there must be $t_{2}\in(0,t_{1}]$ such that $z(t_{2},i_{0}-2)=\min\limits_{\Omega_{1}}z(t,i)<z(t_{1},i_{0}-1)<0$
and $\overset{.}z(t_{2},i_{0}-2)\leq0$. Then we obtain two sequences
$$0<\cdots\leq t_{n}\leq t_{n-1}\leq\cdots\leq t_{1}\leq t_{0}$$ and
$$0>z(t_{1},i_{0}-1)>\cdots>z(t_{n-1},i_{0}-(n-1))>z(t_{n},i_{0}-n)>\cdots$$
with $\overset{.}z(t_{n},i_{0}-n)\leq0$. By the boundedness of $z$,
$z_{i_{0}-n}(t_{n})$ converges as $n\to \infty$. Moreover,
\begin{equation*}
\begin{split}
z_{i_{0}-n+1}(t_{n})-z_{i_{0}-n}(t_{n})
&=z_{i_{0}-(n-1)}(t_{n})-z_{i_{0}-(n-1)}(t_{n-1})\\
 &\quad+z_{i_{0}-(n-1)}(t_{n-1})-z_{i_{0}-n}(t_{n})\\
&=\int_{t_{n-1}}^{t_{n}}\overset{.}z_{i_{0}-(n-1)}(t)dt+z_{i_{0}-(n-1)}(t_{n-1})-z_{i_{0}-n}(t_{n})\\
&\geq\int_{t_{n-1}}^{t_{n}}\big((Az)_{i_{0}-(n-1)}+c_{i_{0}-(n-1)}z_{i_{0}-(n-1)}\big)dt+z_{i_{0}-(n-1)}(t_{n-1})-z_{i_{0}-n}(t_{n})\\
&\geq-\int_{t_{n-1}}^{t_{n}}|(Az)_{i_{0}-(n-1)}+c_{i_{0}-(n-1)}z_{i_{0}-(n-1)}|dt-|z_{i_{0}-(n-1)}(t_{n-1})-z_{i_{0}-n}(t_{n})|\\
&\geq-(4D+\sup\limits_{i}|c_{i}|)\sup\limits_{t\in[0,t_{0}]}{\|z_{i}\|}_{{\ell}^{\infty}}(t_{n-1}-t_{n})\\
 &\quad-|z_{i_{0}-(n-1)}(t_{n-1})-z_{i_{0}-n}(t_{n})|.
\end{split}
\end{equation*}
Similarly,
\begin{equation}\label{l4.2.1}
\begin{split}
z_{i_{0}-n-1}(t_{n})-z_{i_{0}-n}(t_{n})
&\geq-(4D+\sup\limits_{i}|c_{i}|)\sup\limits_{t\in[0,t_{0}]}{\|z_{i}\|}_{{\ell}^{\infty}}(t_{n}-t_{n+1})\\
 &\quad-|z_{i_{0}-(n+1)}(t_{n+1})-z_{i_{0}-n}(t_{n})|.
\end{split}
\end{equation}
Then for any $n$, we have
$$0>z_{i_{0}}(t_{0})>-c_{i_{0}-n}z_{i_{0}-n}(t_{n})\geq (Az)_{i_{0}-n}-\overset{.}z(t_{n},i_{0}-n)
\geq (Az)_{i_{0}-n}$$
since $\overset{.}z(t_{n},i_{0}-n)\leq0$. Moreover,
\begin{equation*}
\begin{split}
(Az)_{i_{0}-n}&=d^{\prime}_{i_{0}-n}(z_{i_{0}-n+1}(t_{n})-z_{i_{0}-n}(t_{n}))
               +d_{i_{0}-n}(z_{i_{0}-n-1}(t_{n})-z_{i_{0}-n}(t_{n}))\\
&\geq-D((4D+\sup\limits_{i}|c_{i}|){\|z_{i}\|}_{{\ell}^{\infty}}(t_{n-1}-t_{n})
      +|z_{i_{0}-(n-1)}(t_{n-1})-z_{i_{0}-n}(t_{n})|)\\
&\quad-D((4D+\sup\limits_{i}|c_{i}|){\|z_{i}\|}_{{\ell}^{\infty}}(t_{n}-t_{n+1})+
      |z_{i_{0}-(n+1)}(t_{n+1})-z_{i_{0}-n}(t_{n})|).\\
\end{split}
\end{equation*}
The right hand side of the above inequality converges to 0 as $n\to \infty$, which is a contradiction!\\
Case 2: If $z(t_{1},i_{0}+1)=\min\limits_{\Omega_{0}}z(t,i)<z(t_{0},i_{0})<0$, where
$t_{1}\in(0,t_{0}]$, then one can still obtain a contradiction as before by a similar argument.
The proof is complete.
\end{proof}
\begin{remark}\label{re4.1}
Assume that $z$ is bounded on $I\times\mathbf{Z}$ for any bounded interval $I\subset[0,+\infty)$, and that
$z_{i}(t)$ is locally Lipschitz continuous on $[0,\infty)$ for any $i\in\mathbf{Z}$. We denote the left derivative by $\frac{d^{-}}{dt}$,
and assume that $\frac{d^{-}}{dt}z_{i}(t)$ exists for any $t>0$, $i\in\mathbf{Z}$. Moreover, we assume that $z$ satisfies
$$\left\{
   \begin{aligned}
 \frac{d^{-}}{dt}z_{i}(t)-(A(t)z)_{i}-c_{i}(t)z_{i}\geq0,\ & \ \ \ (0,+\infty)\times\mathbf{Z},\\
                                     z\geq0,\ & \ \ \ t=0,\\
   \end{aligned}
   \right.$$
where
$A(t):X_{-\infty}\rightarrow X_{-\infty}$ is defined by
$(A(t)\phi)_{i}=d^{\prime}_{i}(t)(\phi_{i+1}-\phi_{i})+d_{i}(t)(\phi_{i-1}-\phi_{i})$ with
 the coefficients $d^{\prime}_{i}=d^{\prime}_{i}(t),\ d_{i}=d_{i}(t),\ c_{i}=c_{i}(t)$
being continuous functions in $t$ and satisfying
$$0<\inf \limits_{(t,i)\in[0,+\infty)\times\mathbf{Z}}d_{i}(t)\leq
\sup \limits_{(t,i)\in[0,+\infty)\times\mathbf{Z}}d_{i}(t)<+\infty,$$
$$0<\inf \limits_{(t,i)\in[0,+\infty)\times\mathbf{Z}}d_{i}^{\prime}(t)\leq
\sup \limits_{(t,i)\in[0,+\infty)\times\mathbf{Z}}d_{i}^{\prime}(t)<+\infty,$$
and $\sup \limits_{(t,i)\in[0,+\infty)\times\mathbf{Z}}|c_{i}(t)|<+\infty$.
Replacing $\overset{.}z_{i}(t)$ by $\frac{d^{-}}{dt}z_{i}(t)$ in the proof of Lemma \ref{lem4.2},
one can still prove $z\geq0$ in $(0,+\infty)\times\mathbf{Z}$.
\end{remark}

\subsection{The first part of Theorem \ref{thm2.1}}

\begin{proof}[Proof of the first part of Theorem \ref{thm2.1}]
For any $\omega>\overline{\omega}$, i.e.,
$\omega>\min\limits_{p>0}\frac{\overline{H}(-p)}{p}$, there exist $p>0,n>0$ such that
$\overline{{\lambda}_{1}}(-p,n)<\omega p$. Hence for
$\delta\in(0,\omega p)$, there exists $\phi\in\mathcal{A}_{n}$ such that
$$(\mathcal{L}\phi^{(-p)})_{i}\leq(\omega p-\delta)\phi^{(-p)}_{i},\ \forall\ i>n,$$
where ${\phi}_{i}^{(p)}\in X_{n}$ with ${\phi}_{i}^{(p)}={e}^{pi}{\phi}_{i}$.
We may assume that ${\phi_{i}e}^{-pi}\geq u_{i}(0)$ for any $i\geq n$ and
${\phi_{n}e}^{-pn}>1$ through multiplying by a positive constant since
$\{i:u(0,i)\neq0\}$ is not empty and has finite elements.
Let $\psi(t)\in X_{n}$ with
$\psi_{i}(t)={\phi_{i}e}^{-pi+(\omega p-\delta)t}\geq{\phi_{i}e}^{-pi}$ for $i\geq n,\ t\geq0$. Then
$\psi_{n}(t)>1$ for $t\geq0$ and $\displaystyle{\lim_{i\rightarrow \infty}}\psi_{i}(t)=0$ locally uniformly
with respect to $t\in[0,\infty)$ since
$\displaystyle{\lim_{i\rightarrow \infty}}\frac{\ln{{\phi}_{i}}}{i}=0$. Moreover,
$$\overset{.}\psi_{i}(t)-(A\psi)_{i}-f_{s}^{\prime}(i,0)\psi_{i}\geq0.$$
Let
\begin{equation}\label{4.2.1}
{v}_{i}(t)=
\left\{
   \begin{aligned}
 1,\ & \ \ \forall i<n,\\
 \min\{1,\psi_{i}(t)\},\ & \ \ \forall i\geq n.\\
   \end{aligned}
   \right.
\end{equation}
Then $v_{i}(0)\geq u_{i}(0)$ for any $i\in\mathbf{Z}$,
${v}_{i}(t)\leq\psi_{i}(t)$ for all $i\geq n$ and $t\geq0$, and ${v_{i}(t)}$ is locally Lipschitz continuous in $t$ for $i\in\mathbf{Z}$.
Next, we will show that $v_{i}$ satisfies $\frac{d^{-}}{dt}v_{i}(t)-(Av)_{i}-f(i,v_{i})\geq0$ for any
$t>0, i\in\mathbf{Z}$.

For any $i\leq n$, $\overset{.}v_{i}(t)-(Av)_{i}-f(i,v_{i})=-(Av)_{i}=d^{\prime}_{i}(v_{i}-v_{i+1})=d^{\prime}_{i}(1-v_{i+1})\geq0$
since $v_{n}(t)=\min\{1,\psi_{n}(t)\}=1$.

For any $i>n$, we need to prove $\frac{d^{-}}{dt}v_{i}(t)-(Av)_{i}-f(i,v_{i})\geq0$. We divide the proof into two
cases.\\
Case 1: $\psi_{i}(t)\geq1$. In this case, $v_{i}(t)=1$. Hence $f(i,v_{i})=0$. Since $\psi_{i}(t)$ is
increasing in $t$, we have
$$\frac{d^{-}}{dt}{v}_{i}=\displaystyle{\lim_{\Delta t\rightarrow0^{-}}}
\frac{v_{i}(t+\Delta t)-v_{i}(t)}{\Delta t}=
\left\{
   \begin{aligned}
 \displaystyle{\lim_{\Delta t\rightarrow0^{-}}}
\frac{1-1}{\Delta t}=0,\ & \ \psi_{i}>1,\\
 \displaystyle{\lim_{\Delta t\rightarrow0^{-}}}
\frac{\psi_{i}(t+\Delta t)-1}{\Delta t}=\overset{.}\psi_{i}(t)>0,\ & \ \psi_{i}=1.\\
   \end{aligned}
   \right.$$
We also have
$$Av_{i}=d^{\prime}_{i}(v_{i+1}-v_{i})+d_{i}(v_{i-1}-v_{i})=d^{\prime}_{i}(v_{i+1}-1)+d_{i}(v_{i-1}-1)\leq0.$$
We have thus proved that $\frac{d^{-}}{dt}v_{i}(t)-(Av)_{i}-f(i,v_{i})\geq0$ in this case.\\
Case 2: $\psi_{i}(t)<1$. In this case, $v_{i}(s)=\psi_{i}(s)$ for $s\in[t-\delta,t]$, where $\delta>0$
is small enough. From this, we have $f(i,v_{i})=f(i,\psi_{i})$ and
$$\overset{.}v_{i}(t)=\displaystyle{\lim_{\Delta t\rightarrow0^{-}}}
\frac{\psi_{i}(t+\Delta t)-\psi_{i}(t)}{\Delta t}
=\overset{.}\psi_{i}(t)>0.$$
Moreover, $v_{i\pm1}(t)\leq\psi_{i\pm1}(t)$ by \eqref{4.2.1}, which yields
$$(Av)_{i}=d^{\prime}_{i}(v_{i+1}-v_{i})+d_{i}(v_{i-1}-v_{i})\leq d^{\prime}_{i}(\psi_{i+1}-v_{i})+d_{i}(\psi_{i-1}-v_{i})=(A\psi)_{i}.$$
Hence $\overset{.}v_{i}(t)-(Av)_{i}-f(i,v_{i})\geq\overset{.}\psi_{i}(t)-(A\psi)_{i}-f(i,\psi_{i})
\geq\overset{.}\psi_{i}(t)-(A\psi)_{i}-f_{s}^{\prime}(i,0)\psi_{i}\geq0.$

Therefore, $v_{i}$ satisfies $\frac{d^{-}}{dt}v_{i}(t)-(Av)_{i}-f(i,v_{i})\geq0$ for any
$t>0, i\in\mathbf{Z}$.
Now let us set $w_{i}(t)=v_{i}(t)-u_{i}(t)$. Then $w$ satisfies
$$\left\{
   \begin{aligned}
 \frac{d^{-}}{dt}w_{i}(t)-(Aw)_{i}-\hat{c}_{i}w_{i}\geq0,\ & \ \ \ (0,+\infty)\times\mathbf{Z},\\
                                     w_{i}(0)\geq0,\ & \ \ \ t=0,\\
   \end{aligned}
   \right.$$
where
$\hat{c}_{i}=
\left\{
   \begin{aligned}
 \frac{f(i,v_{i})-f(i,u_{i})}{v_{i}-u_{i}},\ & \ \ \ v_{i}\neq u_{i}\\
                                     0,\ & \ \ \ v_{i}=u_{i}\\
   \end{aligned}
    \right.$.
If $v_{i}\neq u_{i}$, then there exists some $\theta_{i}$ between $v_{i}$ and $u_{i}$ such that
$|\frac{f(i,v_{i})-f(i,u_{i})}{v_{i}-u_{i}}|=|f^{\prime}_{s}(i,\theta_{i})|\leq
\|f(i,\cdot)\|_{{C}^{1+\gamma}}\leq\sup\limits_{i\in\mathbf{Z}}\|f(i,0)\|_{{C}^{1+\gamma}}$. Hence
$\hat{c}_{i}\in\ell^{\infty}$ uniformly in $t\in\mathbf{R}$.
Thus we have $w\geq0$ by Remark \ref{re4.1}, i.e., $v_{i}(t)\geq u_{i}(t)$ for any
$(t,i)\in(0,+\infty)\times\mathbf{Z}$. Hence $\psi_{i}\geq v_{i}\geq u_{i}$
for $i\geq n$, which yields
$$\sup\limits_{i\geq\omega t}u_{i}(t)
=\sup\limits_{i\geq\omega t}{\phi_{i}e}^{-pi+(\omega p-\delta)t}
\leq\sup\limits_{i\geq\omega t}{\phi_{i}e}^{-pi+(\omega p-\delta)\frac{i}{\omega}}
=\sup\limits_{i\geq\omega t}{\phi_{i}e}^{\frac{\delta i}{\omega}}\to0$$
as $t\to+\infty$ since $\lim_{i\rightarrow +\infty}\frac{\ln{{\phi}_{i}}}{i}=0$.
\end{proof}
\subsection{Harnack inequality}
\begin{thm}\label{thm4.1}
Assume that $u$ is bounded on $[0,+\infty)\times\mathbf{Z}$
and solves equation \eqref{1.1}. Then for any $(t,i)\in[0,+\infty)\times\mathbf{Z},\ T>0$,
there exists a positive constant $\theta$ only depending on $T$ such that
\begin{equation}
u_{i}(t)\leq\theta(T)u_{j}(t+T),\ \ j\in\{i\pm1,i\}.
\end{equation}
\end{thm}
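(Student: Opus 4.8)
The plan is to reduce the statement to a few elementary one-component differential inequalities; no genuine Harnack machinery (Moser iteration and the like) is needed, since on the lattice the nearest-neighbour coupling transfers positivity instantaneously, which is exactly what makes such a one-step estimate cheap. I will use two facts: (i) the a priori bound $0\le u_i(t)\le 1$ for all $(t,i)$ --- the solution is trapped between the steady states $0$ and $1$, which is standard (compare $u$, and then $1-u$, with these constants via the maximum principle of Lemma \ref{lem4.2}, the boundedness hypothesis being precisely what makes Lemma \ref{lem4.2} applicable on all of $\mathbf{Z}$); and (ii) the sign condition $f(i,s)\ge 0$ for $s\in[0,1]$, immediate from the KPP hypothesis $0<\inf_i f(i,s)\le f(i,s)$ on $(0,1)$ together with $f(i,0)=f(i,1)=0$, so that $f(i,u_i(t))\ge 0$ throughout.

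Granting (i)--(ii), the case $j=i$ is immediate. Since $u_{i\pm1}\ge 0$ and $f(i,u_i)\ge 0$,
$$\dot u_i(t)=d'_i(u_{i+1}-u_i)+d_i(u_{i-1}-u_i)+f(i,u_i)\ge-(d'_i+d_i)u_i\ge-2Du_i ,$$
so $\frac{d}{dt}\big(e^{2Dt}u_i(t)\big)\ge 0$; in particular $u_i(t)\le e^{2DT}u_i(t+T)$, and, more usefully for the next step, $u_i(s)\ge e^{-2D(s-t)}u_i(t)$ for every $s\ge t$.

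For the neighbours take $j=i+1$. I will discard $f(i+1,u_{i+1})\ge 0$ and bound $d'_{i+1}(u_{i+2}-u_{i+1})\ge-Du_{i+1}$ and $d_{i+1}(u_i-u_{i+1})\ge\underline{D}u_i-Du_{i+1}$, getting $\dot u_{i+1}(s)\ge\underline{D}u_i(s)-2Du_{i+1}(s)$. Inserting the lower bound on $u_i$ from the previous paragraph turns this into $\frac{d}{ds}\big(e^{2Ds}u_{i+1}(s)\big)\ge\underline{D}e^{2Ds}u_i(s)\ge\underline{D}e^{2Dt}u_i(t)$; integrating over $[t,t+T]$ and dropping the nonnegative term $e^{2Dt}u_{i+1}(t)$ gives $u_{i+1}(t+T)\ge\underline{D}T\,e^{-2DT}u_i(t)$, i.e. $u_i(t)\le\frac{e^{2DT}}{\underline{D}T}u_{i+1}(t+T)$. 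The case $j=i-1$ is the mirror image, with the coefficients at the relevant site interchanged.

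Combining the three cases, $\theta(T):=e^{2DT}\max\{1,(\underline{D}T)^{-1}\}$ works, and it depends only on $T$ and the fixed structural constants $D,\underline{D}$, uniformly in $(t,i)$, as required. I expect the only step needing real care to be the a priori estimate $0\le u\le 1$ of (i): once it is in hand, everything else is a two-line ODE comparison, so the substance of the argument --- modest as it is --- lies in correctly invoking the maximum principle.
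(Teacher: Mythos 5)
Your proof is correct, and the final constant $\theta(T)=e^{2DT}\max\{1,(\underline{D}T)^{-1}\}$ coincides with the one the paper obtains. The core computation is identical in both arguments: the diagonal case $\dot x\ge-2Dx$ and the off-diagonal case $\dot y\ge\underline{D}x-2Dy$, integrated over $[t,t+T]$. What differs is the decomposition leading there. The paper first introduces the fundamental solution $z^{(i)}$ of the pure-diffusion problem $\dot z^{(i)}_j=(Az^{(i)})_j$ with Kronecker-delta initial data, uses Lemma \ref{lem4.2} (together with $f\ge0$) to obtain the comparison $u_i(t)\,z^{(i)}_j(s)\le u_j(t+s)$, and then derives precisely your two ODE inequalities for $z^{(i)}_i$ and $z^{(i)}_{i\pm1}$. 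You skip the kernel and bound $u_j$ itself from below, invoking Lemma \ref{lem4.2} only to pin $u$ into $[0,1]$. The two routes cost about the same and use the same hypotheses ($0\le u\le1$ and $f\ge0$); yours is a touch more economical for the stated $j\in\{i,i\pm1\}$, while the paper's factorization through $z^{(i)}$ is the version that would extend painlessly to a Harnack bound between $u_i(t)$ and $u_j(t+T)$ for $|j-i|$ larger (one just needs a lower bound on $z^{(i)}_j(T)$). One caveat worth making explicit: the a priori bound $0\le u\le1$ that you flag is used by the paper too, essentially as part of the meaning of ``solves \eqref{1.1}'' (indeed $f(i,\cdot)$ is only defined on $[0,1]$), so it is not an extra burden your argument incurs.
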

\begin{proof}
It is easy to see that the solution satisfying $0\leq u_{j}(s)\leq1$ for any
$(s,j)\in[0,\infty)\times\mathbf{Z}$ is unique by Lemma \ref{lem4.2}.\\

Fix $i\in\mathbf{Z}$.
Let $z^{(i)}:=z^{(i)}_{j}(s),\ j\in\mathbf{Z},\ s\in\mathbf{R},$ be a solution of
$$\left\{
\begin{aligned}
      \overset{.}z_{j}^{(i)}(s)-(Az^{(i)})_{j}=0,\ & \ \ \ (0,+\infty)\times\mathbf{Z},\\
       z_{j}^{(i)}(0)=
      \left\{
        \begin{aligned}
              1,\ & \ \ \ j=i,\\
              0,\ & \ \ \ j\neq i,\\
        \end{aligned}
      \right.
\end{aligned}
 \right.$$
with $0\leq z_{j}^{(i)}(s)\leq1$.
By Lemma \ref{lem4.2}, we have $u_{i}(t)z_{j}^{(i)}(s)\leq u_{j}(t+s)$ for any
$s,\ t\in[0,\infty)\ \text{and}\ i,j\in\mathbf{Z}$. In particular, $u_{i}(t)z_{j}^{(i)}(T)\leq u_{j}(t+T)$.
That is to say, we only need to show that $z_{j}^{(i)}(T)$ has a uniform lower bound with respect to $i\in\mathbf{Z},j\in\{i\pm1,i\}$. In fact,
\begin{equation}\label{t4.1.1}
\left\{
\begin{aligned}
 \overset{.}z_{i}^{(i)}(s)=d^{\prime}_{i}z_{i+1}^{(i)}(s)+d_{i}z_{i-1}^{(i)}(s)-(d^{\prime}_{i}+d_{i})z_{i}^{(i)}(s),\ \\
 \overset{.}z_{i+1}^{(i)}(s)=d^{\prime}_{i+1}z_{i+2}^{(i)}(s)+d_{i+1}z_{i}^{(i)}(s)-(d^{\prime}_{i+1}+d_{i+1})z_{i+1}^{(i)}(s),\
 \\
 \overset{.}z_{i-1}^{(i)}(s)=d^{\prime}_{i-1}z_{i}^{(i)}(s)+d_{i-1}z_{i-2}^{(i)}(s)-(d^{\prime}_{i-1}+d_{i-1})z_{i-1}^{(i)}(s).
\end{aligned}
\right.
\end{equation}
By the first equality of \eqref{t4.1.1} we have
$\overset{.}z_{i}^{(i)}(s)\geq-(d^{\prime}_{i}+d_{i})z_{i}^{(i)}(s)\geq-2Dz_{i}^{(i)}(s)$, and this gives
\begin{equation}\label{t4.1.2}
\frac{d}{ds}(z_{i}^{(i)}(s){e}^{2Ds})\geq0.
\end{equation}
Integrating \eqref{t4.1.2} from 0 to $s>0$,
we have $z_{i}^{(i)}(s)\geq{e}^{-2Ds}$ for any $s>0$ since $z_{i}^{(i)}(0)=1$. In particular, $z_{i}^{(i)}(T)\geq{e}^{-2DT}$. Hence
$$\overset{.}z_{i\pm1}^{(i)}(s)\geq \underline{D}z_{i}^{(i)}(s)-2Dz_{i\pm1}^{(i)}(s)
\geq \underline{D}{e}^{-2Ds}-2Dz_{i\pm1}^{(i)}(s)$$
by the last two equalities of \eqref{t4.1.1}.
These give
\begin{equation}\label{t4.1.3}
\frac{d}{ds}(z_{i\pm1}^{(i)}(s){e}^{2Ds})\geq \underline{D}.
\end{equation}
Integrating \eqref{t4.1.3} from 0 to $s>0$,
we have $z_{i\pm1}^{(i)}(s)\geq\underline{D}s{e}^{-2Ds}$ for any $s>0$ since $z_{i\pm1}^{(i)}(0)=0$. In particular,
$z_{i\pm1}^{(i)}(T)\geq \underline{D}T{e}^{-2DT}$.
Let $\theta(T)=\frac{1}{\min\{1,\underline{D}T\}}{e}^{2DT}$. Then we are done.
\end{proof}
\begin{cor}\label{cor4.2}
Assume that all the conditions of Lemma \ref{lem4.2} hold. Then $u_{i}(t)>0$ for $(t,i)\in(0,+\infty)\times\mathbf{Z}$.
\end{cor}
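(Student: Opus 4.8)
The plan is to upgrade the nonnegativity furnished by Lemma~\ref{lem4.2} to strict positivity by propagating positivity from a single lattice site via the Harnack inequality of Theorem~\ref{thm4.1}. The one extra ingredient that must be in force for the statement to be true is that the initial datum is not identically zero; this is automatic when $u$ solves \eqref{1.1}, since there $\{i:u(0,i)\neq 0\}$ is nonempty, so fix a site $i_{0}$ with $u_{i_{0}}(0)>0$. The hypotheses of Lemma~\ref{lem4.2} give $u_{i}(t)\geq 0$ on $[0,+\infty)\times\mathbf{Z}$, and for a solution of \eqref{1.1} one moreover has $0\leq u_{i}(t)\leq 1$, so $u$ is bounded on $[0,+\infty)\times\mathbf{Z}$ and Theorem~\ref{thm4.1} applies.

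Next I would iterate the inequality $u_{i}(t)\leq\theta(T)\,u_{j}(t+T)$ for $j\in\{i-1,i,i+1\}$. Starting from $u_{i_{0}}(0)>0$, a single application with $t=0$ gives $u_{j}(T)>0$ for $j\in\{i_{0}-1,i_{0},i_{0}+1\}$; applying it again to each of these sites gives $u_{m}(2T)>0$ for all $m$ with $|m-i_{0}|\leq 2$; and after $n$ applications, $u_{m}(nT)>0$ whenever $|m-i_{0}|\leq n$, since each step moves the ``positivity front'' only one lattice site. Now let $(t,i)$ with $t>0$ be arbitrary: choose an integer $n\geq|i-i_{0}|$ and put $T:=t/n>0$; then $nT=t$ and $|i-i_{0}|\leq n$, so the iteration yields $u_{i}(t)>0$. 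As $(t,i)$ was arbitrary, this proves $u_{i}(t)>0$ on $(0,+\infty)\times\mathbf{Z}$.

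The only point requiring extra care is that Theorem~\ref{thm4.1} is stated for solutions of the nonlinear problem \eqref{1.1} rather than for a general supersolution $u$ of the linear inequality of Lemma~\ref{lem4.2}. If the conclusion is wanted at that level of generality (together with the nontriviality of $u(0,\cdot)$, which is then a genuinely needed hypothesis), I would first compare $u$ from below with $u_{i_{0}}(0)$ times the fundamental solution $Z^{(i_{0})}$ of the linear \emph{equation} associated with Lemma~\ref{lem4.2}, normalized by $Z^{(i_{0})}_{j}(0)=1$ for $j=i_{0}$ and $0$ otherwise (this comparison being licensed by Lemma~\ref{lem4.2}), and then rerun the elementary ODE estimates of the proof of Theorem~\ref{thm4.1} carrying the extra term $c_{j}Z_{j}$: using $\sup_{j}|c_{j}|<+\infty$ one gets $Z^{(i)}_{i}(s)\geq e^{-(2D+\sup_{j}|c_{j}|)s}$ and $Z^{(i)}_{i\pm1}(s)\geq\underline{D}\,s\,e^{-(2D+\sup_{j}|c_{j}|)s}$, hence a uniform positive lower bound for $Z^{(i)}_{j}(T)$, $j\in\{i\pm1,i\}$, after which the iteration above runs verbatim. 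I expect this short re-derivation, rather than the propagation step, to be the main part of the work.
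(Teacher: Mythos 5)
Your proof is correct and uses essentially the same idea as the paper: iterating the Harnack inequality of Theorem~\ref{thm4.1} with step $T=t/n$ to propagate positivity across one lattice site per time step. The only cosmetic difference is that you run the iteration forward from the positive initial site $i_{0}$, whereas the paper argues by contradiction and pushes a hypothetical zero at $(s,j)$ backward in time to $t=0$; both are the same argument.
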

\begin{proof}
If not, then there exists $(s,j)\in(0,+\infty)\times\mathbf{Z}$
such that $u(s,j)=0$. Let $i\in\mathbf{Z}$ with $u_{i}(0)\neq 0$, and take integer $N\geq|j-i|$ and $T=\frac{s}{N}$.
By Theorem \ref{thm4.1}, one has $u(s-T,k_{1})=0$ for $k_{1}\in\{j\pm1,j\}$ and $u(s-2T,k_{2})=0$ for
$k_{2}\in\{j\pm2,j\pm1,j\}$, $\cdots$, $u(0,k_{N})=0$ for $k_{N}\in\{j\pm N,j\pm (N-1),\cdots,j\pm1,j\}$.
In particular, $u(0,i)=0$, a contradiction!
\end{proof}

\subsection{Homogenization techniques for the lattice system}

In order to show the second part of Theorem \ref{thm2.1}, we will first use homogenization techniques to consider
the behavior of $v_{\varepsilon}(t,x)=\widetilde{u}(\frac{t}{\varepsilon},\frac{x}{\varepsilon}):=
u(\frac{t}{\varepsilon},[\frac{x}{\varepsilon}])$ as $\varepsilon\to 0$. For this reason, we define
$$\widetilde{u}(t,x):=u(t,[x]),\ \ \widetilde{d^{\prime}}(x):=d^{\prime}_{[x]},\ \ \widetilde{d}(x):=d_{[x]},\ \ \widetilde{f}(x,u):=f([x],u),$$
$$v_{\varepsilon}(t,x)=\widetilde{u}(\frac{t}{\varepsilon},\frac{x}{\varepsilon}),\ \
z_{\varepsilon}(t,x):=\varepsilon\ln{v_{\varepsilon}(t,x)}$$
for any $(t,x)\in(0,+\infty)\times\mathbf{R}$. Note that $z_{\varepsilon}(t,x)$ is well defined by Corollary \ref{cor4.2}. Moreover, $\widetilde{u}$ satisfies
\begin{equation}
  \left\{
   \begin{aligned}
   \partial_{t}\widetilde{u}(t,x)=\widetilde{d^{\prime}}(x)(\widetilde{u}(t,x+1)-\widetilde{u}(t,x))
   +\widetilde{d}(x)(\widetilde{u}(t,x-1)-\widetilde{u}(t,x))+\widetilde{f}(x,\widetilde{u}),\\
   0\leq\widetilde{u}(0,x)\leq1\ \text{with compact support}.\\
   \end{aligned}
   \right.
\end{equation}

\begin{thm}\label{thm4.2}
For any compact set $Q\subseteq(0,+\infty)\times\mathbf{R}$, there exist constants $c>0$ and $0<\varepsilon_{0}\leq1$ depending on $Q$ such that $|z_{\varepsilon}(t,x)|\leq c$ for any $\varepsilon\in(0,\varepsilon_{0}),(t,x)\in Q$.
\end{thm}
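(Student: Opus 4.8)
The plan is to reduce the estimate to a quantitative lower bound on the solution $u$ and then obtain that bound by iterating the Harnack inequality of Theorem~\ref{thm4.1}. Since $0\le u_{i}(s)\le1$ for every $(s,i)\in[0,+\infty)\times\mathbf{Z}$, we have $0<v_{\varepsilon}\le1$ and therefore $z_{\varepsilon}\le0$ on $(0,+\infty)\times\mathbf{R}$; hence it is enough to bound $z_{\varepsilon}$ from below on $Q$, that is, to find $c>0$ with $u_{[x/\varepsilon]}(t/\varepsilon)\ge \me^{-c/\varepsilon}$ for all $(t,x)\in Q$ and all small $\varepsilon$. Using that $Q$ is a compact subset of the open set $(0,+\infty)\times\mathbf{R}$, fix constants $\tau_{\min},\tau_{\max},R>0$ with $\tau_{\min}\le t\le\tau_{\max}$ and $|x|\le R$ for every $(t,x)\in Q$, and fix once and for all an index $i_{0}$ with $a_{0}:=u_{i_{0}}(0)>0$, which exists since $\{i:u(0,i)\neq0\}$ is nonempty (note $0<a_{0}\le1$).

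Fix $(t,x)\in Q$ and $\varepsilon\in(0,1)$, and write $s:=t/\varepsilon$, $i:=[x/\varepsilon]$ and $m:=|i-i_{0}|$. Since $|i|\le R/\varepsilon+1$ and $s\ge\tau_{\min}/\varepsilon$, one has $m\le R/\varepsilon+1+|i_{0}|\le\rho s$, where $\rho:=(R+1+|i_{0}|)/\tau_{\min}\ge1/\tau_{\min}$ depends only on $Q$ and $i_{0}$. Set $N:=\lceil\rho s\rceil$ and $T_{\varepsilon}:=s/N$. Then $N$ is a positive integer with $N\ge m$, and, because $\rho s\ge\rho\tau_{\min}/\varepsilon\ge1$, we have $\tfrac1{2\rho}\le\tfrac{s}{\rho s+1}\le T_{\varepsilon}\le\tfrac1{\rho}$. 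Recalling $\theta(T)=\me^{2DT}/\min\{1,\underline{D}T\}$ from Theorem~\ref{thm4.1}, it follows that $\theta(T_{\varepsilon})\le\Theta$, where $\Theta:=\me^{2D/\rho}/\min\{1,\underline{D}/(2\rho)\}\ge1$ depends only on $D$, $\underline{D}$ and $\rho$.

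Now apply Theorem~\ref{thm4.1} repeatedly, with common time-step $T_{\varepsilon}$, along a chain of $N$ steps starting from $(0,i_{0})$: in the first $m$ steps move the spatial index by one unit at a time towards $i$, and in the remaining $N-m$ steps keep the index equal to $i$ (allowed since $j=i\in\{i\pm1,i\}$). Because $NT_{\varepsilon}=s$, this yields
\[
a_{0}=u_{i_{0}}(0)\le\theta(T_{\varepsilon})^{m}\,\theta(T_{\varepsilon})^{N-m}\,u_{i}(NT_{\varepsilon})=\theta(T_{\varepsilon})^{N}\,u_{i}(s)\le\Theta^{N}u_{i}(s),
\]
where $u_{i}(s)=v_{\varepsilon}(t,x)$ is well defined and positive by Corollary~\ref{cor4.2}. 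Hence $u_{i}(s)\ge\Theta^{-N}a_{0}$, so that
\[
z_{\varepsilon}(t,x)=\varepsilon\ln u_{i}(s)\ge-\varepsilon N\ln\Theta+\varepsilon\ln a_{0}.
\]
Since $\varepsilon N\le\varepsilon(\rho s+1)=\rho t+\varepsilon\le\rho\tau_{\max}+1$ and $\varepsilon\ln a_{0}\ge\ln a_{0}$ (because $0<a_{0}\le1$ and $\varepsilon\le1$), we get $z_{\varepsilon}(t,x)\ge-(\rho\tau_{\max}+1)\ln\Theta+\ln a_{0}$. Setting $c:=(\rho\tau_{\max}+1)\ln\Theta-\ln a_{0}+1>0$ and combining with $z_{\varepsilon}\le0\le c$ gives $|z_{\varepsilon}(t,x)|\le c$ for all $(t,x)\in Q$; one may take $\varepsilon_{0}=1$.

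The one point requiring care is the choice of the mesh $T_{\varepsilon}$ in the Harnack chain: it must be small enough that the number of steps $N=s/T_{\varepsilon}$ exceeds the spatial displacement $m$, and at the same time bounded away from $0$ and $\infty$ so that $\theta(T_{\varepsilon})$ is uniformly controlled and the product $\varepsilon N\ln\theta(T_{\varepsilon})$ does not blow up as $\varepsilon\to0$. This balancing is possible exactly because compactness of $Q$ inside $(0,+\infty)\times\mathbf{R}$ keeps $t$ bounded away from $0$ and makes the ratio $m/s$ bounded uniformly in $(t,x)\in Q$ and in $\varepsilon\in(0,1)$.
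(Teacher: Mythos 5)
Your proof is correct, and it is essentially the approach the paper intends: the paper simply states that the proof follows \cite[Lemma 4.1]{B2} using Theorem~\ref{thm4.1}, which is exactly the Harnack-chaining argument you carry out. You have usefully made explicit the choice of a uniform time-step $T_{\varepsilon}\in[1/(2\rho),1/\rho]$ and the counting that shows the exponent $\varepsilon N\ln\Theta$ stays bounded on $Q$, which is precisely the content the paper leaves to the reference.
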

\begin{proof}
The proof is similar to \cite[Lemma 4.1]{B2} by using Theorem \ref{thm4.1}.
\end{proof}
By Theorem \ref{thm4.2}, one can define
$$z_{\ast}(t,x)=\displaystyle{\liminf_{(s,y)\rightarrow(t,x),\varepsilon\to0}}z_{\varepsilon}(s,y)\leq0.$$
In the following content of this section, we will find a Hamilton-Jacobi equation which is related to
$z_{\ast}.$
\begin{lemma}\label{lem4.3}
$\displaystyle{\liminf_{\varepsilon\to0}}v_{\varepsilon}(t,x)>0$ for any
$(t,x)\in int\{z_{\ast}=0\}$.
\end{lemma}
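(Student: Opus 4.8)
The plan is to argue by contradiction, adapting the barrier construction of \cite{B2} to the lattice. Suppose $(t_0,x_0)\in\mathrm{int}\{z_\ast=0\}$ while $\liminf_{\varepsilon\to0}v_\varepsilon(t_0,x_0)=0$, and fix $\varepsilon_n\downarrow0$ with $v_{\varepsilon_n}(t_0,x_0)\to0$. The first step converts the hypothesis into a pointwise bound: since $z_\ast\le0$ everywhere and $z_\ast\equiv0$ on a compact neighbourhood of $(t_0,x_0)$, the definition $z_\ast=\liminf_\ast z_\varepsilon$ together with a finite-subcover argument gives, for every $\eta>0$, some $\delta_\eta>0$ such that $u_i(\sigma)\ge e^{-\eta/\varepsilon}$ whenever $\varepsilon\in(0,\delta_\eta)$, $|i\varepsilon-x_0|\le2\ell$ and $|\sigma\varepsilon-t_0|\le2\tau$, where $\ell,\tau>0$ are fixed small enough that $[x_0-2\ell,x_0+2\ell]\times[t_0-2\tau,t_0]\subset\mathrm{int}\{z_\ast=0\}$.

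Next I would construct a subsolution of the unscaled equation \eqref{1.1} on a long, thin space--time window that starts from the tiny value $e^{-\eta/\varepsilon}$ and grows to a fixed height $\kappa$ at the centre. Set $c_\ast:=\inf_i f_s'(i,0)$, which is positive since $f_s'(i,0)\ge f(i,\tfrac12)/\tfrac12\ge2\inf_j f(j,\tfrac12)>0$ for every $i$; using $f(i,\cdot)\in\mathcal C^{1+\gamma}$ uniformly and $f(i,0)=0$, fix $\kappa\in(0,1)$, independent of $i$, with $f(i,s)\ge\bigl(f_s'(i,0)-\tfrac{c_\ast}{8}\bigr)s$ for $s\in[0,\kappa]$; put $\lambda_0:=c_\ast/4$ and choose $r\in(0,1)$ with $(1-r)D\le c_\ast/2$. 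On the window $W_n:=\{\,i:|i\varepsilon_n-x_0|\le\ell\,\}$, of half-width $N_n\approx\ell/\varepsilon_n$ about its centre $j_n:=[x_0/\varepsilon_n]$, let $\Psi^{(n)}$ be the profile equal to $1$ on $\{\,i:|i-j_n|\le N_n/2\,\}$ and decaying geometrically with ratio $r$ to the endpoints of $W_n$, so that $\Psi^{(n)}\le r^{N_n/2}$ at the two extreme sites, and set
\[
\underline u_i(\sigma):=e^{-\eta_n/\varepsilon_n}\,\Psi^{(n)}_i\,e^{\lambda_0(\sigma-\sigma_0)},\qquad \sigma_0:=\tfrac{t_0-\tau}{\varepsilon_n},\quad \eta_n:=\lambda_0\tau-\varepsilon_n\ln\kappa .
\]
The choice of $\eta_n$ (so $\eta_n>\lambda_0\tau$ and $\eta_n\to\lambda_0\tau$) forces $\underline u_{j_n}(t_0/\varepsilon_n)=\kappa$ while $\underline u\le\kappa$ throughout $W_n\times[\sigma_0,t_0/\varepsilon_n]$, so that no truncation is needed. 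Computing $(A\Psi^{(n)})_i/\Psi^{(n)}_i$ (which is $0$ on the flat part and $\ge-(1-r)D$ on the decaying part) and using $f_s'(i,0)\ge c_\ast$ with the choices of $\lambda_0,r,\kappa$, one checks $\dot{\underline u}_i\le(A\underline u)_i+f(i,\underline u_i)$ on the interior; on the lateral boundary $\underline u\le\kappa r^{N_n/2}\le e^{-\eta''/\varepsilon_n}$ with $\eta'':=\tfrac{\ell}{4}|\ln r|>0$, and on the bottom $\underline u\le e^{-\eta_n/\varepsilon_n}\le e^{-\lambda_0\tau/(2\varepsilon_n)}$.

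By Step~1 applied with $\eta=\eta''$ and with $\eta=\lambda_0\tau/2$, for $n$ large $u\ge\underline u$ on the entire parabolic boundary of $W_n\times[\sigma_0,t_0/\varepsilon_n]$. Writing $f(i,u_i)-f(i,\underline u_i)=\hat c_i(u_i-\underline u_i)$ with $\hat c_i\in\ell^\infty$ (mean value theorem; $f\in\mathcal C^1$), the comparison principle (Lemma~\ref{lem4.1}/Corollary~\ref{cor4.1}) gives $u\ge\underline u$ on the whole window, whence $v_{\varepsilon_n}(t_0,x_0)=u_{j_n}(t_0/\varepsilon_n)\ge\underline u_{j_n}(t_0/\varepsilon_n)=\kappa>0$, contradicting $v_{\varepsilon_n}(t_0,x_0)\to0$.

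The step I expect to be the crux is the simultaneous reconciliation of the constraints on $\underline u$: it must stay below the exponentially small barrier $e^{-\eta/\varepsilon}$ on the parabolic boundary of a window that is only $O(1/\varepsilon_n)$ sites wide, and yet reach the $\varepsilon$-independent value $\kappa$ at the centre within the macroscopic time $\tau$. The geometric-decay profile, which makes $\underline u$ exponentially small at the lateral boundary for free, together with the amplitude $e^{-\eta_n/\varepsilon_n}$ tuned so that $\eta_n\approx\lambda_0\tau$, is what makes these requirements compatible; verifying the subsolution inequality on the decaying part, where the discrete operator $A$ contributes unfavourably, is what pins down the admissible range of $r$ in terms of $c_\ast$ and $D$ and is the place where the discreteness of the medium genuinely enters.
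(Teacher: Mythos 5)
Your proof is correct, but it takes a genuinely different route from the paper's. The paper's argument is a short viscosity-solution/test-function computation: after upgrading $z_\ast\equiv0$ near $(t_0,x_0)$ to locally uniform convergence $z_\varepsilon\to0$ on $B_\delta(t_0,x_0)$, it takes $\phi(t,x)=-(|t-t_0|^2+|x-x_0|^2)$, lets $(t_\varepsilon,x_\varepsilon)\to(t_0,x_0)$ be a minimiser of $z_\varepsilon-\phi$ over $B_\delta(t_0,x_0)$, and feeds the minimum conditions into the exact discrete Hamilton--Jacobi identity for $z_\varepsilon$ to get
$0<f([\tfrac{x_\varepsilon}{\varepsilon}],v_\varepsilon)/v_\varepsilon\le \partial_t\phi-\widetilde{d^{\prime}_{\varepsilon}}(e^{\partial^+_\varepsilon\phi}-1)-\widetilde{d}_{\varepsilon}(e^{\partial^-_\varepsilon\phi}-1)\to0$;
combined with the $\mathcal C^{1+\gamma}$ lower bound $f(i,s)/s\ge f'_s(i,0)-Cs^\gamma$ and $\inf_i f'_s(i,0)>0$, this forces $\liminf_\varepsilon v_\varepsilon(t_\varepsilon,x_\varepsilon)>0$, and the minimum property transfers the bound to $(t_0,x_0)$. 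You replace all of this by an explicit barrier and the discrete comparison principle: a plateau with geometric tails, amplified in time by $e^{\lambda_0\sigma}$, with amplitude $e^{-\eta_n/\varepsilon_n}$ tuned so that it sits below the a priori bound $u_i\ge e^{-\eta/\varepsilon}$ (which you derive from the same locally uniform convergence $z_\varepsilon\to 0$) on the parabolic boundary of the microscopic window yet reaches the $\varepsilon$-free level $\kappa$ at the centre at time $t_0/\varepsilon_n$. Both arguments are sound and both ultimately rest on the two facts that $z_\varepsilon\to0$ locally uniformly and that $f(i,s)/s$ is bounded below near $s=0$. The paper's is shorter and cleaner, using only the pointwise equation for $z_\varepsilon$ and the minimum conditions; yours is longer and more hands-on but more quantitative (it extracts an explicit lower bound $\kappa$ independent of $\varepsilon$), and it makes the role of the discrete medium transparent: the constraint $(1-r)D\le c_\ast/2$, which ensures the discrete operator $A$ does not overwhelm the reaction term along the geometrically decaying tail of $\Psi^{(n)}$, is exactly where discreteness enters.
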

\begin{proof}
For any $(t_{0},x_{0})\in int\{z_{\ast}=0\}$, there exists some $\delta>0$ such that
$z_{\ast}(t,x)=0$ for any $(t,x)\in B_{2\delta}(t_{0},x_{0}):=\{(t,x):|t-t_{0}|+|x-x_{0}|<2\delta\}$.
Hence $z_{\varepsilon}(t,x)\to0$ as $\varepsilon\to0$ uniformly in $B_{\delta}(t_{0},x_{0})$. In fact,
if there are sequences $\varepsilon_{n}$ and $(t_{n},x_{n})\in B_{\delta}(t_{0},x_{0})$ such that $|z_{\varepsilon_{n}}(t_{n,}x_{n})|>\eta>0$, i.e., $z_{\varepsilon_{n}}(t_{n,}x_{n})<-\eta$,
then there exists a subsequence still denoted by $(\varepsilon_{n},t_{n},x_{n})$ converging to
$(0,\overline{t},\overline{x})$ with $(\overline{t},\overline{x})\in\overline{B_{\delta}(t_{0}x_{0})}\subseteq B_{2\delta}(t_{0},x_{0})$,
which yields
$0=\displaystyle{\liminf_{(s,y)\rightarrow(t,x),\varepsilon\to0}}z_{\varepsilon}(s,y)
\leq\displaystyle{\liminf_{n\to0}}z_{\varepsilon_{n}}(t_{n,}x_{n})<-\eta$. This is a contradiction.
Now consider $\phi(t,x)=-({|t-t_{0}|}^{2}+{|x-x_{0}|}^{2})$. Then $z_{\varepsilon}(t,x)-\phi(t,x)$ reaches
its minimum at some point, say $(t_{\varepsilon},x_{\varepsilon})$, over $B_{\delta}(t_{0},x_{0})$ and $(t_{\varepsilon},x_{\varepsilon})\to(t_{0},x_{0})$ as $\varepsilon\to0$ since
$z_{\varepsilon}(t,x)\to0$ as $\varepsilon\to0$ uniformly in $B_{\delta}(t_{0},x_{0})$.
By our setting and \eqref{1.1}, $z_{\varepsilon}(t,x)$ satisfies:
$$\overset{.}z_{\varepsilon}(t,x)=
\widetilde{d^{\prime}_{\varepsilon}}(x)({e}^{{\partial}^{+}_{\varepsilon}z_{\varepsilon}}-1)+
\widetilde{d}_{\varepsilon}(x)({e}^{{\partial}^{-}_{\varepsilon}z_{\varepsilon}}-1)+
\frac{\widetilde{f}(\frac{x}{\varepsilon},v_{\varepsilon})}{v_{\varepsilon}},$$
where $\widetilde{d^{\prime}_{\varepsilon}}(x)=\widetilde{d^{\prime}}(\frac{x}{\varepsilon}),\
\widetilde{d}_{\varepsilon}(x)=\widetilde{d}(\frac{x}{\varepsilon})$,
${\partial}^{\pm}_{\varepsilon}z_{\varepsilon}
=\frac{z_{\varepsilon}(t,x\pm\varepsilon)-z_{\varepsilon}(t,x)}{\varepsilon}$. Also noting that
at $(t_{\varepsilon},x_{\varepsilon})$, we have
$$\overset{.}z_{\varepsilon}(t,x)-{\partial}_{t}\phi(t,x)=0,$$
$${{\partial}^{\pm}_{\varepsilon}z_{\varepsilon}}-{{\partial}^{\pm}_{\varepsilon}\phi}\geq0.$$
Hence at $(t_{\varepsilon},x_{\varepsilon})$
\begin{equation*}
\begin{split}
{\partial}_{t}\phi(t,x)=
\widetilde{d^{\prime}_{\varepsilon}}(x)({e}^{{\partial}^{+}_{\varepsilon}z_{\varepsilon}}-1)+
\widetilde{d}_{\varepsilon}(x)({e}^{{\partial}^{-}_{\varepsilon}z_{\varepsilon}}-1)+
\frac{\widetilde{f}(\frac{x}{\varepsilon},v_{\varepsilon})}{v_{\varepsilon}}\\
\geq\widetilde{d^{\prime}_{\varepsilon}}(x)({e}^{{\partial}^{+}_{\varepsilon}\phi_{\varepsilon}}-1)+
\widetilde{d}_{\varepsilon}(x)({e}^{{\partial}^{-}_{\varepsilon}\phi_{\varepsilon}}-1)+
\frac{\widetilde{f}(\frac{x}{\varepsilon},v_{\varepsilon})}{v_{\varepsilon}}.
\end{split}
\end{equation*}
Then at $(t_{\varepsilon},x_{\varepsilon})$ we obtain
$$0<\frac{\widetilde{f}(\frac{x}{\varepsilon},v_{\varepsilon})}{v_{\varepsilon}}
\leq{\partial}_{t}\phi(t,x)-
\widetilde{d^{\prime}_{\varepsilon}}(x)({e}^{{\partial}^{+}_{\varepsilon}{\phi}_{\varepsilon}}-1)-
\widetilde{d}_{\varepsilon}(x)({e}^{{\partial}^{-}_{\varepsilon}{\phi}_{\varepsilon}}-1)\to0$$
as $\varepsilon\to0$. Since
$f(i,\cdot)\in\mathcal{C}^{1+\gamma}([0,1])$ uniformly with respect to $i\in\mathbf{Z}$, there exists $C>0$ such that
$$\frac{\widetilde{f}(\frac{x_{\varepsilon}}{\varepsilon},v_{\varepsilon})}{v_{\varepsilon}}
=\frac{f([\frac{x_{\varepsilon}}{\varepsilon}],v_{\varepsilon})}{v_{\varepsilon}}
\geq f^{\prime}_{s}([\frac{x_{\varepsilon}}{\varepsilon}],0)-Cv^{\gamma}_{\varepsilon},$$
which yields
$$0<f^{\prime}_{s}([\frac{x_{\varepsilon}}{\varepsilon}],0)
\leq Cv^{\gamma}_{\varepsilon}+o(1)\ \text{as}\ \varepsilon\to 0^{+}.$$
Then we have
$${C}^{\frac{1}{\gamma}}\liminf\limits_{\varepsilon\to0}v_{\varepsilon}(t_{\varepsilon},x_{\varepsilon})
\geq{(\inf\limits_{i}f^{\prime}_{s}(i,0))}^{\frac{1}{\gamma}}>0,$$
and the last inequality follows from $0<\inf \limits_{i}f(i,s)\leq f(i,s)\leq f_{s}^{\prime}(i,0)s$ for any $s\in(0,1)$.
Furthermore, by the definition of $(t_{\varepsilon},x_{\varepsilon})$, we conclude
$$z_{\varepsilon}(t_{\varepsilon},x_{\varepsilon})\leq
z_{\varepsilon}(t_{\varepsilon},x_{\varepsilon})-\phi(t_{\varepsilon},x_{\varepsilon})\leq
z_{\varepsilon}(t_{0},x_{0})-\phi(t_{0},x_{0})=z_{\varepsilon}(t_{0},x_{0}),$$
thus
$\liminf\limits_{\varepsilon\to0}v_{\varepsilon}(t_{0},x_{0})\geq
\liminf\limits_{\varepsilon\to0}v_{\varepsilon}(t_{\varepsilon},x_{\varepsilon})>0$.
\end{proof}
\begin{lemma}\label{lem4.4}
The lower semi-continuous function $z_{\ast}$ is a viscosity supersolution of
$$\max\{\partial_{t}z_{\ast}-\underline{H}(\partial_{x}z_{\ast}),z_{\ast}\}\geq0\ \ \ \
(t,x)\in(0,+\infty)\times(0,+\infty).$$
\end{lemma}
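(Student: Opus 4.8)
The plan is to use the perturbed test function method, in the spirit of \cite{B2}, the corrector now produced by the half-line generalized principal eigenvalues of Definition~\ref{def2.1}. Let $\phi\in\mathcal{C}^{\infty}$ be such that $z_{\ast}-\phi$ has a local minimum at some $(t_0,x_0)\in(0,+\infty)\times(0,+\infty)$ with $z_{\ast}(t_0,x_0)=\phi(t_0,x_0)$; subtracting $|t-t_0|^{2}+|x-x_0|^{2}$ from $\phi$ we may assume the minimum is strict. Since $z_{\ast}\leq0$, if $z_{\ast}(t_0,x_0)=0$ then $\max\{\partial_{t}\phi-\underline{H}(\partial_{x}\phi),z_{\ast}\}(t_0,x_0)\geq z_{\ast}(t_0,x_0)=0$ and there is nothing to prove; so assume $z_{\ast}(t_0,x_0)<0$ and set $p:=\partial_{x}\phi(t_0,x_0)$. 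It then suffices to prove $\partial_{t}\phi(t_0,x_0)\geq\underline{H}(p)$.

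Fix $\delta>0$. Since $\underline{H}(p)=\lim_{n\to+\infty}\underline{{\lambda}_{1}}(p,n)$ is an increasing limit by \eqref{2.2}, choose $N\in\mathbf{Z}$ with $\underline{{\lambda}_{1}}(p,N)>\underline{H}(p)-\delta$; by Definition~\ref{def2.1} there is then $\psi\in\mathcal{A}_{N}$ with $(L_{p}\psi)_{i}\geq(\underline{H}(p)-2\delta)\psi_{i}$ for all $i>N$, equivalently $(\mathcal{L}\Psi)_{i}\geq(\underline{H}(p)-2\delta)\Psi_{i}$ for $i>N$, where $\Psi_{i}:=e^{pi}\psi_{i}>0$. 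Introduce the perturbed test function $\phi^{\varepsilon}(t,x):=\phi(t,x)+\varepsilon\ln\psi_{[x/\varepsilon]}$. Because $\psi\in\mathcal{A}_{N}$ forces $\ln\psi_{i}/i\to0$, the corrector $\varepsilon\ln\psi_{[x/\varepsilon]}=\varepsilon[x/\varepsilon]\cdot(\ln\psi_{[x/\varepsilon]}/[x/\varepsilon])$ tends to $0$ uniformly for $x$ in compact subsets of $(0,+\infty)$ --- this is precisely where the restriction $x_0>0$ enters, and also why a half-line eigenfunction (valid only for $i>N$) is enough, since $[x/\varepsilon]>N$ near $x_0$ once $\varepsilon$ is small. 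Hence $\phi^{\varepsilon}\to\phi$ locally uniformly, and since $z_{\ast}$ is the relaxed lower limit of the $z_{\varepsilon}$ (well defined by Corollary~\ref{cor4.2} and Theorem~\ref{thm4.2}), a standard half-relaxed-limits argument provides, along a subsequence, points $(t_{\varepsilon},x_{\varepsilon})\to(t_0,x_0)$ at which $z_{\varepsilon}-\phi^{\varepsilon}$ attains a local minimum and $z_{\varepsilon}(t_{\varepsilon},x_{\varepsilon})\to z_{\ast}(t_0,x_0)<0$; in particular $v_{\varepsilon}(t_{\varepsilon},x_{\varepsilon})=e^{z_{\varepsilon}(t_{\varepsilon},x_{\varepsilon})/\varepsilon}\to0$.

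Next, evaluate the identity satisfied by $z_{\varepsilon}$ (the one displayed in the proof of Lemma~\ref{lem4.3}) at $(t_{\varepsilon},x_{\varepsilon})$. Since $\phi^{\varepsilon}$ touches $z_{\varepsilon}$ from below there and the corrector does not depend on $t$, one has $\overset{.}z_{\varepsilon}=\partial_{t}\phi$ and $\partial^{\pm}_{\varepsilon}z_{\varepsilon}\geq\partial^{\pm}_{\varepsilon}\phi^{\varepsilon}$ at $(t_{\varepsilon},x_{\varepsilon})$; as $\widetilde{d^{\prime}_{\varepsilon}},\widetilde{d}_{\varepsilon}>0$ and the exponential is increasing,
$$\partial_{t}\phi(t_{\varepsilon},x_{\varepsilon})\geq\widetilde{d^{\prime}_{\varepsilon}}(x_{\varepsilon})\bigl(e^{\partial^{+}_{\varepsilon}\phi^{\varepsilon}}-1\bigr)+\widetilde{d}_{\varepsilon}(x_{\varepsilon})\bigl(e^{\partial^{-}_{\varepsilon}\phi^{\varepsilon}}-1\bigr)+\frac{\widetilde f(x_{\varepsilon}/\varepsilon,v_{\varepsilon})}{v_{\varepsilon}}.$$
Writing $i_{\varepsilon}:=[x_{\varepsilon}/\varepsilon]$, one has $\partial^{+}_{\varepsilon}\phi^{\varepsilon}=\frac{\phi(t_{\varepsilon},x_{\varepsilon}+\varepsilon)-\phi(t_{\varepsilon},x_{\varepsilon})}{\varepsilon}+\ln\frac{\psi_{i_{\varepsilon}+1}}{\psi_{i_{\varepsilon}}}$, whose first term converges to $p$, and similarly $\partial^{-}_{\varepsilon}\phi^{\varepsilon}$ has analogous form with limit $-p$; using that $\{\psi_{i+1}/\psi_{i}\}$ and $\{\psi_{i-1}/\psi_{i}\}$ are bounded (as $\psi\in\mathcal{A}_{N}$) and $d^{\prime}_{i},d_{i}\leq D$, the right-hand side above equals
$$d^{\prime}_{i_{\varepsilon}}e^{p}\frac{\psi_{i_{\varepsilon}+1}}{\psi_{i_{\varepsilon}}}+d_{i_{\varepsilon}}e^{-p}\frac{\psi_{i_{\varepsilon}-1}}{\psi_{i_{\varepsilon}}}-(d^{\prime}_{i_{\varepsilon}}+d_{i_{\varepsilon}})+\frac{f(i_{\varepsilon},v_{\varepsilon})}{v_{\varepsilon}}+o(1)=\frac{(\mathcal{L}\Psi)_{i_{\varepsilon}}}{\Psi_{i_{\varepsilon}}}+\Bigl(\frac{f(i_{\varepsilon},v_{\varepsilon})}{v_{\varepsilon}}-f^{\prime}_{s}(i_{\varepsilon},0)\Bigr)+o(1).$$
By the KPP assumption $f(i,s)/s\geq f^{\prime}_{s}(i,0)-Cs^{\gamma}$ together with $v_{\varepsilon}(t_{\varepsilon},x_{\varepsilon})\to0$, the middle bracket is $\geq o(1)$, while $(\mathcal{L}\Psi)_{i_{\varepsilon}}/\Psi_{i_{\varepsilon}}\geq\underline{H}(p)-2\delta$ because $i_{\varepsilon}>N$ for $\varepsilon$ small. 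Hence $\partial_{t}\phi(t_{\varepsilon},x_{\varepsilon})\geq\underline{H}(p)-2\delta+o(1)$; letting $\varepsilon\to0$ and then $\delta\to0$ gives $\partial_{t}\phi(t_0,x_0)\geq\underline{H}(p)$, as required.

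The main obstacle I anticipate is the bookkeeping in the last paragraph: identifying the finite-difference expression built from $\phi^{\varepsilon}$ with the ratio $(\mathcal{L}\Psi)_{i_{\varepsilon}}/\Psi_{i_{\varepsilon}}$ up to an $o(1)$ error, which forces one to estimate $\bigl(e^{\partial^{\pm}_{\varepsilon}\phi^{\varepsilon}}-1\bigr)-\bigl(e^{\pm p}\psi_{i_{\varepsilon}\pm1}/\psi_{i_{\varepsilon}}-1\bigr)$ uniformly as $i_{\varepsilon}\to+\infty$; this is exactly where the $\ell^{\infty}$-bounds on the discrete logarithmic derivatives of $\psi$ encoded in $\mathcal{A}_{N}$, the uniform bound $D$ on the diffusion rates, and the decay $v_{\varepsilon}(t_{\varepsilon},x_{\varepsilon})\to0$ must all be used together. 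A secondary but routine point is the half-relaxed-limits step producing the minimizers $(t_{\varepsilon},x_{\varepsilon})$ with $z_{\varepsilon}(t_{\varepsilon},x_{\varepsilon})\to z_{\ast}(t_0,x_0)$, for which the uniform convergence of the corrector to $0$ near the minimum --- hence again $x_0>0$ --- is invoked once more.
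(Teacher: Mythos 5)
Your argument is correct and follows essentially the same route as the paper's proof: a perturbed test function method with a half-line eigenfunction $\psi\in\mathcal{A}_{N}$ as the corrector, exploiting that $x_0>0$ makes $[x/\varepsilon]>N$ for small $\varepsilon$, the minimum conditions $\dot z_{\varepsilon}=\partial_t\phi$ and $\partial^{\pm}_{\varepsilon}z_{\varepsilon}\geq\partial^{\pm}_{\varepsilon}\phi^{\varepsilon}$, and the decay $v_{\varepsilon}(t_{\varepsilon},x_{\varepsilon})\to0$ to replace $f(i,v_{\varepsilon})/v_{\varepsilon}$ by $f'_s(i,0)$ up to $o(1)$. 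The only difference is a cosmetic reordering of limits: you pick $N$ large enough at the outset (so that $\underline{\lambda}_{1}(p,N)>\underline{H}(p)-\delta$) and send $\delta\to0$ once, whereas the paper keeps $N$ fixed, sends $\mu\to0$, and only then lets $N\to+\infty$; both are valid because $\underline{\lambda}_{1}(p,n)\nearrow\underline{H}(p)$.
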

\begin{proof}
Note that $z_{\ast}\leq0$. Hence we only need to show that
$$\partial_{t}z_{\ast}-\underline{H}(\partial_{x}z_{\ast})\geq0$$
in $\{(t,x)\in(0,+\infty)\times(0,+\infty)|z_{\ast}(t,x)<0\}$.
For a smooth function $\phi$ defined on $(0,+\infty)\times(0,+\infty)$,
assume that $z_{\ast}-\phi$ reaches its strict minimum at $(t_{0},x_{0})\in(0,+\infty)\times(0,+\infty)$
over $\overline{B_{r}(t_{0},x_{0})}$, with $z_{\ast}(t_{0},x_{0})<0$.
It is sufficient to show that
$$\partial_{t}\phi-\underline{H}(\partial_{x}\phi)\geq0 \ \ \ \ \text{at}\ \ \ \ (t_{0},x_{0}).$$
Denote $p:=\partial_{x}\phi(t_{0},x_{0})$ and fix some $N\in\mathbf{Z}$ large enough. For any $\mu>0$ small, there exists $\psi\in\mathcal{A}_{N}$ such that
\begin{equation}\label{l4.4.1}
L_{p}\psi\geq(\underline{{\lambda}_{1}}(p,N)-\mu)\psi\ \text{on}\ I_{N}.
\end{equation}
Denote $\psi_{\varepsilon}(x):=\widetilde{\psi}(\frac{x}{\varepsilon})=\psi([\frac{x}{\varepsilon}])$.
Then $\varepsilon\ln\psi_{\varepsilon}(\cdot)\to0$ as $\varepsilon\to0$ locally uniform in $(N,+\infty)$.

Using an argument similar to the proof of \cite[Propsition 4.3]{B2}, we can obtain a sequence $(\varepsilon_{n},t_{n},x_{n})\to(0,t_{0},x_{0})$  as $n\to+\infty$. Moreover,
$z_{\varepsilon_{n}}(t_{n},x_{n})\to z_{\ast}(t_{0},x_{0})$ as $n\to+\infty$, and $z_{\varepsilon_{n}}-\phi-\varepsilon_{n}\ln\psi_{\varepsilon_{n}}$ reaches its strict minimum at
$(t_{n},x_{n})$ over $\overline{B_{r}(t_{0},x_{0})}$ for $n\geq n_{0}$.

Noting that $\frac{x_{n}}{\varepsilon_{n}}\to+\infty$ as $n\to+\infty$ since $x_{0}>0$, we may assume that
$\frac{x_{n}}{\varepsilon_{n}}>N$ for $n\geq n_{0}$.
Hence by \eqref{l4.4.1} at $(\varepsilon_{n},t_{n},x_{n})$, we have
\begin{equation*}
\begin{split}
{e}^{p}\widetilde{d^{\prime}_{\varepsilon}}(x)\psi_{\varepsilon}(x+\varepsilon)+
{e}^{-p}\widetilde{d}_{\varepsilon}(x)\psi_{\varepsilon}(x-\varepsilon)-
(\widetilde{d^{\prime}_{\varepsilon}}(x)+\widetilde{d}_{\varepsilon}(x))\psi_{\varepsilon}(x)\\
\geq(-f^{\prime}_{s}([\frac{x}{\varepsilon}],0)+\underline{{\lambda}_{1}}(p,N)-\mu)\psi_{\varepsilon}(x),
\end{split}
\end{equation*}
that is,
\begin{equation}\label{l4.4.2}
\begin{split}
{\mathrm{e}}^{p}\widetilde{d^{\prime}_{\varepsilon}}(x)
\frac{\psi_{\varepsilon}(x+\varepsilon)}{\psi_{\varepsilon}(x)}+
{\mathrm{e}}^{-p}\widetilde{d}_{\varepsilon}(x)
\frac{\psi_{\varepsilon}(x-\varepsilon)}{\psi_{\varepsilon}(x)}-
(\widetilde{d^{\prime}_{\varepsilon}}(x)+\widetilde{d}_{\varepsilon}(x))\\
\geq(-f^{\prime}_{s}([\frac{x}{\varepsilon}],0)+\underline{{\lambda}_{1}}(p,N)-\mu).
\end{split}
\end{equation}
We also have
\begin{equation}\label{l4.4.3}
\overset{.}z_{\varepsilon}=
\widetilde{d^{\prime}_{\varepsilon}}(x)({\mathrm{e}}^{{\partial}^{+}_{\varepsilon}z_{\varepsilon}}-1)+
\widetilde{d}_{\varepsilon}(x)({\mathrm{e}}^{{\partial}^{-}_{\varepsilon}z_{\varepsilon}}-1)+
\frac{f([\frac{x}{\varepsilon}],v_{\varepsilon})}{v_{\varepsilon}}.
\end{equation}
Denote $\beta_{\varepsilon}(x):=\varepsilon\ln\psi_{\varepsilon}(x)$. Then $\frac{\psi_{\varepsilon}(x\pm\varepsilon)}{\psi_{\varepsilon}(x)}
=\mathrm{e}^{\partial^{\pm}_{\varepsilon}\beta_{\varepsilon}(x)}$
is bounded by Claim 2 in the proof of Proposition \ref{prop2.1}.
Since $z_{\varepsilon_{n}}-\phi-\beta_{\varepsilon_{n}}$ reaches its strict minimum at $(t_{n},x_{n})$,
$$\partial_{t}(z_{\varepsilon_{n}}-\phi-\beta_{\varepsilon_{n}})(t,x)=0,\
\text{i.e.},\ \overset{.}z_{\varepsilon_{n}}(t,x)=\partial_{t}\phi(t,x)\ \text{at}\ (t_{n},x_{n}),$$
$${\partial}^{\pm}_{\varepsilon_{n}}(z_{\varepsilon_{n}}-\phi-\beta_{\varepsilon_{n}})(t,x)\geq0,\
\text{i.e.},\ {\partial}^{\pm}_{\varepsilon_{n}}z_{\varepsilon_{n}}(t,x)
\geq{\partial}^{\pm}_{\varepsilon_{n}}(\phi+\beta_{\varepsilon_{n}})(t,x)\ \text{at}\ (t_{n},x_{n}).$$
\eqref{l4.4.2} and \eqref{l4.4.3} yield
\begin{equation}\label{l4.4.4}
\begin{split}
\partial_{t}\phi(t,x)-\underline{{\lambda}_{1}}(p,N)+\mu
&\geq\frac{f([\frac{x}{\varepsilon}],v_{\varepsilon})}{v_{\varepsilon}}
  +\widetilde{d^{\prime}_{\varepsilon}}(x)
  ({\mathrm{e}}^{{\partial}^{+}_{\varepsilon}z_{\varepsilon}}
  -{\mathrm{e}}^{{\partial}^{+}_{\varepsilon}\beta_{\varepsilon}+p})\\
  &\quad-f^{\prime}_{s}([\frac{x}{\varepsilon}],0)
  +\widetilde{d}_{\varepsilon}(x)
  ({\mathrm{e}}^{{\partial}^{-}_{\varepsilon}z_{\varepsilon}}
  -{\mathrm{e}}^{{\partial}^{-}_{\varepsilon}\beta_{\varepsilon}-p}).
\end{split}
\end{equation}
As $\lim\limits_{n\to+\infty}z_{\varepsilon_{n}}(t_{n},x_{n})=z_{\ast}(t_{0},x_{0})<0$, we have
$v_{\varepsilon_{n}}(t_{n},x_{n})=\exp{\frac{z_{\varepsilon_{n}}(t_{n},x_{n})}{\varepsilon_{n}}}\to0$ as
$n\to+\infty$. Then
$\frac{f([\frac{x}{\varepsilon}],v_{\varepsilon})}{v_{\varepsilon}}
-f^{\prime}_{s}([\frac{x}{\varepsilon}],0)\to0$
as $n\to+\infty$. It is known that $\widetilde{d}_{\varepsilon}(x)$ and
$\mathrm{e}^{\partial^{\pm}_{\varepsilon}\beta_{\varepsilon}(x)}$
are uniformly bounded for $\varepsilon>0, x\in\mathbf{R}$. Taking $n\to +\infty$ in \eqref{l4.4.4},
and noting the  definition of
$p=\partial_{x}\phi(t_{0},x_{0})=
\lim\limits_{n\to+\infty}\pm{\partial}^{\pm}_{\varepsilon_{n}}\phi$, we obtain that
\begin{equation*}
\begin{split}
\partial_{t}\phi(t_{0},x_{0})-\underline{{\lambda}_{1}}(p,N)+\mu
&\geq\limsup\limits_{n\to+\infty}\ [\widetilde{d^{\prime}}_{\varepsilon_{n}}(x_{n})
  ({\mathrm{e}}^{{\partial}^{+}_{\varepsilon_{n}}z_{\varepsilon_{n}}}
  -{\mathrm{e}}^{{\partial}^{+}_{\varepsilon_{n}}(\phi+\beta_{\varepsilon_{n}})})\\
  &\quad+\widetilde{d}_{\varepsilon_{n}}(x_{n})
  ({\mathrm{e}}^{{\partial}^{-}_{\varepsilon_{n}}z_{\varepsilon_{n}}}
  -{\mathrm{e}}^{{\partial}^{-}_{\varepsilon_{n}}(\phi+\beta_{\varepsilon_{n}})})]\\
    &\geq0.
\end{split}
\end{equation*}
Then taking $\mu\to0^{+}$, we have
$$\partial_{t}\phi(t_{0},x_{0})-\underline{{\lambda}_{1}}(p,N)\geq0.$$
Finally, taking $N\to+\infty$,
$$\partial_{t}\phi(t_{0},x_{0})-\underline{H}(p)\geq0.$$
The proof is complete .
\end{proof}

Next we need to consider the convex conjugate of $\underline{H}$ given by
$\underline{H}^{*}(q):=\sup\limits_{p\in\mathbf{R}}(pq-\underline{H}(p))\geq pq-\underline{H}(p)$
for any $p\in\mathbf{R}$. It is well defined by Proposition \ref{prop2.2}.
We have the following estimate for $z_{\ast}$.
\begin{lemma}\label{lem4.5}
$z_{\ast}(t,x)\geq\min\{-t\underline{H}^{*}(-\frac{x}{t}),0\}$ for any $(t,x)\in(0,+\infty)\times(0,+\infty).$
\end{lemma}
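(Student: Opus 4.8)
\emph{Proof strategy.} The plan is to recognise the right-hand side as the natural subsolution of the obstacle problem of Lemma \ref{lem4.4} and to conclude by a comparison argument. Unwinding the convex conjugate gives $-t\,\underline H^{*}(-x/t)=\inf_{p\in\mathbf R}(px+t\,\underline H(p))$, so the asserted bound is $z_{\ast}(t,x)\ge\underline z(t,x):=\min\{\inf_{p\in\mathbf R}(px+t\,\underline H(p)),\,0\}$; thus $\underline z$ is a minimum of affine functions of $(t,x)$ truncated above by $0$, hence concave, locally Lipschitz and $\le 0$ on $(0,+\infty)\times\mathbf R$ (the infimum is finite and attained because $\underline H$ is coercive, by Proposition \ref{prop2.2}). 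Since Lemma \ref{lem4.4} says that $z_{\ast}$ is a viscosity supersolution of $\max\{\partial_{t}z-\underline H(\partial_{x}z),\,z\}\ge 0$ on $(0,+\infty)\times(0,+\infty)$, it suffices to prove (i) that $\underline z$ is a viscosity subsolution of the same equation, (ii) that $\underline z\le z_{\ast}$ on the parabolic boundary of $(0,+\infty)\times(0,+\infty)$, and then (iii) to apply the comparison principle.

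For (i) I would split $(0,+\infty)\times\mathbf R$ along the interface $\{x=\underline\omega t\}$. On $\{x<\underline\omega t\}$ one checks $\inf_{p}(px+t\,\underline H(p))>0$, so $\underline z\equiv 0$ there, and any smooth function touching $\underline z$ from above has zero gradient while $0-\underline H(0)\le-\varepsilon_{0}<0$ by Proposition \ref{prop2.2}. On $\{x>\underline\omega t\}$, $\underline z=\inf_{p}(px+t\,\underline H(p))$; at a point of differentiability the envelope identity gives $\partial_{t}\underline z=\underline H(\partial_{x}\underline z)$, and at a concave corner the minimising $p$'s fill an interval on which $\underline H$ is affine, so on the resulting flat superdifferential the subsolution inequality $\partial_{t}\phi-\underline H(\partial_{x}\phi)\le 0$ holds with equality. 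The delicate case, which I expect to be the main point, is the interface $\{x=\underline\omega t\}$: there $\underline z=0$, the superdifferential of $\underline z$ is the segment from $(0,0)$ to $(\underline H(p^{\ast}),p^{\ast})$ where $p^{\ast}=-q^{\ast}<0$ is a minimiser of $p\mapsto p\underline\omega+\underline H(p)$ (so $\underline H(p^{\ast})=-p^{\ast}\underline\omega$), and the subsolution test reduces to $s\,\underline H(p^{\ast})-\underline H(sp^{\ast})\le 0$ for all $s\in[0,1]$, i.e.\ $\underline H(-sq^{\ast})/(sq^{\ast})\ge\underline\omega$; this is exactly the content of the definition $\underline\omega=\min_{q>0}\underline H(-q)/q$ in \eqref{2.3}. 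Any speed larger than $\underline\omega$ would break the subsolution property across this interface, which is why the minimum is taken.

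For (ii): on $\{t=0,\ x>0\}$ one has $\inf_{p}(px+t\,\underline H(p))\to-\infty$ as $t\to 0^{+}$, so $\underline z(0^{+},x)=-\infty\le z_{\ast}$; on $\{x=0,\ t>0\}$ one has $\inf_{p}(px+t\,\underline H(p))=t\inf_{p}\underline H(p)>0$ by Proposition \ref{prop2.2}, so $\underline z\equiv 0$ there and it must be shown that $z_{\ast}(t,0)\ge 0$. Here I would use that the KPP hypotheses ($0<\inf_{i}f(i,s)\le f(i,s)\le f'_{s}(i,0)s$) force $u_{i}(\tau)\to 1$ as $\tau\to+\infty$, uniformly for $i$ in any bounded set, together with the Harnack inequality of Theorem \ref{thm4.1} iterated $|[y/\varepsilon]|$ times, which gives $u_{[y/\varepsilon]}(s/\varepsilon)\ge\theta(1)^{-|[y/\varepsilon]|}\,u_{0}(s/\varepsilon-|[y/\varepsilon]|)$ once $s/\varepsilon>|[y/\varepsilon]|$; letting $(s,y,\varepsilon)\to(t,0,0)$ with $t>0$ gives $\varepsilon\ln v_{\varepsilon}(s,y)\ge-(|y|+\varepsilon)\ln\theta(1)+\varepsilon\ln u_{0}(\cdot)\to 0$, so $z_{\ast}(t,0)\ge 0$. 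Finally, as $x\to+\infty$ with $t$ bounded, $\underline z(t,x)\to-\infty$ faster than linearly, whereas Theorem \ref{thm4.2} bounds $z_{\ast}$ from below on compact sets; this lets one run the comparison on boxes $[0,T]\times[0,R]$ and then let $R,T\to+\infty$.

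For (iii): $p\mapsto\underline H(p)$ is locally Lipschitz (Proposition \ref{prop2.2}), so the standard comparison principle for first-order equations of the form $\max\{\partial_{t}w-\underline H(\partial_{x}w),\,w\}=0$ applies on each box, giving $\underline z\le z_{\ast}$ there once the boundary ordering of (ii) holds, and letting the boxes exhaust $(0,+\infty)\times(0,+\infty)$ finishes the proof. To summarise, the two places I expect to need real work are the interface computation in (i) — which is precisely where the choice of $\underline\omega$ as a minimum enters — and the boundary identity $z_{\ast}(t,0)=0$ in (ii), which is where the discrete Harnack estimate of Theorem \ref{thm4.1} is used; the remaining steps are routine viscosity-solution bookkeeping.
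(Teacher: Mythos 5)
Your overall structure — comparison of $z_{\ast}$ (a viscosity supersolution by Lemma \ref{lem4.4}) against the explicit candidate $\underline z(t,x)=\min\{-t\underline H^{*}(-x/t),0\}$ as a subsolution — is the right one, and your treatment of the interior subsolution test is sound. In particular, the corner inequality $\partial_{t}\phi-\underline H(\partial_{x}\phi)\le 0$ does not actually require $\underline H$ to be convex: since the infimum defining $\underline z$ is attained at both endpoints $p_{1},p_{2}$, for any intermediate $p=\alpha p_{1}+(1-\alpha)p_{2}$ the very optimality of $p_{1}$ forces $\alpha\underline H(p_{1})+(1-\alpha)\underline H(p_{2})\le\underline H(p)$, which is exactly the needed inequality, so that worry would take care of itself. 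Your identification of the free-boundary test $\underline\omega\le\underline H(-sq^{\ast})/(sq^{\ast})$ as the defining property of $\underline\omega$ in \eqref{2.3} is also correct.

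The genuine gap is the boundary identity $z_{\ast}(t,0)\ge 0$ in part (ii). You reduce it, correctly, via an iterated Harnack inequality to $\varepsilon\ln u_{0}(\tau_{\varepsilon})\to 0$ along a sequence $\tau_{\varepsilon}\to+\infty$, and for this you invoke ``the KPP hypotheses force $u_{i}(\tau)\to 1$ as $\tau\to+\infty$ uniformly for bounded $i$.'' That statement is the hair-trigger/local-convergence property, and it is \emph{not} established anywhere in the paper before Lemma \ref{lem4.5}; indeed, the only place the paper proves something of this kind is Step~3 of the proof of the second part of Theorem \ref{thm2.1}, which comes logically \emph{after} Lemma \ref{lem4.5} and uses it (through Steps 1--2). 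As written, your argument is therefore in danger of being circular, and even if one believes the hair-trigger property can be proved independently — it can, by comparing $u$ from below with a compactly supported Dirichlet eigenfunction of the linearization on a large box $[-N,N]$, using that the Dirichlet principal eigenvalue is positive for $N$ large (which in turn needs the uniform positivity $\inf_{i}f_{s}^{\prime}(i,0)>0$ supplied by the KPP bounds) and then iterating the nonlinear comparison principle — this is a nontrivial lemma that must be stated and proved, not folded into a parenthetical. The same issue would also surface in your exhaustion of $(0,\infty)\times(0,\infty)$ by boxes $[0,T]\times[0,R]$: on the lateral boundary $\{x=0\}$ one has $\underline z\equiv 0$, so without the lower bound $z_{\ast}(t,0)\ge 0$ the comparison on the box simply does not start. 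Note that the weaker conclusion $\liminf_{\tau\to\infty}u_{0}(\tau)>0$ (rather than convergence to $1$) would already suffice for your estimate, but even this much requires the bump-subsolution argument and should be made explicit. This is the one place where a real lemma is missing from an otherwise reasonable outline.
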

\begin{proof}
The proof is similar to \cite[Lemma 4.4]{B2}.
\end{proof}

\subsection{Complete the proof of Theorem \ref{thm2.1}}

\begin{proof}[Proof of the second part of Theorem \ref{thm2.1}] We will prove it in five steps.\\
Step 1: Show that $\liminf\limits_{t\to+\infty}\widetilde{u}(t,wt)>0$ $\forall\omega\in(0,\underline{\omega})$.

By the definition of $\underline{\omega}$, there exists $\varepsilon>0$ such that
$\underline{H}(-p)\geq p\omega(1+\varepsilon)$ for any $p>0$; also from Proposition \ref{prop2.2} one can find
that there exists $0<\eta\leq\varepsilon_{0}\leq\underline{H}(0)$ such that
$\underline{H}(-p)\geq p\omega+\eta$,
i.e., $-\eta\geq(-p)(-\omega)-\underline{H}(-p)$ for any
$p\in\mathbf{R}$. Then we obtain
$-\underline{H}^{*}(-\omega)\geq\eta>0$. Hence by the continuity of $\underline{H}^{*}$ and
Lemma \ref{lem4.5}, there exists a neighbourhood $B(1,\omega)$ of $(1,\omega)\in(0,+\infty)\times(0,+\infty)$ such that

$$z_{\ast}(t,x)\geq \min\{-t\underline{H}^{*}(-\frac{x}{t}),0\}=0,\ \ \forall(t,x)\in B(1,\omega).$$
That is to say, $(1,\omega)\in \text{int}\{z_{\ast}=0\}$. From Lemma \ref{lem4.3}, we have
$$\liminf\limits_{t\to+\infty}\widetilde{u}(t,t\omega)=
\liminf\limits_{\varepsilon\to0}\widetilde{u}(\frac{1}{\varepsilon},\frac{\omega}{\varepsilon})
=\liminf\limits_{\varepsilon\to0}v_{\varepsilon}(1,\omega)>0.$$
Step 2: If we set
$d^{\prime-}_{i}:=d^{\prime}_{-i}, d^{-}_{i}:=d_{-i},\ f^{-}(i,s):=f(-i,s),\ f^{\prime-}_{s}(i,0):=f^{\prime}_{s}(-i,0)\ \text{and}\ u^{-}(t,i):=u(t,-i)$,
then $u^{-}$ satisfies
$$\overset{.}u_{i}^{-}(t)=d^{-}_{i}(u^{-}_{i+1}(t)-u^{-}_{i}(t))+
d^{\prime-}_{i}(u^{-}_{i-1}(t)-u^{-}_{i}(t))+f^{-}(i,u^{-}_{i}).$$
As what we did before, one can still define $\underline{{\lambda}^{-}_{1}}(p,n),\overline{{\lambda}^{-}_{1}}(p,n),\
\underline{H}^{-}(p),\overline{H}^{-}(p),\underline{\omega}^{-} \text{and}\ \overline{\omega}^{-}$ associated
with $\mathcal{L}^{-}$, where $\mathcal{L}^{-}:X_{n}\rightarrow X_{n+1}$ is defined by
$(\mathcal{L}^{-}\phi)_{i}=d^{-}_{i}(\phi_{i+1}-\phi_{i})+d^{\prime-}_{i}(\phi_{i-1}-\phi_{i})+f^{\prime-}_{s}(i,0)\phi_{i}$.
Moreover, we have
$\underline{{\lambda}^{-}_{1}}(p,n)\leq\overline{{\lambda}^{-}_{1}}(p,n),\
0<\underline{H}^{-}(p)\leq\overline{H}^{-}(p)$ and
$0<\underline{\omega}^{-}\leq\overline{\omega}^{-}$. In particular, for any
$\omega^{-}\in(0,\underline{\omega}^{-})$, we have
$\liminf\limits_{t\to+\infty}\widetilde{u}^{-}(t,t\omega^{-})>0$, where
$\widetilde{u}^{-}(t,x):=u^{-}(t,[x])=u(t,-[x])$. Hence we obtain
$\liminf\limits_{t\to+\infty}u(t,-[t\omega^{-}])>0$.\\
Step 3: Show that $\liminf\limits_{t\to+\infty}\{\inf\limits_{i\in S_{t}}u(t,i)\}>0$, where $S_{t}=
\{i\in\mathbf{Z}:-\omega^{-}t\leq i\leq\omega t\}.$

To do this, we first introduce another generalized principal eigenvalue for $\mathcal{L}$
$$\lambda^{\prime}_{1}=\sup\{\lambda:\exists \phi\in\ell^{\infty}, \inf\limits_{i\in\mathbf{Z}}\phi_{i}>0, \mathcal{L}\phi\geq \lambda\phi,i\in\mathbf{Z}\}.$$
(Related notions were defined in \cite{B1,B12,B4} for continuous problems.)
Taking $\phi_{i}\equiv1$ as a test function, it is easy to see that
$0<\inf\limits_{i\in\mathbf{Z}}f^{\prime}(i,0)\leq\lambda^{\prime}_{1}\leq\underline{\lambda_{1}}(0,n)$
for any $n\in\{-\infty\}\cup\mathbf{Z}$ by the definition of $\lambda^{\prime}_{1}$. Hence there exist $0<\mu<\lambda^{\prime}_{1}$ and
$\phi\in\ell^{\infty}{(\mathbf{Z})}$ with $\inf\limits_{i\in\mathbf{Z}}\phi_{i}>0$ such that
$(A\phi)_{i}+f_{s}^{\prime}(i,0)\phi_{i}\geq\mu\phi_{i}$. Without loss of generality, we may assume
$\sup\limits_{i\in\mathbf{Z}}\phi_{i}=1$. Note that $f(i,\cdot)\in\mathcal{C}^{1+\gamma}([0,1])$
uniformly in $i\in\mathbf{Z}$. Then there exists $\delta_{0}$ with $0<\delta_{0}<1$ such that $f(i,s)\geq(f^{\prime}(i,0)-\mu)s$
for any$s\in[0,\delta_{0}]$.

Furthermore, by Step 1 and Step 2, there exist $t_{0}>0,\ \delta_{1}>0$ such that
$u(t,[\omega t])\geq\delta_{1}$, $u(t,[\omega^{-}t])\geq\delta_{1}$ for any $t\in[t_{0},+\infty)$, and
$u(t_{0},i)\geq\delta_{1}$ for any $i\in S_{t_{0}}$. Set $\delta_{2}=\min\{\delta_{0},\delta_{1}\}$ and
$z:=u-\delta_{2}\phi\geq0\ \text{on}\ \Gamma_{t_{0}}$. Thus $z$ satisfies
$$\left\{
   \begin{aligned}
 \overset{.}z_{i}(t)-(Az)_{i}-\hat{c}_{i}z_{i}\geq0,\ & \ \ (0,+\infty)\times\mathbf{Z},\\
                                z\geq0,\ & \ \ \Gamma_{t_{0}},\\
   \end{aligned}
   \right.$$
where
$\hat{c}_{i}=
\left\{
   \begin{aligned}
 \frac{f(i,u)-f(i,\delta\phi)}{v_{i}-u_{i}},\ & \ \ \ u_{i}\neq \delta\phi_{i}\\
                                     0,\ & \ \ \ u_{i}=\delta\phi_{i}\\
   \end{aligned}
    \right.$.
Then $\hat{c}_{i}$ is bounded since $\sup\limits_{i}\|f(i,\cdot)\|_{\mathcal{C}^{1+\gamma}}<+\infty$.
By Lemma \ref{lem4.1} we have
$z\geq0$ in $\Omega_{t_{0}}$, which means $u(t,i)\geq\delta_{2}\phi_{i}$ for any
$(t,i)\in\{t\geq t_{0},i\in S_{t}\}$. In particular,
$$\inf\limits_{t\geq t_{0},i\in S_{t}}u(t,x)\geq\delta_{2}\inf\limits_{t\geq t_{0},i\in S_{t}}\phi_{i}
\geq\delta_{2}\inf\limits_{i}\phi_{i}>0.$$
Hence $\liminf\limits_{t\to+\infty}\{\inf\limits_{i\in S_{t}}u(t,i)\}>0$.\\
Step 4: Consider $\limsup\limits_{t\to+\infty}\sup\limits_{i\in S_{\varepsilon}(t)}|u(t,i)-1|$, where
$S_{\varepsilon}(t)=\{i\in\mathbf{Z}:-(\omega^{-}-\varepsilon)t\leq i\leq(\omega-\varepsilon)t\}$ and
$\varepsilon>0$. One can assume that $\{t_{n}\}$ is increasing and $i_{n}\in S_{\varepsilon}(t_{n})$ with
$t_{n}\to+\infty$ as $n\to+\infty$ such that
$$\lim\limits_{n\to+\infty}|u(t_{n},i_{n})-1|=
\limsup\limits_{t\to+\infty}\sup\limits_{i\in S_{\varepsilon}(t)}|u(t,i)-1|.$$
Set $u^{(n)}(t,i):=u_{i+i_{n}}(t+t_{n})\in(0,1]$. Then $u^{(n)}$ satisfies
$$\overset{.}u^{(n)}(t,i) =
d^{\prime(n)}_{i}(u^{(n)}_{i+1}(t)-u^{(n)}_{i}(t))+d^{(n)}_{i}(u^{(n)}_{i-1}(t)-u^{(n)}_{i}(t))+f^{(n)}(i,u^{(n)})$$
for any $t\in(-t_{n},+\infty),\ i\in\mathbf{Z}$, where
$d^{\prime(n)}_{i}=d^{\prime}_{i+i_{n}}, d^{(n)}_{i}=d_{i+i_{n}}, \text{and}\ f^{(n)}(i,s)=f(i+i_{n},s)$.

Since $d^{\prime}$ and $d$ belong to $\ell^{\infty}(\mathbf{Z})$, and $f(i,\cdot)$ are uniformly bounded in
$\mathcal{C}^{1+\gamma}([0,1])$ with respect to $i\in\mathbf{Z}$, one can use a diagonal extraction method
to find a subsequence, still denoted by $i_{n}$, and $\overline{d^{\prime}}\in\ell^{\infty}(\mathbf{Z})$,
$\overline{d}\in\ell^{\infty}(\mathbf{Z})$, $\overline{f}(i,\cdot)\in\mathcal{C}([0,1])$ and
$\overline{u}_{i}(\cdot)\in\mathcal{C}^{1}_{loc}([0,1])$
such that
$$d^{\prime}_{i+i_{n}}\to\overline{d^{\prime}}_{i},$$
$$d_{i+i_{n}}\to\overline{d}_{i},$$
$$f^{(n)}(i,\cdot)\to\overline{f}(i,\cdot)\ \ \ \text{in}\ \ \mathcal{C}([0,1]),$$
$$u^{(n)}_{i}(\cdot)\to\overline{u}_{i}(\cdot)\ \ \ \text{in}\ \
  \mathcal{C}^{1}_{loc}(\mathbf{R})$$
as $n\to+\infty$.

Moreover, $\overline{u}$ satisfies
$$\frac{d}{dt}\overline{u}(t,i)=\overline{A}\overline{u}(t,i)+\overline{f}(i,\overline{u})\ \ \ \ \ \text{in}\ \ \mathbf{R}\times\mathbf{Z},$$
where $\overline{A}\phi(i)
:=\overline{d^{\prime}}_{i}\phi(i+1)+\overline{d}_{i}\phi(i-1)-(\overline{d}_{i}+\overline{d^{\prime}}_{i})\phi(i)$.
For any fixed $(\tau,j)\in\mathbf{R}\times\mathbf{Z},$ we have
$$-\omega^{-}<-(\omega^{-}-\varepsilon)\leq\liminf\limits_{n\to+\infty}\frac{j+i_{n}}{\tau+t_{n}}
\leq\limsup\limits_{n\to+\infty}\frac{j+i_{n}}{\tau+t_{n}}\leq\omega-\varepsilon<\omega.$$
Then $i+i_{n}\in S(t+t_{n})$ for $n$ large enough. Therefore, the conclusion in Step 3 yields
\begin{equation*}
\begin{split}
\overline{u}(\tau,j)
&=\lim\limits_{n\to+\infty}u(\tau+t_{n},j+i_{n})\\
&\geq\lim\limits_{n\to+\infty}\inf\limits_{i\in S(\tau+t_{n})}u(\tau+t_{n},i)\\
&\geq\liminf\limits_{t\to+\infty}\inf\limits_{i\in S(t)}u(t,i)\\
&>0.\\
\end{split}
\end{equation*}
That is to say,
$\inf\limits_{\mathbf{R}\times\mathbf{Z}}\overline{u}\geq
\liminf\limits_{t\to+\infty}\inf\limits_{i\in S(t)}u(t,i)>0.$\\
Step 5: Show that $\overline{u}(t,i)\equiv1$.

If this is true, then
$$\limsup\limits_{t\to+\infty}\sup\limits_{i\in S_{\varepsilon}(t)}|u(t,i)-1|
=\lim\limits_{n\to+\infty}|u(t_{n},i_{n})-1|=|\overline{u}(0,0)-1|=0.$$
Note that $\omega\in(0,\omega),\ -\omega\in(0,\omega^{-})$ are arbitrary and that $\varepsilon>0$ can be
arbitrary small. Then one exactly has
$\limsup\limits_{t\to+\infty}\sup\limits_{i\in S(t)}|u(t,i)-1|=0$. In particular, the conclusion of part 2
of Theorem \ref{thm2.1} is valid.

As $u(t,i)\in[0,1]$, it follows from the definition of $\overline{u}$ and the conclusion in Step 4 that
$0<m_{0}:=\inf\limits_{\mathbf{R}\times\mathbf{Z}}\overline{u}\leq\overline{u}\leq1$.
Now it is sufficient to show that $m_{0}=1$.
Assume by contradiction that $m_{0}<1$. Then there exists $\overline{u}(s_{n},j_{n})\to m_{0}$ as $n\to+\infty$.
Set $\overline{u}^{(n)}(t,i)=\overline{u}(t+s_{n},i+j_{n})$, and by the same argument as we did in
Step 4, there exist $\hat{d^{\prime}}(i), \hat{d}(i),\ \hat{f}(i,s)$ and $\hat{u}(t,i)$
with $\hat{u}(t,i)\geq\hat{u}(0,0)=m_{0}$ satisfying
$$\frac{d}{dt}\hat{u}(t,i)=\hat{A}\hat{u}(t,i)
+\hat{f}(i,\hat{u})\ \ \ \ \ \text{in}\ \ \mathbf{R}\times\mathbf{Z},$$
where $\hat{A}\phi(i)
:=\hat{d^{\prime}}(i)\phi(i+1)+\hat{d}(i)\phi(i-1)
-(\hat{d^{\prime}}(i)+\hat{d}(i))\phi(i)$.
It is easy to find that $\inf\limits_{i}\hat{f}(i,s)>0$ for any $s\in(0,1)$ since
$\inf\limits_{i}f(i,s)>0$ for any $s\in(0,1)$. Hence
$\hat{f}(0,m_{0})>0$. Since
$\hat{u}$ reaches its minimum $m_{0}$ at $(0,0)$, we deduce:
$$\frac{d}{dt}\hat{u}|_{(0,0)}=0,\ \ \
\hat{A}\hat{u}|_{(0,0)}\geq0,\ \ \
\hat{f}(i,\hat{u})|_{(0,0)}=\hat{f}(0,m_{0})>0,$$
which is a contradiction! Thus $m_{0}=1$.
\end{proof}

\section{Application: almost periodic coefficients}

In this section, we assume that the coefficients of $\mathcal L$ are almost periodic. Here a sequence
$e_{i}$ is said almost periodic with respect to $i$ if for any sequence $\{i_{n}\}_{i=1}^{\infty},$ there
exists a subsequence $\{i_{n_{k}}\}$ such that $e_{i+i_{n_{k}}}$
converges uniformly in $i\in\mathbf{Z}$ as $k\to+\infty$. We will prove that $\underline{\omega}=\overline{\omega}$.
\begin{thm}\label{thm5.1}
Assume that $d^{\prime}_{i},\ d_{i},\ f^{\prime}_{s}(i,0)$ are almost periodic with respect to $i$. Then
$$\underline{\omega}=\overline{\omega}.$$
\end{thm}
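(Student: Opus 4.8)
The plan is to establish that $\overline{H}(p)=\underline{H}(p)$ for every $p\in\mathbf{R}$; granting this, \eqref{2.3} gives $\underline{\omega}=\min_{p>0}\underline{H}(-p)/p=\min_{p>0}\overline{H}(-p)/p=\overline{\omega}$. Since $\underline{H}\le\overline{H}$ already holds (Proposition \ref{prop2.1} and \eqref{2.2}, together with the monotonicity of $n\mapsto\underline{\lambda_{1}}(p,n),\overline{\lambda_{1}}(p,n)$), only $\overline{H}(p)\le\underline{H}(p)$ is at stake, and I would reduce it to the existence of a single positive $\Phi\in\mathcal{A}_{-\infty}$ solving
$$(L_{p}\Phi)_{i}=\lambda\Phi_{i},\qquad i\in\mathbf{Z},$$
for some $\lambda\in\mathbf{R}$. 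Indeed such a $\Phi$ is simultaneously an admissible test function for $\overline{\lambda_{1}}(p,-\infty)$ and for $\underline{\lambda_{1}}(p,-\infty)$, so Corollary \ref{cor2.1} (with $n=-\infty$) forces $\overline{\lambda_{1}}(p,-\infty)=\underline{\lambda_{1}}(p,-\infty)=\lambda$; and since restricting a test function from $\mathbf{Z}$ to $[n,+\infty)\cap\mathbf{Z}$ preserves both admissibility and the defining inequalities, one gets $\overline{H}(p)\le\overline{\lambda_{1}}(p,-\infty)=\underline{\lambda_{1}}(p,-\infty)\le\underline{H}(p)$, which is exactly the missing inequality. (If pinning down an exact eigenfunction proves too rigid, the same conclusion follows from the weaker statement that for every $\varepsilon>0$ there are $n\in\mathbf{Z}$, $\lambda\in\mathbf{R}$ and $\phi\in\mathcal{A}_{n}$ with $(\lambda-\varepsilon)\phi_{i}\le(L_{p}\phi)_{i}\le(\lambda+\varepsilon)\phi_{i}$ on $I_{n}$, which by Definition \ref{def2.1} gives $\underline{H}(p)\ge\lambda-\varepsilon$ and $\overline{H}(p)\le\lambda+\varepsilon$.)

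To construct $\Phi$ I would bring in the almost periodicity of $\xi:=(d'_{i},d_{i},f'_{s}(i,0))_{i\in\mathbf{Z}}$ through the dynamics on its hull $\mathcal{H}$ — the uniform closure of the $\mathbf{Z}$-orbit of $\xi$ — which is compact and on which the shift acts minimally and uniquely ergodically. A preliminary simplification: the Lipschitz dependence of $\underline{\lambda_{1}},\overline{\lambda_{1}}$ on the coefficients (proved in Section 3 for $c$, and valid for $d_{i},d'_{i}$ by the same comparison argument), combined with the density of the forward orbit of $\xi$ in $\mathcal{H}$, shows that $\overline{\lambda_{1}}(p,n)$ and $\underline{\lambda_{1}}(p,n)$ are independent of $n\in\mathbf{Z}$, hence equal $\overline{H}(p)$, resp.\ $\underline{H}(p)$. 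For the eigenfunction itself I would rewrite $(L_{p}\phi)_{i}=\lambda\phi_{i}$ — a second-order linear recursion whose coefficients are driven by $\xi$ — as a first-order cocycle over the shift on $\mathcal{H}$, and extract $\Phi$ from the bottom of its dynamical (Sacker--Sell) spectrum: there is a value $\lambda=\lambda(p)$ for which the recursion admits a positive solution on $\mathbf{Z}$ of zero exponential type, and by unique ergodicity its logarithmic increments $r_{i}:=\ln\Phi_{i+1}-\ln\Phi_{i}$ are governed by the unique invariant measure, so that (ergodic theorem) their Ces\`aro averages over long windows converge to a single number; $\lambda(p)$ is precisely the value making that number $0$, which yields $\ln\Phi_{i}/i\to0$ as $|i|\to\infty$ and, via the first paragraph, $\lambda(p)=\overline{H}(p)=\underline{H}(p)$. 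The boundedness of $\{r_{i}\}$ — so that $\Phi\in\mathcal{A}_{-\infty}$ once the averages vanish — is exactly the discrete Harnack-type bound already isolated as Claim 2 in the proof of Proposition \ref{prop2.1} and as Theorem \ref{thm4.1}; and the same bound supplies the compactness for a hands-on version of the construction, in which one solves the eigenvalue problem on the boxes $\{-R,\dots,R\}$ (Perron--Frobenius, with $\lambda_{R}$ bounded uniformly by \eqref{3.5}), normalizes by $\Phi^{R}_{0}=1$, and passes to a subsequential limit.

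The genuine obstacle is this identification step: that the almost-periodic cocycle possesses, at the edge of its spectrum, a positive solution with bounded logarithmic increments of zero mean — so that it lands in $\mathcal{A}_{-\infty}$ and its eigenvalue is pinned to the common value of $\overline{H}$ and $\underline{H}$. This is exactly where almost periodicity, and not mere boundedness, of $d'_{i},d_{i},f'_{s}(i,0)$ is indispensable: already in the constant-coefficient case the Perron--Frobenius eigenfunctions of the naive symmetric-box truncation carry a nonzero exponential drift $-p+\tfrac{1}{2}\ln(d/d')$, so the zero-drift solution must be produced by the recurrence and unique-ergodicity structure of the hull rather than by a crude truncation. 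Once $\Phi$ is in hand, the remainder is the soft sandwich of the first paragraph, and \eqref{2.3} closes the proof.
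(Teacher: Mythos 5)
Your overall strategy is the right one — produce a test vector that is simultaneously admissible for both $\underline{\lambda_1}(p,-\infty)$ and $\overline{\lambda_1}(p,-\infty)$, then use Corollary~\ref{cor2.1} and the $n$-monotonicity to conclude $\underline{H}(p)=\overline{H}(p)$ — and your closing reduction from an exact to an $\varepsilon$-approximate eigenvector is exactly the soft step that makes the argument viable. The paper's proof also runs through this sandwich. But the route you propose to the eigenvector has a genuine gap precisely at the step you flag as ``the genuine obstacle,'' and that gap is not a technicality.

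You assert that ``there is a value $\lambda=\lambda(p)$ for which the recursion admits a positive solution on $\mathbf{Z}$ of zero exponential type,'' and then plan to tune $\lambda$ so the unique-ergodicity average of the logarithmic increments vanishes. This presupposes that the Riccati cocycle at the critical $\lambda$ admits a continuous invariant section over the hull, i.e.\ that the cocycle is reducible at the spectral edge. For almost periodic second-order recursions this is not automatic and can fail: in the positive-Lyapunov-exponent (nonuniformly hyperbolic) regime, which does occur for almost periodic potentials, the positive solution at the critical value carries a nonzero exponential drift and its logarithmic increments are \emph{not} given by a continuous function on the hull, so unique ergodicity gives you no control. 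Your hands-on fallback — Perron--Frobenius on the boxes $\{-R,\dots,R\}$, normalize, pass to a subsequential limit — does not repair this: you correctly note that the limit will in general carry a nonzero drift, and nothing in the construction forces the drift to zero. So the central object of your proof is not produced.

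The paper's proof avoids constructing any exact eigenfunction. It works with the logarithmic unknown $u=\ln\phi$, so that $L_p\phi=\lambda\phi$ becomes $Bu+c=\lambda$ with $Bu(i)=d^+(i)\mathrm{e}^{\partial^+u(i)}+d^-(i)\mathrm{e}^{\partial^-u(i)}$, and then regularizes by the ``vanishing discount'' equation $\varepsilon u^\varepsilon-Bu^\varepsilon-c=0$. A priori bounds give $|\varepsilon u^\varepsilon|\le M$ and $|\partial^\pm u^\varepsilon|\le M_1$ uniformly in $\varepsilon$, so $\phi^\varepsilon:=\mathrm{e}^{u^\varepsilon}$ lies in $\mathcal{A}_{-\infty}$ and satisfies $(L_p\phi^\varepsilon)_i=\varepsilon u^\varepsilon(i)\,\phi^\varepsilon_i$. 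The almost periodicity is used only to show, via the doubling-of-variables comparison (Lemma~\ref{lem:cp}) applied to shifted copies of $u^\varepsilon$, that $\varepsilon u^\varepsilon(i)-\varepsilon u^\varepsilon(0)\to0$ uniformly in $i$; hence $\varepsilon u^\varepsilon\to\lambda_0$ uniformly, and for every $\kappa>0$ one can choose $\varepsilon$ so that $(\lambda_0-\kappa)\phi^\varepsilon\le L_p\phi^\varepsilon\le(\lambda_0+\kappa)\phi^\varepsilon$ on all of $\mathbf{Z}$. This is exactly your $\varepsilon$-approximate eigenvector, delivered without any assumption on the Lyapunov exponent or reducibility of the cocycle. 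The ergodic/vanishing-discount step is what you would need to fill in; as written, your proposal leaves it open.
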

We only need to show $\underline{{\lambda}_{1}}(p,-\infty)=\overline{{\lambda}_{1}}(p,-\infty)$.
We start from a comparison argument. Consider $B:X_{-\infty}\rightarrow X_{-\infty}$ defined by
$B\phi(i):=d^{+}(i)\mathrm{e}^{\partial^{+}\phi(i)}+d^{-}(i)\mathrm{e}^{\partial^{-}\phi(i)}$, where
${\partial}^{\pm}\phi(i)=\phi(i\pm1)-\phi(i)$, and $d^{\pm}(i)>0$ are bounded sequences, and we denote
$\overline{D}=\max\{\sup\limits_{i}d^{+}(i),\sup\limits_{i}d^{-}(i)\}.$
\begin{lemma}\label{lem:cp}
Assume that $w$ and $v$ belong to $\ell^{\infty}(\mathbf{Z})$
satisfying
$$\left\{
   \begin{aligned}
    \varepsilon w(i)-Bw(i)-c(i)\leq0,\\
    \varepsilon v(i)-Bv(i)-c(i)\geq0,
   \end{aligned}
   \right.$$
  where $\varepsilon>0$ is a parameter, $\sup\limits_{i}|c(i)|<+\infty$. Then $w(i)\leq v(i)$ for any $i\in\mathbf{Z}$.
\end{lemma}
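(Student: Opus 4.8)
The plan is to run a direct comparison (maximum-principle) argument, transcribed to the lattice $\mathbf{Z}$. Set
$$m:=\sup_{i\in\mathbf{Z}}\bigl(w(i)-v(i)\bigr),$$
which is finite because $w,v\in\ell^{\infty}(\mathbf{Z})$. Assume, for contradiction, that $m>0$. Since $m$ is only a supremum and need not be attained, I would work with a maximizing sequence: choose $i_{k}\in\mathbf{Z}$ with $w(i_{k})-v(i_{k})>m-\tfrac{1}{k}$, and set $\delta_{k}:=m-\bigl(w(i_{k})-v(i_{k})\bigr)$, so that $\delta_{k}\in[0,\tfrac{1}{k})$ and $\delta_{k}\to0^{+}$. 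By the definition of $m$ one has $w(i_{k}\pm1)-v(i_{k}\pm1)\le m$, hence
$$\partial^{\pm}w(i_{k})-\partial^{\pm}v(i_{k})=\bigl(w(i_{k}\pm1)-v(i_{k}\pm1)\bigr)-\bigl(w(i_{k})-v(i_{k})\bigr)\le\delta_{k},$$
which is the discrete counterpart of the optimality conditions at a near-maximum of $w-v$.

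Next I would subtract the two inequalities at $i=i_{k}$; the term $c(i_{k})$ cancels and leaves
$$\varepsilon\bigl(w(i_{k})-v(i_{k})\bigr)\le Bw(i_{k})-Bv(i_{k})=\sum_{\sigma\in\{+,-\}}d^{\sigma}(i_{k})\bigl(\mathrm{e}^{\partial^{\sigma}w(i_{k})}-\mathrm{e}^{\partial^{\sigma}v(i_{k})}\bigr).$$
Using the monotonicity of $\exp$ and $\partial^{\sigma}w(i_{k})\le\partial^{\sigma}v(i_{k})+\delta_{k}$, each summand is at most $d^{\sigma}(i_{k})\,\mathrm{e}^{\partial^{\sigma}v(i_{k})}\bigl(\mathrm{e}^{\delta_{k}}-1\bigr)$, so the right-hand side is bounded above by $(\mathrm{e}^{\delta_{k}}-1)\,Bv(i_{k})$. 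This is exactly where the supersolution property of $v$ enters: since $d^{\pm}>0$ one has $0\le Bv(i_{k})\le\varepsilon v(i_{k})-c(i_{k})\le\varepsilon\|v\|_{\ell^{\infty}}+\|c\|_{\ell^{\infty}}=:K$, uniformly in $k$. Hence $\varepsilon\bigl(w(i_{k})-v(i_{k})\bigr)\le(\mathrm{e}^{\delta_{k}}-1)K$; letting $k\to\infty$ yields $\varepsilon m\le0$, contradicting $\varepsilon>0$ and $m>0$. Therefore $m\le0$, i.e.\ $w\le v$ on $\mathbf{Z}$.

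I expect the only genuine obstacle to be that $m$ need not be attained: the lattice is infinite and $w-v$ need not decay, so one cannot simply argue at a single maximizing site. This forces the use of a maximizing sequence, and with it the need for a \emph{uniform} bound on $Bv(i_{k})$ along that sequence rather than a pointwise value — which the supersolution inequality together with the boundedness of $v$ and $c$ supplies. Everything else — the cancellation of the $c$-term, the elementary bound $\mathrm{e}^{a}-\mathrm{e}^{b}\le\mathrm{e}^{b}(\mathrm{e}^{\delta}-1)$ whenever $a-b\le\delta$, and the passage to the limit — is routine, and it uses only positivity and boundedness of $d^{\pm}$, not any almost-periodic structure.
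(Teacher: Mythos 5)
Your proof is correct, and it takes a genuinely different — and in this discrete setting, cleaner — route than the paper's. The paper's proof is a direct transplant of the doubling-of-variables technique from the theory of viscosity solutions: it sets up the two-variable functional $\Phi(i,j)=w(i)-v(j)-\alpha|i-j|-\mu(i^{2}+j^{2})$, uses the $\mu$-penalization to force an attained maximum at some $(k,l)$, then argues that $\mu k,\mu l\to0$ and that $k=l$ once $\alpha$ is large, and finally passes $\mu\to0$. That machinery exists in the PDE setting because one needs genuine smooth test functions and attained maxima to extract derivative information; on $\mathbf{Z}$, the discrete difference operator $\partial^{\pm}$ is defined everywhere, so your maximizing sequence already delivers the key near-optimality inequality $\partial^{\pm}w(i_{k})-\partial^{\pm}v(i_{k})\le\delta_{k}$ without any penalization. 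The one point the paper's doubling handles implicitly — keeping the error term under control — you handle explicitly and correctly by observing that the supersolution inequality together with $v,c\in\ell^{\infty}$ gives the uniform bound $0\le Bv(i_{k})\le\varepsilon\|v\|_{\ell^{\infty}}+\|c\|_{\ell^{\infty}}$, so that $(\mathrm{e}^{\delta_{k}}-1)Bv(i_{k})\to0$. This uniform control is the essential ingredient; without it, a maximizing-sequence argument would have no teeth. What the paper's approach buys is a uniform template that also works when the operator has unbounded coefficients or when the functions are not bounded a priori (where a pure maximizing sequence would not supply the needed uniform estimate); what your approach buys is brevity and transparency in exactly the $\ell^{\infty}$ regime this lemma actually addresses.
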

\begin{proof}
Set $$\Phi(i,j)=w(i)-v(j)-\alpha|i-j|-\mu(i^{2}+j^{2})$$
for $\alpha>0,\ \mu>0$. Then $\Phi$ reaches its maximum at some point, say $(k,l)$,
over $\mathbf{Z}\times\mathbf{Z}$. Obviously, $(k,l)$ depends on $\alpha,\ \mu$.
If $w(i)\leq v(i)$ is not true, then there must exist $i_{0}\in\mathbf{Z},\ \delta>0$ such that
$w(i_{0})-v(i_{0})\geq2\delta$. Now for sufficiently small $\mu$,
we have $\Phi(i_{0},i_{0})=w(i_{0})-v(i_{0})-2\mu i^{2}_{0}\geq\delta$, hence
$0<\delta\leq\Phi(k,l)\leq\sup\limits_{i}|w(i)|+\sup\limits_{i}|v(i)|$. From this we obtain
$$\delta+\alpha|k-l|+\mu(k^{2}+l^{2})\leq w(k)-v(l)\leq\sup\limits_{i}|w(i)|+\sup\limits_{i}|v(i)|,$$
which yields $\mu k,\ \mu l\to0$ as $\mu\to0$ uniformly with respect to $\alpha$, $\alpha|k-l|$ bounded as $\mu\to0$
and $|k-l|\to0$ as $\alpha\to+\infty$ uniformly with respect to $\mu$, i.e., there exists $\alpha_{0}>0$
such that $k=l$ if
$\alpha\geq\alpha_{0}$. Furthermore,
\begin{equation}\label{l5.1.1}
\begin{split}
\varepsilon\delta
&\leq\varepsilon w(k)-\varepsilon v(l)\\
&\leq Bw(k)+c(k)-Bv(l)-c(l)\\
&=d^{+}(k)\mathrm{e}^{\partial^{+}w(k)}+d^{-}(k)\mathrm{e}^{\partial^{-}w(k)}+c(k)-c(l)\\
  &\quad-d^{+}(l)\mathrm{e}^{\partial^{+}v(l)}-d^{-}(l)\mathrm{e}^{\partial^{-}v(l)}.\\
\end{split}
\end{equation}
Note that $\Phi(k,l)$ reaches its maximum at $(k,l)$. Then $\Phi(k\pm1,l\pm1)\leq\Phi(k,l)$, i.e.,
$$w(k\pm1)-v(l\pm1)-\alpha|k-l|-\mu((k\pm1)^{2}+(l\pm1)^{2})\leq w(k)-v(l)-\alpha|k-l|-\mu(k^{2}+l^{2}).$$
Thus $\partial^{\pm}w(k)\leq\partial^{\pm}v(l)+2\mu(1\pm k\pm l)$. Take $\alpha\geq\alpha_{0}$ and then
\eqref{l5.1.1} yields
\begin{equation*}
\begin{split}
\varepsilon\delta
&\leq d^{+}(k)\mathrm{e}^{\partial^{+}w(k)}+d^{-}(k)\mathrm{e}^{\partial^{-}w(k)}+c(k)-c(l)\\
  &\quad-d^{+}(l)\mathrm{e}^{\partial^{+}v(l)}-d^{-}(l)\mathrm{e}^{\partial^{-}v(l)}\\
&\leq d^{+}(k)\mathrm{e}^{\partial^{+}v(l)+2\mu(1+k+l)}-d^{+}(l)\mathrm{e}^{\partial^{+}v(l)}+c(k)-c(l)\\
  &\quad +d^{-}(k)\mathrm{e}^{\partial^{-}v(l)+2\mu(1-k-l)}-d^{-}(l)\mathrm{e}^{\partial^{-}v(l)}\\
&=d^{+}(l)\mathrm{e}^{\partial^{+}v(l)}({\mathrm{e}^{2\mu(1+2l)}}-1)
  +d^{-}(l)\mathrm{e}^{\partial^{-}v(l)}({\mathrm{e}^{2\mu(1-2l)}}-1)\\
&\leq \overline{D}\mathrm{e}^{2\sup\limits_{i}|v(i)|}|{\mathrm{e}^{2\mu(1+2l)}}-1|+
      \overline{D}\mathrm{e}^{2\sup\limits_{i}|v(i)|}|{\mathrm{e}^{2\mu(1-2l)}}-1|\to0
\end{split}
\end{equation*}
as $\mu\to0$, a contradiction! Hence $w(i)\leq v(i)$ for any $i\in\mathbf{Z}$.
\end{proof}

Consider the following equation
\begin{equation}\label{l5.1.2}
\varepsilon u^{\varepsilon}(i)-Bu^{\varepsilon}(i)-c(i)=0,\ \ \ \ i\in\mathbf{Z}.
\end{equation}
It follows from the Perron's method that there is a unique solution $u^{\varepsilon}\in X_{-\infty}$ of
equation \eqref{l5.1.2} such that $-\frac{\sup_{i}|c(i)|}{\varepsilon}\leq
u^{\varepsilon}(i)\leq\frac{2\overline{D}+\sup_{i}|c(i)|}{\varepsilon}$.
\begin{lemma}
Assume that $d^{\pm}(i)$ and $c(i)$ are almost periodic.
Let $u^{\varepsilon}\in\ell^{\infty}(\mathbf{Z})$ be the solution of \eqref{l5.1.2}.
Then $\varepsilon u^{\varepsilon}(i)$ converges to some constant
as $\varepsilon\to0$ uniformly with respect to $i\in\mathbf{Z}$.
\end{lemma}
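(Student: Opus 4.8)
The plan is to prove the statement through two facts about $v^{\varepsilon}:=\varepsilon u^{\varepsilon}$: that its oscillation $\mathrm{osc}(v^{\varepsilon}):=\sup_{i}v^{\varepsilon}(i)-\inf_{i}v^{\varepsilon}(i)$ tends to $0$ as $\varepsilon\to0$, and that $m(\varepsilon):=\inf_{i}v^{\varepsilon}(i)$ converges as $\varepsilon\to0$. Together these force $v^{\varepsilon}$ to converge uniformly to a constant, since $|v^{\varepsilon}(i)-m(\varepsilon)|\le\mathrm{osc}(v^{\varepsilon})$ for every $i$.

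The first ingredient is a uniform discrete gradient bound. Since $u^{\varepsilon}$ solves \eqref{l5.1.2}, we have $d^{+}(i)\mathrm{e}^{\partial^{+}u^{\varepsilon}(i)}+d^{-}(i)\mathrm{e}^{\partial^{-}u^{\varepsilon}(i)}=\varepsilon u^{\varepsilon}(i)-c(i)$, whose right-hand side lies in a bounded interval independent of $\varepsilon$ and $i$ by the a priori estimate stated above. As $\inf_{i}d^{\pm}(i)>0$ in the present setting and both summands are positive, this forces $\partial^{+}u^{\varepsilon}(i)\le K$; writing the same identity at $i+1$ and using $\partial^{-}u^{\varepsilon}(i+1)=-\partial^{+}u^{\varepsilon}(i)$ gives the matching lower bound, so $|\partial^{\pm}u^{\varepsilon}(i)|\le K$ with $K$ independent of $\varepsilon,i$, whence $|v^{\varepsilon}(i+1)-v^{\varepsilon}(i)|\le\varepsilon K$.

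Next I would prove $\mathrm{osc}(v^{\varepsilon})\to0$. Fix $\eta>0$; by almost periodicity of $d^{+},d^{-},c$ their common $\eta$-almost-periods form a relatively dense set, say with gaps at most $L=L(\eta)$. For such a $\tau$, the shift $u^{\varepsilon}(\cdot+\tau)$ solves \eqref{l5.1.2} with the coefficients shifted by $\tau$; replacing them by the original ones alters the equation by an amount $\le(2\mathrm{e}^{K}+1)\eta$, the gradient bound controlling the factors $\mathrm{e}^{\partial^{\pm}u^{\varepsilon}}$. Hence $u^{\varepsilon}(\cdot+\tau)\pm(2\mathrm{e}^{K}+1)\eta/\varepsilon$ are respectively a super- and a subsolution of \eqref{l5.1.2}, and Lemma~\ref{lem:cp}, applied against the solution $u^{\varepsilon}$, yields $|v^{\varepsilon}(i+\tau)-v^{\varepsilon}(i)|\le(2\mathrm{e}^{K}+1)\eta$ for all $i$. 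For arbitrary $i,j$ one then picks an $\eta$-almost-period $\tau$ with $|j-i-\tau|\le L$ and combines the two estimates to obtain $|v^{\varepsilon}(j)-v^{\varepsilon}(i)|\le\varepsilon KL+(2\mathrm{e}^{K}+1)\eta$; letting $\varepsilon\to0$ and then $\eta\to0$ shows $\mathrm{osc}(v^{\varepsilon})\to0$.

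For the convergence of $m(\varepsilon)$ I would use a monotonicity trick. Let $0<\varepsilon'\le\varepsilon$ and set $w:=u^{\varepsilon}+\tfrac{\varepsilon-\varepsilon'}{\varepsilon\varepsilon'}m(\varepsilon)$. Since $B$ is unchanged by adding a constant, one computes $\varepsilon'w(i)-Bw(i)-c(i)=\tfrac{\varepsilon-\varepsilon'}{\varepsilon}\bigl(m(\varepsilon)-v^{\varepsilon}(i)\bigr)\le0$, so $w$ is a subsolution of the $\varepsilon'$-version of \eqref{l5.1.2}; comparing $w$ with the solution $u^{\varepsilon'}$ via Lemma~\ref{lem:cp}, then multiplying by $\varepsilon'$ and taking the infimum over $i$, gives $m(\varepsilon)\le m(\varepsilon')$. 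Thus $m(\cdot)$ is nonincreasing on $(0,\infty)$ and bounded by the a priori estimate, so $m(\varepsilon)\to\lambda$ for some constant $\lambda$ as $\varepsilon\to0^{+}$, and $\mathrm{osc}(v^{\varepsilon})\to0$ then forces $\varepsilon u^{\varepsilon}(i)\to\lambda$ uniformly in $i\in\mathbf{Z}$. I expect the oscillation estimate to be the main obstacle: one must extract from almost periodicity a uniform-in-$\varepsilon$ modulus of continuity strong enough to beat the $\varepsilon^{-1}$ loss incurred in the comparison step, which is exactly why the discrete gradient bound and the relative density of almost-periods must be exploited together; once the comparison principle is in hand the monotonicity argument for $m(\varepsilon)$ is short.
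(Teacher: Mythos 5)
Your proof is correct, and it departs from the paper's in two noteworthy places while using the same foundational ingredients (the a priori $\ell^\infty$ bound, the discrete gradient bound, and the comparison principle of Lemma~\ref{lem:cp}).

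For the ``oscillation $\to 0$'' step, the paper argues by contradiction and invokes the Bochner characterization of almost periodicity: it extracts a sequence of shifts $i_n$ along which the coefficients converge uniformly, then applies Lemma~\ref{lem:cp} to $\hat u^{\varepsilon}(\cdot+i_n)$ and $\hat u^{\varepsilon}(\cdot+i_m)$ to contradict $|\varepsilon_n\hat u^{\varepsilon_n}(i_n)|\ge 2\theta$. You instead use the Bohr characterization directly (relatively dense $\eta$-almost-periods of gap at most $L(\eta)$) to obtain a quantitative, $\varepsilon$-uniform modulus of continuity $|v^{\varepsilon}(j)-v^{\varepsilon}(i)|\le\varepsilon KL+(2\mathrm{e}^K+1)\eta$; this is a constructive version of the same comparison argument and is equivalent in strength, though arguably more transparent.

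Where you genuinely diverge is in the final step, establishing convergence of the limiting constant. The paper repeats a doubling-of-variables argument with the auxiliary function $\Phi(i,j)=u^{\varepsilon}(i)-u^{\varepsilon'}(j)-\alpha|i-j|-\mu(i^2+j^2)$, essentially re-running the proof of Lemma~\ref{lem:cp} with two different equation parameters. Your monotonicity trick replaces this entirely: translating $u^{\varepsilon}$ by the constant $\tfrac{\varepsilon-\varepsilon'}{\varepsilon\varepsilon'}m(\varepsilon)$ produces a subsolution of the $\varepsilon'$-problem (the key computation $\varepsilon'w-Bw-c=\tfrac{\varepsilon-\varepsilon'}{\varepsilon}(m(\varepsilon)-v^{\varepsilon})\le 0$ is correct, using that $B$ is invariant under addition of constants), and one application of Lemma~\ref{lem:cp} gives $m(\varepsilon)\le m(\varepsilon')$. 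This is cleaner and buys you monotone convergence of $m(\varepsilon)$ without re-doing the Perron-style argument. Combined with the vanishing oscillation it pins down the unique constant. Both approaches deliver the same conclusion; yours packages the second half more economically.
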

\begin{proof}
First, note that $-\frac{\sup_{i}|c(i)|}{\varepsilon}\leq
u^{\varepsilon}(i)\leq\frac{2\overline{D}+\sup_{i}|c(i)|}{\varepsilon}$. Hence there exists $M>0$ such that
$\sup\limits_{\varepsilon}\|\varepsilon u^{\varepsilon}\|_{\ell^{\infty}}\leq M$. Moreover,
there exists $M_{1}>0$ such that
$\sup\limits_{\varepsilon}\|\partial^{\pm}u^{\varepsilon}\|_{\ell^{\infty}}\leq M_{1}$.
In fact, if there exist sequences $\{\varepsilon_{n}\}$ and $\{i_{n}\}$ such that $|u^{\varepsilon_{n}}(i_{n}+1)-u^{\varepsilon_{n}}(i_{n})|\geq n$, then there
must be subsequences still denoted by $\{\varepsilon_{n}\}$ and $\{i_{n}\}$ such that $u^{\varepsilon}(i_{n}+1)-u^{\varepsilon}(i_{n})\geq n$ or
$u^{\varepsilon}(i_{n}+1)-u^{\varepsilon}(i_{n})\leq-n$.

On the other hand, from $\varepsilon u^{\varepsilon}(i)-c(i)=Bu^{\varepsilon}(i)=
d^{+}(i)\mathrm{e}^{\partial^{+}u^{\varepsilon}(i)}
+d^{-}(i)\mathrm{e}^{\partial^{-}u^{\varepsilon}(i)}$, we have
$$\varepsilon u^{\varepsilon}(i_{n})-c(i_{n})>
 d^{+}(i_{n})\mathrm{e}^{\partial^{+}u^{\varepsilon}(i_{n})}\geq
 d^{+}(i_{n})\mathrm{e}^{n}$$
or
$$\varepsilon u^{\varepsilon}(i_{n}+1)-c(i_{n}+1)>
 d^{-}(i_{n}+1)\mathrm{e}^{\partial^{-}u^{\varepsilon}(i_{n}+1)}\geq
 d^{-}(i_{n}+1)\mathrm{e}^{n}.$$
By taking $n\to+\infty$, one gets a contradiction since $\sup\limits_{i}|c(i)|<+\infty$ and
$\sup\limits_{\varepsilon}\|\varepsilon u^{\varepsilon}\|_{\ell^{\infty}}\leq M$.

Let $\hat{u}^{\varepsilon}(i):=u^{\varepsilon}(i)-u^{\varepsilon}(0)$. Then $\hat{u}^{\varepsilon}(i)$
satisfies
$$\varepsilon\hat{u}^{\varepsilon}(i)-B\hat{u}^{\varepsilon}(i)-c(i)+\varepsilon u^{\varepsilon}(0)=0.$$
Claim. $\varepsilon\hat{u}^{\varepsilon}(i)\to0$ as $\varepsilon\to0$ uniformly with respect to $i$.\\
Proof of Claim: Assume by contradiction that there exist $\varepsilon_{n}\to0,\ i_{n},\ \theta>0$ such that
$|\varepsilon_{n}\hat{u}^{\varepsilon_{n}}(i_{n})|\geq2\theta$. Without loss of generality, we may assume
$d^{\pm}(i+i_{n}),\ c(i+i_{n})$ converge uniformly as $n\to+\infty$ since they are almost periodic. Set
$\tilde{u}(i):=\hat{u}^{\varepsilon}(i+i_{n}),\ v(i):=\hat{u}^{\varepsilon}(i+i_{m})$, and denote
$d^{\pm}_{k}(i)=d^{\pm}(i+i_{k}),\ c_{k}(i)=c(i+i_{k})$. Then $\tilde{u}(i),\ v(i)$ satisfy
$$\varepsilon\tilde{u}(i)-d^{+}_{n}(i)\mathrm{e}^{\partial^{+}\tilde{u}(i)}
-d^{-}_{n}(i)\mathrm{e}^{\partial^{-}\tilde{u}(i)}
-c_{n}(i)+\varepsilon u^{\varepsilon}(0)=0,$$
$$\varepsilon v(i)-d^{+}_{m}(i)\mathrm{e}^{\partial^{+}v(i)}
-d^{-}_{m}(i)\mathrm{e}^{\partial^{-}v(i)}
-c_{m}(i)+\varepsilon u^{\varepsilon}(0)=0.$$
Set $w(i):=\tilde{u}(i)-\frac{\eta_{m,n}}{\varepsilon}$, where
$$\eta_{m,n}:=3\mathrm{e}^{M_{1}}\max\{\sup\limits_{i}|d^{+}_{m}(i)-d^{+}_{n}(i)|,
   \sup\limits_{i}|d^{-}_{m}(i)-d^{-}_{n}(i)|,\sup\limits_{i}|c_{m}(i)-c_{n}(i)|\}.$$
Then
\begin{equation*}
\begin{split}
&\ \ \ \ \varepsilon w(i)-d^{+}_{m}(i)\mathrm{e}^{\partial^{+}w(i)}
  -d^{-}_{m}(i)\mathrm{e}^{\partial^{-}w(i)}
  -c_{m}(i)+\varepsilon u^{\varepsilon}(0)\\
&=\varepsilon\tilde{u}(i)-d^{+}_{m}(i)\mathrm{e}^{\partial^{+}\tilde{u}(i)}
  -d^{-}_{m}(i)\mathrm{e}^{\partial^{-}\tilde{u}(i)}
  -c_{m}(i)+\varepsilon u^{\varepsilon}(0)-\eta_{m,n}\\
&=(d^{+}_{n}(i)-d^{+}_{m}(i))\mathrm{e}^{\partial^{+}\tilde{u}(i)}
  +(d^{-}_{n}(i)-d^{-}_{m}(i))\mathrm{e}^{\partial^{-}\tilde{u}(i)}+c_{n}(i)-c_{m}(i)-\eta_{m,n}\\
&\leq0.
\end{split}
\end{equation*}
Hence by Lemma \ref{lem:cp}, we have $w(i)\leq v(i)$ for any $i\in\mathbf{Z}$, i.e.,
$$\varepsilon\hat{u}^{\varepsilon}(i+i_{n})\leq\varepsilon\hat{u}^{\varepsilon}(i+i_{m})+\eta_{m,n}.$$
Since $|\varepsilon_{n}\hat{u}^{\varepsilon_{n}}(i_{n})|\geq2\theta$, without loss of generality we may assume
that either $\varepsilon_{n}\hat{u}^{\varepsilon_{n}}(i_{n})\geq2\theta$ or
$\varepsilon_{n}\hat{u}^{\varepsilon_{n}}(i_{n})\leq-2\theta$ for any $n$. We will only prove the case where
$\varepsilon_{n}\hat{u}^{\varepsilon_{n}}(i_{n})\geq2\theta$, and the proof of the other case is similar.
Setting $\varepsilon=\varepsilon_{n},\ i=0$,  we have
\begin{equation}\label{l5.2.1}
2\theta\leq\varepsilon_{n}\hat{u}^{\varepsilon_{n}}(i_{n})\leq\varepsilon_{n}\hat{u}^{\varepsilon_{n}}(i_{m})+\eta_{m,n}
\leq\varepsilon_{n}(u^{\varepsilon_{n}}(i_{m})-u^{\varepsilon_{n}}(0))+\eta_{m,n}\leq
\varepsilon_{n}i_{m}M_{1}+\eta_{m,n}
\end{equation}
for any $n,m\in\mathbf{N}$.
On the other hand, there exists $n_{0}$ such that for any $n,m\geq n_{0}$, $\eta_{m,n}<\theta$ holds by the
definition of $\eta_{m,n}$ and the choice of $i_{n}$. In particular, $\eta_{n_{0},n}<\theta$ for any
$n\geq n_{0}$, then \eqref{l5.2.1} possesses a special case as follows
$$2\theta\leq\varepsilon_{n}i_{n_{0}}M_{1}+\eta_{n_{0},n}.$$
Letting $n\to+\infty$, we obtain a contradiction. Hence $\varepsilon\hat{u}^{\varepsilon}(i)\rightarrow0$,
i.e., $\varepsilon u^{\varepsilon}(i)-\varepsilon u^{\varepsilon}(0)\rightarrow0$ as $\varepsilon\to0$ uniformly with respect to $i\in\mathbf{Z}$.
The proof of the claim is complete.

The claim means that for any sequence $\{\varepsilon_{n}\}$ there exists a subsequence still denoted by $\{\varepsilon_{n}\}$ such that
$\varepsilon_{n}u^{\varepsilon_{n}}\rightarrow
\lim\limits_{n\to+\infty}\varepsilon_{n}u^{\varepsilon_{n}}(0)$ uniformly with respect to $i\in\mathbf{Z}$.
Then we still need to show that for any sequence $\varepsilon_{n}$ tending to $0$,
$\varepsilon_{n}u^{\varepsilon_{n}}$ converges to the same constant as $n\to+\infty$.
If not, then there exist $\{\varepsilon_{n}\}$ and $\{\varepsilon^{\prime}_{n}\}$ such that
$\varepsilon_{n}u^{\varepsilon_{n}}\rightarrow a,\ \text{and}\
 \varepsilon^{\prime}_{n}u^{\varepsilon^{\prime}_{n}}\rightarrow b$
as $n\to+\infty$ uniformly with respect to $i\in\mathbf{Z}$. Without loss of generality, we may assume that $a>b$.
Then we choose
$\varepsilon\in\{\varepsilon_{n}\},\ \varepsilon^{\prime}\in\{\varepsilon^{\prime}_{n}\}$ such that
$\|\varepsilon u^{\varepsilon}-a\|_{\ell^{\infty}}<\frac{a-b}{4},\
 \|\varepsilon^{\prime}u^{\varepsilon^{\prime}}-b\|_{\ell^{\infty}}<\frac{a-b}{4}$. Hence
$\varepsilon u^{\varepsilon}(i)-\varepsilon^{\prime}u^{\varepsilon^{\prime}}(j)>\frac{a-b}{2}$
for all $i,j\in\mathbf{Z}$. Let
$$\Phi(i,j)=u^{\varepsilon}(i)-u^{\varepsilon^{\prime}}(j)-\alpha|i-j|-\mu(i^{2}+j^{2}).$$
Then by the same argument as Lemma \ref{lem:cp}, $\Phi$ reaches its maximum at some point, say $(k,l)$,
and $\mu k,\mu l\to0$ as $\mu\to0$ uniformly with respect to $\alpha$, $|k-l|\to0$ as $\alpha\to+\infty$ uniformly
with respect to $\mu$, and there still exists $\alpha_{0}>0$ such that $k=l$ if
$\alpha\geq\alpha_{0}$. Moreover,
$\partial^{\pm}u^{\varepsilon}(k)\leq\partial^{\pm}u^{\varepsilon^{\prime}}(l)+2\mu(1\pm k\pm l)$.
Hence for $\alpha\geq\alpha_{0}$
$$\frac{a-b}{2}\leq\varepsilon u^{\varepsilon}(k)-\varepsilon^{\prime}u^{\varepsilon^{\prime}}(l)
=Bu^{\varepsilon}(k)+c(k)-Bu^{\varepsilon^{\prime}}(l)-c(l)\ \to0$$
as $\mu\to0$, which is a contradiction! Thus the proof is complete.
\end{proof}
Denote $\lambda_{0}:=\lim\limits_{\varepsilon\to0}\varepsilon u^{\varepsilon}$. Now we can prove our main
result of this section.

\begin{proof}[Proof of Theorem \ref{thm5.1}]
Let $d^{+}(i)=d^{\prime}_{i}\mathrm{e}^{p},\ d^{-}(i)=d_{i}\mathrm{e}^{-p},\ c(i)=f^{\prime}_{s}(i,0)-d^{\prime}_{i}-d_{i}$
and $\phi_{i}=\mathrm{e}^{u^{\varepsilon}(i)}$, where $u^{\varepsilon}$ is a solution of \eqref{l5.1.2}.
Then $\{\phi_{i}\}\in\mathcal{A}_{-\infty}$
satisfies $(L_{p}\phi)_{i}={e}^{-pi}\mathcal (Le^{p\cdot}\phi)_{i}=\varepsilon u^{\varepsilon}(i)\phi_{i}$ for any $\varepsilon>0$.
Moreover, for any $\kappa>0$ small, there always exists $\varepsilon$ such that
$\|\varepsilon u^{\varepsilon}-\lambda_{0}\|_{\ell^{\infty}}\leq\kappa$. Then from the definition and
the monotonicity of $\underline{\lambda}_{1},\overline{\lambda}_{1}$, one can take $\{\phi\}$ as a test
vector to obtain
$$\lambda_{0}-\kappa\leq\underline{{\lambda}_{1}}(p,-\infty)\leq
\underline{{\lambda}_{1}}(p,n)\leq\overline{{\lambda}_{1}}(p,n)
\leq\overline{{\lambda}_{1}}(p,-\infty)\leq\lambda_{0}+\kappa$$
for $\kappa>0,\ n\in\mathbf{N},\ p\in\mathbf{R}$. Then setting $\kappa\to0$,  we have
$$\lambda_{0}=\underline{{\lambda}_{1}}(p,-\infty)=\underline{{\lambda}_{1}}(p,n)
=\overline{{\lambda}_{1}}(p,n)=\overline{{\lambda}_{1}}(p,-\infty).$$
Hence $\underline{\omega}=\min \limits_{p>0}\frac{\underline{{\lambda}_{1}}(-p,-\infty)}{p}
=\min \limits_{p>0}\frac{\overline{{\lambda}_{1}}(-p,-\infty)}{p}=\overline{\omega}.$
\end{proof}

If, furthermore, $d^{\prime}_{i}=d_{i}\equiv d$, then we can prove that the speed in the positive direction
equals to the speed in the negative direction, i.e.,
$\underline{\omega}^{-}=\overline{\omega}^{-}=\underline{\omega}=\overline{\omega}$, where
$\underline{\omega}^{-},\ \overline{\omega}^{-}$ were given in Step 2 in the proof of the second part of
Theorem \ref{thm2.1}. In fact, we have the following theorem:
\begin{thm}\label{thm5.2}
Assume that $d^{\prime}_{i}=d_{i}\equiv d,$ and $f^{\prime}_{s}(i,0)$ is almost periodic with respect to $i$. Then
$$
\underline{{\lambda}_{1}}(p,-\infty)=\overline{{\lambda}_{1}}(p,-\infty)=
\underline{{\lambda}^{-}_{1}}(p,-\infty)=\overline{{\lambda}^{-}_{1}}(p,-\infty).
$$
\end{thm}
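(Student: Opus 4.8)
The plan is to show that all four quantities coincide, for each fixed $p$, with the single constant $\lambda_0(p)$ produced by the resolvent approximation of the two lemmas preceding Theorem~\ref{thm5.1}, and to do so by combining a reflection identity with a discrete integration by parts against the formal adjoint operator. Throughout I write $c(i):=f^{\prime}_{s}(i,0)-2d$, and for $p\in\mathbf{R}$ I let $u^\varepsilon_p\in X_{-\infty}$ be the solution of the resolvent equation \eqref{l5.1.2} with data $d^{+}(i)=d\me^{p}$, $d^{-}(i)=d\me^{-p}$, $c(i)$; as in the proof of Theorem~\ref{thm5.1}, $\phi^{(p)}:=\me^{u^\varepsilon_p}\in\mathcal{A}_{-\infty}$ satisfies $(L_p\phi^{(p)})_i=\varepsilon u^\varepsilon_p(i)\phi^{(p)}_i$, and $\lambda_0(p):=\lim_{\varepsilon\to0}\varepsilon u^\varepsilon_p=\underline{{\lambda}_{1}}(p,-\infty)=\overline{{\lambda}_{1}}(p,-\infty)$, the convergence being uniform in $i$. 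Since $d^{\prime}_{i}=d_{i}\equiv d$ and $f^{\prime}_{s}(i,0)$ is almost periodic (and the reflection of an almost periodic sequence is almost periodic), both $\mathcal{L}$ and $\mathcal{L}^{-}$ satisfy the hypotheses of Theorem~\ref{thm5.1}, so likewise $\underline{{\lambda}^{-}_{1}}(p,-\infty)=\overline{{\lambda}^{-}_{1}}(p,-\infty)=:\lambda_0^{-}(p)$. Thus it suffices to prove $\lambda_0^{-}(p)=\lambda_0(-p)$ and $\lambda_0(p)=\lambda_0(-p)$.

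The first equality I would obtain by reflection. Let $R\colon X_{-\infty}\to X_{-\infty}$, $(R\phi)_i:=\phi_{-i}$. Because $d^{\prime}_{i}=d_{i}\equiv d$, a direct computation gives $L^{-}_{p}=R\,L_{-p}\,R$ on $X_{-\infty}$. Since $R$ maps $\mathcal{A}_{-\infty}$ bijectively onto itself and preserves positivity and the $\ell^{\infty}$ difference-quotient bounds in Definition~\ref{def2.1}, transporting admissible test functions through $R$ in the definitions of $\underline{{\lambda}_{1}}$ and $\overline{{\lambda}_{1}}$ yields $\underline{{\lambda}^{-}_{1}}(p,-\infty)=\underline{{\lambda}_{1}}(-p,-\infty)$ and $\overline{{\lambda}^{-}_{1}}(p,-\infty)=\overline{{\lambda}_{1}}(-p,-\infty)$, i.e.\ $\lambda_0^{-}(p)=\lambda_0(-p)$.

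The heart of the argument is the evenness $\lambda_0(p)=\lambda_0(-p)$, which I would prove using that $L_{-p}$ is the formal $\ell^{2}$-adjoint of $L_p$ (a consequence of the symmetry of the diffusion). Fix $\varepsilon>0$ small, set $\phi:=\phi^{(p)}$ and $\psi:=\phi^{(-p)}$, so that $(L_p\phi)_i=\varepsilon u^\varepsilon_p(i)\phi_i$ and $(L_{-p}\psi)_i=\varepsilon u^\varepsilon_{-p}(i)\psi_i$; by the a priori bound on $u^\varepsilon$ recorded after \eqref{l5.1.2} there is $M_\varepsilon>0$ (independent of $i$) with $\me^{-M_\varepsilon}\le\phi_i,\psi_i\le\me^{M_\varepsilon}$. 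Multiplying the first relation by $\psi_i$, the second by $\phi_i$, subtracting (the potential terms cancel), and summing over $-n\le i\le n$, an Abel summation collapses the left side to boundary terms:
\begin{equation*}
\sum_{i=-n}^{n}\big(\varepsilon u^\varepsilon_p(i)-\varepsilon u^\varepsilon_{-p}(i)\big)\phi_i\psi_i
= d\me^{p}\big(\phi_{n+1}\psi_n-\phi_{-n}\psi_{-n-1}\big)+d\me^{-p}\big(\phi_{-n-1}\psi_{-n}-\phi_{n}\psi_{n+1}\big).
\end{equation*}
The right-hand side is bounded in modulus by $C_\varepsilon:=2d(\me^{p}+\me^{-p})\me^{2M_\varepsilon}$, independently of $n$. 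By the convergence lemma preceding Theorem~\ref{thm5.1}, $\varepsilon u^\varepsilon_{\pm p}(i)\to\lambda_0(\pm p)$ uniformly in $i$ as $\varepsilon\to0$; hence for $\varepsilon$ small the summands on the left all share the sign of $\lambda_0(p)-\lambda_0(-p)$ and have absolute value at least $|\lambda_0(p)-\lambda_0(-p)|-r(\varepsilon)$ with $r(\varepsilon)\to0$, so the left side has modulus at least $\big(|\lambda_0(p)-\lambda_0(-p)|-r(\varepsilon)\big)(2n+1)\me^{-2M_\varepsilon}$. Letting $n\to+\infty$ forces $|\lambda_0(p)-\lambda_0(-p)|\le r(\varepsilon)$, and $\varepsilon\to0$ then gives $\lambda_0(p)=\lambda_0(-p)$. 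Combining, $\lambda_0^{-}(p)=\lambda_0(-p)=\lambda_0(p)$, which is the claimed chain of equalities.

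I expect the last step to be the main obstacle: the crucial point is that the bulk sum in the displayed identity grows linearly in $n$ while the boundary terms stay $n$-bounded, which is exactly what upgrades the formal self-adjointness $L_p^{*}=L_{-p}$ to a genuine coincidence of generalized principal eigenvalues; controlling the boundary terms uses the fixed-$\varepsilon$ two-sided bounds $\me^{-M_\varepsilon}\le\phi_i,\psi_i\le\me^{M_\varepsilon}$ together with the uniform-in-$i$ convergence $\varepsilon u^\varepsilon\to\lambda_0$, both from the lemmas preceding Theorem~\ref{thm5.1}. The same scheme, with the definitions \eqref{2.3}, also yields $\underline{\omega}=\overline{\omega}=\underline{\omega}^{-}=\overline{\omega}^{-}$, and the parallel homogenization estimates give the analogous statement for \eqref{continuousequation}.
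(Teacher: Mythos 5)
Your proof is correct and follows essentially the same route as the paper: both hinge on the resolvent eigenfunctions $\phi=\me^{u^\varepsilon}$ from the ergodic lemma before Theorem~\ref{thm5.1}, a discrete integration by parts that makes the potential terms cancel, and the observation that the boundary terms stay bounded while the bulk sum grows linearly in $n$, forcing the two limits to coincide. The only cosmetic difference is that you factor the argument into the reflection identity $L^-_p = R\,L_{-p}\,R$ plus the evenness $\lambda_0(p)=\lambda_0(-p)$ via formal $\ell^2$-adjointness, whereas the paper folds these into a single pairing of $\phi_i$ against the reflected eigenfunction $\psi_{-i}$ of $L^-_p$; these are equivalent (minor arithmetic nit: the uniform summand lower bound should read $|\lambda_0(p)-\lambda_0(-p)|-2r(\varepsilon)$ rather than $-r(\varepsilon)$, which of course does not affect the conclusion).
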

\begin{proof}
First by Theorem \ref{thm5.1}, there exist
$u^{\varepsilon}\in{\ell}^{\infty},\ v^{\varepsilon}\in{\ell}^{\infty}$ such that
$$L_{p}\phi=\varepsilon u^{\varepsilon}\phi,\ L^{-}_{p}\psi=\varepsilon v^{\varepsilon}\psi,$$
where $\phi=\mathrm{e}^{u^{\varepsilon}},\ \psi=\mathrm{e}^{v^{\varepsilon}}$. Moreover,
$$\underline{{\lambda}_{1}}(p,-\infty)=\overline{{\lambda}_{1}}(p,-\infty)=\lim\limits_{\varepsilon\to0}
\varepsilon u^{\varepsilon}$$
$$\underline{{\lambda}^{-}_{1}}(p,-\infty)=\overline{{\lambda}^{-}_{1}}(p,-\infty)=\lim\limits_{\varepsilon\to0}
\varepsilon v^{\varepsilon}.$$
We denote $\lambda_{0}:=\lim\limits_{\varepsilon\to0}\varepsilon u^{\varepsilon}$ and
$\lambda^{-}_{0}:=\lim\limits_{\varepsilon\to0}\varepsilon v^{\varepsilon}$.
Now it is sufficient to show
$\lambda_{0}=\lambda^{-}_{0}$. If not, we may assume by contradiction that
$\lambda_{0}<\lambda^{-}_{0}$ without loss of generality, then there exists $\varepsilon_{0}$ such that
$\varepsilon_{0} u^{\varepsilon_{0}}<\lambda_{0}+\frac{\lambda^{-}_{0}-\lambda_{0}}{4}$ and
$\varepsilon_{0} v^{\varepsilon_{0}}>\lambda^{-}_{0}-\frac{\lambda^{-}_{0}-\lambda_{0}}{4}$. Since
$$\left\{
   \begin{aligned}
    d\mathrm{e}^{p}\phi_{i+1}+d\mathrm{e}^{-p}\phi_{i-1}+(f^{\prime}_{s}(i,0)-2d)\phi_{i}
    =\varepsilon_{0} u^{\varepsilon_{0}}_{i}\phi_{i}
     \leq(\lambda_{0}+\frac{\lambda^{-}_{0}-\lambda_{0}}{4})\phi_{i},\\
    d\mathrm{e}^{p}\psi_{i+1}+d\mathrm{e}^{-p}\psi_{i-1}+(f^{\prime-}_{s}(i,0)-2d)\psi_{i}
    =\varepsilon_{0} v^{\varepsilon_{0}}_{i}\psi_{i}
     \geq(\lambda^{-}_{0}-\frac{\lambda^{-}_{0}-\lambda_{0}}{4})\psi_{i}.
   \end{aligned}
   \right.$$
Hence we have
\begin{equation}\label{t5.2.1}
\sum_{i=-n}^{n}(d\mathrm{e}^{p}\phi_{i+1}+d\mathrm{e}^{-p}\phi_{i-1}+(f^{\prime}_{s}(i,0)-2d)\phi_{i})\psi_{-i}
\leq\sum_{i=-n}^{n}(\lambda_{0}+\frac{\lambda^{-}_{0}-\lambda_{0}}{4})\phi_{i}\psi_{-i},
\end{equation}
and
\begin{equation}\label{t5.2.2}
\sum_{i=-n}^{n}(d\mathrm{e}^{p}\psi_{-i+1}+d\mathrm{e}^{-p}\psi_{-i-1}+(f^{\prime-}_{s}(-i,0)-2d)\psi_{-i})\phi_{i}
\geq\sum_{i=-n}^{n}(\lambda^{-}_{0}-\frac{\lambda^{-}_{0}-\lambda_{0}}{4})\psi_{-i}\phi_{i}
\end{equation}
for any $n\in\mathbf{N}$.
By \eqref{t5.2.2}-\eqref{t5.2.1}, we have
\begin{equation*}
\begin{split}
\frac{\lambda^{-}_{0}-\lambda_{0}}{2}\sum_{i=-n}^{n}\psi_{-i}\phi_{i}
&\leq\sum_{i=-n}^{n}(d\mathrm{e}^{p}\psi_{-i+1}\phi_{i}-d\mathrm{e}^{p}\phi_{i+1}\psi_{-i})\\
&\quad+\sum_{i=-n}^{n}(d\mathrm{e}^{-p}\psi_{-i-1}\phi_{i}-d\mathrm{e}^{-p}\phi_{i-1}\psi_{-i}),\\
\end{split}
\end{equation*}
i.e.,
$$n(\lambda^{-}_{0}-\lambda_{0})\inf \limits_{i}(\phi_{i}\psi_{i})\leq
d\mathrm{e}^{p}(\psi_{n+1}\phi_{-n}-\psi_{-n}\phi_{n+1})
+d\mathrm{e}^{-p}(\psi_{-n-1}\phi_{n}-\psi_{n}\phi_{-n-1}).$$
Take $n\to \infty$ in the above inequality. Then we have
$$\liminf \limits_{n\to\infty}(d\mathrm{e}^{p}(\psi_{n+1}\phi_{-n}-\psi_{-n}\phi_{n+1})
+d\mathrm{e}^{-p}(\psi_{-n-1}\phi_{n}-\psi_{n}\phi_{-n-1}))\geq+\infty,$$
which contradicts $\phi=\mathrm{e}^{u^{\varepsilon_{0}}}\in{\ell}^{\infty}$ and
$\psi=\mathrm{e}^{v^{\varepsilon_{0}}}\in{\ell}^{\infty}$.
\end{proof}
\begin{remark}
For the reaction diffusion equation \eqref{continuousequation}, which was investigated in \cite{B2},
one can also show that the speed in the positive direction equals to the speed in the
negative direction if $a(x)\equiv a\ \text{is a constant},\ q(x)\equiv0$ and $f^{\prime}_{s}(x,0)$ is almost periodic.
\end{remark}

\section{Application: random stationary ergodic coefficients}

In this section, we always assume that $d^{\prime}_{i}=d_{i+1}$.  We consider a probability space
$(\Omega,\mathbb{P},\mathcal F)$ and assume that the reaction rate
$f:(i,s;\omega)\in{\mathbf{Z}\times[0,1]\times\Omega}\rightarrow\mathbf{R}$ and
$d:(i,\omega)\in{\mathbf{Z}\times\Omega}\rightarrow(0,+\infty)$ in \eqref{1.1} are random variables.
Furthermore, we assume that there exists $\Omega_{0}\subset\Omega$
with $\mathbb{P}(\Omega_{0})=1$ such that the following conditions hold for any fixed $\omega\in\Omega_{0}$:
$0<\inf \limits_{i}d(i,\omega)\leq\sup \limits_{i}d(i,\omega)<+\infty$,
$f(i,\cdot;\omega)\in\mathcal{C}^{1+\gamma}([0,1])$ with $\sup\limits_{i}\|f(i,\cdot;\omega)\|_{\mathcal{C}^{1+\gamma}}<+\infty$,
$f(i,0;\omega)\equiv f(i,1;\omega)\equiv0$,\ $0<\inf\limits_{i}f(i,s;\omega)\leq f(i,s;\omega)\leq f_{s}^{\prime}(i,0;\omega)s$ for any $s\in (0,1)$,
and $\liminf \limits_{|i|\to \infty}(f_{s}^{\prime}(i,0;\omega)-({\sqrt {d(i+1,\omega)}}-{\sqrt {d(i,\omega)}})^2)>0$.
We denote $d(i,\omega)$ by $d_{i}(\omega)$ and $f_{s}^{\prime}(i,0;\omega)$ by $c_{i}(\omega)=c(i,\omega)$
for $(i,\omega)\in{\mathbf{Z}\times\Omega}$.
The functions $c(\cdot,\cdot)$ and $d(\cdot,\cdot)$
are assumed to be random stationary ergodic, which means that there exists a group
$(\pi_{i})_{i\in\mathbf{Z}}$ of measure-preserving transformations acting ergodically on
$\Omega$ such that $c(i+j,\omega)=c(i,\pi_{j}\omega)$ and
$d(i+j,\omega)=d(i,\pi_{j}\omega)$ for any $(i,j,\omega)\in\mathbf{Z}\times\mathbf{Z}\times\Omega$.
The linearized operator $\mathcal L$ now depends on the event $\omega$,
that is,
$$(\mathcal L^{\omega}\phi)_{i}=(A^{\omega}\phi)_{i}+c_{i}(\omega)\phi_{i}=
d_{i+1}(\omega)(\phi_{i+1}-\phi_{i})+d_{i}(\omega)(\phi_{i-1}-\phi_{i})+c_{i}(\omega)\phi_{i}\ \forall \omega\in\Omega.$$
We also denote $L_{p}^{\omega}\phi={\mathrm{e}}^{-p\cdot}\mathcal L^{\omega}({\mathrm{e}}^{p\cdot}\phi)$
for $p\in\mathbf{R}$.
We associate these operators with two generalized eigenvalues
$\underline{{\lambda}_{1}}(p,n;\omega),\overline{{\lambda}_{1}}(p,n;\omega)$ through Definition \ref{def2.1}, two
Hamiltonians $\underline{H}(p,\omega),\overline{H}(p,\omega)$ through \eqref{2.2} and two speeds
$\underline{\omega}(\omega)$ and $\overline{\omega}(\omega)$ through \eqref{2.3}.

Our main result in this section is
\begin{thm}\label{thm6.1}
Under the assumptions stated above, there is a measurable set $\Omega^{\prime}\subset\Omega$
with $\mathbb{P}(\Omega^{\prime})=1$, such that
$$
\overline{\omega}(\cdot)=\underline{\omega}(\cdot)
$$
is a constant function on $\Omega^{\prime}$.
\end{thm}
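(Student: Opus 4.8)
The plan is to prove the stronger assertion that for every fixed $p\in\mathbf R$ there are a deterministic constant $\lambda_0(p)$ and a full-measure event $\Omega_p\subset\Omega_0$ on which $\underline{\lambda_1}(p,n;\omega)=\overline{\lambda_1}(p,n;\omega)=\lambda_0(p)$ for all $n\in\{-\infty\}\cup\mathbf Z$. Given this, \eqref{2.2} forces $\underline H(p;\omega)=\overline H(p;\omega)=\lambda_0(p)$ on $\Omega_p$, and \eqref{2.3} gives $\underline\omega(\omega)=\overline\omega(\omega)=\min_{p>0}\lambda_0(-p)/p$ there. To handle all $p$ simultaneously I would fix a countable dense set $\{p_k\}\subset\mathbf R$, set $\Omega':=\Omega_0\cap\bigcap_k\Omega_{p_k}$ (still of probability one), and use the local Lipschitz continuity of $p\mapsto\underline{\lambda_1}(p,n;\cdot)$ and $p\mapsto\overline{\lambda_1}(p,n;\cdot)$ from Section 3, which is uniform in $n$ and, since the coefficients are uniformly bounded over $\Omega_0$, uniform in $\omega\in\Omega_0$ as well; this propagates the equalities from $\{p_k\}$ to all $p$, and keeps $\underline\omega(\cdot)=\overline\omega(\cdot)$ measurable and equal to a single deterministic constant on $\Omega'$.

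For the fixed-$p$ statement, the "a.s.\ constant" part is cheap: translating the coefficient data of $\mathcal L^\omega$ by $j$ (which, by stationarity, is exactly passing from $\omega$ to $\pi_j\omega$) is undone by translating the admissible test functions in $\mathcal A_{-\infty}$ by $j$, leaving the defining inequalities in Definition \ref{def2.1} unchanged, so $\omega\mapsto\underline{\lambda_1}(p,-\infty;\omega)$ and $\omega\mapsto\overline{\lambda_1}(p,-\infty;\omega)$ are $(\pi_j)_j$-invariant and hence a.s.\ equal to deterministic constants $\underline\lambda(p)\le\overline\lambda(p)$ by ergodicity. To compare them I would mimic the homogenization scheme of Section 5: put $d^+(i;\omega)=d_{i+1}(\omega)\mathrm e^{p}$, $d^-(i;\omega)=d_i(\omega)\mathrm e^{-p}$, $c(i;\omega)=c_i(\omega)-d_{i+1}(\omega)-d_i(\omega)$, and let $u^\varepsilon_\omega\in\ell^\infty(\mathbf Z)$ be the Perron solution of $\varepsilon u^\varepsilon_\omega(i)-B^\omega u^\varepsilon_\omega(i)-c(i;\omega)=0$, with $\|\varepsilon u^\varepsilon_\omega\|_{\ell^\infty}\le M$ and $\|\partial^\pm u^\varepsilon_\omega\|_{\ell^\infty}\le M_1$ uniform in $\varepsilon\in(0,1]$ and $\omega\in\Omega_0$. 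Then $\phi^\varepsilon_\omega:=\mathrm e^{u^\varepsilon_\omega}\in\mathcal A_{-\infty}$ (it is bounded for fixed $\varepsilon$, with bounded discrete logarithmic gradient) and satisfies $(L^\omega_p\phi^\varepsilon_\omega)_i=\varepsilon u^\varepsilon_\omega(i)\,\phi^\varepsilon_{\omega,i}$, so testing in Definition \ref{def2.1} and using Proposition \ref{prop2.1} yields
$$\inf_{i}\varepsilon u^\varepsilon_\omega(i)\le\underline{\lambda_1}(p,-\infty;\omega)\le\overline{\lambda_1}(p,-\infty;\omega)\le\sup_{i}\varepsilon u^\varepsilon_\omega(i).$$
By uniqueness (Lemma \ref{lem:cp}) and stationarity $u^\varepsilon_{\pi_j\omega}(i)=u^\varepsilon_\omega(i+j)$, so $\inf_i\varepsilon u^\varepsilon_\omega(i)$ and $\sup_i\varepsilon u^\varepsilon_\omega(i)$ are $(\pi_j)$-invariant and a.s.\ equal to $\operatorname*{ess\,inf}_\omega\varepsilon u^\varepsilon_\omega(0)$ and $\operatorname*{ess\,sup}_\omega\varepsilon u^\varepsilon_\omega(0)$. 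Measurability of all the eigenvalue functions comes along for free, since $\omega\mapsto u^\varepsilon_\omega$ is measurable.

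It therefore remains to show $\operatorname*{ess\,sup}_\omega\varepsilon u^\varepsilon_\omega(0)-\operatorname*{ess\,inf}_\omega\varepsilon u^\varepsilon_\omega(0)\to0$ as $\varepsilon\to0$, i.e.\ that $\varepsilon u^\varepsilon_\omega(0)$ converges, uniformly in $\omega\in\Omega_0$, to a deterministic limit $\lambda_0(p)$; this is the stochastic-homogenization core and I expect it to be the main obstacle. The almost-periodic argument of Section 5 rested on choosing shifts that make the coefficient sequences uniformly close, which is unavailable for a general stationary ergodic field, so I would instead attach to $u^\varepsilon_\omega$ a subadditive functional — for instance (minus) the logarithm of the fundamental solution of the linear parabolic problem $\dot w_i=(A^\omega w)_i+c_i(\omega)w_i$, or a discrete action/first-passage quantity between lattice sites — and apply Kingman's subadditive ergodic theorem to obtain a.s.\ and $L^1$ convergence to a deterministic limit, then use Lemma \ref{lem:cp} together with the uniform bounds $M,M_1$ to upgrade this to uniformity in $\omega$ and to identify the limit with both $\underline\lambda(p)$ and $\overline\lambda(p)$. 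Once this is established, letting $\varepsilon\to0$ in the displayed sandwich gives $\underline{\lambda_1}(p,-\infty;\omega)=\overline{\lambda_1}(p,-\infty;\omega)=\lambda_0(p)$ on a full-measure set $\Omega_p$, and the monotonicity of $\underline{\lambda_1}(p,\cdot)$ and $\overline{\lambda_1}(p,\cdot)$ in $n$ squeezes the common value at every finite $n$ to $\lambda_0(p)$ as well, which completes the fixed-$p$ statement and hence, by the first paragraph, the proof of Theorem \ref{thm6.1}.
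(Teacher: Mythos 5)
Your outer scaffolding is sound and matches the logic the paper needs: reduce Theorem \ref{thm6.1} to showing $\underline{\lambda_1}(p,-\infty;\omega)=\overline{\lambda_1}(p,-\infty;\omega)=\lambda_0(p)$ a.s.\ for each fixed $p$, use the nesting $\underline{\lambda_1}(p,-\infty)\le\underline{\lambda_1}(p,n)\le\overline{\lambda_1}(p,n)\le\overline{\lambda_1}(p,-\infty)$ to propagate to all $n$, and use the uniform local Lipschitz continuity from Section~3 to pass from a countable dense set of $p$'s to all $p$. The sandwich $\inf_i\varepsilon u^\varepsilon_\omega(i)\le\underline{\lambda_1}\le\overline{\lambda_1}\le\sup_i\varepsilon u^\varepsilon_\omega(i)$ obtained by testing with $\mathrm e^{u^\varepsilon_\omega}$ is also correct.

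However, you have a genuine gap exactly where you flag one. The whole proof hinges on showing that $\sup_i\varepsilon u^\varepsilon_\omega(i)-\inf_i\varepsilon u^\varepsilon_\omega(i)\to 0$, i.e.\ that $\varepsilon\hat u^\varepsilon_\omega(i):=\varepsilon(u^\varepsilon_\omega(i)-u^\varepsilon_\omega(0))\to 0$ \emph{uniformly in $i$}. In the almost-periodic case (Section~5) this is the content of the Claim in the lemma preceding Theorem~\ref{thm5.1}, and it is proved by the comparison principle (Lemma~\ref{lem:cp}) together with the defining property of almost periodicity: shifted coefficient sequences can be made $\ell^\infty$-close, which supplies the $\eta_{m,n}$ and makes the contradiction go through. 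That mechanism has no analogue for a general stationary ergodic field — two translates of the coefficients need not be close in $\ell^\infty$. Invoking Kingman's subadditive ergodic theorem, as you suggest, gives at best an almost-sure pointwise limit of $\varepsilon u^\varepsilon_\omega(0)$ (or a deterministic Lyapunov-type exponent), not the uniform-in-$i$ oscillation estimate you need; you name neither a concrete subadditive functional nor verify its properties nor explain the promised ``upgrade to uniformity.'' As written the central analytic step is a conjecture, not a proof.

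The paper's route is structurally different and avoids the vanishing-discount scheme entirely. It introduces the principal eigenvalues $\Gamma_{l,k}(\omega)$ of finite symmetric tridiagonal truncations, shows they increase to a deterministic $\Gamma_\infty$ (Lemma~\ref{lem6.1}), and for each $\gamma>\Gamma_\infty$ constructs the unique positive solution $u(\gamma,\omega)$ of $(A^\omega u)_i+(c_i-\gamma)u_i=0$ on $\mathbf Z$ decaying at $+\infty$ (via the quantitative decay estimate of Lemma~\ref{lem6.2}). Stationarity yields the multiplicative cocycle identity $u_{i+j}(\gamma,\omega)=u_i(\gamma,\omega)u_j(\gamma,\pi_i\omega)$, so the \emph{Birkhoff} (not Kingman) ergodic theorem applied to $\ln u_1(\gamma,\pi_k\omega)$ produces deterministic exponential rates $\mu(\gamma)=\nu(\gamma)$ (Theorem~\ref{thm6.2}). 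These rates are concave, strictly increasing and invertible, and $\phi_i=\mathrm e^{pi}u_i(k(p),\omega)$ lies in $\mathcal A_{-\infty}$ and is an \emph{exact} eigenfunction of $L^\omega_{-p}$ with eigenvalue $k(p)$; Corollary~\ref{cor2.1} then pins $\underline{\lambda_1}=\overline{\lambda_1}=k(p)$ for $p$ outside an interval $[p_l,p_r]$. For $p$ inside, convexity (Lemma~\ref{lem3.2}) gives $\overline{\lambda_1}\le\Gamma_\infty$, and the matching lower bound $\underline{\lambda_1}\ge\Gamma_\infty$ (Theorem~\ref{thm6.3}) is established by a separate stationary cell-problem construction $L_p^\omega u=u^2$. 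None of this appears in your proposal. To repair your argument you would essentially need to replicate this construction or carry out a genuine stochastic-homogenization proof for the discrete HJ equation, including the uniform oscillation bound, which is substantially more work than the sketch indicates.
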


First, a simple observation is that $\sup \limits_{i}d_{i}(\omega),\ \inf \limits_{i}d_{i}(\omega),\
\sup \limits_{i}c_{i}(\omega),\ \inf \limits_{i}c_{i}(\omega)$ and
$\liminf \limits_{|i|\to \infty}(c_{i}(\omega)-({\sqrt {d_{i}(\omega)}}-{\sqrt {d_{i}(\omega)}})^2)$
are constants, almost surely. In fact, let $D(\omega)=\sup \limits_{i}d_{i}(\omega)$,  we have
$D(\pi_{j}\omega)=\sup \limits_{i}d_{i}(\pi_{j}\omega)=\sup \limits_{i}d_{i+j}(\omega)=D(\omega)$. Then
the assumption of ergodicity implies that $D(\omega)$ is a constant function a.e.. For the other four terms,
the reason is similar. We may assume that
$\sup \limits_{i}d_{i}(\omega)=D,\ \inf \limits_{i}d_{i}(\omega)=\underline{D},\
\sup \limits_{i}c_{i}(\omega)=C,\ \inf \limits_{i}c_{i}(\omega)=c$ on $\Omega_{0}$ by omitting a set with
probability zero.

Denote $\mathbf{I}_{l,k}=\{l+1,l+2,\cdots,l+k\}$ for $l\in\mathbf{Z},\ k\in\mathbf{N}$. Fix $\omega\in\Omega$ and
let $(\Gamma_{l,k}(\omega),\phi^{l,k}(\omega))$ be the principal eigenpair of the following eigenvalue
problem
\begin{equation}\label{t6.1.1}
\left\{
   \begin{aligned}
   (A^{\omega}\phi^{l,k})_{i}(\omega)
    +c_{i}(\omega)\phi_{i}^{l,k}(\omega)=\lambda\phi_{i}^{l,k}(\omega),&\ \ i\in\mathbf{I}_{l,k},\\
    \phi_{l}^{l,k}(\omega)=\phi_{l+k+1}^{l,k}(\omega)=0.
   \end{aligned}
   \right.
\end{equation}
It is easy to find that
$\Gamma_{i,1}(\omega)=c_{i+1}(\omega)-d_{i+1}(\omega)-d_{i+2}(\omega)$,  and that \eqref{t6.1.1} is equivalent
to the following eigenvalue problem of the matrix:
$$D_{l,k}x=\lambda x,$$
where
\begin{gather*}
D_{l,k}:=
\left(
  \begin{array}{ccccc}
    \Gamma_{l+1} &   d_{l+2}   &            &          &            \\
    d_{l+2}      & \Gamma_{l+2}& d_{l+3}    &          &            \\
                & d_{l+3}      &\Gamma_{l+3} &\ddots    &            \\
                &              &\ddots       &\ddots    &d_{l+k+1}   \\
                &              &             &d_{l+k+1} &\Gamma_{l+k}\\
  \end{array}
\right)
\end{gather*}
with $\Gamma_{i}(\omega)=c_{i}(\omega)-d_{i}(\omega)-d_{i+1}(\omega),\ d_{i}=d_{i}(\omega)$.
\begin{lemma}\label{lem6.1}
There exists a constant $\Gamma_{\infty}$ such that the following statements hold on a subset of $\Omega$
with probability one: $\Gamma_{l_{1},k_{1}}(\omega)\leq\Gamma_{l_{2},k_{2}}(\omega)$ if
$\mathbf{I}_{l_{1},k_{1}}\subset\mathbf{I}_{l_{2},k_{2}}$, and
$$\displaystyle{\lim_{k\rightarrow+\infty}}\Gamma_{-k,2k}(\omega)
=\displaystyle{\lim_{k\rightarrow+\infty}}\Gamma_{-k,k}(\omega)
=\displaystyle{\lim_{k\rightarrow+\infty}}\Gamma_{0,k}(\omega)=\Gamma_{\infty}.$$
\end{lemma}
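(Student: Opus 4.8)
The plan is to identify $\Gamma_\infty$ as the almost sure constant value of $\Lambda(\omega):=\sup\operatorname{spec}_{\ell^2(\mathbf Z)}\mathcal L^\omega$, exploiting that, thanks to the standing assumption $d_i'=d_{i+1}$ of this section, $\mathcal L^\omega$ is a bounded self-adjoint operator on $\ell^2(\mathbf Z)$ and $D_{l,k}(\omega)$ is precisely the compression of $\mathcal L^\omega$ to the coordinate subspace $\ell^2(\mathbf I_{l,k})$. Consequently $\Gamma_{l,k}(\omega)$, being the principal (= largest) eigenvalue of $D_{l,k}(\omega)$, has the Rayleigh quotient (Courant--Fischer) characterisation $\Gamma_{l,k}(\omega)=\max\{\langle\mathcal L^\omega v,v\rangle:\ \|v\|_{\ell^2}=1,\ \operatorname{supp}v\subseteq\mathbf I_{l,k}\}$. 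The monotonicity assertion is then immediate and needs no probability: if $\mathbf I_{l_1,k_1}\subseteq\mathbf I_{l_2,k_2}$, the competitors for $\Gamma_{l_1,k_1}$ form a subfamily of those for $\Gamma_{l_2,k_2}$, so $\Gamma_{l_1,k_1}(\omega)\le\Gamma_{l_2,k_2}(\omega)$ for every $\omega\in\Omega_0$. In particular each of the three sequences $k\mapsto\Gamma_{-k,2k}(\omega)$, $k\mapsto\Gamma_{-k,k}(\omega)$, $k\mapsto\Gamma_{0,k}(\omega)$ is nondecreasing (the intervals are nested) and bounded above by $C$, hence convergent.

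Next I would treat the ``full'' sequence. Since $\mathbf I_{-k,2k}=\{-k+1,\dots,k\}$ exhausts $\mathbf Z$, the standard fact that the top of the spectrum of a compression of a bounded self-adjoint operator to an increasing sequence of subspaces with dense union converges to the top of the spectrum of the operator gives $\Gamma_{-k,2k}(\omega)\uparrow\Lambda(\omega)$ for every $\omega\in\Omega_0$. The key stationarity identity is $\Gamma_{l,m}(\pi_j\omega)=\Gamma_{l+j,m}(\omega)$: it holds because $c_i(\pi_j\omega)=c_{i+j}(\omega)$ and $d_i(\pi_j\omega)=d_{i+j}(\omega)$, so $D_{l,m}(\pi_j\omega)$ is the matrix $D_{l+j,m}(\omega)$ up to relabelling indices, hence has the same eigenvalues. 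Applying it gives $\Lambda(\pi_j\omega)=\Lambda(\omega)$ for all $j$, so by ergodicity $\Lambda$ equals a constant $\Gamma_\infty$ on a measurable set of full probability.

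It remains to show that the two ``half-line'' limits also equal $\Gamma_\infty$ almost surely; this is the main obstacle, since a Dirichlet operator on a half-line may genuinely have a strictly smaller spectral top than the whole-line operator, and it is exactly stationarity and ergodicity that forbid this here. Put $\Gamma_+(\omega):=\lim_k\Gamma_{0,k}(\omega)$. From $\mathbf I_{1,k}\subseteq\mathbf I_{0,k+1}$ and the stationarity identity, $\Gamma_+(\pi_1\omega)\le\Gamma_+(\omega)$; as $\Gamma_+$ is bounded, hence integrable, and $\pi_1$ preserves $\mathbb P$, this forces $\Gamma_+\circ\pi_1=\Gamma_+$ a.s., so $\Gamma_+$ is $\pi$-invariant and therefore almost surely equal to a constant $\gamma_+$. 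Now, on the full-measure set where $\Lambda(\omega)=\Gamma_\infty$ and $\Gamma_+(\pi_{-k}\omega)=\gamma_+$ for every $k\in\mathbf Z$ (a countable intersection of full-measure sets), the stationarity identity yields $\Gamma_{-k,2k}(\omega)=\Gamma_{0,2k}(\pi_{-k}\omega)\le\Gamma_+(\pi_{-k}\omega)=\gamma_+$ for all $k$; letting $k\to\infty$ gives $\Gamma_\infty\le\gamma_+$, while $\gamma_+=\lim_k\Gamma_{0,k}\le\lim_k\Gamma_{-k,2k}=\Gamma_\infty$ follows from $\mathbf I_{0,k}\subseteq\mathbf I_{-k,2k}$. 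Hence $\lim_k\Gamma_{0,k}(\omega)=\Gamma_\infty$ a.s.

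Finally, for $\Gamma_{-k,k}$ I would argue via Birkhoff's ergodic theorem. Writing $\Gamma_{-k,k}(\omega)=\Gamma_{0,k}(\pi_{-k}\omega)\ge\Gamma_{0,j}(\pi_{-k}\omega)$ for any fixed $j\le k$ (using $\mathbf I_{0,j}\subseteq\mathbf I_{0,k}$), and applying the ergodic theorem to the bounded function $\Gamma_{0,j}$, one gets $\limsup_{k\to\infty}\Gamma_{0,j}(\pi_{-k}\omega)\ge\mathbb E[\Gamma_{0,j}]$ almost surely, whence $\lim_k\Gamma_{-k,k}(\omega)\ge\mathbb E[\Gamma_{0,j}]$ for every $j$; letting $j\to\infty$ and using bounded (monotone) convergence $\mathbb E[\Gamma_{0,j}]\uparrow\mathbb E[\Gamma_+]=\gamma_+=\Gamma_\infty$ gives $\lim_k\Gamma_{-k,k}(\omega)\ge\Gamma_\infty$, while the reverse inequality is again immediate from $\mathbf I_{-k,k}\subseteq\mathbf I_{-k,2k}$. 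Intersecting the countably many full-measure sets used produces the desired subset of $\Omega$ of probability one on which all three limits coincide with $\Gamma_\infty$ and the monotonicity holds. I expect the only delicate point to be this last passage from half-line boxes to the whole-line spectral top; everything else is the Rayleigh quotient, elementary spectral theory, and the two applications of ergodicity.
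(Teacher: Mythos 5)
Your proof is correct, and it follows the same route as the paper: the variational (Rayleigh quotient/Courant--Fischer) characterisation of $\Gamma_{l,k}$ yields the monotonicity for every $\omega\in\Omega_0$, and the identification of the three limits with a single almost-sure constant rests on the stationarity identity $\Gamma_{l,m}(\pi_j\omega)=\Gamma_{l+j,m}(\omega)$ combined with measure preservation and ergodicity of the $\pi$-action. The paper states the Rayleigh-quotient step explicitly and defers the rest to \cite[Lemma 2.1]{N1}; your argument supplies those details, including the helpful observation that under $d_i'=d_{i+1}$ the operator $\mathcal L^\omega$ is self-adjoint on $\ell^2(\mathbf Z)$ and $D_{l,k}(\omega)$ is its compression to $\ell^2(\mathbf I_{l,k})$, which cleanly gives both the nested monotonicity and the convergence $\Gamma_{-k,2k}(\omega)\uparrow\sup\operatorname{spec}\mathcal L^\omega$.
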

\begin{proof}
If $\mathbf{I}_{l_{1},k_{1}}\subset\mathbf{I}_{l_{2},k_{2}}$ then the fact that
$\Gamma_{l_{1},k_{1}}\leq\Gamma_{l_{2},k_{2}}$ follows from the variation representation
$$\Gamma_{l,k}=\displaystyle{\sup_{x\neq0,x\in\mathbf{R}^{n}}}\frac{(D_{l,k}x,x)}{(x,x)}.$$
The remained proof can be finished by the almost same arguments as \cite[Lemma 2.1]{N1}.
\end{proof}
Without loss of generality, we may still denote the subset of $\Omega$
with probability one in Lemma \ref{lem6.1} by $\Omega_{0}$.
\begin{lemma}\label{lem6.2}
Let $\gamma>\Gamma_{\infty}$.
There are positive constants $\delta,K$ only depending on
$\gamma,\Gamma_{\infty},D,\underline{D},$ and $C$ but not depending on $\omega\in\Omega_{0}$
such that if $\{w(i,\omega)\}_{i=l}^{l+k+1}$ satisfies
$$(\mathcal{L}^{\omega}w)_{i}\geq\gamma w_{i}, \ i\in I_{l,k},$$
then
\begin{equation}\label{l6.2.0}
w(i,\omega)\leq\max\{0,w(l;\omega)\}K{\mathrm{e}}^{\delta(l-i)}+
\max\{0,w(l+k+1;\omega)\}K{\mathrm{e}}^{\delta(i-k-l-1)},\ i\in\mathbf{I}_{l,k}.
\end{equation}
\end{lemma}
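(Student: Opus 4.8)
The plan is to compare $w$ with the solution of a finite Dirichlet problem, and to estimate that solution by an exponential (Combes--Thomas type) bound on the resolvent of the associated tridiagonal matrix. Fix $\omega\in\Omega_{0}$ and write $\mathcal{L}=\mathcal{L}^{\omega}$, $D_{l,k}=D_{l,k}(\omega)$, $\Gamma_{l,k}=\Gamma_{l,k}(\omega)$. Because $d'_{i}=d_{i+1}$, each $D_{l,k}$ is a \emph{symmetric} tridiagonal matrix with positive off-diagonal entries $d_{j}$, and by Lemma \ref{lem6.1} its largest eigenvalue satisfies $\Gamma_{l,k}\le\Gamma_{\infty}<\gamma$. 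Hence $\gamma I-D_{l,k}$ is symmetric, positive definite, and has nonpositive off-diagonal entries, i.e. it is a Stieltjes matrix; consequently $R_{l,k}:=(\gamma I-D_{l,k})^{-1}$ is entrywise nonnegative.

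\emph{Step 1 (reduction to a Dirichlet problem).} Put $A:=\max\{0,w(l;\omega)\}$, $B:=\max\{0,w(l+k+1;\omega)\}$ and let $\zeta$ be the function on $\{l,\dots,l+k+1\}$ with $(\mathcal{L}\zeta)_{i}=\gamma\zeta_{i}$ on $\mathbf{I}_{l,k}$, $\zeta(l)=A$, $\zeta(l+k+1)=B$. Writing $x=(\zeta_{l+1},\dots,\zeta_{l+k})^{\top}$, this amounts to $(\gamma I-D_{l,k})x=b$ with $b=(d_{l+1}A,0,\dots,0,d_{l+k+1}B)^{\top}\ge0$, so $x=R_{l,k}b\ge0$, i.e. $\zeta\ge0$ on $\mathbf{I}_{l,k}$. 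Moreover $w\le\zeta$ on $\mathbf{I}_{l,k}$: if $\tilde v$ denotes the vector of values of $v:=w-\zeta$ on $\mathbf{I}_{l,k}$, then $(\mathcal{L}w)_{i}\ge\gamma w_{i}$ together with $v(l)\le0$, $v(l+k+1)\le0$ yields $(\gamma I-D_{l,k})\tilde v\le0$ entrywise (at the two end indices one picks up an extra term $d_{\,\cdot}\,v(\text{boundary})\le0$), and multiplying by $R_{l,k}\ge0$ gives $\tilde v\le0$. This use of the Stieltjes/$M$-matrix structure replaces the usual maximum principle, which is unavailable here since $c_{i}$ may exceed $\gamma$.

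\emph{Step 2 (resolvent decay and conclusion).} It remains to prove the Combes--Thomas bound
$$0\le (R_{l,k})_{mn}\le\frac{2}{\gamma-\Gamma_{\infty}}\,e^{-\delta|m-n|},\qquad \delta:=\min\Big\{1,\ \frac{\gamma-\Gamma_{\infty}}{8D}\Big\}.$$
For $s\in\mathbf{R}$ let $E_{s}=\mathrm{diag}(e^{sm})$; then $E_{s}(\gamma I-D_{l,k})E_{s}^{-1}=\gamma I-E_{s}D_{l,k}E_{s}^{-1}$, and $E_{s}D_{l,k}E_{s}^{-1}-D_{l,k}$ is tridiagonal with zero diagonal and off-diagonal entries of modulus $\le D|e^{\pm s}-1|\le2D|s|$ for $|s|\le1$, hence of operator norm $\le4D|s|$. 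Since $\|R_{l,k}\|\le(\gamma-\Gamma_{l,k})^{-1}\le(\gamma-\Gamma_{\infty})^{-1}$, for $|s|\le\delta$ a Neumann series shows $\gamma I-E_{s}D_{l,k}E_{s}^{-1}$ is invertible with norm $\le2(\gamma-\Gamma_{\infty})^{-1}$; reading off the $(m,n)$ entry of $R_{l,k}=E_{s}^{-1}(\gamma I-E_{s}D_{l,k}E_{s}^{-1})^{-1}E_{s}$ and taking $s=\delta\,\mathrm{sgn}(m-n)$ gives the stated estimate. Finally, from $x=R_{l,k}b$ and $|j-1|=j-1$, $|k-j|=k-j$ for $1\le j\le k$,
$$\zeta_{l+j}=d_{l+1}A\,(R_{l,k})_{j,1}+d_{l+k+1}B\,(R_{l,k})_{j,k}\le\frac{2De^{\delta}}{\gamma-\Gamma_{\infty}}\big(A\,e^{-\delta j}+B\,e^{-\delta(k+1-j)}\big),$$
and since $e^{-\delta j}=e^{\delta(l-(l+j))}$ and $e^{-\delta(k+1-j)}=e^{\delta((l+j)-k-l-1)}$, combining with $w\le\zeta$ yields \eqref{l6.2.0} with $K:=2De^{\delta}(\gamma-\Gamma_{\infty})^{-1}$. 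Both $\delta$ and $K$ depend only on $\gamma,\Gamma_{\infty},D$ — in particular only on $\gamma,\Gamma_{\infty},D,\underline D,C$ — and not on $l,k$ or $\omega$, the latter because $\Gamma_{\infty},D,\underline D,C$ are almost sure constants by Lemma \ref{lem6.1} and the discussion preceding it.

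The main obstacle is Step 2: the \emph{$\omega$-uniform} exponential decay of $R_{l,k}(\omega)$, which rests precisely on the deterministic spectral gap $\gamma-\Gamma_{\infty}>0$ provided by Lemma \ref{lem6.1}. An equivalent route is to absorb this decay into a barrier $\psi^{\omega}=(\bar\gamma I-\mathcal{L}^{\omega})^{-1}\mathbf{1}$ with $\Gamma_{\infty}<\bar\gamma<\gamma$, which one checks is bounded above and below uniformly in $\omega$, and then to compare $\zeta$ with $g_{i}\big(Ae^{\delta(l-i)}+Be^{\delta(i-k-l-1)}\big)$ for $g=\psi^{\omega}/\inf_{i}\psi^{\omega}_{i}$ using the same Stieltjes comparison as in Step 1.
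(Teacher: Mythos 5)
Your proof is correct, and it takes a genuinely different route from the paper's. The paper proceeds by the substitution $z_i = e^{-\delta(l-i)}w_i - e^{l-i}$, multiplies the resulting inequality by $z_i^+$ and sums (a discrete integration by parts), and then controls the resulting quadratic form by comparing the principal eigenvalue of the symmetrized, exponentially weighted matrix $D^\delta_{l,k}$ with $\Gamma_{l,k}$ via a Weyl-type perturbation bound from Horn--Johnson. You instead observe that $\gamma I - D_{l,k}(\omega)$ is a symmetric Stieltjes/$M$-matrix, use entrywise nonnegativity of its inverse to replace the maximum principle (your Step 1, which is the discrete analogue of the barrier comparison that the paper runs implicitly), and then prove an $\omega$-uniform Combes--Thomas exponential decay estimate for $(\gamma I - D_{l,k})^{-1}$ by conjugating with $E_s=\mathrm{diag}(e^{sm})$ and a Neumann series. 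Both arguments rest on exactly the same deterministic spectral gap $\gamma-\Gamma_\infty>0$ supplied by Lemma \ref{lem6.1}, which is why the constants are $\omega$-independent. The trade-off is that the paper's argument is entirely self-contained and elementary (Cauchy--Schwarz plus Weyl), while your argument is shorter and more modular but presupposes the $M$-matrix inverse-positivity fact and the Combes--Thomas mechanism. I also checked the index bookkeeping in your conclusion: with $b=(d_{l+1}A,0,\dots,0,d_{l+k+1}B)^\top$ and the bound $0\le(R_{l,k})_{mn}\le\frac{2}{\gamma-\Gamma_\infty}e^{-\delta|m-n|}$ one indeed recovers \eqref{l6.2.0} with $K=2De^{\delta}/(\gamma-\Gamma_\infty)$, and the choice $\delta=\min\{1,(\gamma-\Gamma_\infty)/(8D)\}$ is consistent with the operator-norm estimate $\|E_s D_{l,k}E_s^{-1}-D_{l,k}\|\le 4D|s|$ for $|s|\le 1$.
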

\begin{proof}
We first prove the result by assuming $w(l,\omega)=1$ and $w(l+k+1)\leq0$.
Let $z_{i}={\mathrm{e}}^{-\delta(l-i)}w_{i}-{\mathrm{e}}^{(l-i)}$, where $\delta$ will be chosen later on. Then
$w_{i}=(z_{i}+{\mathrm{e}}^{(l-i)}){\mathrm{e}}^{\delta(l-i)}
=z_{i}{\mathrm{e}}^{\delta(l-i)}+{\mathrm{e}}^{(1+\delta)(l-i)}$
and $z_{i}$ satisfies
$$(A^{\omega}_{\delta}z)_{i}+(c_{i}-\gamma)z_{i}+d_{i+1}(e^{-\delta}-1)z_{i}
+d_{i}(e^{\delta}-1)z_{i}\geq(\gamma-c_{i}+\frac{(A^{\omega}e^{(1+\delta)(l-\cdot)})_{i}}{e^{(1+\delta)(l-i)}})e^{l-i},
$$
where $(A_{\delta}^{\omega}z)_{i}=d_{i+1}e^{-\delta}(z_{i+1}-z_{i})+d_{i}e^{\delta}(z_{i-1}-z_{i})$.
After multiplying this equation by $z^{+}_{i}=\max\{0,z_{i}\}$ and summing $i$ from $l+1$ to $l+k$, we obtain
\begin{equation}\label{l6.2.1}
\begin{split}
&\ \ \ \sum_{i=l+1}^{l+k}[(A_{\delta}^{\omega}z^{+})_{i}z_{i}^{+}+c_{i}(z_{i}^{+})^{2}+
(d_{i+1}(e^{-\delta}-1)+d_{i}(e^{\delta}-1)-\gamma)(z_{i}^{+})^{2}]\\
&\geq\sum_{i=l+1}^{l+k}[(A_{\delta}^{\omega}z)_{i}z_{i}^{+}+c_{i}z_{i}z_{i}^{+}+
(d_{i+1}(e^{-\delta}-1)+d_{i}(e^{\delta}-1)-\gamma)z_{i}z_{i}^{+}]\\
&\geq\sum_{i=l+1}^{l+k}
(\gamma-c_{i}+\frac{(A^{\omega}e^{(1+\delta)(l-\cdot)})_{i}}{e^{(1+\delta)(l-i)}})e^{l-i}z_{i}^{+}.\\
\end{split}
\end{equation}
The first inequality follows from the fact that
$(A_{\delta}^{\omega}z)_{i}z_{i}\leq(A_{\delta}^{\omega}z^{+})_{i}z^{+}_{i}$. Noting that
$z^{+}_{l}=(w_{i}-1)^{+}=0$ and $z_{l+k+1}^{+}=(e^{\delta(k+1)}w_{l+k+1}-e^{-(k+1)})^{+}=0$, we have
$$\sum_{i=l+1}^{l+k}(A_{\delta}^{\omega}z^{+})_{i}z_{i}^{+}+c_{i}(z_{i}^{+})^{2}=(D_{l,k}^{\delta}z^{+},z^{+}),$$
where
\begin{gather*}
D_{l,k}^{\delta}:=
\left(
  \begin{array}{ccccc}
    \Gamma_{l+1}^{\delta} &   d_{l+2}^{\delta}   &            &          &            \\
    d_{l+2}^{\delta}      & \Gamma_{l+2}^{\delta}& d_{l+3}^{\delta}    &          &            \\
                & d_{l+3}^{\delta}      &\Gamma_{l+3}^{\delta} &\ddots    &            \\
                &              &\ddots       &\ddots    &d_{l+k+1}^{\delta}   \\
                &              &             &d_{l+k+1}^{\delta} &\Gamma_{l+k}^{\delta}\\
  \end{array}
\right),
\end{gather*}
with $\Gamma_{i}^{\delta}=c_{i}-d_{i}e^{\delta}-d_{i+1}e^{-\delta}$,
$d_{i}^{\delta}=d_{i}\cosh{\delta}$. Hence
$\Gamma_{l,k}^{\delta}\geq\frac{(D_{l,k}^{\delta}z_{i}^{+},z_{i}^{+})}{(z_{i}^{+},z_{i}^{+})}$, where
$\Gamma_{l,k}^{\delta}$ is the principal eigenvalue of $D_{l,k}^{\delta}$.
Moreover, by \cite[Corollary 6.3.4]{H1}, $\Gamma_{l,k}^{\delta}\leq\Gamma_{l,k}+C_{\delta}$, where
$C_{\delta}=\sqrt{3}D((e^{\delta}-1)^{2}+(1-e^{-\delta})^{2}+(e^{\delta}-e^{-\delta})^{2})^{\frac{1}{2}}$. Then
$\Gamma_{l,k}^{\delta}\leq\Gamma_{\infty}+C_{\delta}$ for any $l,k\in\mathbf{Z}$.
Hence from \eqref{l6.2.1} we have
\begin{equation*}
\begin{split}
&\ \ \ \sum_{i=l+1}^{l+k}(\Gamma_{\infty}+C_{\delta}+D(e^{\delta}-1)+\underline{D}(e^{-\delta}-1)-\gamma)(z_{i}^{+})^{2}\\
&\geq\sum_{i=l+1}^{l+k}
(\gamma-c_{i}+\frac{(A^{\omega}e^{(1+\delta)(l-\cdot)})_{i}}{e^{(1+\delta)(l-i)}})e^{l-i}z_{i}^{+}.\\
\end{split}
\end{equation*}
We choose an appropriate $\delta>0$ (depending on $\gamma,\gamma_{\infty},D,\underline{D},C,$ but not on $\omega,l,k$)
and denote $\beta=\gamma-\Gamma_{\infty}-C_{\delta}-D(e^{\delta}-1)-\underline{D}(e^{-\delta}-1)>0$. Thus
\begin{equation*}
\begin{split}
\sum_{i=l+1}^{l+k}\beta(z_{i}^{+})^{2}&\leq\sum_{i=l+1}^{l+k}
(c_{i}-\gamma-\frac{(A^{\omega}e^{(1+\delta)(l-\cdot)})_{i}}{e^{(1+\delta)(l-i)}})e^{l-i}z_{i}^{+}\\
&\leq\frac{1}{2\beta}\sum_{i=l+1}^{l+k}
(c_{i}-\gamma-\frac{(A^{\omega}e^{(1+\delta)(l-\cdot)})_{i}}{e^{(1+\delta)(l-i)}})^{2}e^{2(l-i)}
+\frac{\beta}{2}\sum_{i=l+1}^{l+k}(z_{i}^{+})^{2}.\\
\end{split}
\end{equation*}
i.e.
\begin{equation*}
\begin{split}
\sum_{i=l+1}^{l+k}(z_{i}^{+})^{2}
&\leq\frac{1}{{\beta}^{2}}\sum_{i=l+1}^{l+k}
(c_{i}-\gamma-\frac{(A^{\omega}e^{(1+\delta)(l-\cdot)})_{i}}{e^{(1+\delta)(l-i)}})^{2}e^{2(l-i)}\\
&\leq\frac{(D(e^{\delta}+e^{-\delta}+2)+C+\gamma)^{2}}{{\beta}^{2}}\sum_{i=l+1}^{\infty}e^{2(l-i)}\\
&\leq K_{0}(\gamma,\gamma_{\infty},D,\underline{D},C).
\end{split}
\end{equation*}
Now for $i\in\mathbf{I}_{l,k}$, we have
$$z_{i}\leq z_{i}^{+}\leq\sum_{j=l+1}^{i}z_{j}^{+}
\leq\sqrt{\sum_{j=l+1}^{i}1\cdot\sum_{j=l+1}^{i}(z_{j}^{+})^{2}}\leq K\sqrt{i-l}.$$
Hence
\begin{equation*}
\begin{split}
w_{i}&=(z_{i}+e^{(l-i)})e^{\delta(l-i)}\\
&\leq(K\sqrt{i-l}e^{\varepsilon(l-i)}+e^{(1+\varepsilon)(l-i)})e^{(\delta-\varepsilon)(l-i)}\\
&\leq K(\gamma,\gamma_{\infty},D,\underline{D},C,\varepsilon)e^{(\delta-\varepsilon)(l-i)}.
\end{split}
\end{equation*}
Then we are done for this assumption when taking $\varepsilon=\min\{\frac{\delta}{2},\frac{1}{2}\}$ and also rewriting $\delta=\delta-\varepsilon$.

For the assumption $w_{l}\leq0, w_{l+k+1}=1$, one can similarly find $w_{i}\leq Ke^{\delta(i-l-k-1)}$
by setting $w_{i}=z_{i}-e^{\delta(i-l-k-1)}$. For the general assumption, \eqref{l6.2.0} still holds because $\mathcal{L}^{\omega}$ is a linear operator .
\end{proof}
\begin{remark}\label{re6.1}
$\delta$ can be large if $\gamma$ is large enough. In fact, if we choose $\delta>0$ satisfying
$$C_{\delta}+D(e^{\delta}-1)=\frac{1}{2}(\gamma-\Gamma_{\infty})$$ whenever $\gamma$ is large, then
$\beta>\gamma-\Gamma_{\infty}-C_{\delta}-D(e^{\delta}-1)=\frac{1}{2}(\gamma-\Gamma_{\infty})>0$ and
thus the proof of Lemma \ref{lem6.1} is still valid.
\end{remark}
\begin{cor}\label{cor6.1}
Let $\omega\in\Omega_{0}$ and $\gamma>\Gamma_{\infty}$.
Consider $w=\{w_{i}\}_{i=l}^{+\infty}$ with $w_{l}\leq0$ satisfying $(A^{\omega}w)_{i}+c_{i}(\omega)w
\geq\gamma w_{i}$ for $i=l+1,l+2,\cdots$.
Then $w_{i}\leq0$ provided $\displaystyle{\liminf_{i\rightarrow+\infty}}w_{i}<+\infty$.
In particular, if $(A^{\omega}w)_{i}+c_{i}(\omega)w=\gamma w_{i}$ and $w_{l}=0$, then $w_{i}\equiv0$.
\end{cor}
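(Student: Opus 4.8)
The plan is to derive the conclusion from the exponential localization estimate of Lemma~\ref{lem6.2}, applied to longer and longer finite segments of $w$, followed by a passage to the limit along a suitable subsequence.

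First I would fix $\gamma>\Gamma_{\infty}$ and invoke Lemma~\ref{lem6.2} to obtain constants $\delta,K>0$ depending only on $\gamma,\Gamma_{\infty},D,\underline{D},C$ and, crucially, \emph{not} on $l$, $k$ or $\omega\in\Omega_{0}$. For each $k\in\mathbf{N}$, the assumption $(A^{\omega}w)_{i}+c_{i}(\omega)w_{i}\ge\gamma w_{i}$ for $i=l+1,\dots,l+k$ is exactly the hypothesis $(\mathcal{L}^{\omega}w)_{i}\ge\gamma w_{i}$ on $\mathbf{I}_{l,k}$ required by Lemma~\ref{lem6.2} for the finite vector $\{w_{i}\}_{i=l}^{l+k+1}$. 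Since $w_{l}\le 0$, the first term of \eqref{l6.2.0} vanishes, giving
\begin{equation*}
w_{i}\le\max\{0,w_{l+k+1}\}\,K\,\mathrm{e}^{\delta(i-k-l-1)},\qquad i=l+1,\dots,l+k.
\end{equation*}

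Next I would use the hypothesis $\liminf_{i\to+\infty}w_{i}<+\infty$: pick $M\ge 0$ and a sequence $j_{n}\to+\infty$ with $w_{j_{n}}\le M$ for all $n$, and apply the displayed inequality with $k=k_{n}:=j_{n}-l-1$, so that $l+k_{n}+1=j_{n}$ and the exponent becomes $i-k_{n}-l-1=i-j_{n}$. Then for each fixed $i>l$ and every $n$ large enough that $i\le j_{n}-1$,
\begin{equation*}
w_{i}\le M\,K\,\mathrm{e}^{\delta(i-j_{n})}\longrightarrow 0\quad(n\to+\infty).
\end{equation*}
Hence $w_{i}\le 0$ for every $i>l$, and together with $w_{l}\le 0$ this is the first assertion. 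For the ``in particular'' part, observe that when $(A^{\omega}w)_{i}+c_{i}(\omega)w_{i}=\gamma w_{i}$ both $w$ and $-w$ satisfy the inequality hypothesis (with $(\pm w)_{l}=0\le 0$); applying the first part to each of $w$ and $-w$ — in the situations where the corollary is used $w$ is bounded, so both $\liminf w_{i}$ and $\liminf(-w_{i})$ are finite — yields $w_{i}\le 0$ and $-w_{i}\le 0$, i.e.\ $w\equiv 0$.

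The only point requiring care is that $\delta$ and $K$ in Lemma~\ref{lem6.2} are genuinely independent of the segment length $k$; it is exactly this uniformity that lets the decay factor $\mathrm{e}^{\delta(i-j_{n})}$ tend to $0$ as $n\to+\infty$ with $i$ held fixed. Everything else is a direct substitution into \eqref{l6.2.0} and a one-line limit, so I do not expect a genuine obstacle here.
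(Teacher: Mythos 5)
Your argument is correct and is essentially the paper's own proof, which merely says: choose $M$ and $k_n\to\infty$ with $w_{l+k_n+1}\le M$ from the $\liminf$ hypothesis, apply Lemma~\ref{lem6.2} on $\mathbf{I}_{l,k_n}$, and let $n\to\infty$; you have simply made the substitution into \eqref{l6.2.0} explicit and used the uniformity of $\delta,K$ in $k$, exactly as intended. Your remark on the ``in particular'' clause is also apt: applying the first part to $-w$ tacitly requires $\limsup_{i\to\infty}w_i>-\infty$, which the statement leaves implicit but which holds in every place the corollary is invoked, since there $w$ is bounded.
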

\begin{proof}
There are a constant $M$ and a sequence $\{k_{n}\}_{n=1}^{+\infty}$ with
$k_{n}\rightarrow\infty$ as $n\rightarrow\infty$ such that
$w_{l+k_{n}+1}\leq M$ since $\displaystyle{\liminf_{i\rightarrow+\infty}}w_{i}<+\infty$.
Hence we can prove this result by using Lemma \ref{lem6.2} on $\mathbf{I}_{l,k_{n}}$
and also taking $n\rightarrow+\infty$.
\end{proof}

Consider $w^{(k)}=\{w^{(k)}_{i}\}_{i=0}^{k+1}$ be a solution of
$$\left\{
   \begin{aligned}
    (A^{\omega}w)_{i}+(c_{i}(\omega)-\gamma)w_{i}=0&,\ i\in\mathbf{I}_{0,k},\\
    w_{0}=1, w_{k+1}=0.
   \end{aligned}
   \right.$$
One can easily find $w_{i}^{(k)}\geq0$ on $i\in\mathbf{I}_{0,k}$ by Lemma \ref{lem6.2}
with $-w_{i}^{(k)}$ instead of $w$.
Moreover, $w_{i}^{(k)}\leq w_{i}^{(k+1)}$ for $i\in\mathbf{I}_{0,k}$ by Lemma \ref{lem6.2}
with $w_{i}^{(k)}-w_{i}^{(k+1)}$ instead of $w$. Therefore, $w^{k}_{i}$ is increasing in $k$,
and from \eqref{l6.2.0} we have $w_{i}^{(k)}\leq Ke^{-\delta i}$ for any $k\geq i$.
Let $u_{i}=\displaystyle{\lim_{k\rightarrow+\infty}}w_{i}^{(k)}$. Then one can easily verify that
$\{u_{i}\}_{i=0}^{+\infty}$ satisfies $(A^{\omega}u)_{i}+(c_{i}(\omega)-\gamma)u_{i}=0$ and
\begin{equation}\label{c6.1.1}
0\leq u_{i}\leq Ke^{-\delta i}\ \text{for}\ i\in\{1,2,\cdots\}.
\end{equation}
One can use the equality $(A^{\omega}u)_{i}+(c_{i}(\omega)-\gamma)u_{i}=0$ to extend this solution
on $\{-1,-2,\cdots\}$ by induction.
Then $u\in X_{-\infty}$ be the unique solution of
\begin{equation}\label{c6.1.2}
\left\{
   \begin{aligned}
    (A^{\omega}u)_{i}+(c_{i}(\omega)-\gamma)u_{i}=0,\ \ &i\in\mathbf{Z},\\
    u_{0}=1, \displaystyle{\lim_{i\rightarrow+\infty}}u_{i}=0.
   \end{aligned}
   \right.
\end{equation}
The uniqueness follows from Corollary \ref{cor6.1}. We sometimes denote it by
$u_{i}(\gamma,\omega)$ to emphasize that $u_{i}$ depends on
$\omega, \gamma$.
\begin{lemma}\label{lem6.3}
Let $\omega\in\Omega_{0}$, $\gamma>\Gamma_{\infty}$, and $u=\{u_{i}\}_{i\in\mathbf{Z}}$ be the unique solution of \eqref{c6.1.2}.
Then
\begin{equation}\label{l6.3.0}
u_{i}
\left\{
   \begin{aligned}
   \geq(\frac{\underline{D}}{\gamma-c+2D})^{i},&\ \ i\geq0,\\
   \leq(\frac{\underline{D}}{\gamma-c+2D})^{i},&\ \ i<0,
   \end{aligned}
   \right.
\end{equation}
where $c=\inf\limits_{i} c_{i}$. Moreover, $u_{i}>0$ for
$i\in\mathbf{Z}$ and $\displaystyle{\lim_{i\rightarrow-\infty}}u_{i}=+\infty.$
\end{lemma}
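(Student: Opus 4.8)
The plan is to rewrite \eqref{c6.1.2} in three‑term form, $d_{i+1}u_{i+1}+d_iu_{i-1}=b_iu_i$ with $b_i:=d_{i+1}+d_i-c_i+\gamma$, and to note first that $b_i>0$, since $\gamma>\Gamma_\infty\ge\Gamma_{i-1,1}=c_i-d_i-d_{i+1}$ by Lemma \ref{lem6.1}. For $i\ge 0$: by \eqref{c6.1.1} we have $u_j\ge0$ for $j\ge1$, so dropping the nonnegative term $d_{i+1}u_{i+1}$ gives $d_iu_{i-1}\le b_iu_i\le(\gamma-c+2D)u_i$ for $i\ge1$, whence (using $d_i\ge\underline D$) $u_i\ge r\,u_{i-1}$ with $r:=\underline D/(\gamma-c+2D)$, and iterating from $u_0=1$ yields $u_i\ge r^{\,i}>0$ for $i\ge0$. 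For $i<0$: if $u_{i_0}\le0$ for some $i_0<0$, Corollary \ref{cor6.1} applied to $\{u_i\}_{i\ge i_0}$ (which satisfies $(A^\omega u)_i+c_iu_i=\gamma u_i$, $u_{i_0}\le0$, $\liminf_{i\to+\infty}u_i=0$) forces $u_i\le0$ for all $i\ge i_0$, contradicting $u_0=1$; hence $u_i>0$ on all of $\mathbf{Z}$. Then the recurrence, now dropping $d_{i+1}u_{i+1}>0$ (resp.\ $d_iu_{i-1}>0$), gives $u_{i-1}\le r^{-1}u_i$ and $u_{i+1}\le r^{-1}u_i$ for every $i$; iterating the first downward from $u_0=1$ gives $u_i\le r^{\,i}$ for $i<0$, and together these produce the two‑sided estimate $r\,u_i\le u_{i\pm1}\le r^{-1}u_i$, which I use below.

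It remains to prove $\lim_{i\to-\infty}u_i=+\infty$, and I would do this by a Wronskian argument. Running the construction that precedes Lemma \ref{lem6.3} for the reflected coefficients $\tilde d_i=d_{1-i}$, $\tilde c_i=c_{-i}$ (which satisfy the same hypotheses, and whose associated $\tilde\Gamma_\infty$ equals $\Gamma_\infty$ because $\Gamma_{l,k}$ is invariant under reflection of the index set), and then reflecting back, one obtains $\psi\in X_{-\infty}$ with $\mathcal L^\omega\psi=\gamma\psi$, $\psi_i>0$, $\psi_0=1$, and $\psi_i\le\tilde K e^{\tilde\delta i}$ for $i\le-1$ (the reflected analogue of \eqref{c6.1.1}), so $\psi$ decays exponentially at $-\infty$. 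A direct computation shows $W_i:=d_{i+1}(u_{i+1}\psi_i-u_i\psi_{i+1})$ is independent of $i$; write $W_i\equiv W$. If $W=0$, then $u_i/\psi_i$ is constant ($=1$ at $i=0$), so $u=\psi$ decays exponentially at both ends, hence $u\in\ell^2(\mathbf{Z})$; multiplying $\mathcal L^\omega u=\gamma u$ by $u_i$, summing, and summing by parts gives $\gamma\sum_iu_i^2=-\sum_id_{i+1}(u_{i+1}-u_i)^2+\sum_ic_iu_i^2$, and testing the variational characterization $\Gamma_{l,k}=\sup_{x\ne0}(D_{l,k}x,x)/(x,x)$ from Lemma \ref{lem6.1} with truncations of $u$ forces $\Gamma_\infty\ge\gamma$, contradicting $\gamma>\Gamma_\infty$. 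Hence $W\ne0$. Setting $g_i:=\psi_i/u_i>0$ and dividing $W_i\equiv W$ by $u_iu_{i+1}$ gives $g_{i+1}-g_i=-W/(d_{i+1}u_iu_{i+1})$, so $g$ is strictly monotone.

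If $W>0$, then $g$ decreases, so $\psi_i=g_iu_i\le g_0u_i\le g_0Ke^{-\delta i}$ for $i\ge1$ by \eqref{c6.1.1}; thus $\psi$ decays exponentially at both ends, $\psi\in\ell^2(\mathbf{Z})$, and the same variational argument again yields $\Gamma_\infty\ge\gamma$, a contradiction. So $W<0$ and $g$ is strictly increasing; let $g_{-\infty}:=\lim_{i\to-\infty}g_i=\inf_ig_i\ge0$. If $g_{-\infty}>0$, then $u_i=\psi_i/g_i\le(\tilde K/g_{-\infty})e^{\tilde\delta i}$ for $i\le-1$, so $u$ decays exponentially at both ends, $u\in\ell^2(\mathbf{Z})$, a contradiction; hence $g_{-\infty}=0$. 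Summing $g_{j+1}-g_j=|W|/(d_{j+1}u_ju_{j+1})$ over $j$ from $i$ to $-1$ and letting $i\to-\infty$ (using $g_i\to0$) gives $\sum_{j<0}\big(d_{j+1}u_ju_{j+1}\big)^{-1}=g_0/|W|<\infty$, whence $u_ju_{j+1}\to+\infty$ as $j\to-\infty$; since $u_{j+1}\le r^{-1}u_j$ this gives $u_j^2\ge r\,u_ju_{j+1}\to+\infty$, i.e.\ $u_j\to+\infty$. Positivity of $u$ on $\mathbf{Z}$ was established above, so the lemma follows.

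The step I expect to be the main obstacle is this last one: it rests on the repeatedly invoked fact that $\mathcal L^\omega v=\gamma v$ admits no nonzero $\ell^2(\mathbf{Z})$ solution when $\gamma>\Gamma_\infty$ — which must be extracted from the variational characterization of $\Gamma_{l,k}$ in Lemma \ref{lem6.1} together with a summation‑by‑parts and truncation argument — and on carefully setting up and controlling the reflected solution $\psi$ by applying the already‑proven parts of the lemma (positivity and the exponential decay bound \eqref{c6.1.1}) to the reflected equation.
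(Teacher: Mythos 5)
Your first paragraph (positivity and the two-sided bound \eqref{l6.3.0}) matches the paper's proof essentially step for step: the paper also derives $\frac{u_{i\pm 1}}{u_i}\le\frac{\gamma-c+2D}{\underline D}$ directly from the three-term recurrence and deduces \eqref{l6.3.0} from it, and it establishes $u_i>0$ on all of $\mathbf{Z}$ by the same appeal to Corollary~\ref{cor6.1}. The difference is only in ordering (you push the $i\ge 0$ lower bound through first, then positivity, then the $i<0$ bound; the paper does positivity first).

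For $\lim_{i\to-\infty}u_i=+\infty$ you take a genuinely different and considerably heavier route. The paper's argument is a one-liner with tools already in hand: if $u_{-l_n}\le M$ along some $l_n\to\infty$, apply Lemma~\ref{lem6.2} to $u$ on $\mathbf{I}_{-l_n,2l_n}$; the right boundary term $u_{l_n+1}$ is already controlled by the decay bound \eqref{c6.1.1}, so both exponential terms in \eqref{l6.2.0} vanish at any fixed $i$ as $n\to\infty$, forcing $u_i\le 0$ and contradicting $u_0=1$. Your argument instead builds the second fundamental solution $\psi$ (by running the construction on the reflected coefficients), uses constancy of the discrete Wronskian $W_i=d_{i+1}(u_{i+1}\psi_i-u_i\psi_{i+1})$, and rules out the cases $W=0$, $W>0$, and $W<0$ with $g_{-\infty}>0$ by the observation that $\gamma>\Gamma_\infty$ precludes a nonzero $\ell^2$ solution of $\mathcal{L}^\omega v=\gamma v$ (which you correctly reduce to the variational formula $\Gamma_{l,k}=\sup_x\frac{(D_{l,k}x,x)}{(x,x)}$ plus summation by parts and a truncation). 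I checked the Wronskian identity, the reflected-coefficient reduction ($\tilde\Gamma_\infty=\Gamma_\infty$ because $\tilde D_{l,k}$ is $D_{-l-k-1,k}$ up to an order-reversing permutation, and any nested exhaustion of $\mathbf{Z}$ gives the same limit by the monotonicity in Lemma~\ref{lem6.1}), the $\ell^2$ spectral bound, and the final summation of $g_{j+1}-g_j$: these all hold, so your proof is correct. What each approach buys: yours is self-contained 1D Jacobi/Sturm--Liouville theory (two solutions, Wronskian, a Green's-function-type convergence $\sum_{j<0}(d_{j+1}u_ju_{j+1})^{-1}<\infty$), whereas the paper simply exploits the fact that Lemma~\ref{lem6.2} already packages the relevant two-sided exponential barrier, so no second solution is needed. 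In the context of this paper, where Lemma~\ref{lem6.2} is proved precisely to be invoked this way (and is used again immediately afterwards for Theorem~\ref{thm6.2}), the paper's route is the more economical one; it is worth noticing that the estimate of Lemma~\ref{lem6.2}, applied on a symmetric expanding window with the decay \eqref{c6.1.1} plugged in at the right endpoint, already delivers the conclusion.
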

\begin{proof}
If there exists $i_{0}\in\mathbf{Z}$ such that $u_{i_{0}}\leq0$ then
$u_{i}\leq0$ for all $i>i_{0}$ by Corollary \ref{cor6.1} since $\displaystyle{\lim_{i\rightarrow+\infty}}u_{i}=0$.
From this and \eqref{c6.1.1}  we have $u_{i}=0$ for $i\geq\max\{1,i_{0}\}$, which yields that $u\equiv0$.
That is impossible since $u_{0}=1$. Note that
$d_{i+1}(\omega)(u_{i+1}-u_{i})+d_{i}(\omega)(u_{i-1}-u_{i})+c_{i}(\omega)u_{i}=\gamma u_{i}$, i.e.,
$d_{i+1}(\omega)\frac{u_{i+1}}{u_{i}}+d_{i}(\omega)\frac{u_{i-1}}{u_{i}}=(\gamma-\Gamma_{i}(\omega))$.
Hence $\frac{u_{i\pm1}}{u_{i}}\leq\frac{\gamma-c+2D}{\underline{D}}$ for $i\in\mathbf{Z}$.
This yields \eqref{l6.3.0} immediately.
Next, we prove that $\displaystyle{\lim_{i\rightarrow-\infty}}u_{i}=+\infty$.
If not, then there exist some constant $M$ and sequence $\{l_{n}\}_{n=1}^{\infty}$ with
$\displaystyle{\lim_{n\rightarrow+\infty}}l_{n}=+\infty$ such that $u_{-l_{n}}\leq M$.
By using Lemma \ref{lem6.2} on $\mathbf{I}_{-l_{n},2l_{n}}$, we obtain
$$u_{i}\leq u_{-l_{n}}K{\mathrm{e}}^{\delta(-l_{n}-i)}+
u_{l_{n}+1}K{\mathrm{e}}^{\delta(i-l_{n}-1)},\ i\in\mathbf{I}_{-l_{n},2l_{n}}.$$
Letting $n\rightarrow+\infty$, we can easily find that $u_{i}\leq0$ for any $i\in\mathbf{Z}$,
but that's impossible since $u_{0}=1$.
\end{proof}

A fact is that
$\bigcap\limits_{i\in\mathbf{Z}}(\pi_{i}\Omega_{0})$, a subset of $\Omega_{0}$, satisfies
$\mathbb{P}(\bigcap\limits_{i\in\mathbf{Z}}(\pi_{i}\Omega_{0}))=1$ since
$\pi_{i}$ is measure-preserving for any $i\in\mathbf{Z}$.
We still denote $\bigcap\limits_{i\in\mathbf{Z}}(\pi_{i}\Omega_{0})$ by $\Omega_{0}$.
Now from \eqref{c6.1.1} and \eqref{l6.3.0} we have
\begin{equation}\label{l6.3.1}
(\frac{\underline{D}}{\gamma-c+2D})^{i}\leq u_{i}(\gamma,\omega)\leq Ke^{-\delta i}
\end{equation}
for $i\geq0$, $\omega\in\Omega_{0}$ and $\gamma>\Gamma_{\infty}$. Moreover, for any fixed $i\in\mathbf{Z}$,
both $j\longmapsto u_{i+j}(\gamma,\omega)$ and
$j\longmapsto u_{i}(\gamma,\omega)u_{j}(\gamma,\pi_{i}\omega)$ satisfy
\begin{equation*}
\left\{
   \begin{aligned}
    (A^{\pi_{i}\omega}v)_{j}+(c_{j}(\pi_{i}\omega)-\gamma)v_{j}=0,\\
    v_{0}=u_{i}(\gamma,\omega), \displaystyle{\lim_{j\rightarrow+\infty}}v_{j}=0,
   \end{aligned}
   \right.
\end{equation*}
since $c_{i+j}(\omega)=c_{j}(\pi_{i}\omega), d_{i+j}(\omega)=d_{j}(\pi_{i}\omega)$. Therefore,
by Corollary \ref{cor6.1}, we have
\begin{lemma}
Let $\omega\in\Omega_{0}$, $\gamma>\Gamma_{\infty}$, and $u=\{u_{i}\}_{i\in\mathbf{Z}}$
be the unique solution of \eqref{c6.1.2}. Then
\begin{equation}\label{l6.4.1}
u_{i+j}(\gamma,\omega)=u_{i}(\gamma,\omega)u_{j}(\gamma,\pi_{i}\omega)
\end{equation}
for any $i,j\in\mathbf{Z}$.
\end{lemma}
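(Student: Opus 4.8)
The plan is to recognize both sides of \eqref{l6.4.1}, regarded as sequences in $j$ with $i$ fixed, as solutions of one and the same linear boundary value problem, and then to invoke the uniqueness already established via Corollary \ref{cor6.1}.

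First I would fix $i\in\mathbf{Z}$ and $\gamma>\Gamma_{\infty}$ and introduce $v^{(1)}_{j}:=u_{i+j}(\gamma,\omega)$ and $v^{(2)}_{j}:=u_{i}(\gamma,\omega)\,u_{j}(\gamma,\pi_{i}\omega)$ for $j\in\mathbf{Z}$. Using the stationarity identities $c_{i+j}(\omega)=c_{j}(\pi_{i}\omega)$, $d_{i+j}(\omega)=d_{j}(\pi_{i}\omega)$, a direct substitution into \eqref{c6.1.2} shows $v^{(1)}$ solves $(A^{\pi_{i}\omega}v)_{j}+(c_{j}(\pi_{i}\omega)-\gamma)v_{j}=0$ for all $j\in\mathbf{Z}$; because $u_{i}(\gamma,\omega)$ is merely a constant multiplier and the operator is linear, $v^{(2)}$ solves the same equation. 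For the normalizations, $v^{(1)}_{0}=u_{i}(\gamma,\omega)$ and $v^{(2)}_{0}=u_{i}(\gamma,\omega)u_{0}(\gamma,\pi_{i}\omega)=u_{i}(\gamma,\omega)$ since $u_{0}=1$; moreover both tend to $0$ as $j\to+\infty$ --- for $v^{(1)}$ because $\lim_{i\to+\infty}u_{i}(\gamma,\omega)=0$, hence $u_{i+j}\to0$ as $j\to+\infty$, and for $v^{(2)}$ because $\lim_{j\to+\infty}u_{j}(\gamma,\pi_{i}\omega)=0$.

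The next step is to deduce $v^{(1)}\equiv v^{(2)}$. Put $w:=v^{(1)}-v^{(2)}$, so that $w_{0}=0$, $(A^{\pi_{i}\omega}w)_{j}+(c_{j}(\pi_{i}\omega)-\gamma)w_{j}=0$ for all $j$, and $\lim_{j\to+\infty}w_{j}=0$. Since $\Omega_{0}$ was arranged to be $\pi$-invariant we have $\pi_{i}\omega\in\Omega_{0}$, so Corollary \ref{cor6.1} (base point $l=0$) applies to both $w$ and $-w$, yielding $w_{j}=0$ for $j\ge0$. To reach the negative indices, rewrite the recurrence as $d_{j}(\pi_{i}\omega)w_{j-1}=\bigl(\gamma+d_{j}(\pi_{i}\omega)+d_{j+1}(\pi_{i}\omega)-c_{j}(\pi_{i}\omega)\bigr)w_{j}-d_{j+1}(\pi_{i}\omega)w_{j+1}$; since $\inf_{j}d_{j}(\pi_{i}\omega)>0$, the vanishing of $w_{j}$ and $w_{j+1}$ forces $w_{j-1}=0$, and a downward induction starting from $w_{0}=w_{1}=0$ gives $w_{j}=0$ for all $j<0$. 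Hence $v^{(1)}\equiv v^{(2)}$, which is \eqref{l6.4.1}.

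Since the argument is purely a uniqueness argument, I do not expect a real obstacle. The only point needing a little care is that Corollary \ref{cor6.1} controls only the half-line $j\ge0$, so one must separately run the second-order recurrence backward --- which is legitimate precisely because $d_{j}>0$ makes it solvable for $w_{j-1}$ --- to pin down the negative indices; one should also check at the outset that $\pi_{i}\omega$ again lies in the good set, which is exactly the reason $\Omega_{0}$ was replaced earlier by $\bigcap_{i\in\mathbf{Z}}\pi_{i}\Omega_{0}$.
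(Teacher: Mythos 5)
Your proof is correct and follows the same route as the paper: both $j\mapsto u_{i+j}(\gamma,\omega)$ and $j\mapsto u_i(\gamma,\omega)\,u_j(\gamma,\pi_i\omega)$ solve the boundary value problem $(A^{\pi_i\omega}v)_j+(c_j(\pi_i\omega)-\gamma)v_j=0$ with $v_0=u_i(\gamma,\omega)$ and $v_j\to 0$ as $j\to+\infty$, and uniqueness (Corollary \ref{cor6.1} for $j\ge 0$, backward recursion for $j<0$) forces them to coincide. The paper states this more tersely; your extra steps --- verifying $\pi_i\omega\in\Omega_0$ thanks to the replacement of $\Omega_0$ by $\bigcap_{i}\pi_i\Omega_0$, and running the three-term recurrence downward to settle $j<0$ --- are exactly the details the paper leaves implicit and are handled correctly.
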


Now suppose that $\gamma<\Gamma_{\infty}$. Then for any $\omega\in\Omega_{0}$,  we have
$\gamma<\Gamma_{l,k}(\omega)\leq\Gamma_{\infty}$ for some $l,k$ depending on $\omega$ by Lemma \ref{lem6.1}. Let
$z_{i}(t):=e^{(\Gamma_{l,k}(\omega)-\gamma)t}{\phi}_{i}^{l,k}(\omega)$, where $(\Gamma_{l,k}(\omega),\phi^{l,k}(\omega))$ is the principal eigenpair of \eqref{t6.1.1}.
Then we have
$\overset{.}z_{i}(t)-A^{\omega}z_{i}-c_{i}(\omega)z_{i}+\gamma z_{i}=0$ and $z_{i}\rightarrow+\infty$ as $t\rightarrow+\infty$.
If there exists $w\in X_{-\infty}$ with $w_{i}>0$ satisfying
$(A^{\omega}w)_{i}+(c_{i}(\omega)-\gamma)w_{i}=0$ for $i\in\mathbf{Z}$,
then the maximum principle would imply that for some suitable constant $\kappa>0$, $w_{i}\geq\kappa z_{i}(t)$
must hold for any $t\geq0$ and $i\in\mathbf{I}_{l,k}$.
That's impossible since $z_{i}\rightarrow+\infty$ as $t\rightarrow+\infty$.
Therefore, for $\gamma<\Gamma_{\infty}$, \eqref{c6.1.2} doesn't possess a positive solution.
\begin{thm}\label{thm6.2}
There exists a measurable set $\Omega^{\prime}\subset\Omega_{0}$ with $\mathbb{P}(\Omega^{\prime})=1$
such that
$$\mu(\gamma):=\displaystyle{\lim_{i\rightarrow+\infty}}\frac{-\ln(u_{i}(\gamma,\omega))}{i}
=\displaystyle{\lim_{i\rightarrow-\infty}}\frac{-\ln(u_{i}(\gamma,\omega))}{i}=:\nu(\gamma),$$
where $\omega\in\Omega^{\prime},\ \gamma>\Gamma_{\infty}$, and $u=u(\gamma,\omega)$
is the unique solution of \eqref{c6.1.2}.
Moreover, $\mu(\gamma)$, which does not depend on $\omega$, is strictly increasing,
concave and converges to $+\infty$ as $\gamma$ tends to $+\infty$.
\end{thm}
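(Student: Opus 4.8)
The plan is to exploit the multiplicative cocycle identity \eqref{l6.4.1} together with the pointwise ergodic theorem. First I would fix $\gamma>\Gamma_{\infty}$ and set $a_{i}(\omega):=-\ln u_{i}(\gamma,\omega)$. By \eqref{l6.4.1} one has $a_{i+j}(\omega)=a_{i}(\omega)+a_{j}(\pi_{i}\omega)$ and $a_{0}\equiv 0$, so that $a_{n}(\omega)=\sum_{k=0}^{n-1}a_{1}(\pi_{k}\omega)$ for $n\geq 1$ and $a_{-n}(\omega)=-\sum_{k=1}^{n}a_{1}(\pi_{-k}\omega)$ for $n\geq 1$. The bounds \eqref{l6.3.1} applied at $i=1$ show that $a_{1}$ is bounded, hence lies in $L^{1}(\Omega,\mathbb{P})$. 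Since the $\mathbf{Z}$-action is generated by the measure-preserving ergodic transformation $\pi_{1}$ (and $\pi_{-1}=\pi_{1}^{-1}$ is ergodic as well), Birkhoff's ergodic theorem produces a full-measure set $\Omega_{\gamma}$ on which $\frac{1}{n}a_{n}(\omega)\to\mathbb{E}[a_{1}]$ and $\frac{1}{n}\sum_{k=1}^{n}a_{1}(\pi_{-k}\omega)\to\mathbb{E}[a_{1}]$; that is, $\frac{-\ln u_{i}(\gamma,\omega)}{i}$ converges, both as $i\to+\infty$ and as $i\to-\infty$, to the deterministic constant $\mu(\gamma):=\mathbb{E}[-\ln u_{1}(\gamma,\cdot)]$. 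This already gives $\mu(\gamma)=\nu(\gamma)$, independent of $\omega\in\Omega_{\gamma}$.

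Next I would upgrade this to a single full-measure set valid for every $\gamma$. Using Corollary \ref{cor6.1} one checks that $\gamma\mapsto u_{i}(\gamma,\omega)$ is nonincreasing for $i\geq 0$: if $\gamma_{1}<\gamma_{2}$, then $w:=u(\gamma_{2},\omega)-u(\gamma_{1},\omega)$ satisfies $(A^{\omega}w)_{i}+c_{i}(\omega)w_{i}\geq\gamma_{2}w_{i}$, $w_{0}=0$ and $\liminf_{i\to+\infty}w_{i}=0$, whence $w_{i}\leq 0$ for $i\geq 0$. Combining this with the normalization $u_{i}(\gamma,\omega)u_{-i}(\gamma,\pi_{i}\omega)=1$ coming from \eqref{l6.4.1} gives that $\gamma\mapsto u_{i}(\gamma,\omega)$ is nondecreasing for $i<0$; in either case $\gamma\mapsto\frac{-\ln u_{i}(\gamma,\omega)}{i}$ is nondecreasing, so $\mu$ is nondecreasing. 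A compactness argument based on the uniform bounds \eqref{l6.3.1} (on a compact $\gamma$-window) and the uniqueness in \eqref{c6.1.2} shows that $\gamma\mapsto u_{i}(\gamma,\omega)$ is continuous for each $i$ and $\omega$, and then dominated convergence makes $\mu$ continuous. Choosing a countable dense set $\{\gamma_{k}\}\subset(\Gamma_{\infty},+\infty)$ and $\Omega':=\bigcap_{k}\Omega_{\gamma_{k}}$, for arbitrary $\gamma$ and $\omega\in\Omega'$ the monotonicity squeezes the $\liminf$ and $\limsup$ of $\frac{-\ln u_{i}(\gamma,\omega)}{i}$ between $\mu(\gamma_{k})$ and $\mu(\gamma_{j})$ whenever $\gamma_{k}<\gamma<\gamma_{j}$, and the continuity of $\mu$ forces the two limits to exist and equal $\mu(\gamma)$.

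It then remains to establish the qualitative properties of $\mu$. That $\mu(\gamma)\to+\infty$ as $\gamma\to+\infty$ follows from \eqref{l6.3.1} and Remark \ref{re6.1}, since $\mu(\gamma)\geq\delta(\gamma)\to+\infty$. For strict monotonicity and concavity I would pass to the generalized principal eigenvalue: as $\mu(\gamma)=\nu(\gamma)$, the sequence $\psi_{i}:=\mathrm{e}^{\mu(\gamma)i}u_{i}(\gamma,\omega)$ belongs to $\mathcal{A}_{-\infty}$ (the growth condition follows from $\frac{-\ln u_{i}}{i}\to\mu(\gamma)$ at both ends, the $\ell^{\infty}$-conditions from $\psi>0$ and the bounds on $u_{i\pm 1}/u_{i}$ in the proof of Lemma \ref{lem6.3}) and satisfies $L_{-\mu(\gamma)}^{\omega}\psi=\gamma\psi$ on $\mathbf{Z}$; hence Corollary \ref{cor2.1} gives $\overline{\lambda_{1}}(-\mu(\gamma),-\infty;\omega)=\gamma$. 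Writing $g(q):=\overline{\lambda_{1}}(q,-\infty;\omega)$, which is convex in $q$ by Lemma \ref{lem3.2} and tends to $+\infty$ as $|q|\to+\infty$, the points $-\mu(\gamma)$ lie on the decreasing branch of $g$ (because $\mu$ is nondecreasing while $g(-\mu(\gamma))=\gamma$ increases). Injectivity of $\gamma\mapsto g(-\mu(\gamma))$ makes $\mu$ strictly increasing, and for $\gamma=\alpha\gamma_{1}+(1-\alpha)\gamma_{2}$ the convexity of $g$ yields $g\!\left(-\alpha\mu(\gamma_{1})-(1-\alpha)\mu(\gamma_{2})\right)\leq\alpha\gamma_{1}+(1-\alpha)\gamma_{2}=g(-\mu(\gamma))$, so, $g$ being decreasing there, $\mu(\gamma)\geq\alpha\mu(\gamma_{1})+(1-\alpha)\mu(\gamma_{2})$; thus $\mu$ is concave.

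The ergodic core of the statement is a direct consequence of the cocycle identity \eqref{l6.4.1}; I expect the main technical obstacle to be the passage from ``for each fixed $\gamma$, almost surely'' to ``almost surely, for all $\gamma$'', which forces one to prove the continuity of $\gamma\mapsto u_{i}(\gamma,\omega)$ and of $\mu$, and to combine these with the monotonicity obtained from Corollary \ref{cor6.1}.
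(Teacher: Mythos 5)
Your proof is correct, and it differs from the paper's on two substantive points. For the equality $\mu(\gamma)=\nu(\gamma)$ you apply Birkhoff's ergodic theorem in both time directions — to $\pi_{1}$ and to $\pi_{1}^{-1}$ — and directly identify both one-sided limits with $\mathbb{E}\big[-\ln u_{1}(\gamma,\cdot)\big]$; the paper instead first argues, via ergodicity, that each of $\mu(\gamma,\omega)$ and $\nu(\gamma,\omega)$ is separately a deterministic constant, and then proves the two constants coincide by a measure-theoretic contradiction (exhibiting a set $S$ with $\mathbb{P}(S)>\frac{1}{2}$ that would have to be disjoint from $\pi_{-2N}S$). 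Your route is shorter and more transparent. For strict monotonicity and concavity, you pass through the generalized principal eigenvalue: Corollary \ref{cor2.1} applied to $\psi_{i}=\mathrm{e}^{\mu(\gamma)i}u_{i}(\gamma,\omega)$ gives $\overline{\lambda_{1}}(-\mu(\gamma),-\infty;\omega)=\gamma$, and you read off injectivity and concavity of $\mu$ from the convexity of $q\mapsto\overline{\lambda_{1}}(q,-\infty;\omega)$ supplied by Lemma \ref{lem3.2}. The paper instead works entirely at the level of the solution $u$, showing $u(\gamma_{1})\le\sqrt{u(\gamma_{0})u(\gamma_{2})}$ (concavity) and $u(\gamma')\le\mathrm{e}^{-\varepsilon\cdot}u(\gamma)$ (strict increase) via the discrete product rule for $A^{\omega}$ and Corollary \ref{cor6.1}. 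The GPE route is more conceptual and ties the result to the eigenvalue framework of Section 2, but it forces a particular logical ordering: since $\psi\in\mathcal{A}_{-\infty}$ requires $\mu(\gamma)=\nu(\gamma)$, you must first produce the single full-measure set $\Omega'$, which in turn requires continuity of $\gamma\mapsto\mu(\gamma)$ to be established by an independent compactness-and-uniqueness argument (where one should verify that the constants $K,\delta$ in \eqref{l6.3.1} can be taken locally uniform in $\gamma$). The paper sidesteps this by proving concavity and strict monotonicity first, from which continuity of $\mu$ is then automatic, and only then intersects the exceptional sets over rational $\gamma$.
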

\begin{proof}
For each integer $i\geq1$, we iterate \eqref{l6.4.1} $i-1$ times to obtain
$$\ln(u_{i}(\gamma,\omega))=\ln(\prod_{k=0}^{i-1}u_{1}(\gamma,\pi_{k}\omega))=
\sum_{k=0}^{i-1}\ln(u_{1}(\gamma,\pi_{k}\omega)).$$
One can easily check that $\omega\in\Omega\longmapsto\ln(u_{1}(\gamma,\pi_{k}\omega))$ is a measurable
function for $k\in\{0,1,\cdots,i-1\}$, and $\ln(u_{1}(\gamma,\omega))\in\mathcal{L}^{1}(\Omega)$ by \eqref{l6.3.1}.
Hence the ergodic theorem implies that
\begin{equation*}
\begin{split}
\mu(\gamma,\omega):&=\displaystyle{\lim_{i\rightarrow+\infty}}\frac{-\ln(u_{i}(\gamma,\omega))}{i}\\
&=-\displaystyle{\lim_{i\rightarrow+\infty}}\frac{1}{i}\sum_{k=0}^{i-1}\ln(u_{1}(\gamma,\pi_{k}\omega))\\
&=-\mathbb{E}(\ln(u_{1}(\gamma,\omega)))\\
&=-\mathbb{E}(\ln(u_{1}(\gamma,\pi_{j}\omega)))\\
&=\mu(\gamma,\pi_{j}\omega)
\end{split}
\end{equation*}
for any $j\in\mathbf{Z}$. The ergodicity assumption implies that $\mu(\gamma,\omega)$
should be a constant almost surely and we write it by $\mu(\gamma)$. Similarly, $\nu(\gamma,\omega)$
should be a constant almost surely and we write it by $\nu(\gamma)$. Without loss of generality, we may assume
$\mu(\gamma,\omega)=\mu(\gamma)$ and $\nu(\gamma,\omega)=\nu(\gamma)$ for $\omega\in\Omega_{\gamma}$, where
$\Omega_{\gamma}\subset\Omega_{0}$ with $\mathbb{P}(\Omega_{\gamma})=1$.

Now consider $\Gamma_{\infty}<\gamma_{0}<\gamma_{1}=\frac{\gamma_{0}+\gamma_{2}}{2}<\gamma_{2},\ \omega\in\Omega_{0}$ and
$u^{(k)}:=u(\gamma_{k},\omega), k=0,1,2$. $u^{(k)}$ satisfies
\begin{equation*}
\left\{
   \begin{aligned}
    (A^{\omega}u^{(k)})_{i}+(c_{i}(\omega)-\gamma_{k})u_{i}^{(k)}=0,\\
    u_{0}^{(k)}=1, \displaystyle{\lim_{i\rightarrow+\infty}}u_{i}^{(k)}=0.
   \end{aligned}
   \right.
\end{equation*}
One can compute that
$$\frac{(A^{\omega}\phi\psi)_{i}}{\phi_{i}\psi_{i}}=\frac{(A^{\omega}\phi)_{i}}{\phi_{i}}
+\frac{(A^{\omega}\psi)_{i}}{\psi_{i}}
+d_{i+1}\frac{\partial_{i}^{+}\phi}{\phi_{i}}\frac{\partial_{i}^{+}\psi}{\psi_{i}}
+d_{i}\frac{\partial_{i}^{-}\phi}{\phi_{i}}\frac{\partial_{i}^{-}\psi}{\psi_{i}},$$
where $\partial_{i}^{\pm}\phi=\phi_{i\pm1}-\phi_{i}$. Then
$$\frac{(A^{\omega}u^{(k)})_{i}}{u_{i}^{(k)}}=\frac{(A^{\omega}\sqrt{u^{(k)}}\sqrt{u^{(k)}})_{i}}{u_{i}^{(k)}}
=2\frac{(A^{\omega}\sqrt{u^{(k)}})_{i}}{\sqrt{u^{(k)}_{i}}}
+d_{i+1}{\Big(\frac{\partial_{i}^{+}\sqrt{u^{(k)}}}{\sqrt{u^{(k)}_{i}}}\Big)}^{2}
+d_{i}{\Big(\frac{\partial_{i}^{-}\sqrt{u^{(k)}}}{\sqrt{u^{(k)}_{i}}}\Big)}^{2},$$
that is,
$$\frac{(A^{\omega}\sqrt{u^{(k)}})_{i}}{\sqrt{u^{(k)}_{i}}}=
\frac{1}{2}\Big[\frac{(A^{\omega}u^{(k)})_{i}}{u_{i}^{(k)}}-
d_{i+1}{\Big(\frac{\partial_{i}^{+}\sqrt{u^{(k)}}}{\sqrt{u^{(k)}_{i}}}\Big)}^{2}
-d_{i}{\Big(\frac{\partial_{i}^{-}\sqrt{u^{(k)}}}{\sqrt{u^{(k)}_{i}}}\Big)}^{2}\Big].$$
Using this, we have
\begin{equation*}
\begin{split}
\frac{(A^{\omega}\sqrt{u^{(0)}u^{(2)}})_{i}}{\sqrt{u_{i}^{(0)}u_{i}^{(2)}}}
&=\frac{1}{2}\Big[\frac{(A^{\omega}u^{(0)})_{i}}{u_{i}^{(0)}}-
d_{i+1}{\Big(\frac{\partial_{i}^{+}\sqrt{u^{(0)}}}{\sqrt{u^{(0)}_{i}}}\Big)}^{2}
-d_{i}{\Big(\frac{\partial_{i}^{-}\sqrt{u^{(0)}}}{\sqrt{u^{(0)}_{i}}}\Big)}^{2}\Big]\\
&\ \ +\frac{1}{2}\Big[\frac{(A^{\omega}u^{(2)})_{i}}{u_{i}^{(2)}}-
d_{i+1}{\Big(\frac{\partial_{i}^{+}\sqrt{u^{(2)}}}{\sqrt{u^{(2)}_{i}}}\Big)}^{2}
-d_{i}{\Big(\frac{\partial_{i}^{-}\sqrt{u^{(2)}}}{\sqrt{u^{(2)}_{i}}}\Big)}^{2}\Big]\\
&\ \ +d_{i+1}\frac{\partial_{i}^{+}\sqrt{u^{(0)}}}{\sqrt{u^{(0)}_{i}}}
\frac{\partial_{i}^{+}\sqrt{u^{(2)}}}{\sqrt{u^{(2)}_{i}}}
+d_{i}\frac{\partial_{i}^{-}\sqrt{u^{(0)}}}{\sqrt{u^{(0)}_{i}}}
\frac{\partial_{i}^{-}\sqrt{u^{(2)}}}{\sqrt{u^{(2)}_{i}}}\\
&=\gamma_{1}-c_{i}-
\frac{d_{i+1}}{2}{\Big(\frac{\partial_{i}^{+}\sqrt{u^{(0)}}}{\sqrt{u^{(0)}_{i}}}+
\frac{\partial_{i}^{+}\sqrt{u^{(2)}}}{\sqrt{u^{(2)}_{i}}}\Big)}^{2}\\
&\ \ -\frac{d_{i}}{2}{\Big(\frac{\partial_{i}^{-}\sqrt{u^{(0)}}}{\sqrt{u^{(0)}_{i}}}+
\frac{\partial_{i}^{-}\sqrt{u^{(2)}}}{\sqrt{u^{(2)}_{i}}}\Big)}^{2}\\
&\leq\gamma_{1}-c_{i},
\end{split}
\end{equation*}
i.e., $(A^{\omega}\sqrt{u^{(0)}u^{(2)}})_{i}+c_{i}\sqrt{u_{i}^{(0)}u_{i}^{(2)}}
\leq\gamma_{i}\sqrt{u_{i}^{(0)}u_{i}^{(2)}}$. Using Corollary \ref{cor6.1} to $u^{(1)}-\sqrt{u^{(0)}u^{(2)}}$,
one can find that $u^{(1)}\leq\sqrt{u^{(0)}u^{(2)}}$, i.e., ${(u^{(1)})}^{2}\leq u^{(0)}u^{(2)}$.
Thus
$$2\ln u_{i}^{(1)}(\frac{\gamma_{0}+\gamma_{2}}{2},\omega)=2\ln u_{i}^{(1)}(\gamma_{1},\omega)
\leq\ln u_{i}^{(0)}(\gamma_{0},\omega)+\ln u_{i}^{(2)}(\gamma_{2},\omega),$$
which yields that
$2\mu(\frac{\gamma_{0}+\gamma_{2}}{2},\omega)\geq\mu(\gamma_{0},\omega)+\mu(\gamma_{2},\omega)$,
i.e., $\mu(\cdot,\omega)$ is concave.

Next, letting ${\gamma}^{\prime}>\gamma$ and $\varepsilon>0$ small enough, we have
\begin{equation*}
\begin{split}
\frac{(A^{\omega}e^{-\varepsilon\cdot}u)_{i}}{e^{-\varepsilon i}u_{i}}
&=\frac{(A^{\omega}u)_{i}}{u_{i}}+\frac{(A^{\omega}e^{-\varepsilon\cdot})_{i}}{e^{-\varepsilon i}}
+d_{i+1}\frac{\partial_{i}^{+}e^{-\varepsilon\cdot}}{e^{-\varepsilon i}}\frac{\partial_{i}^{+}u}{u_{i}}
+d_{i}\frac{\partial_{i}^{-}e^{-\varepsilon\cdot}}{e^{-\varepsilon i}}\frac{\partial_{i}^{-}u}{u_{i}}\\
&=\gamma^{\prime}-c_{i}-(\gamma^{\prime}-\gamma)
+d_{i+1}\frac{u_{i+1}}{u_{i}}(e^{-\varepsilon}-1)+d_{i}\frac{u_{i-1}}{u_{i}}(e^{\varepsilon}-1)\\
&\leq\gamma^{\prime}-c_{i},
\end{split}
\end{equation*}
i.e.,
$(A^{\omega}e^{-\varepsilon\cdot}u)_{i}\leq(\gamma^{\prime}-c_{i}){e^{-\varepsilon i}u_{i}}$,
which yields that $u_{i}(\gamma^{\prime},\omega)\leq e^{-\varepsilon i}u_{i}(\gamma,\omega)$, hence
$\mu(\gamma^{\prime},\omega)\geq\mu(\gamma,\omega)+\varepsilon>\mu(\gamma,\omega)$.
Moreover, by \eqref{l6.3.1} and Remark \ref{re6.1}, one can easily find that
$\displaystyle{\lim_{\gamma\rightarrow+\infty}}\mu(\gamma,\omega)=+\infty$.

The concavity and monotonicity yield that $\mu(\gamma,\omega)$ is continuous in $\gamma$
for any fixed $\omega\in\Omega_{0}$.
Let $\{\gamma_{i}\}_{i=1}^{+\infty}=(\Gamma_{\infty},+\infty)\cap\mathbf{Q}$, and
$\Omega^{\prime}:={\bigcap}_{i=1}^{+\infty}\Omega_{\gamma_{i}}$. Then $\mathbb{P}(\Omega^{\prime})=1$
and $\mu(\gamma_{i},\omega)=\mu(\gamma_{i})$ for any $\omega\in\Omega^{\prime}$ and $i\in\mathbf{Z}_{+}$. Hence for
any $\gamma\in(\Gamma_{\infty},+\infty)$, there exists a sequence $\{\gamma_{i_{n}}\}_{n=1}^{+\infty}$
such that $\gamma_{i_{n}}\rightarrow\gamma$ as $n\rightarrow+\infty$, and
$\mu(\gamma,\omega)=\displaystyle{\lim_{n\rightarrow+\infty}}u(\gamma_{i_{n}},\omega)
=\displaystyle{\lim_{n\rightarrow+\infty}}\mu(\gamma_{i_{n}})$ for any $\omega\in\Omega^{\prime}$.
That is to say, $\mu(\gamma,\omega)$ is a constant with respect to $\omega\in\Omega^{\prime}$,
so does $\nu(\gamma,\omega)$.

Finally, we will prove $\mu(\gamma)=\nu(\gamma)$. If not, then there exists $S\subset\Omega^{\prime}$ such that $\mathbb{P}(S)>\frac{1}{2}$ and $N<+\infty$ such that
$$\Big|\frac{\ln u_{N}(\gamma,\omega)}{N}+\mu(\gamma)\Big|<\frac{|\mu(\gamma)-\nu(\gamma)|}{2},$$
$$\Big|\frac{\ln u_{-N}(\gamma,\omega)}{-N}+\nu(\gamma)\Big|<\frac{|\mu(\gamma)-\nu(\gamma)|}{2}$$
for any $\omega\in S$. Then
\begin{equation*}
\begin{split}
\Big|\frac{\ln u_{N}(\gamma,\pi_{-2N}\omega)}{N}+\mu(\gamma)\Big|
&=\Big|\frac{\ln u_{-N}(\gamma,\omega)}{-N}-\mu(\gamma)\Big|\\
&=\Big|\frac{\ln u_{-N}(\gamma,\omega)}{-N}+\nu(\gamma)-(\nu(\gamma)+\mu(\gamma))\Big|\\
&\geq|\nu(\gamma)+\mu(\gamma)|-\Big|\frac{\ln u_{-N}(\gamma,\omega)}{-N}+\nu(\gamma)\Big|\\
&\geq\frac{|\mu(\gamma)-\nu(\gamma)|}{2},
\end{split}
\end{equation*}
and the last inequality holds since both $\mu(\gamma)$ and $\nu(\gamma)$ are positive by Lemma \ref{lem6.3} and \eqref{l6.3.1}.
Thus $S\cap\pi_{-2N}S=\emptyset$, a contradiction with $\mathbb{P}(S)=\mathbb{P}(\pi_{-2N}S)>\frac{1}{2}.$
\end{proof}
\begin{thm}\label{thm6.3}
There is a measurable set $\Omega^{\prime\prime}\subset\Omega$
with $\mathbb{P}(\Omega^{\prime\prime})=1$ such that $\underline{{\lambda}_{1}}(p,-\infty;\omega)\geq\Gamma_{\infty}$ for any $\omega\in\Omega^{\prime\prime}$ and for any $p\in\mathbf{R}$.
\end{thm}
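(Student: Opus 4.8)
The plan is to deduce Theorem~\ref{thm6.3} from the solutions $u(\gamma,\omega)$ of \eqref{c6.1.2} for $\gamma>\Gamma_\infty$, together with Corollary~\ref{cor2.1}. Since $\Gamma_\infty$ is a deterministic constant (Lemma~\ref{lem6.1}) and $p\mapsto\underline{\lambda_1}(p,-\infty;\omega)$ is locally Lipschitz (the estimate proved earlier for $\mathcal L$ applies verbatim to $\mathcal L^\omega$, uniformly in $n$), it suffices to exhibit one measurable set $\Omega''$ of full probability on which $\underline{\lambda_1}(p,-\infty;\omega)\ge\Gamma_\infty$ holds for every rational $p$; continuity then gives it for all real $p$. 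I would take $\Omega''$ to be the intersection of the full-measure sets furnished by Lemmas~\ref{lem6.1}, \ref{lem6.3}, \ref{lem6.4} and Theorem~\ref{thm6.2} (and their reflected analogues below).

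Fix $\gamma>\Gamma_\infty$ and set $u=u(\gamma,\omega)$. By Theorem~\ref{thm6.2}, $\frac{\ln u_i}{i}\to-\mu(\gamma)$ as $i\to\pm\infty$, with $\mu(\gamma)$ independent of $\omega\in\Omega''$, and by Lemma~\ref{lem6.3} all ratios $u_{i\pm1}/u_i$ lie in the fixed compact subinterval $\big[\frac{\underline D}{\gamma-c+2D},\frac{\gamma-c+2D}{\underline D}\big]$ of $(0,+\infty)$. Hence $\phi:=\{\me^{\mu(\gamma)i}u_i\}_{i\in\mathbf Z}\in\mathcal A_{-\infty}$, and since $(L^\omega_{-\mu(\gamma)}\phi)_i=\me^{\mu(\gamma)i}(\mathcal L^\omega u)_i=\gamma\phi_i$ for all $i$, Corollary~\ref{cor2.1} gives
$$\underline{\lambda_1}(-\mu(\gamma),-\infty;\omega)=\overline{\lambda_1}(-\mu(\gamma),-\infty;\omega)=\gamma>\Gamma_\infty.$$
Because $d'_i=d_{i+1}$, the operator $\mathcal L^\omega$ is symmetric and $\Gamma_\infty$ is invariant under $i\mapsto-i$; running the constructions of this section for the reflected (still stationary ergodic) coefficients and reflecting back produces a positive solution $v=v(\gamma,\omega)$ of $\mathcal L^\omega v=\gamma v$ with $v_0=1$ and $v_i\to0$ as $i\to-\infty$. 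The constancy of the discrete Wronskian $d_{i+1}(u_iv_{i+1}-u_{i+1}v_i)$ forces the two Lyapunov exponents of $\mathcal L^\omega w=\gamma w$ to be $\pm\mu(\gamma)$, so $\frac{\ln v_i}{i}\to+\mu(\gamma)$ as $i\to\pm\infty$, and the argument above applied to $v$ yields $\underline{\lambda_1}(\mu(\gamma),-\infty;\omega)=\gamma$. Since $\mu$ is increasing with $\mu(\gamma)\to+\infty$ as $\gamma\to+\infty$, we obtain $\underline{\lambda_1}(p,-\infty;\omega)>\Gamma_\infty$ whenever $|p|>L:=\lim_{\gamma\downarrow\Gamma_\infty}\mu(\gamma)$, and then $\underline{\lambda_1}(\pm L,-\infty;\omega)=\Gamma_\infty$ by the uniform Lipschitz continuity in $p$.

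It remains to handle the ``plateau'' $|p|<L$ (nonempty only if $L>0$). Here I would pass to the limit $\gamma\downarrow\Gamma_\infty$: by the monotonicity established inside the proof of Theorem~\ref{thm6.2} ($u_i(\gamma',\omega)\le\me^{-\varepsilon i}u_i(\gamma,\omega)$ for $\gamma'>\gamma$, $i\ge0$), the $u(\gamma,\omega)$ increase as $\gamma\downarrow\Gamma_\infty$ to a positive sequence $u^\ast$ solving $\mathcal L^\omega u^\ast=\Gamma_\infty u^\ast$ with $u^\ast_0=1$ and with uniformly bounded logarithmic increments (the bounds \eqref{l6.3.1} pass to the limit); likewise one gets $v^\ast$. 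For $\lambda<\Gamma_\infty$ and $p\in(-L,L)$ one then seeks a test function in $\mathcal A_{-\infty}$ with asymptotic slope $p$ and $(L^\omega_p\phi)_i\ge\lambda\phi_i$ by combining $u^\ast,v^\ast$ (or the exponentially rescaled copies $\me^{(p+\mu(\gamma))i}u_i(\gamma,\omega)$ and $\me^{(p-\mu(\gamma))i}v_i(\gamma,\omega)$ for $\gamma$ slightly above $\Gamma_\infty$), which gives $\underline{\lambda_1}(p,-\infty;\omega)\ge\lambda$; letting $\lambda\uparrow\Gamma_\infty$ finishes.

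This last step is where the real difficulty lies. The rescaled copies above do have logarithmic slope $p$ at both ends and are super-solutions of $\mathcal L^\omega\Phi\ge\lambda\Phi$ with $\lambda=\gamma$ minus an error governed by $|p\pm\mu(\gamma)|$, but that error stays bounded away from $0$ once $p$ is strictly inside $(-L,L)$, and combining the two pieces by the maximum — essentially the only elementary operation that preserves super-solutions — mixes their two slopes and destroys membership in $\mathcal A_{-\infty}$ (the geometric mean, which would fix the slope, fails to preserve super-solutions, by the one–sided nature of Lemma~\ref{lem3.1}). Closing this gap — either by showing the interval $(-L,L)$ is in fact degenerate, or by a genuinely two-sided construction exploiting the multiplicative cocycle identity \eqref{l6.4.1} and the ergodicity of the shift — is the heart of the proof; the rest is a routine assembly of Corollary~\ref{cor2.1}, Theorem~\ref{thm6.2} and the continuity estimates.
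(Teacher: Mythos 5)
Your construction via the eigen-sequences $u(\gamma,\omega)$ of \eqref{c6.1.2} does give $\underline{\lambda_1}(p,-\infty;\omega)\ge\Gamma_\infty$ for $|p|$ outside the plateau, and this is essentially how the paper argues inside the proof of Theorem~\ref{thm6.1}; but it is not the route to Theorem~\ref{thm6.3}. You have put your finger on the genuine obstruction: for $p$ strictly inside the plateau there is no positive solution of $\mathcal L^\omega v=\Gamma_\infty v$ with two-sided sublinear logarithmic growth, and neither the pointwise maximum nor the geometric mean of the one-sided solutions $u^\ast$, $v^\ast$ produces an admissible test function with the right slope (for exactly the reason you give: the one-sidedness of Lemma~\ref{lem3.1}). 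That gap is real, and it is the entire content of Theorem~\ref{thm6.3} in the paper's architecture; your proposal, as you acknowledge, does not close it.

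The paper closes it with an idea you did not try: a \emph{nonlinear} auxiliary equation. After adding a constant to $c_i$ (which shifts $\Gamma_\infty$ and $\underline{\lambda_1}(p,-\infty;\omega)$ by the same amount) one may assume $\Gamma_\infty>0$, so it suffices to show $\underline{\lambda_1}(p,-\infty;\omega)\ge0$. Consider $(L_p^\omega\phi)_i=\phi_i^2$. The constant $\overline\phi=\sup_i\bigl(d_{i+1}(\mathrm e^p-1)+d_i(\mathrm e^{-p}-1)+c_i\bigr)$ is a supersolution, and for a fixed $\overline\gamma\in(0,\Gamma_\infty)$ the Dirichlet eigenfunctions $\chi^{j,k(j,\omega)}$ of \eqref{t6.1.1} (with $k(j,\omega)$ the least $k$ making $\Lambda_{j,k}(\omega)\ge\overline\gamma$), scaled by their eigenvalue and a suitable exponential and extended by zero, furnish compactly supported subsolutions $\underline\phi^j\le\overline\phi$. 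Monotone iteration through $(MI-A_{-p}^\omega)^{-1}$, justified by the two discrete maximum-principle claims in the paper's Step~3, produces the \emph{minimal} solution $u(\omega)$ with $\underline\phi_i^0(\omega)\le u_i(\omega)\le\overline\phi$; minimality together with stationarity of the coefficients forces $u_i(\omega)=u_{i+j}(\pi_{-j}\omega)$, so $u$ is a stationary, bounded, strictly positive process with bounded logarithmic increments. The ergodic theorem then gives $\lim_{i\to\pm\infty}\ln u_i(\omega)/i=0$ a.s., so $u(\omega)\in\mathcal A_{-\infty}$, and $L_p^\omega u=u^2\ge0$ makes $u$ a test function yielding $\underline{\lambda_1}(p,-\infty;\omega)\ge0$. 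The quadratic nonlinearity is the crucial trick: it pins the amplitude of $u$ between explicit barriers (so the minimal solution is automatically stationary and bounded), while the resulting inequality $L_p^\omega u\ge0$ is precisely the test inequality at level $0$. This works uniformly in $p\in\mathbf R$ and never distinguishes a plateau, which is why it succeeds where the linear eigenfunction approach stalls.
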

We will prove this theorem later on, and we will first use it to prove Theorem \ref{thm6.1}.

\begin{proof}[Proof of Theorem \ref{thm6.1}]
First we still denote $\Omega^{\prime\prime}\cap\Omega^{\prime}$, a set
with probability one, by $\Omega^{\prime}$. As $\mu(\gamma)$ is strictly increasing and nonnegative on
$(\Gamma_{\infty},+\infty)$, we can define
$\mu(\Gamma_{\infty}):=\displaystyle{\lim_{\gamma\rightarrow\Gamma_{\infty}^{+}}}\mu(\gamma)=p_{r}\geq0$.
The function $\mu$ admits an inverse $k:[p_{r},+\infty)\rightarrow[\Gamma_{\infty},+\infty)$.
For any $p>p_{r}$ and $\omega\in\Omega^{\prime}$, let $\phi_{i}(k(p),\omega)=e^{pi}u_{i}(k(p),\omega)>0$,
where $u_{i}(k(p),\omega)$ is the solution of \eqref{c6.1.2}. Then $\phi$ satisfies
$$(L_{-p}^{\omega}\phi)_{i}={\mathrm{e}}^{pi}(\mathcal L^{\omega}{\mathrm{e}}^{-p\cdot}\phi)_{i}
={\mathrm{e}}^{pi}(\mathcal L^{\omega}u)_{i}={\mathrm{e}}^{pi}k(p)u_{i}=k(p)\phi_{i},\ i\in\mathbf{Z}.$$

Moreover, ${\Big\{\frac{{\phi}_{i\pm1}-{\phi}_{i}}{{\phi}_{i}}\Big\}}_{i=-\infty}^{\infty}\in{\ell}^{\infty}$
since $\frac{u_{i\pm1}}{u_{i}}\leq\frac{\gamma-c+2D}{\underline{D}}$, and
$$\displaystyle{\lim_{i\rightarrow\pm\infty}}\frac{\ln\phi_{i}(k(p),\omega)}{i}=-\mu(k(p))+p=0$$
for any $\omega\in\Omega^{\prime}$. Hence $\phi\in\mathcal{A}_{-\infty}$. Combining this with Corollary \ref{cor2.1},
we can prove that
\begin{equation}\label{t6.3.1}
\underline{{\lambda}_{1}}(p,-\infty;\omega)=\overline{{\lambda}_{1}}(p,-\infty;\omega)=k(p)
\end{equation}
for any $p>p_{r}$ and $\omega\in\Omega^{\prime}$. The continuity of $k(p)$, $\overline{{\lambda}_{1}}(p,-\infty;\omega)$ and $\underline{{\lambda}_{1}}(p,-\infty;\omega)$ yields that \eqref{t6.3.1} holds for $p=p_{r}$ and all $\omega\in\Omega^{\prime}$.

Similarly, one can prove the existence of $p_{l}\leq0$ such that for any $p<p_{l}$ and
$\omega\in\Omega^{\prime}$, and there exists a solution
$\phi\in\mathcal{A}_{-\infty}$ of $L_{-p}^{\omega}\phi=\tilde{k}(p)\phi$,
where $\tilde{k}:(-\infty,p_{l}]\rightarrow[\Gamma_{\infty},+\infty)$
is strictly decreasing. Also,
$$\underline{{\lambda}_{1}}(p,-\infty;\omega)=\overline{{\lambda}_{1}}(p,-\infty;\omega)=\tilde{k}(p)$$
for any $p\leq p_{r}$ and $\omega\in\Omega^{\prime}$.

Finally, the convexity of $\overline{{\lambda}_{1}}(p,-\infty;\omega)$ from Lemma \ref{lem3.2} yields that
$$\underline{{\lambda}_{1}}(p,-\infty;\omega)\leq\overline{{\lambda}_{1}}(p,-\infty;\omega)\leq\Gamma_{\infty}$$
for any $p\in[p_{l},p_{r}]$ and $\omega\in\Omega^{\prime}$. Then by Theorem \ref{thm6.3}
$$\underline{{\lambda}_{1}}(p,-\infty;\omega)=\overline{{\lambda}_{1}}(p,-\infty;\omega)=\Gamma_{\infty}$$
for any $p\in[p_{l},p_{r}]$ and $\omega\in\Omega^{\prime}$.
\end{proof}
We are now in the position to prove Theorem \ref{thm6.3}.

\begin{proof}[Proof of Theorem \ref{thm6.3}] Let $\lambda\in\mathbf{R}$ be a constant. Denote
$(\mathcal{L}^{\omega}_{\lambda}\phi)_{i}=(\mathcal{L}^{\omega}\phi)_{i}+\lambda\phi_{i}$ and
$(L^{\omega}_{p,\lambda}\phi)_{i}={\mathrm{e}}^{-pi}(\mathcal{L}^{\omega}_{\lambda}{\mathrm{e}}^{p\cdot}\phi)_{i}$
for $i\in\mathbf{Z}$, where $\phi\in X_{-\infty}$.
Then for any fixed $\lambda$, we can find $\Gamma_{\infty}(\lambda)$ by Lemma \ref{lem6.1} and
define $\underline{{\lambda}_{1}}(p,-\infty;\omega,\lambda)$ by Definition \ref{def2.1}. Both of them are related to $\mathcal{L}^{\omega}_{\lambda}$.
We write down $\lambda$ here
to emphasize that $\Gamma_{\infty}$ and $\underline{{\lambda}_{1}}(p,-\infty;\omega)$ depend on $\lambda$.
Moreover, it is easy to see that $\Gamma_{\infty}(\lambda)=\Gamma_{\infty}(0)+\lambda$ and
$\underline{{\lambda}_{1}}(p,-\infty;\omega,\lambda)=\underline{{\lambda}_{1}}(p,-\infty;\omega,0)+\lambda$.
Hence we only need to show that $\underline{{\lambda}_{1}}(p,-\infty;\omega,\lambda)\geq0$ provided that
$\Gamma_{\infty}(\lambda)>0$. Without loss of generality, we assume that $\Gamma_{\infty}=\Gamma_{\infty}(0)>0,$
and then prove that
$\underline{{\lambda}_{1}}(p,-\infty;\omega)=\underline{{\lambda}_{1}}(p,-\infty;\omega,0)\geq0$.
We do this in five steps.\\
Step 1: For any $k\in\mathbf{Z}_{+}, (j,\omega)\in\mathbf{Z}\times\Omega$, let
$B(j,k)=\mathbf{I}_{j-k-1,2k+1}=\{j-k,j-k+1,\cdots,j,\cdots,j+k\}$ and
$$(\chi^{j,k}(\omega),\Lambda_{j,k}(\omega))=(\phi^{j-k-1,2k+1}(\omega),\Gamma_{j-k-1,2k+1}(\omega))$$
for convenience, where $(\phi^{j-k-1,2k+1}(\omega),\Gamma_{j-k-1,2k+1}(\omega))$
is the principal eigenpair of \eqref{t6.1.1}. We reduce that $\max\limits_{i\in B(j,k)}\chi_{i}^{j,k}(\omega)=1$.
Then
\begin{equation}\label{t6.3.2}
\left\{
   \begin{aligned}
   {(\mathcal{L}^{\omega}\chi^{j,k})}_{i}=(A^{\omega}\chi^{j,k})_{i}
    +c_{i}(\omega)\chi_{i}^{j,k}=\Lambda_{j,k}(\omega)\chi_{i}^{j,k},\ \ i\in B(j,k),\\
    \chi_{j-k-1}^{j,k}=\chi_{j+k+1}^{j,k}=0,\ \chi_{i}^{j,k}>0,\ \ i\in B(j,k),\\
    \max\limits_{i\in B(j,k)}\chi_{i}^{j,k}(\omega)=1.
   \end{aligned}
   \right.
   \end{equation}
It is easy to check that $\omega\longmapsto(\chi_{i}^{j,k}(\omega),\Lambda_{j,k}(\omega))$ is a measurable function
for any $j\in\mathbf{Z},\ k\in\mathbf{Z_{+}}$, and $i\in B(j,k)$.

Now for $(j,n,k,\omega)\in\mathbf{Z}^{2}\times\mathbf{Z}_{+}\times\Omega$, we define
$\{\psi_{i}\}_{i=j-k-1}^{j+k+1}$ with $\psi_{i}=\chi_{i+n}^{j+n,k}(\omega)$. Then
\begin{equation*}
\begin{split}
{(\mathcal{L}^{\pi_{n}\omega}\psi)}_{i}
&=d_{i+1}(\pi_{n}\omega)(\psi_{i+1}-\psi_{i})+d_{i}(\pi_{n}\omega)(\psi_{i-1}-\psi_{i})
  +c_{i}(\pi_{n}\omega)\psi_{i}\\
&={(\mathcal{L}^{\omega}\chi^{j+n,k})}_{i+n}=\Lambda_{j+n,k}(\omega)\chi_{i+n}^{j+n,k}(\omega)\\
&=\Lambda_{j+n,k}(\omega)\psi_{i}
\end{split}
\end{equation*}
for $i\in B(j,k)$, $\psi_{j-k-1}=\psi_{j+k+1}=0,\ \psi_{i}>0$ for all $i\in B(j,k)$, and
$\max\limits_{i\in B(j,k)}\psi_{i}=1$. Noting that the solution of \eqref{t6.3.2} is unique, we have
\begin{equation}\label{t6.3.3}
\psi_{i}=\chi_{i+n}^{j+n,k}(\omega)=\chi_{i}^{j,k}(\pi_{n}\omega)\ \text{for}\ i\in B(j,k)
\ \text{and}\ \Lambda_{j+n,k}(\omega)=\Lambda_{j,k}(\pi_{n}\omega).
\end{equation}
This means that the eigenelements are random stationary ergodic in $(j,\omega)$.
Moreover, for any given
$0<\overline{\gamma}<\Gamma_{\infty}$ one can define
$k(j,\omega)=\min\{k|\ \Lambda_{j,k}(\omega)\geq\overline{\gamma}\}$ for
$(j,\omega)\in\mathbf{Z}\times\Omega_{0}$ since
$\Lambda_{j,k}(\omega)$ is increasing in $k$ and
$\displaystyle{\lim_{k\rightarrow+\infty}}\Lambda_{j,k}(\omega)=\Gamma_{\infty}$.
It follows from \eqref{t6.3.3} that $k(j+n,\omega)=k(j,\pi_{n}\omega)$ for any
$(j,n,\omega)\in\mathbf{Z}^{2}\times\Omega_{0}$.

Step 2: Consider the following equation
\begin{equation}\label{t6.3.4}
(L_{p}^{\omega}\phi)_{i}={\phi}^{2}_{i},\ i\in\mathbf{Z}
\end{equation}
for $\omega\in\Omega_{0}$. Note that
$(L_{p}^{\omega}\phi)_{i}={(A_{-p}^{\omega}\phi)}_{i}+\tilde{c}_{i}(\omega)\phi_{i}$,
where
$${(A_{-p}^{\omega}\phi)}_{i}=d_{i+1}(\omega)e^{p}(\phi_{i+1}-\phi_{i})+d_{i}(\omega)e^{-p}(\phi_{i-1}-\phi_{i})$$
and $\tilde{c}_{i}(\omega)=d_{i+1}(\omega)(e^{p}-1)+d_{i}(\omega)(e^{-p}-1)+c_{i}(\omega)$. Then
$\overline{\phi}\equiv\sup \limits_{i}\tilde{c}_{i}>0$ is a supersolution of \eqref{t6.3.4}, i.e.,
${(A_{-p}^{\omega}\overline{\phi})}_{i}+\tilde{c}_{i}(\omega)\overline{\phi_{i}}\leq{\overline{\phi}_{i}}^{2}$.
Moreover, it is easy to check that
\begin{equation*}
\underline{\phi}_{i}^{j}(\omega):=
\left\{
   \begin{aligned}
    \Lambda_{j,k(j,\omega)}(\omega)\chi_{i}^{j,k(j,\omega)}(\omega)e^{-p(i-j+\text{sgn}(p)k(j,\omega))},\ \
    i\in B(j,k(j,\omega)),\\
    0,\ \ i\notin B(j,k(j,\omega)),
   \end{aligned}
   \right.
\end{equation*}
satisfies
${(A_{-p}^{\omega}\underline{\phi}^{j})}_{i}+\tilde{c}_{i}(\omega)\underline{\phi}^{j}_{i}
\geq({\underline{\phi}^{j}_{i}})^{2}$ since $\max\limits_{i\in B(j,k)}\chi_{i}^{j,k}(\omega)=1$.
In other words, $\underline{\phi}^{j}$ is a subsolution. We also have
$0\leq\underline{\phi}_{i}^{j}(\omega)\leq\Lambda_{j,k(j,\omega)}(\omega)\leq\Gamma_{\infty}\leq\overline{\phi}.$

Step 3: Let $M>0$. We will prove two claims.\\
Claim 1. Assume that $\phi^{(k)}=\{\phi^{(k)}_{i}\}_{i=-k-1}^{k+1}$ satisfies
\begin{equation}\label{t6.3.5}
\left\{
   \begin{aligned}
    ((MI-A_{-p}^{\omega})\phi^{(k)})_{i}\geq0,&\ \ i\in B(0,k),\\
    \phi^{(k)}_{\pm(k+1)}\geq0,
   \end{aligned}
   \right.
\end{equation}
where $I$ is an identity matrix. Then $\phi^{(k)}_{i}\geq0$ for $i\in B(0,k)$.
Moreover, $\phi^{(k)}_{i}>0$ for $i\in B(0,k)$ provided $((MI-A_{-p}^{\omega})\phi^{(k)})_{i}>0$
for some $i\in B(0,k)$.\\
Proof of Claim 1: Assume that $\phi^{(k)}_{i}$ reaches its minimum at $i_{0}\in B(0,k)$, i.e.,
$\phi^{(k)}_{i_{0}}=\min\limits_{i\in B(0,k)}{\phi^{(k)}_{i}}$. If $\phi^{(k)}_{i_{0}}<0$, then
one can conclude that
$$((MI-A_{-p}^{\omega})\phi^{(k)})_{i_{0}}<0,$$
which contradicts \eqref{t6.3.5}. Hence $\phi^{(k)}_{i_{0}}\geq0$. Furthermore,
suppose that $((MI-A_{-p}^{\omega})\phi^{(k)})_{i}>0$ for some $i\in B(0,k)$.
Then $\min\limits_{i\in B(0,k)}{\phi^{(k)}_{i}}>0$. If not, then there must exist
$i\in B(0,k)$ such that $\phi^{(k)}_{i}=\min\limits_{i\in B(0,k)}{\phi^{(k)}_{i}}=0$ and $\phi^{(k)}_{i-1}+\phi^{(k)}_{i+1}>0$. Hence at $i$ we have
$$((MI-A_{-p}^{\omega})\phi^{(k)})_{i}=
-d_{i+1}(\omega)e^{p}\phi^{(k)}_{i+1}-d_{i}(\omega)e^{-p}\phi^{(k)}_{i-1}<0,$$
which contradicts \eqref{t6.3.5}.\\
Claim 2: Assume that $\phi\in X_{-\infty}$ with $\sup\limits_{i}\phi_{i}<+\infty$ satisfying
$((MI-A_{-p}^{\omega})\phi)_{i}\leq0$ for $i\in\mathbf{Z}$. Then $\phi_{i}\leq0$ for $i\in\mathbf{Z}$.\\
Proof of Claim 2: Assume by contradiction that $\phi_{i_{0}}>0$ for some $i_{0}\in B(0,k)$. Then we have either
$\phi_{i_{0}-1}>\phi_{i_{0}}$ or $\phi_{i_{0}+1}>\phi_{i_{0}}$. In fact, $\phi_{i_{0}\pm1}\leq\phi_{i_{0}}$
would imply $((MI-A_{-p}^{\omega})\phi)_{i_{0}}>0$. That is impossible. If $\phi_{i_{0}-1}>\phi_{i_{0}}$
holds, then one can easily find that $\phi_{i_{0}-(i+1)}>\phi_{i_{0}-i}$ for $i=1,2,\cdots$ by induction.
Hence $\{\phi_{i_{0}-i}\}_{i=1}^{\infty}$ is strictly increasing. Moreover,
$\lim\limits_{i\to\infty}\phi_{i_{0}-i}$ exists since $\sup\limits_{i}\phi_{i}<+\infty$. Then we have
$$0\geq\lim\limits_{i\to\infty}((MI-A_{-p}^{\omega})\phi)_{i_{0}-i}=M\lim\limits_{i\to\infty}\phi_{i_{0}-i}>0,$$
which is a contradiction. We have thus proved that $\phi_{i_{0}-1}>\phi_{i_{0}}$ fails. Therefore,
$\phi_{i_{0}+1}>\phi_{i_{0}}$, which yields that $\{\phi_{i_{0}+i}\}_{i=1}^{\infty}$ is strictly increasing.
Then one can obtain a contradiction by the same argument as we just did.
Hence $\phi_{i}\leq0$ for $i\in\mathbf{Z}$.

Step 4: Now choose $M$ large enough such that
$$(M+\tilde{c}_{i}-t)t\geq(M+\tilde{c}_{i}-s)s$$
for any $i\in\mathbf{Z},\ t,s\in[0,\overline{\phi}]$ with $t\geq s$. Fix $j\in\mathbf{Z}$. For any given $\varphi\in X_{-\infty}$
with $\underline{\phi}_{i}^{j}(\omega)\leq\varphi_{i}\leq\overline{\phi}$, $i\in\mathbf{Z}$,
consider the equation
\begin{equation}\label{t6.3.6}
\left\{
   \begin{aligned}
    ((MI-A_{-p}^{\omega})\psi^{(k)})_{i}=(M+\tilde{c}_{i}-\varphi_{i})\varphi_{i},&\ \ i\in B(0,k)\\
    \psi^{(k)}_{\pm(k+1)}=0.
   \end{aligned}
   \right.
\end{equation}
\eqref{t6.3.6} possesses a unique solution $\psi^{(k)}=\{\psi^{(k)}_{i}\}_{i=-k-1}^{k+1}$
since $M$ can be chosen large enough.
Moreover, $0\leq\psi^{(k)}_{i}\leq\overline{\phi}$ for $i\in B(0,k)$.
The last inequality is obtained by using Claim 1 with $\overline{\phi}-\psi^{(k)}$ instead of $\phi^{(k)}$.
By using the diagonal extraction method one can find a solution $\psi\in X_{-\infty}$ of
$$((MI-A_{-p}^{\omega})\psi)_{i}=(M+\tilde{c}_{i}-\varphi_{i})\varphi_{i},\ \ i\in\mathbf{Z},$$
with $0\leq\psi_{i}\leq\overline{\phi}$ for $i\in\mathbf{Z}$. Using Claim 2 with
$\underline{\phi}^{j}(\omega)-\psi$ instead of $\phi$, one can obtain that
$\underline{\phi}_{i}^{j}(\omega)\leq\psi_{i}$ for $i\in\mathbf{Z}$.
Let $S^{j}=\{\phi\in X_{-\infty}|\ \underline{\phi}_{i}^{j}(\omega)\leq\varphi_{i}\leq\overline{\phi}\}$.
Define $T:S^{j}\rightarrow X_{-\infty}$ by $T\varphi=\psi$. Then $TS^{j}\subset S^{j}$.
Consider $\varphi^{1},\ \varphi^{2}\in S^{j}$ with $\varphi^{1}_{i}\leq\varphi^{2}_{i}$ for $i\in\mathbf{Z}$.
Then $T\varphi^{1}_{i}-T\varphi^{2}_{i}$ is a bounded solution of
$$((MI-A_{-p}^{\omega})\psi)_{i}=(M+\tilde{c}_{i}-\varphi^{1}_{i})\varphi^{1}_{i}-(M+\tilde{c}_{i}-\varphi^{2}_{i})\varphi^{2}_{i}.$$
Hence we have $T\varphi^{1}_{i}\leq T\varphi^{2}_{i}$ by the choice of $M$ and Claim 2.
Then by the super-subsolution method one can find a minimal solution $u=u_{i}(\omega)>0$ of $u=Tu$,
i.e., $L_{p}^{\omega}u_{i}=u^{2}_{i}$ for $i\in\mathbf{Z}$, in the class of all the solutions satisfying
$\underline{\phi}^{0}_{i}(\omega)\leq u_{i}(\omega)\leq\overline{\phi}$ for any
$(i,\omega)\in\mathbf{Z}\times\Omega_{0}.$

Take $j\in\mathbf{Z}$ and $v_{i}(\omega):=u_{i+j}(\pi_{-j}\omega)$. Then
$$\underline{\phi}_{i}^{0}(\omega)=\underline{\phi}_{i+j}^{0}(\pi_{-j}\omega)\leq
u_{i+j}(\pi_{-j}\omega)=v_{i}(\omega)\leq\overline{\phi}.$$
The stationarity of the coefficients yields that $v_{i}(\omega)$ satisfies $L_{p}^{\omega}v_{i}=v^{2}_{i}$ for
$i\in\mathbf{Z}$. Hence
$u_{i}(\omega)\leq v_{i}(\omega)=u_{i+j}(\pi_{-j}\omega)$ for any
$(i,j,\omega)\in{\mathbf{Z}}^{2}\times\Omega_{0}$ from the
minimality of $u_{i}(\omega)$.
Similarly, $u_{i+j}(\pi_{-j}\omega)\leq u_{i+j-j}(\pi_{j}\pi_{-j}\omega)=u_{i}(\pi_{j}\pi_{-j}\omega)$,
where $u_{i}(\pi_{j}\pi_{-j}\omega)$ is a minimal solution that satisfies
$\underline{\phi}_{i}^{0}(\pi_{j}\pi_{-j}\omega)\leq u_{i}(\pi_{j}\pi_{-j}\omega)\leq\overline{\phi}$.
On the other hand, $\underline{\phi}_{i}^{0}(\pi_{j}\pi_{-j}\omega)=\underline{\phi}_{i+j-j}^{0}(\omega)=
\underline{\phi}_{i}^{0}(\omega)\leq u_{i}(\omega)\leq\overline{\phi}$.
It yields that $u_{i}(\omega)=u_{i+j}(\pi_{-j}\omega)$ immediately.

Step 5: Since $L_{p}^{\omega}u=u^{2}$, one can find that $u>0$ and
${\Big\{\frac{{u}_{i\pm1}-{u}_{i}}{{u}_{i}}\Big\}}_{i=-\infty}^{\infty}\in{\ell}^{\infty}$ by Claims 1 and 2 in Step 3. The ergodic
theorem yields that there exists a measurable set $\Omega_{p}\subset\Omega_{0}$ with $\mathbb{P}(\Omega_{p})=1$
such that
$$\displaystyle{\lim_{i\rightarrow\pm\infty}}\frac{\ln u_{i}(\omega)}{i}$$
exist for any $\omega\in\Omega_{p}$. A similar argument to the proof of Theorem \ref{thm6.2} yields that these two limits are equal.
Note that $u$ is random stationary ergodic with $0<u_{i}(\omega)\leq\overline{\phi}$ for any $(i,\omega)$.
Thus these limits must be zero. Hence
$u(\omega)\in\mathcal{A}_{-\infty}$ for any $\omega\in\Omega_{p}$. Taking $u(\omega)$ as a test function
in the definition of $\underline{{\lambda}_{1}}(p,-\infty;\omega)$ one can find that
$\underline{{\lambda}_{1}}(p,-\infty;\omega)\geq0$ for any $\omega\in\Omega_{p}$. Now we have already
proved that for any fixed $p\in\mathbf{R}$, there exists $\Omega_{p}$ with $\mathbb{P}(\Omega_{p})=1$
such that $\underline{{\lambda}_{1}}(p,-\infty;\omega)\geq\Gamma_{\infty}$ for all $\omega\in\Omega_{p}$.
Let $\{p_{i}\}_{i=1}^{+\infty}=\mathbf{Q},\ \Omega^{\prime\prime}:={\bigcap}_{i=1}^{+\infty}\Omega_{p_{i}}$.
Then $\Omega^{\prime\prime}$ with $\mathbb{P}(\Omega^{\prime\prime})=1$.
Noting that $\underline{{\lambda}_{1}}(p,-\infty;\omega)$ is continuous in $p$,
we have $\underline{{\lambda}_{1}}(p,-\infty;\omega)\geq\Gamma_{\infty}$ for any $(p,\omega)\in\mathbf{R}\times\Omega^{\prime\prime}$.
\end{proof}


\begin{thebibliography}{99}
{\small
\bibitem {A1} Aronson D G, Weinberger H F. Multidimensional nonlinear diffusion arising in population genetics[J]. Advances in Mathematics, 1978, 30(1): 33-76.
\bibitem {BH}Berestycki H, Hamel F. Front propagation in periodic excitable media,
Communications on Pure and Applied Mathematics, 2002, 55: 0949-1032.
\bibitem {BHR} Berestycki H, Hamel F, Roques L.  Analysis of the periodically fragmented
environment model:II-Biological invasions and pulsating traveling fronts, J.
Math. Pures Appl., 2005, 84: 1101-1146.
\bibitem {BHN} Berestycki H, Hamel F, Nadirashvili N. The speed of propagation for KPP type problems. I: Periodic framework[J]. Journal of The European Mathematical Society, 2005, 7(2): 173-213.
\bibitem {BHN2}Berestycki H, Hamel F,  Nadirashvili N.The speed of propagation for KPP type problems. II - General domains. J. Amer. Math. Soc., 2010: 23:1-34.
\bibitem {B1} Berestycki H, Hamel F, Nadin G. Asymptotic spreading in heterogeneous diffusive excitable media[J]. Journal of Functional Analysis, 2008, 255(9): 2146-2189.
\bibitem {B12} Berestycki H, Hamel F, Rossi L. Liouville-type results for semilinear elliptic equations in unbounded domains[J]. Annali Di Matematica Pura Ed Applicata, 2007, 186(3): 469-507.
\bibitem {B2} Berestycki H, Nadin G. Spreading speeds for one-dimensional monostable reaction-diffusion equations[J]. Journal of Mathematical Physics, 2012, 53(11): 115619.
\bibitem {B3} Berestycki H, Nadin G. Asymptotic spreading for general heterogeneous Fisher-KPP type equations[J]. 2015, preprint.
\bibitem {B4} Berestycki H, Rossi L. On the principal eigenvalue of elliptic operators in $\ R^ N $ and applications[J]. Journal of the European Mathematical Society, 2006, 8(2): 195-215.
\bibitem{CLW} Cheng C P, Li W T, Wang Z C. Spreading speeds and travelling waves in a delayed population model with stage structure on a 2D spatial lattice[J]. IMA journal of applied mathematics, 2008, 73(4): 592-618.
\bibitem{CCV} Cahn J W, Chow S N, Van Vleck E S. Spatially discrete nonlinear diffusion equations[J]. Rocky Mount. J. Math., to appear, 1995.
\bibitem{CFG}Chen X, Fu S C, Guo J S. Uniqueness and asymptotics of traveling waves of monostable dynamics on lattices[J]. SIAM journal on mathematical analysis, 2006, 38(1): 233-258.

\bibitem {C1} Crandall M G, Ishii H, Lions P L. User's guide to viscosity solutions of second order partial differential equations[J]. Bulletin of the American Mathematical Society, 1992, 27(1): 1-67.

\bibitem{DL} Ding W, Liang X. Principal eigenvalues of generalized convolution operators on the circle and spreading speeds of noncompact evolution systems in periodic media. SIAM Journal on Mathematical Analysis, 2015, 47,855-896.
\bibitem {F1} Fisher R A. The wave of advance of advantageous genes[J]. Annals of eugenics, 1937, 7(4): 355-369.
\bibitem {GF} Gartner J, Freidlin M I , On the propagation of concentration waves in
periodic and random media, Sov. Math. Dokl., 1979, 20: 1282-1286.
\bibitem {G1} Guo J S, Hamel F. Front propagation for discrete periodic monostable equations[J]. Mathematische Annalen, 2006, 335(3): 489-525.
\bibitem {H1} Horn R A, Johnson C R. Matrix analysis[M]. Cambridge university press, 2012.
\bibitem {HZ} Hudson W, Zinner B. Existence of traveling waves for a generalized discrete Fisher's equation,
Comm. Appl. Nonlinear Anal. 1994,1: 23-46.
\bibitem{JZ}
Jin Y, Zhao X. Spatial dynamics of a discrete-time population model in a periodic lattice habitat.
 J. Dyn. Diff. Equat., 2009, 21,  501--525.
\bibitem {K1} Kolmogorov A N, Petrovsky I G, Piskunov N S.  Etude de l equation
de la diffusion avec croissance de la quantite de matiere et son application a
un probleme biologique. Moscow Univ. Math. Bull, 1937, 1: 1-25.
\bibitem{LZ}  Liang X, Zhao X. Spreading speeds and traveling waves for abstract
monostable evolution systems, J. Funct. Anal., 2010, 259: 857-903.
\bibitem {L1} Lions P L, Souganidis P E. Homogenization of degenerate second-order PDE in periodic and almost periodic environments and applications. Annales de l'IHP Analyse non linear. 2005, 22(5): 667-677.
\bibitem{MWZ} Ma S, Weng P, Zou X. Asymptotic speed of propagation and traveling wavefronts in a non-local delayed lattice differential equation[J]. Nonlinear Analysis: Theory, Methods and Applications, 2006, 65(10): 1858-1890.
\bibitem{MZ} Ma S, Zou X. Existence, uniqueness and stability of travelling waves in a discrete reaction-diffusion monostable equation with delay[J]. Journal of Differential Equations, 2005, 217(1): 54-87.
\bibitem{Mallet} Mallet-Paret J. The global structure of traveling waves in spatially discrete dynamical systems[J]. Journal of Dynamics and Differential Equations, 1999, 11(1): 49-127.
\bibitem {Na} Nadin G. The effect of the Schwarz rearrangement on the periodic principal eigenvalue of a nonsymmetric operator[J]. SIAM Journal on Mathematical Analysis, 2010, 41(6): 2388-2406.

\bibitem {N1} Nolen J. A central limit theorem for pulled fronts in a random medium[J]. NHM, 2011, 6(2): 167-194.
\bibitem {s1} Shen W. Variational principle for spreading speeds and generalized propagating
speeds in time almost periodic and space periodic KPP models, Trans.
Amer. Math. Soc., 2010, 362, 5125-5168.
\bibitem {s2} Shen W. Existence, uniqueness, and stability of generalized traveling waves in
time dependent monostable equations, J. Dynam. Differential Equations., 2011, 23: 1-44.
\bibitem {s3} Shen W. Existence of generalized traveling waves in time recurrent and space
periodic monostable equations, J. Appl. Anal. Comput.,  2011, 1: 69-93.
\bibitem{SKT} Shigesada N, Kawasaki K, Teramoto E. Traveling periodic waves in heterogeneous
environments. Theor. Popul. Biol.,1986, 30: 143-160.
\bibitem{WLW} Wang Z C, Li W T, Wu J. Entire solutions in delayed lattice differential equations with monostable nonlinearity[J]. SIAM Journal on Mathematical Analysis, 2009, 40(6): 2392-2420.
\bibitem {W}  Weinberger H. On spreading speeds and traveling waves for growth and
migration models in a periodic habitat, J. Math. Biol., 2002, 45: 511-548.
\bibitem {X}  Xin J. Existence of planar flame fronts in convective-diffusive periodic media,
Archive for rational mechanics and analysis, 1992, 121: 205-233.
\bibitem {Z} Zlatos A. Transition fronts in inhomogeneous Fisher-KPP reaction-diffusion equations[J]. Journal de mathšŠmatiques pures et appliqušŠes, 2012, 98(1): 89-102.
}
\end{thebibliography}
\end{document}